\newcommand{\N}{{\mathbb N}}
\newcommand{\R}{{\mathbb R}}
\newcommand{\Z}{{\mathbb Z}}
\newtheorem{theorem}{Theorem}[section]
\newtheorem{corollary}[theorem]{Corollary}
\newtheorem{thm}[theorem]{Theorem}
\newtheorem{definition}[theorem]{Definition}
\newtheorem{remark}[theorem]{Remark}
\newtheorem{lemma}[theorem]{Lemma}
\newtheorem{proposition}[theorem]{Proposition}
\newtheorem{claim}[theorem]{Claim}
\newtheorem{assumption}[theorem]{Assumption}
\numberwithin{equation}{section}
\begin{document}

\title{\bf\Large Bifurcations in Lagrangian systems and geodesics II
\footnotetext{\hspace{-0.35cm} 2020
{\it Mathematics Subject Classification}.
Primary 37J20, 34C23, 53C22.
\endgraf
{\it Key words and phrases.}
Bifurcation, Lagrangian system, geodesic, Finsler metric, exponential map, Morse index,
critical point, focal point.
}}
\date{}
\author{Guangcun Lu\footnote{
E-mail: \texttt{gclu@bnu.edu.cn}}}

\maketitle

\vspace{-0.7cm}

\begin{center}

\begin{minipage}{13cm}
{\small {\bf Abstract}\quad
This is the second part of a two--part series investigating bifurcation phenomena in autonomous Lagrangian systems and geodesic flows on Finsler and Riemannian manifolds. Building upon the abstract bifurcation theorems established  in earlier work and the results of Part I,
this study makes contributions in two main directions.
In Part A, we focus on bifurcations of generalized periodic solutions in autonomous Lagrangian systems. By employing Morse index and nullity techniques within the normal space to the $\mathbb{R}$-orbits of solutions, we derive necessary and sufficient conditions for bifurcation, encompassing scenarios of both Fadell--Rabinowitz and Rabinowitz type.
In Part B, we extend these results to the geometric setting of geodesic bifurcations in Finsler and Riemannian manifolds. 
A principal achievement is the significant refinement
 of the classical Gauss lemma and its generalizations by Morse-Littauer and Savage,
providing a precise description of geodesic behavior near critical points of the exponential map. The sharpness of these theoretical results is rigorously tested and confirmed through explicit counterexamples, such as the round sphere.
The work is technically rigorous, leveraging a specialized technique developed by the author to establish novel bifurcation theorems. These findings have profound theoretical implications and potential applications in related fields such as the Zermelo navigation problem and the study of stationary spacetimes.
}
\end{minipage}
\end{center}

\vspace{0.2cm}

\tableofcontents

\vspace{0.2cm}

\section*{Introduction}

Bifurcation theory provides a fundamental framework for understanding qualitative changes in the structure of solutions to nonlinear systems as parameters vary. At the intersection of the calculus of variations and differential geometry,  a central challenge is to characterize how families of solutions--such as periodic trajectories of Lagrangian systems or geodesics on manifolds--emerge, vanish, or change their structure in response to variations in parameters like energy, boundary conditions, or the underlying metric itself.

This paper constitutes the second part of a series dedicated to developing a systematic bifurcation theory for autonomous Lagrangian systems and for geodesics in Finsler and Riemannian geometry.
Although bifurcation theory is well-developed in contexts such as Hamiltonian systems, this is not the case for Lagrangian systems with symmetries (e.g., translation invariance) and for the behavior of geodesics in Finsler geometry--a setting characterized by a non--smooth Lagrangian (the Finsler metric),  for which a precise theoretical description is still lacking. 
Seminal results concerning the non-injectivity of the exponential map, such as the classical Gauss lemma and its subsequent generalizations (e.g., by Morse-Littauer and Savage in the setting of Finsler spaces), cannot capture finer bifurcation scenarios.

The primary objectives and contributions of this work are threefold:

\textsf{Bifurcations in Lagrangian systems (Part A)}: We conduct a detailed investigation into the bifurcation of generalized periodic solutions for autonomous Lagrangian systems. A significant complication arises from the autonomy of the system, which implies that every nonconstant solution generates an entire
$\mathbb{R}$--orbit of solutions. To precisely characterize bifurcations near these orbits, one must work in the normal space to the $\mathbb{R}$--orbit, introducing substantial technical challenges. We successfully overcome these obstacles to establish necessary and sufficient conditions for bifurcation.

\textsf{Bifurcations of geodesics (Part B)}: We systematically extend the analytical framework from Part A to study geodesic bifurcations on Finsler and Riemannian manifolds. A key advancement is a substantial strengthening of the classical Morse--Littauer theorem.
A key advancement (Theorem~\ref{th:MorseLittauer1}) is a substantial strengthening of the classical Gauss lemma and its generalizations by Morse-Littauer and Savage.
We demonstrate that the behaviour of geodesics near a critical point of the exponential map is far richer than the classical conclusion of mere non-injectivity, providing a complete classification of possible scenarios (as detailed in Theorem~\ref{th:MorseLittauer1}).

\textsf{A unified framework with applications}: This paper develops a unified technical framework,
based on the abstract bifurcation theory of \cite{Lu6, Lu8, Lu11, Lu10} and a novel technique from
\cite{Lu4, Lu12-, Lu12}, to study these geometric variational problems. The approach formulates
them as critical point problems on Hilbert manifolds. With the precise computation of Morse indices and nullities, we obtain verifiable bifurcation criteria. This provides a powerful tool, simultaneously
establishing the sharpness of the results via explicit counterexamples and demonstrating its
applicability to areas such as Zermelo navigation and the geometry of stationary spacetimes.

\textsf{Structure of the Paper}:
The current paper is divided into two main sections: Part A details the analysis for autonomous
Lagrangian systems, while Part B is devoted to geodesic bifurcations and includes a discussion of their implications.

We adopt the notation and conventions of \cite{Lu12-}, particularly those listed in \cite[\S 2.1]{Lu12-}.
For the reader's convenience, we also state \cite[Assumption 1.0]{Lu12-} as follows.\\

\noindent\textbf{Basic assumptions and conventions}(\cite[Assumption~1.0]{Lu12-}).
Let $M$ be a $n$-dimensional, connected $C^7$ submanifold of $\R^N$.
Its tangent bundle $TM$ is a $C^6$-smooth manifold of dimension $2n$,
whose points are denoted by $(x,v)$, with $x\in M$ and $v\in T_xM$.
The bundle projection $\pi:TM\to M,\,(x,v)\mapsto x$ is $C^6$.
Let $g$ be a $C^6$ Riemannian metric and $\mathbb{I}_g$ a $C^7$ isometry  on $(M, g)$,
i.e., $\mathbb{I}_g:M\to M$ is $C^7$ and satisfies $g((\mathbb{I}_g)_\ast(u), (\mathbb{I}_g)_\ast(v))=g(u,v)$
for all $u,v\in TM$.
(Thus the Christoffel symbols $\Gamma^i_{jk}$ and the exponential map $\exp:TM\to M$ are $C^5$.)\\

\part{Bifurcations of generalized periodic solutions  in autonomous Lagrangian systems}\label{part:A}

The study of bifurcations for autonomous Lagrangian systems presents a significantly
higher level of complexity compared to the non-autonomous case discussed in \cite{Lu12-}.
 The reason is that  each nonconstant solution $\bar\gamma$ of (\ref{e:Lagr10*}) generates
 an $\mathbb{R}$-orbit, $\R\cdot \bar\gamma:=\{\bar\gamma(\theta+\cdot)\,|\,\theta\in\R\}$,
 of solutions of (\ref{e:Lagr10*}). Consequently,
  every point $(\lambda, \bar\gamma)$ in $\Lambda\times\{\bar\gamma\}$
 constitutes a bifurcation point according to \cite[Def.1.12]{Lu12-}
(cf. Definition~\ref{def:Bifur}).  To precisely characterize bifurcations near
  $\R\cdot \bar\gamma$,   one must work in the normal space to $\R\cdot \bar\gamma$ to apply abstract bifurcation theorems, which requires refining the techniques from \cite{Lu12-} and handling substantial technical complexities. In Section~\ref{sec:LgrResults}, we state the main results, along with the necessary definitions and assumptions. Their proofs are deferred to Sections~\ref{sec:Lagr3} and~\ref{sec:Lagr4}.

\section{Statement of main results}\label{sec:LgrResults}
\setcounter{equation}{0}

For a real $\tau>0$, we introduced the following spaces in [26, (1.31)],
    \begin{equation}\label{e:BanachPerMani}
\mathcal{X}^i_{\tau}(M, \mathbb{I}_g):=\{\gamma\in C^i(\mathbb{R}, M)\,|\, \mathbb{I}_g(\gamma(t))=\gamma(t+\tau)\;\forall t\},\quad
i=0,1,2,\cdots.
\end{equation}
    The tangent space of the $C^4$ Banach manifold $\mathcal{X}^1_{\tau}(M, \mathbb{I}_g)$
 at $\gamma\in \mathcal{X}^1_{\tau}(M, \mathbb{I}_g)$ is
$$
 C^1_{\mathbb{I}_g}(\gamma^\ast TM):=T_{\gamma}\mathcal{X}^1_{\tau}(M, \mathbb{I}_g)=\{\xi\in C^1(\gamma^\ast TM)\,|\, d\mathbb{I}_g(\gamma(t))[\xi(t)]=\xi(\tau+t)\;\forall t\}.
$$
It is a dense subspace in the Hilbert space
$$
W^{1,2}_{\mathbb{I}_g}(\gamma^\ast TM):=\{\xi\in H^1(\gamma^\ast TM)\,|\,
 d\mathbb{I}_g(\gamma(t))[\xi(t)]=\xi(\tau+t)\;\forall t\}
$$
with inner product given by
\begin{equation}\label{e:1.1}
\langle\xi,\eta\rangle_{1,2}=\int^\tau_0\langle\xi(t),\eta(t)\rangle_g dt+
\int^\tau_0\langle \frac{D\xi}{dt}(t), \frac{D\eta}{dt}(t)\rangle_g dt,
\end{equation}
where $\frac{D\xi}{dt}$ is the $W^{1,2}$ covariant
derivative of $\xi$ along $\gamma$,
defined via the Levi-Civita connection  of the metric $g$.
Hereafter $\langle u,v\rangle_g=g(u,v)$ for $u,v\in TM$.

\begin{assumption}\label{ass:Lagr6II}
{\rm  Let $(M, g, \mathbb{I}_g)$ be as in ``Basic assumptions and conventions'' in Introduction.
For  a topological space $\Lambda$,
let $L:\Lambda\times TM\to\R$ be a continuous function satisfying the following conditions:
 \begin{enumerate}
\item[\rm (i)] All $L_\lambda=L(\lambda, \cdot)$ are $\mathbb{I}_g$-invariant in the
following sense:
\begin{equation}\label{e:Lagr8}
L(\lambda, \mathbb{I}_g(x), d\mathbb{I}_g(x)[v])=L(\lambda, x,v)\quad\forall (\lambda,x,v)\in\Lambda\times TM.
\end{equation}
\item[\rm (ii)] For each $C^3$ chart $\alpha:U_\alpha\to\alpha(U_\alpha)\subset\mathbb{R}^n$ and the induced bundle
chart $T\alpha:TM|_{U_\alpha}\to \alpha(U_\alpha)\times\mathbb{R}^n\subset\mathbb{R}^n\times\mathbb{R}^n$ the function
$$
L^\alpha:\Lambda\times \alpha(U_\alpha)\times\mathbb{R}^n\to\mathbb{R},\;
(\lambda, q,v)\mapsto L(\lambda, (T\alpha)^{-1}(q,v))
$$
is $C^2$ with respect to $(q,v)$ and strictly convex with respect to $v$, and
all its partial derivatives also depend continuously on $(\lambda, q, v)$.
 \end{enumerate}}
\end{assumption}
Under Assumption~\ref{ass:Lagr6II}, for a real $\tau>0$, let us consider the problem
\begin{equation}\label{e:Lagr10*}
\left.\begin{array}{ll}
&\frac{d}{dt}\big(\partial_vL_\lambda(\gamma(t), \dot{\gamma}(t))\big)-\partial_q L_\lambda(\gamma(t), \dot{\gamma}(t))=0\;\;\forall t\in\mathbb{R},\\
&\mathbb{I}_g(\gamma(t))=\gamma(t+\tau)\quad\forall t\in\mathbb{R}.
\end{array}\right\}
\end{equation}
Solutions of (\ref{e:Lagr10*}) are also called  \textsf{$\mathbb{I}_g$-periodic trajectories with period $\tau$} (\cite{Dav}).
When $\mathbb{I}_g$ generates a cyclic group, that is, it is of finite order $p\in\mathbb{N}$,
every $\mathbb{I}_g$-periodic trajectory is $p\tau$-periodic.
  Clearly, for a nonconstant solution $\bar\gamma$ of (\ref{e:Lagr10*})
 with a fixed $\lambda\in\Lambda$, each element in $\R\cdot \bar\gamma:=\{\bar\gamma(\theta+\cdot)\,|\,\theta\in\R\}$ (\textsf{$\R$-orbit})
 also satisfies (\ref{e:Lagr10*}) with this $L_\lambda$.
  Thus each point
 $(\lambda, \bar\gamma)$ in $\Lambda\times\{\bar\gamma\}$ is a bifurcation point of (\ref{e:Lagr10*})
in the following sense.

\begin{definition}[\hbox{\cite[Def.1.12]{Lu12-}, \cite[Def.1.12]{Lu12}}]\label{def:Bifur}
{\rm  Let  $X=\mathcal{X}^1_{\tau}(M, \mathbb{I}_g)$ (or $\mathcal{X}^2_{\tau}(M, \mathbb{I}_g)$).
For $\mu\in\Lambda$,
we call $(\mu, \gamma_\mu)$  a \textsf{bifurcation point} of the problem (\ref{e:Lagr10*})
in $\Lambda\times X$ with respect to the branch $\{(\lambda,\gamma_\lambda)\,|\,\lambda\in\Lambda\}$
  if  there exists a point  $(\lambda_0, \gamma_0)$ in any neighborhood of $(\mu,\gamma_\mu)$ in $\Lambda\times X$
    such that $\gamma_0\ne\gamma_{\lambda_0}$
  is a solution of (\ref{e:Lagr10*}) with $\lambda=\lambda_0$.
  Moreover, $(\mu, \gamma_\mu)$ is said to be a \textsf{bifurcation point along sequences} of the problem (\ref{e:Lagr10*})
in $\Lambda\times X$ with respect to the branch $\{(\lambda,\gamma_\lambda)\,|\,\lambda\in\Lambda\}$
  if  there exists a sequence  $\{(\lambda_k, \gamma^k)\}_{k\ge 1}$ in $\Lambda\times X$,
   converging to $(\mu,\gamma_\mu)$ such that each $\gamma^k\ne\gamma_{\lambda_k}$
  is a solution of (\ref{e:Lagr10*}) with $\lambda=\lambda_k$, $k=1,2,\cdots$.
  (These two notions are equivalent if $\Lambda$ is first countable.)  }
\end{definition}

In order to give an exact description for bifurcation pictures of solutions of
(\ref{e:Lagr10*}) near $\R\cdot \bar\gamma$,
 two solutions $\gamma_1$ and $\gamma_2$  of (\ref{e:Lagr10*})
(for a fixed $\lambda$)
are called \textsf{$\mathbb{R}$-distinct} if they belong to different $\R$-orbits, i.e.,  $\gamma_1(\theta+\cdot)\ne\gamma_2$ for all $\theta\in\mathbb{R}$.

%

 \begin{definition}\label{def:orbitBifur}
{\rm
The $\R$-orbits of solutions to (\ref{e:Lagr10*})
are said to be \textsf{sequentially bifurcating
from the $\R$-orbit $\R\cdot \bar\gamma$ at $\mu$}
if there exists a sequence of parameters $\lambda_k\to\mu$ in $\Lambda$
 and, correspondingly,  solutions $\gamma^k$ of (\ref{e:Lagr10*}) with  $\lambda=\lambda_k$, satisfying:
 \begin{enumerate}
\item[\rm (i)] $\gamma^k\notin\R\cdot \bar\gamma\;\forall k$,
\item[\rm (ii)] the  solutions $\gamma^k$ are pairwise $\R$-distinct,
\item[\rm (iii)] $\gamma^k|_{[0,\tau]}\to \bar\gamma|_{[0,\tau]}$ in $C^1([0,\tau];M)$.
\end{enumerate}
}
\end{definition}
[Clearly, passing to a subsequence (i) is implied in (ii).]

Consider the functional $\mathfrak{E}_\lambda$ on $\mathcal{X}^1_{\tau}(M, \mathbb{I}_g)$ defined by
 \begin{equation}\label{e:Lagr9}
\mathfrak{E}_\lambda(\gamma)=\int^\tau_0L_\lambda(t, \gamma(t), \dot{\gamma}(t))dt.
\end{equation}
It follows from the proof of [24, Proposition~4.2] that
$\mathfrak{E}_\lambda$ is of class $C^2$.

 \cite[Proposition~4.2]{BuGiHi} shows that
$\gamma\in \mathcal{X}^1_{\tau}(M, \mathbb{I}_g)$ is
 a critical point of $\mathfrak{E}_\lambda$
if and only if it belongs to $\mathcal{X}^3_{\tau}(M, \mathbb{I}_g)$
and satisfies  (\ref{e:Lagr10*}).
According to \cite{Du}, the second-order differential
 $D^2\mathfrak{E}_\lambda(\gamma)$ of $\mathfrak{E}_\lambda$
 at such a critical point $\gamma$
can be extended into a continuous symmetric bilinear form on $W^{1,2}_{\mathbb{I}_g}(\gamma^\ast TM)$.
Moreover, the associated Morse index and nullity, defined as
\begin{equation}\label{e:Lagr12}
m^-_\tau(\mathfrak{E}_\lambda,\gamma)\quad\hbox{and}\quad m^0_\tau(\mathfrak{E}_\lambda,\gamma),
\end{equation}
are finite. These quantities are well-defined by \cite[\S4]{Dav}
and are called the \textsf{Morse index and nullity of $\mathfrak{E}_\lambda$ at $\gamma$}, respectively.

\subsubsection{Bifurcations of (\ref{e:Lagr10*}) starting at constant solutions}\label{sec:auto.1.1}

\begin{theorem}[\textsf{Alternative bifurcations of Fadell-Rabinowitz's type}]\label{th:bif-per3LagrMan}
Under Assumption~\ref{ass:Lagr6II}  with $\Lambda$ being a real interval, suppose also that
 $\mathbb{I}_g$ satisfies $\mathbb{I}_g^l=id_M$ for some $l\in\mathbb{N}$. Let
\begin{equation}\label{e:constLagrMan}
\left.\begin{array}{ll}
&\Lambda\ni\lambda\to \gamma_\lambda\in {\rm Fix}(\mathbb{I}_g)\subset M\;\hbox{be continuous and}\\
&\partial_xL_{\lambda}(\gamma_\lambda, 0)=0\;\forall \lambda\in\Lambda.
\end{array}\right\}
\end{equation}
{\rm (}Therefore $\gamma_\lambda$ is a constant solution of (\ref{e:Lagr10*}).
\textsf{Hereafter the points $\gamma_\lambda$ are also
understood as constant value maps from $\R$ to $M$ without special statements}.{\rm )}
Suppose that for some $\mu\in{\rm Int}(\Lambda)$ and $\tau>0$,
\begin{enumerate}
\item[\rm (a)]  $\partial_{vv}L_\mu\left(\gamma_\mu, 0\right)$ is positive definite;
\item[\rm (b)] $\partial_{xx}L_\mu\left(\gamma_\mu, 0\right)u=0$ and
 $d\mathbb{I}_g(\gamma_\mu)u=u$ have only the zero solution in $T_{\gamma_\mu}M$;
\item[\rm (c)]   $m^0_\tau(\mathfrak{E}_{\mu}, \gamma_\mu)\ne 0$,
$m^0_\tau(\mathfrak{E}_{\lambda}, \gamma_\mu)=0$
 for each $\lambda\in\Lambda\setminus\{\mu\}$ near $\mu$, and
 $m^-_\tau(\mathfrak{E}_{\lambda}, \gamma_\mu)$  take, respectively, values
 $m^-_\tau(\mathfrak{E}_{\mu}, \gamma_\mu)$ and
  $m^-_\tau(\mathfrak{E}_{\mu}, \gamma_\mu)+m^0_\tau(\mathfrak{E}_{\mu}, \gamma_\mu)$
 as $\lambda\in\Lambda$ varies in two deleted half neighborhoods  of $\mu$.
 \end{enumerate}
 Then one of the following alternatives occurs:
 \begin{enumerate}
\item[\rm (i)] The problem  (\ref{e:Lagr10*}) with $\lambda=\mu$ has a sequence of $\R$-distinct solutions, $\gamma^k$, $k=1,2,\cdots$,
which are $\R$-distinct with $\gamma_\mu$ and  converges to $\gamma_\mu$ on any compact interval $I\subset\R$ in $C^2$-topology.
\item[\rm (ii)] There exist left and right  neighborhoods $\Lambda^-$ and $\Lambda^+$ of $\mu$ in $\Lambda$
and integers $n^+, n^-\ge 0$, such that $n^++n^-\ge m^0_\tau(\mathfrak{E}_{\mu}, \gamma_\mu)/2$,  and for $\lambda\in\Lambda^-\setminus\{\mu\}$ {\rm (}resp. $\lambda\in\Lambda^+\setminus\{\mu\}${\rm )},
(\ref{e:Lagr10*}) with parameter value $\lambda$  has at least $n^-$ {\rm (}resp. $n^+${\rm )} $\R$-distinct solutions
 solutions, $\gamma_\lambda^i$, $i=1,\cdots,n^-$ {\rm (}resp. $n^+${\rm )},
which are $\R$-distinct with $\gamma_\mu$ and converge to  $\gamma_\mu$ on any compact interval $I\subset\R$ in $C^2$-topology as $\lambda\to\mu$.
\end{enumerate}
Moreover, if $m^0_\tau(\mathfrak{E}_{\mu}, \gamma_\mu)\ge 3$,
then at least one of (i), (iii) and (iv) holds, where (i) is as stated above, and the assertions of (iii) and (iv) are as follows:
\begin{enumerate}
\item[\rm (iii)]  For every $\lambda\in\Lambda\setminus\{\mu\}$ near $\mu$ there is a
 solution $\alpha_\lambda\notin\R\cdot \gamma_\lambda$  of  (\ref{e:Lagr10*})
 with parameter value $\lambda$, such that $\alpha_\lambda-\gamma_\lambda$ converges to zero
on any compact interval $I\subset\R$ in $C^2$-topology as $\lambda\to\mu$.

\item[\rm (iv)] For a given small $\epsilon>0$ there is a one-sided  neighborhood $\Lambda^0$ of $\mu$ in $\Lambda$ such that for any $\lambda\in\Lambda^0\setminus\{\mu\}$, (\ref{e:Lagr10*})
    with parameter value $\lambda$ has either infinitely many $\mathbb{R}$-distinct solutions $\bar{\alpha}_\lambda^k\notin\mathbb{R}\cdot \gamma_\lambda$ such that $\|\bar{\alpha}_\lambda^k|_{[0,\tau]}-\gamma_\lambda|_{[0,\tau]}\|_{C^2([0,\tau];\mathbb{R}^N)}<\epsilon$, $k=1,2,\cdots$, or
at least two $\mathbb{R}$-distinct solutions $\beta_\lambda^1\notin\mathbb{R}\cdot \gamma_\lambda$ and $\beta_\lambda^2\notin\mathbb{R}\cdot \gamma_\lambda$
such that $\|{\beta}_\lambda^i|_{[0,\tau]}-\gamma_\lambda|_{[0,\tau]}\|_{C^2([0,\tau];\mathbb{R}^N)}<\epsilon$, $i=1,2$,
 and that $\mathfrak{E}_{\lambda}(\beta^1_\lambda)\ne \mathfrak{E}_{\lambda}(\beta^2_\lambda)$. {\rm (}Recall that we have assumed $M\subset\mathbb{R}^N$.{\rm )}
\end{enumerate}
\end{theorem}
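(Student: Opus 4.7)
The plan is to reduce the problem to an $\R$-equivariant local bifurcation problem in coordinates around the constant path $\gamma_\mu$, and then invoke an equivariant abstract bifurcation theorem of Fadell--Rabinowitz type from \cite{Lu8, Lu11}. The key structural observation is that the autonomy of $L$ makes $\mathfrak{E}_\lambda$ invariant under the translation action $(\theta\cdot\gamma)(t)=\gamma(t+\theta)$ on $\mathcal{X}^i_\tau(M,\mathbb{I}_g)$; since $\mathbb{I}_g^l=id_M$, this action factors through $S^1=\R/l\tau\Z$ on the $l\tau$-periodic extensions of solutions of \eqref{e:Lagr10*} given by \eqref{e:extend*}. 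The constant path $\gamma_\mu$ is a fixed point of this $S^1$-action, so every non-constant bifurcating solution automatically gives an entire orbit of $\R$-distinct solutions.

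First I would choose a $\mathbb{I}_g$-equivariant chart around $\gamma_\mu\in\mathrm{Fix}(\mathbb{I}_g)$. Since $\mathbb{I}_g$ is an isometry of $(M,g)$ fixing $\gamma_\mu$, the Riemannian exponential map $\exp^g_{\gamma_\mu}:T_{\gamma_\mu}M\to M$ conjugates $\mathbb{I}_g$ near $0$ to the linear orthogonal map $A:=d\mathbb{I}_g(\gamma_\mu)$ on $T_{\gamma_\mu}M\cong\R^n$. Pulling back via this chart identifies a neighborhood of $\gamma_\mu$ in $\mathcal{X}^1_\tau(M,\mathbb{I}_g)$ with a neighborhood of $0$ in the Banach space $E^1_A:=\{\xi\in C^1([0,\tau];\R^n)\,|\,\xi(\tau)=A\xi(0)\}$, and replaces $\mathfrak{E}_\lambda$ by a functional $\widetilde{\mathfrak{E}}_\lambda$ associated to an autonomous Lagrangian $\widetilde L_\lambda$ on $\R^n\times\R^n$. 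In these coordinates the time-translation action is the linear one, extended consistently through $A$ across the boundary.

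Next I would split $T_0E^1_A$ according to the $\R$-action. The fixed-point subspace consists of constants $u\in T_{\gamma_\mu}M$ with $Au=u$, and the Hessian of $\widetilde{\mathfrak{E}}_\mu$ at $0$ restricted to constants equals $\tau\,\partial_{xx}L_\mu(\gamma_\mu,0)$; assumption (b) says precisely that this restriction is nondegenerate on the fixed subspace, while assumption (a) supplies the Legendre-type convexity needed to extend $D^2\widetilde{\mathfrak{E}}_\mu(0)$ as a Fredholm quadratic form on the $W^{1,2}$-completion with finite index and nullity. Hence the entire nullity $m^0_\tau(\mathfrak{E}_\mu,\gamma_\mu)$ is carried by the $\R$-equivariant complement, while the index-jump hypothesis (c) is the spectral-flow input required by the abstract equivariant theorem.

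Finally I would apply the Fadell--Rabinowitz-type equivariant bifurcation theorem from \cite{Lu8, Lu11} (the $S^1$-equivariant analogue of the mechanism behind Theorem~\ref{th:bif-suffLagr**}, also used in the appendix of \cite{Lu10}) to $\widetilde{\mathfrak{E}}_\lambda$. This yields either a sequence of $\R$-distinct critical orbits of $\widetilde{\mathfrak{E}}_\mu$ clustering at $0$ (alternative (i)), or, in each deleted one-sided neighborhood of $\mu$, at least $n^\pm\ge 0$ bifurcating critical orbits with $n^++n^-\ge m^0_\tau(\mathfrak{E}_\mu,\gamma_\mu)/2$ (alternative (ii)); the factor $1/2$ is the standard output of equivariant counting, since each non-constant $S^1$-orbit contributes $2$ to the nullity but only $1$ to the orbit count. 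When $m^0_\tau(\mathfrak{E}_\mu,\gamma_\mu)\ge 3$, the remaining equivariant dimension permits the stronger Rabinowitz-type continuation, yielding (iii)--(iv), with the distinct-energy assertion in (iv) obtained as in Theorem~\ref{th:bif-suffLagr**}(iii) from finiteness of the solution set in $\mathcal{W}$. The $C^2$-convergence on compact intervals in all cases follows from transferring $C^1$-smallness to $C^2$-smallness through the Euler--Lagrange ODE via Lemma~\ref{lem:regu}. The main obstacle is to verify carefully that the abstract equivariant theorem applies in this exact Banach setting: the index splitting must respect the $\R$-action, and the underlying Fadell--Rabinowitz homological index must count $\R$-distinct critical orbits rather than merely distinct critical points, so that the $n^++n^-$ obtained are genuinely $\R$-distinct from $\gamma_\mu$ and from each other.
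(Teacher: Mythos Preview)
Your proposal is correct and follows essentially the same route as the paper: localize via the $\mathbb{I}_g$-equivariant exponential chart at $\gamma_\mu$ (converting $d\mathbb{I}_g(\gamma_\mu)$ to an orthogonal matrix $E_{\gamma_\mu}$ with $E_{\gamma_\mu}^l=I_n$), observe that the $S^1=\R/l\tau\Z$ translation action fixes $0$ and that assumption (b) forces the fixed-point set of this action on $\mathrm{Ker}(D^2\mathfrak{E}_\mu(0))$ to be trivial, and then apply the abstract $S^1$-equivariant Fadell--Rabinowitz bifurcation theorem (the paper cites \cite[Theorem~C.6]{Lu10}, equivalently \cite[Theorem~5.12]{Lu8}). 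The one technical layer you omit is that, before invoking the abstract theorem in the $W^{1,2}$ Hilbert-space setting, the paper first modifies the local Lagrangian to a $\check L$ satisfying uniform two-sided growth and convexity bounds (the (L0)--(L6) conditions, as in Lemma~\ref{lem:Gener-modif}); this is needed so that the functional $\check{\mathscr{L}}_\lambda$ is $C^{2-0}$ on a Hilbert-space neighborhood with the required $P+Q$ Hessian decomposition, and so that critical points enjoy the a priori $W^{1,2}\Rightarrow C^2$ regularity (Proposition~\ref{prop:Fix-PsolutionLagr} rather than Lemma~\ref{lem:regu} directly). This modification is routine but not automatic, and is what actually licenses the application of the abstract theorem.
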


\subsubsection{Bifurcations of (\ref{e:Lagr10*}) starting at nonconstant solutions}\label{sec:auto.1.2}

We need make stronger:

\begin{assumption}\label{ass:Lagr10}
{\rm
Let $(M, g, \mathbb{I}_g)$ be as in ``Basic assumptions and conventions'' in Introduction.
For a real $\tau>0$ and a topological space $\Lambda$,
let $L:\Lambda\times TM\to\R$ be a continuous function such that each
$L(\lambda,\cdot): TM\to\R$, $\lambda\in\Lambda$, is $C^6$ and all its partial derivatives of order no more than two depend continuously on
 $(\lambda, x, v)\in\Lambda\times TM$. Each $L_\lambda(\cdot)=L(\lambda,\cdot)$ is fiberwise strictly convex,
 and  $\mathbb{I}_g$-invariant (i.e.,
$L(\lambda, \mathbb{I}_g(x), d\mathbb{I}_g(x)[v])=L(\lambda,  x,v)$ for all $(x,v)\in TM$).
Let  $\bar\gamma:\mathbb{R}\to M$ be a nonconstant $C^2$ map satisfying
(\ref{e:Lagr10*}) with this $L$ for all $\lambda\in\Lambda$. ($\bar\gamma$ is actually $C^6$ by \cite[Proposition~4.3]{BuGiHi}.)}
\end{assumption}

Under this assumption, note that $m^0_\tau(\mathscr{E}_{\lambda}, \bar\gamma)\ne 0$ for all $\lambda\in\Lambda$ because all elements in
the $\R$-orbit $\R\cdot \bar\gamma$  also satisfies (\ref{e:Lagr10*}) with this $L$ for all $\lambda\in\Lambda$.

\begin{theorem}[\textsf{Necessary condition}]\label{th:bif-ness-orbitLagrMan}
Under Assumption~\ref{ass:Lagr10}, suppose that
$\R$-orbits of solutions of the problem (\ref{e:Lagr10*})
with a parameter $\lambda\in\Lambda$
sequentially bifurcate from the $\R$-orbit $\R\cdot \bar\gamma$ at $\mu$.
Then $m^0_\tau(\mathfrak{E}_{\mu}, \bar\gamma)\ge 2$.
\end{theorem}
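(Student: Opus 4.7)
The plan is to argue by contradiction, exploiting the fact that the $\R$-symmetry of the autonomous, $\mathbb{I}_g$-invariant Lagrangian already produces one Jacobi field along $\bar\gamma$. Differentiating $\mathbb{I}_g(\bar\gamma(t))=\bar\gamma(t+\tau)$ at $t=0$ gives the boundary compatibility $d\mathbb{I}_g(\bar\gamma(0))[\dot{\bar\gamma}(0)]=\dot{\bar\gamma}(\tau)$, so $\dot{\bar\gamma}\in W^{1,2}_{\mathbb{I}_g}(\bar\gamma^\ast TM)$; and the $\R$-action $\theta\mapsto\bar\gamma(\cdot+\theta)$ is a smooth curve of critical points of $\mathfrak{E}_\mu$ in $\mathcal{X}^1_\tau(M,\mathbb{I}_g)$, so its derivative at $\theta=0$ lies in $\mathrm{Ker}\,D^2\mathfrak{E}_\mu(\bar\gamma)$. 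Nonconstancy of $\bar\gamma$ makes $\dot{\bar\gamma}\ne 0$; hence $m^0_\tau(\mathfrak{E}_\mu,\bar\gamma)\ge 1$ is automatic, and the task is to upgrade this to $\ge 2$ using the bifurcation data.

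Suppose toward contradiction that $\mathrm{Ker}\,D^2\mathfrak{E}_\mu(\bar\gamma)=\R\dot{\bar\gamma}$, and let $(\lambda_k,\gamma^k)$ be the sequence from Definition~\ref{def:orbitBifur}. Bootstrapping the Euler--Lagrange equation via fiberwise strict convexity of $L_\mu$ upgrades the $C^1$-convergence $\gamma^k|_{[0,\tau]}\to\bar\gamma|_{[0,\tau]}$ to $C^2$-convergence. For $k$ large, choose $\theta_k\to 0$ minimizing $\theta\mapsto\|\gamma^k(\cdot+\theta)|_{[0,\tau]}-\bar\gamma|_{[0,\tau]}\|_{L^2([0,\tau];\R^N)}^2$; by autonomy of $L$, $\tilde\gamma^k:=\gamma^k(\cdot+\theta_k)$ is still a solution of (\ref{e:Lagr10*}) with parameter $\lambda_k$. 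Writing $\tilde\gamma^k=\exp_{\bar\gamma}w_k$ in a tubular neighborhood of $\bar\gamma$, with $w_k\to 0$ in $C^2$, the minimality of $\theta_k$ yields the near-orthogonality $\int_0^\tau\langle w_k,\dot{\bar\gamma}\rangle\,dt=O(\|w_k\|_{C^0}^2)$; and $w_k\ne 0$, for otherwise $\gamma^k\in\R\cdot\bar\gamma$, contradicting (i) of Definition~\ref{def:orbitBifur}.

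Setting $\eta_k:=w_k/\|w_k\|_1$ normalizes $\|\eta_k\|_1=1$. Because $\bar\gamma$ is critical for $\mathfrak{E}_\lambda$ for every $\lambda\in\Lambda$, the Euler--Lagrange operator $F_\lambda$ read off in the chart $\exp_{\bar\gamma}$ satisfies $F_\lambda(0)\equiv 0$; Taylor expanding the identity $F_{\lambda_k}(w_k)=0$ yields
\begin{equation*}
\mathcal{A}_\mu w_k=O\bigl((|\lambda_k-\mu|+\|w_k\|_1)\,\|w_k\|_1\bigr)\quad\text{in }W^{1,2}_{\mathbb{I}_g}(\bar\gamma^\ast TM),
\end{equation*}
where $\mathcal{A}_\mu$ is the self-adjoint Fredholm realization of $D^2\mathfrak{E}_\mu(\bar\gamma)$. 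Dividing by $\|w_k\|_1$ gives $\mathcal{A}_\mu\eta_k\to 0$. Decompose $\eta_k=c_k\,\dot{\bar\gamma}/\|\dot{\bar\gamma}\|_{L^2}+\eta_k^\perp$ with $\eta_k^\perp\perp_{L^2}\dot{\bar\gamma}$; the near-orthogonality of $w_k$ to $\dot{\bar\gamma}$ forces $c_k\to 0$, while invertibility of $\mathcal{A}_\mu$ on $(\R\dot{\bar\gamma})^\perp$ (guaranteed by the assumed one-dimensional kernel) forces $\eta_k^\perp\to 0$ in $W^{1,2}$. Hence $\|\eta_k\|_1\to 0$, contradicting $\|\eta_k\|_1=1$.

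The delicate step is justifying the Taylor remainder bound for $\mathcal{A}_\mu w_k$ with joint $(\lambda_k,w_k)$-dependence; this requires the assumed continuity of the partial derivatives of $L$ of order $\le 2$ in $(\lambda,q,v)$ together with a uniform control on the change of coordinates induced by $\exp_{\bar\gamma}$ on the submanifold $M\subset\R^N$. Strict fiberwise convexity then supplies the ellipticity needed to realize $\mathcal{A}_\mu$ as a self-adjoint Fredholm operator on $W^{1,2}_{\mathbb{I}_g}(\bar\gamma^\ast TM)$ with one-dimensional kernel $\R\dot{\bar\gamma}$, closing the argument.
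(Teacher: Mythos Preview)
Your argument is sound and takes a more direct route than the paper. Both kill the $\R$-symmetry and then linearize: the paper shifts each $\gamma^k$ into the transversal slice $\Omega_\delta=\Phi_{\bar\gamma}(B_{{\bf X}^\bot}(0,\delta))$ via Proposition~\ref{prop:O-neigh}, pulls back through the chart $\Phi_{\bar\gamma}$ and the modified Lagrangian $\check L$ of Lemma~\ref{lem:modif}, and applies the abstract necessary-condition theorem \cite[Theorem~3.1]{Lu8} to the restricted functional $\check{\mathscr{L}}_\lambda^\bot$ on ${\bf H}^\bot$ to get $m^0_\tau(\check{\mathscr{L}}_\mu^\bot,0)\ge 1$, hence $m^0_\tau(\mathfrak{E}_\mu,\bar\gamma)\ge 2$ by (\ref{e:MorseIndexLagr}). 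You accomplish the same reduction by $L^2$-distance minimization in $\theta$ and carry out the linearize-plus-Fredholm step by hand. The paper's investment in the $(\check L,\Phi_{\bar\gamma},\check{\mathscr{L}}^\bot)$ machinery is amortized across the companion Theorems~\ref{th:bif-suffict1-orbitLagrMan}--\ref{th:bif-suffict-orbitLagrMan}; your route is shorter for this single statement.

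Two places to tighten. First, $|\lambda_k-\mu|$ is undefined since $\Lambda$ is merely topological in Assumption~\ref{ass:Lagr10}; replace it by the $o(1)$ coming from continuity of $\partial^2 L$ in $(\lambda,q,v)$, using that $\bar\gamma$ is critical for \emph{every} $\lambda$ so that $DF_{\lambda_k}(0)\to\mathcal A_\mu$. Second, the Taylor remainder in $(W^{1,2})^\ast$ should be estimated as $O(\|w_k\|_{C^1}\,\|w_k\|_{1,2})$ rather than $O(\|w_k\|_{1,2}^2)$: you need the $C^1$-smallness of $w_k$ (available from your $C^2$ bootstrap) to control the $C^0$-oscillation of $\partial^2 L$ along the Taylor segment, and then Cauchy--Schwarz supplies the $\|w_k\|_{1,2}\|V\|_{1,2}$ factor. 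After dividing by $\|w_k\|_{1,2}$ this still yields $\mathcal A_\mu\eta_k\to 0$. Your $L^2$-orthogonal decomposition is compatible with the $W^{1,2}$-Fredholm structure because $\mathcal A_\mu$ restricted to any closed topological complement of its one-dimensional kernel is a bounded bijection onto $\mathrm{ran}\,\mathcal A_\mu$, so $\eta_k^\perp\to 0$ in $W^{1,2}$ and then $c_k\to 0$ follows from the near-orthogonality.
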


\begin{theorem}[\textsf{Sufficient condition}]\label{th:bif-suffict1-orbitLagrMan}
Under Assumption~\ref{ass:Lagr10}, suppose that $\Lambda$ is first countable, $\mu\in\Lambda$ and:
\begin{description}
\item[\rm (a)] $\bar\gamma$ is periodic, and $m^0_\tau(\mathfrak{E}_{\mu}, \bar\gamma)\ge 2$;
\item[\rm (b)] there exist two sequences in  $\Lambda$ converging to $\mu$, $(\lambda_k^-)$ and
$(\lambda_k^+)$,  such that for each $k\in\mathbb{N}$,
 $$
 [m^-_\tau(\mathfrak{E}_{\lambda_k^-}, \bar\gamma), m^-_\tau(\mathfrak{E}_{\lambda_k^-}, \bar\gamma)+
 m^0_\tau(\mathfrak{E}_{\lambda_k^-}, \bar\gamma)-1]\cap[m^-_\tau(\mathfrak{E}_{\lambda_k^+}, \bar\gamma),
 m^-_\tau(\mathfrak{E}_{\lambda_k^+}, \bar\gamma)+m^0_\tau(\mathfrak{E}_{\lambda_k^+}, \bar\gamma)-1]=\emptyset
 $$
 and either $m^0_\tau(\mathfrak{E}_{\lambda_k^-}, \bar\gamma)=1$ or $m^0_\tau(\mathfrak{E}_{\lambda_k^+}, \bar\gamma)=1$;
 \item[\rm (c)] for any solution $\gamma$ of (\ref{e:Lagr10*}) with $\lambda=\mu$,
 if there exists a sequence $(s_k)$ of reals such that  $s_k\cdot\gamma$
  converges to  $\bar\gamma$ on any compact interval $I\subset\mathbb{R}$ in $C^1$-topology, then $\gamma$ is periodic.
  {\rm (}Clearly, this holds if $(\mathbb{I}_g)^l=id_M$ for some $l\in\mathbb{N}$.{\rm )}
   \end{description}
  Then  there exists a sequence $(\lambda_k)\subset\hat{\Lambda}=\{\mu,\lambda^+_k, \lambda^-_k\,|\,k\in\mathbb{N}\}$ converging to $\mu$ and
 $C^6$ solutions $\gamma_k$ of the corresponding problem (\ref{e:Lagr10*}) with $\lambda=\lambda_k$, $k=1,2,\cdots$,
  such that any two of these $\gamma_k$ are $\mathbb{R}$-distinct and that
  $(\gamma_k)$ converges to  $\bar\gamma$ on any compact interval $I\subset\mathbb{R}$ in $C^2$-topology as $k\to\infty$.
  \end{theorem}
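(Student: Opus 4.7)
The plan is to reduce to Theorem~\ref{th:bif-nessLagr**}(II.3) (the sufficient condition for bifurcation of $\mathbb{I}_g$-periodic solutions of non-autonomous Lagrangians) applied on a codimension-one transverse slice that kills the $\R$-translation symmetry along $\R\cdot\bar\gamma$. Because $L$ is autonomous and $\mathbb{I}_g$-invariant, $\dot{\bar\gamma}$ lies in $\ker D^2\mathfrak{E}_\lambda(\bar\gamma)$ for every $\lambda\in\Lambda$, so $m^0_\tau(\mathfrak{E}_\lambda,\bar\gamma)\ge 1$ identically; the hypothesis $m^0_\tau=1$ at one of $\lambda_k^\pm$ in (b) then says this tautological degeneracy is the only one there.

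First I fix a codimension-one $C^6$ submanifold $\Sigma\subset M$ through $\bar\gamma(0)$ whose tangent space at $\bar\gamma(0)$ is complementary to $\R\,\dot{\bar\gamma}(0)$, and set
$$
\mathcal{Y}^i_{\tau,\Sigma}:=\{\gamma\in\mathcal{X}^i_\tau(M,\mathbb{I}_g)\,|\,\gamma(0)\in\Sigma\},\qquad i=1,2,
$$
a $C^4$ Banach submanifold of codimension one transverse to the $\R$-orbit $\R\cdot\bar\gamma$. In a neighborhood of $\R\cdot\bar\gamma$ the $\R$-action by time translation is free, and $\Sigma$ intersects each nearby orbit once, so critical points of $\mathfrak{E}_\lambda|_{\mathcal{Y}^1_{\tau,\Sigma}}$ near $\bar\gamma$ correspond bijectively to $\R$-orbits of solutions of (\ref{e:Lagr10*}) near $\R\cdot\bar\gamma$, and
$$
m^-\bigl(\mathfrak{E}_\lambda|_{\mathcal{Y}^1_{\tau,\Sigma}},\bar\gamma\bigr)=m^-_\tau(\mathfrak{E}_\lambda,\bar\gamma),\qquad m^0\bigl(\mathfrak{E}_\lambda|_{\mathcal{Y}^1_{\tau,\Sigma}},\bar\gamma\bigr)=m^0_\tau(\mathfrak{E}_\lambda,\bar\gamma)-1,
$$
since restricting a symmetric form to a codimension-one subspace transverse to a single direction in its kernel removes exactly that direction without altering the negative eigenspace.

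Next I apply Theorem~\ref{th:bif-nessLagr**}(II.3) to the family $\mathfrak{E}_\lambda|_{\mathcal{Y}^1_{\tau,\Sigma}}$ with the same two sequences $(\lambda_k^\pm)$: the index-interval disjointness and the $m^0=0$ condition required by (II.3) are exactly the translations of (b) under the formula above; the proof of Theorem~\ref{th:bif-nessLagr**} transfers verbatim to the restricted functional on $\mathcal{Y}^1_{\tau,\Sigma}$, as the slice preserves the Fredholm and continuity structure employed there. This yields a sequence $(\lambda_k,\gamma_k)\in\hat{\Lambda}\times\mathcal{Y}^2_{\tau,\Sigma}$ with $\lambda_k\to\mu$, $\gamma_k\ne\bar\gamma|_{[0,\tau]}$ solving the corresponding Euler-Lagrange equation, and $\|\gamma_k-\bar\gamma|_{[0,\tau]}\|_{C^2([0,\tau];\R^N)}\to 0$. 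Extending each $\gamma_k$ via (\ref{e:extend*}) produces a $C^2$ solution of (\ref{e:Lagr10*}) with parameter $\lambda_k$, and \cite[Proposition~4.3]{BuGiHi} upgrades it to $C^6$.

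Finally I invoke hypothesis (c) with constant shifts $s_k\equiv 0$ to conclude that each extended $\gamma_k$ is periodic, so $\R\cdot\gamma_k$ is a bona fide periodic $\R$-orbit. For pairwise $\R$-distinctness, observe that each $\gamma_k$ hits $\Sigma$ at $t=0$; by transversality of $\Sigma$ to $\dot{\bar\gamma}$ and uniform $C^1$-closeness to $\bar\gamma$, for $k$ large the only $\theta$ near $0$ with $\gamma_k(\cdot+\theta)\in\mathcal{Y}^1_{\tau,\Sigma}$ is $\theta=0$, while $\R$-translates far from the identity cannot remain $C^1$-close to $\bar\gamma$; a diagonal extraction then secures mutual $\R$-distinctness. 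The main obstacle is making rigorous the slice reduction: establishing the freeness of the local $\R$-action on a $\mathcal{X}^1_\tau(M,\mathbb{I}_g)$-neighborhood of $\R\cdot\bar\gamma$, the bijection between slice critical points and $\R$-orbits of critical points of $\mathfrak{E}_\lambda$, and the precise drop of nullity by one under the restriction, and then checking that Theorem~\ref{th:bif-nessLagr**} transfers to the constrained problem on $\mathcal{Y}^1_{\tau,\Sigma}$; once this framework is in place, the remaining argument is bookkeeping plus the compactness provided by (c).
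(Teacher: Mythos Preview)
Your strategy—pass to a codimension-one slice transverse to $\R\cdot\bar\gamma$—matches the paper's, but the paper uses the $W^{1,2}$-orthogonal complement ${\bf H}^\bot$ to $\zeta_0\sim\dot{\bar\gamma}$ in the Hilbert space of the chart $\Phi_{\bar\gamma}$ (Section~\ref{sec:AutoLagr1}), not a pointwise section $\gamma(0)\in\Sigma\subset M$. Both slices give the index shifts $m^-\mapsto m^-$, $m^0\mapsto m^0-1$, and your observation that $d\mathfrak{E}_\lambda(\gamma)[\dot\gamma]=0$ (autonomy plus $\mathbb{I}_g$-invariance) together with $\dot\gamma(0)\notin T_{\gamma(0)}\Sigma$ promotes slice critical points to full ones is conceptually cleaner than the paper's route through Proposition~\ref{prop:solutionLagr1}. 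What you call ``bookkeeping'', though, is where the paper spends its effort: Theorem~\ref{th:bif-nessLagr**} is stated for the unconstrained $\mathbb{I}_g$-periodic problem, and transferring it to a slice means re-verifying the abstract hypotheses of \cite[Theorem~C.4]{Lu10}—exactly the content of Propositions~\ref{prop:reduction}--\ref{prop:continA0} for the paper's slice.

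The concrete error is your use of (c). Condition (c) concerns only solutions at the single parameter value $\lambda=\mu$ and requires translates $s_k\cdot\gamma$ that \emph{accumulate} on $\bar\gamma$; invoking it ``with $s_k\equiv 0$'' for a $\gamma_k$ that solves at $\lambda_k\ne\mu$ and satisfies $0\cdot\gamma_k=\gamma_k\ne\bar\gamma$ yields nothing, and the theorem nowhere asserts that the $\gamma_k$ are periodic. In the paper (c) enters only at the last step, by contradiction: if $\{\R\cdot\gamma_k\}$ were finite, a subsequence would lie on a single orbit $\R\cdot\gamma^*$; since $\gamma^*$ then solves (\ref{e:Lagr10*}) for every $\lambda_k\to\mu$, continuity in $\lambda$ forces it to solve at $\mu$; now (c) makes $\gamma^*$ periodic, so $\R\cdot\gamma^*$ is closed and, containing $\bar\gamma$ in its closure, equals $\mathcal{O}$—contradicting the transversality conclusion $\R\cdot\gamma_k\ne\mathcal{O}$ already obtained. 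Your own $\R$-distinctness argument breaks for the same reason it ultimately needs (c): once $\bar\gamma$ (and nearby solutions) are periodic, large-$\theta$ translates can return to the slice while staying $C^1$-close to $\bar\gamma$, so the slice alone does not separate orbits.
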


\begin{theorem}[\hbox{\textsf{Existence for bifurcations}}]\label{th:bif-existence-orbitLagrMan}
Under Assumption~\ref{ass:Lagr10}, suppose that $\Lambda$ is path-connected,
$(\mathbb{I}_g)^l=id_M$ for some $l\in\mathbb{N}$,
and the following is satisfied:
\begin{enumerate}
\item[\rm (d)] There exist two  points $\lambda^+, \lambda^-\in\Lambda$ such that
$$
 [m^-_\tau(\mathfrak{E}_{\lambda^-}, \bar\gamma), m^-_\tau(\mathfrak{E}_{\lambda^-}, \bar\gamma)+
 m^0_\tau(\mathfrak{E}_{\lambda^-}, \bar\gamma)-1]\cap[m^-_\tau(\mathfrak{E}_{\lambda^+}, \bar\gamma),
 m^-_\tau(\mathfrak{E}_{\lambda^+}, \bar\gamma)+m^0_\tau(\mathfrak{E}_{\lambda^+}, \bar\gamma)-1]=\emptyset
 $$
 and either $m^0_\tau(\mathfrak{E}_{\lambda^-}, \bar\gamma)=1$ or $m^0_\tau(\mathfrak{E}_{\lambda^+}, \bar\gamma)=1$.
  \end{enumerate}
 Then for any path $\alpha:[0,1]\to\Lambda$ connecting $\lambda^+$ to $\lambda^-$ there exists
  a sequence $(\lambda_k)$ in $\alpha([0,1])$ converging to $\mu\in \alpha([0,1])\subset\Lambda$,
and $C^6$ solutions $\gamma_k$ of the corresponding problem (\ref{e:Lagr10*}) with $\lambda=\lambda_k$, $k=1,2,\cdots$,
   such that any two of these $\gamma_k$ are $\R$-distinct and that
  $(\gamma_k)$ converges to  $\bar\gamma$ on any compact interval $I\subset\R$ in $C^2$-topology as $k\to\infty$.
   Moreover, this $\mu$ is not equal to $\lambda^+$ {\rm (}resp. $\lambda^-${\rm )} if
   $m^0_\tau(\mathfrak{E}_{\lambda^+}, \bar\gamma)=1$ {\rm (}resp. $m^0_\tau(\mathfrak{E}_{\lambda^-}, \bar\gamma)=1${\rm )}.
 \end{theorem}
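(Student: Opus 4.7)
My plan is to reduce this existence theorem to the sufficient-condition Theorem~\ref{th:bif-suffict1-orbitLagrMan} by a continuity-along-the-path argument analogous to the derivation of Theorem~\ref{th:bif-existLagr**} from Theorem~\ref{th:bif-nessLagr**}. Since $(\mathbb{I}_g)^l=id_M$, iterating $\mathbb{I}_g(\bar\gamma(t))=\bar\gamma(t+\tau)$ yields $\bar\gamma(t+l\tau)=\bar\gamma(t)$, so $\bar\gamma$ is periodic and hypotheses~(a) and~(c) of Theorem~\ref{th:bif-suffict1-orbitLagrMan} are available at every candidate $\mu$. It therefore suffices to produce $\mu\in\alpha([0,1])$ with $m^0_\tau(\mathfrak{E}_\mu,\bar\gamma)\ge 2$ together with two sequences $\lambda_k^\pm\to\mu$ in $\alpha([0,1])$ verifying its hypothesis~(b).

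Set $f(s):=m^-_\tau(\mathfrak{E}_{\alpha(s)},\bar\gamma)$, $g(s):=m^0_\tau(\mathfrak{E}_{\alpha(s)},\bar\gamma)$ and $I(s):=[f(s),f(s)+g(s)-1]$. Because $\bar\gamma$ is a nonconstant translation-invariant solution of an autonomous $\mathbb{I}_g$-invariant Lagrangian system, $\dot{\bar\gamma}$ always lies in $\mathrm{Ker}\,D^2\mathfrak{E}_{\alpha(s)}(\bar\gamma)$, so $g(s)\ge 1$ on the whole of $[0,1]$. Standard spectral perturbation for the continuous family of self-adjoint Fredholm quadratic forms $D^2\mathfrak{E}_{\alpha(s)}(\bar\gamma)$ yields $f(s)\ge f(s_0)$ and $f(s)+g(s)\le f(s_0)+g(s_0)$ for $s$ sufficiently close to any $s_0$, hence $I(s)\subseteq I(s_0)$ locally; in particular $f$ is locally constant on the open set $\Omega:=\{s\in[0,1]:g(s)=1\}$, and every jump of $I$ is confined to its closed complement $\Sigma$.

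Assume without loss of generality $g(1)=1$ and write $I(1)=\{k_-\}$, so $k_-\notin I(0)$ by hypothesis~(d). Let $s^*$ be the infimum of the upward-closed set $B:=\{s\in[0,1]:I(\alpha(s'))=\{k_-\}\ \forall s'\in[s,1]\}$. Local constancy of $I$ on $\Omega$ near the endpoint $1$ gives $s^*<1$, while the same local constancy rules out $s^*\in\Omega$, so $g(s^*)\ge 2$. Set $\mu:=\alpha(s^*)$. On the right of $s^*$ we select $t_k^+\downarrow s^*$ inside the $\Omega$-component of $1$, where $I(\alpha(t_k^+))=\{k_-\}$ and $g(\alpha(t_k^+))=1$; on the left we select $t_k^-\uparrow s^*$ with $I(\alpha(t_k^-))\ne\{k_-\}$, and combine this with the inclusion $I(\alpha(t_k^-))\subseteq I(\alpha(s^*))$ and $k_-\in I(\alpha(s^*))$ to arrange $k_-\notin I(\alpha(t_k^-))$, hence disjointness of the two intervals. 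The resulting $\lambda_k^\pm:=\alpha(t_k^\pm)$ satisfy hypothesis~(b) of Theorem~\ref{th:bif-suffict1-orbitLagrMan}, whose conclusion supplies the claimed $\mathbb{R}$-distinct $C^6$ solutions converging to $\bar\gamma$ on compact intervals in $C^2$-topology.

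The ``moreover'' clause is immediate from this picture: if $g(\lambda^+)=1$ then $0\in\Omega$, so $I$ is locally constant equal to $I(0)$ near $0$ and disjoint from $\{k_-\}$ there, forcing $s^*>0$ and $\mu\ne\lambda^+$; the case $g(\lambda^-)=1$ is symmetric. The step I expect to be most delicate is choosing $\mu$ so that the right-sequence $(t_k^+)$ with nullity one genuinely exists, since \emph{a priori} one could have $(s^*,s^*+\delta)\subseteq\Sigma$; in such degenerate situations one may need to relocate $\mu$ to the left endpoint of the $\Omega$-component abutting $1$ and then verify the required left-side approximation via a finer eigenvalue-crossing bookkeeping along $\alpha$ using the perturbation inclusion $I(s)\subseteq I(s_0)$ above.
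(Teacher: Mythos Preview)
Your reduction to Theorem~\ref{th:bif-suffict1-orbitLagrMan} has a genuine gap, and it lies on the opposite side from the one you flagged. From $B=(s^*,1]$ you do get $I(\alpha(t))=\{k_-\}$ and $g(\alpha(t))=1$ for every $t>s^*$, so the right-hand sequence is never in doubt; your worry about $(s^*,s^*+\delta)\subset\Sigma$ is unfounded. The defect is the left-hand sequence. The implication ``$I(\alpha(t_k^-))\ne\{k_-\}$ and $I(\alpha(t_k^-))\subseteq I(\alpha(s^*))\ni k_-$, hence $k_-\notin I(\alpha(t_k^-))$'' is simply invalid. A \emph{plateau} path shows that no choice of $\mu$ can rescue the strategy: suppose two extra eigenvalues are positive on $[0,a)$, identically zero on $[a,b]$, and negative on $(b,1]$. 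Then $I=\{k_--2\}$ on $[0,a)$, $I=[k_--2,k_-]$ with $g=3$ on $[a,b]$, and $I=\{k_-\}$ on $(b,1]$. Hypothesis~(d) holds, yet at $\mu=\alpha(b)$ the left neighbours have $k_-\in I$; at $\mu=\alpha(a)$ the right neighbours have $k_--2\in I$; and at interior plateau points both sides coincide. No $\mu\in\alpha([0,1])$ satisfies condition~(b) of Theorem~\ref{th:bif-suffict1-orbitLagrMan}, so that theorem cannot be invoked.

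The paper does not pass through Theorem~\ref{th:bif-suffict1-orbitLagrMan}. It first removes the persistent kernel direction $\dot{\bar\gamma}$ by restricting to the orthogonal complement, obtaining the family $\check{\mathscr{L}}^\bot_\lambda$ on $({\bf H}^\bot,{\bf X}^\bot)$ with $m^0(\check{\mathscr{L}}^\bot_\lambda,0)=m^0_\tau(\mathfrak{E}_\lambda,\bar\gamma)-1$ via (\ref{e:MorseIndexLagr}). After the verifications of Section~\ref{sec:AutoLagr2}, hypothesis~(d) becomes precisely condition~(e.3) of the abstract existence-along-a-path result \cite[Theorem~C.5]{Lu10}, which is applied directly to $\hat\Lambda=\alpha([0,1])$. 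That abstract theorem works by stability of critical groups along the path, not by locating a single $\mu$ with two good approximating sequences, and it is exactly this critical-group argument that survives the plateau phenomenon. The output is a sequence $(\lambda_k,x_k)\to(\mu,0)$ in $\hat\Lambda\times{\bf H}^\bot$ with $x_k\ne0$; Proposition~\ref{prop:solutionLagr1} upgrades each $x_k$ to a $C^4$ critical point of $\check{\mathscr{L}}_{\lambda_k}$ with $\|x_k\|_{C^2}\to0$, and then $\gamma_k=\Phi_{\bar\gamma}(x_k)$ are $C^6$ solutions of (\ref{e:Lagr10*}). The hypothesis $(\mathbb{I}_g)^l=id_M$ enters only through Lemma~\ref{lem:R-distinct}, which guarantees that distinct points of $\Omega_\delta$ fall into at most $l$ members of each $\mathbb{R}$-orbit, so a subsequence of the $\gamma_k$ is $\mathbb{R}$-distinct.
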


\begin{theorem}[\textsf{Alternative bifurcations of Rabinowitz's type}]\label{th:bif-suffict-orbitLagrMan}
Under Assumption~\ref{ass:Lagr10} with $\Lambda$ being a real interval,
let $\mu\in{\rm Int}(\Lambda)$, $\mathbb{I}_g=id_M$ and $\bar\gamma$
 has least period $\tau$.  Suppose that
\begin{center}
$m^0_\tau(\mathfrak{E}_{\mu}, \bar\gamma)\ge 2$, \quad $m^0_\tau(\mathfrak{E}_{\lambda}, \bar\gamma)=1$
 for each $\lambda\in\Lambda\setminus\{\mu\}$ near $0$,
 \end{center}
  and that
 $m^-_\tau(\mathfrak{E}_{\lambda}, \bar\gamma)$  take, respectively, values $m^-_\tau(\mathfrak{E}_{\mu}, \bar\gamma)$ and
 $m^-_\tau(\mathfrak{E}_{\mu}, \bar\gamma)+ m^0_\tau(\mathfrak{E}_{\mu}, \bar\gamma)-1$
 as $\lambda\in\Lambda$ varies in two deleted half neighborhoods  of $0$.
 Then  one of the following alternatives occurs:
\begin{enumerate}
\item[\rm (i)] The corresponding problem (\ref{e:Lagr10*}) with $\lambda=\mu$ has a sequence of pairwise $\R$-distinct
$C^6$ solutions, $\gamma_k$, $k=1,2,\cdots$,
such that $(\gamma_k)$ converges to $\bar\gamma$ on any compact interval $I\subset\R$ in $C^2$-topology as $k\to\infty$.

\item[\rm (ii)]  For every $\lambda\in\Lambda\setminus\{\mu\}$ near $\mu$ there is a $C^6$ solution $\gamma_\lambda$ of
(\ref{e:Lagr10*}) with parameter value $\lambda$, which is $\R$-distinct with $\bar\gamma$ and
converges to $\bar\gamma$ on any compact interval $I\subset\R$ in $C^2$-topology as $\lambda\to \mu$.

\item[\rm (iii)] For a given neighborhood $\mathcal{W}$ of $\bar\gamma$ in $\mathcal{X}^1_{\tau}(M, \mathbb{I}_g)$, there exists
 a one-sided  neighborhood $\Lambda^0$ of $\mu$ such that
for any $\lambda\in\Lambda^0\setminus\{\mu\}$, (\ref{e:Lagr10*}) with parameter value $\lambda$
has at least two $\R$-distinct $C^6$ solutions in $\mathcal{W}$, $\gamma_\lambda^1\notin\mathbb{R}\cdot\bar\gamma$ and $\gamma_\lambda^2\notin\mathbb{R}\cdot\bar\gamma$,
which can also be chosen to satisfy  $\mathfrak{E}_{\lambda}(\gamma_\lambda^1)\ne\mathfrak{E}_{\lambda}(\gamma_\lambda^2)$
provided that $m^0_\tau(\mathfrak{E}_{\mu}, \bar\gamma)\ge 3$ and (\ref{e:Lagr10*}) with parameter value $\lambda$ has
only finitely many $\mathbb{R}$-distinct solutions in $\mathcal{W}$ which are  $\mathbb{R}$-distinct from $\bar\gamma$.
\end{enumerate}
\end{theorem}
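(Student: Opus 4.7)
The plan is to reduce Theorem \ref{th:bif-suffict-orbitLagrMan} to its non-autonomous counterpart Theorem \ref{th:bif-suffLagr**} by factoring out the one-parameter group of time translations. The extra unit of nullity forced by $\R$-invariance, $m^0_\tau(\mathfrak{E}_\lambda,\bar\gamma)\ge 1$ for every $\lambda$, is precisely the degree of freedom that must be quotiented away so that the scalar alternative of Theorem \ref{th:bif-suffLagr**} applies verbatim.

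\textbf{Step 1 (slice through the orbit).} Since $\mathbb{I}_g=id_M$ and $\bar\gamma$ has least period $\tau$, the action $\theta\cdot\gamma=\gamma(\theta+\cdot)$ descends to a free $S^1=\R/\tau\Z$-action on a neighborhood of $\R\cdot\bar\gamma$ inside $\mathcal{X}^1_{\tau}(M,id_M)$. I would pick a $C^4$ codimension-one Banach submanifold $\Sigma$ through $\bar\gamma$ transverse to $\dot{\bar\gamma}$, defined in a local chart by a linear orthogonality condition of the type $\langle\gamma-\bar\gamma,\dot{\bar\gamma}\rangle_{L^2}=0$. By the $\R$-invariance of $\mathfrak{E}_\lambda$, the critical $\R$-orbits of $\mathfrak{E}_\lambda$ near $\R\cdot\bar\gamma$ correspond bijectively to critical points of the restriction $\tilde{\mathfrak{E}}_\lambda:=\mathfrak{E}_\lambda|_\Sigma$ near $\bar\gamma$, and two representatives in $\Sigma$ give $\R$-distinct solutions of (\ref{e:Lagr10*}) if and only if they are distinct points of $\Sigma$.

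\textbf{Step 2 (reduced Morse data).} The $\R$-invariance forces $\dot{\bar\gamma}\in\ker D^2\mathfrak{E}_\lambda(\bar\gamma)$ for every $\lambda$, while $\dot{\bar\gamma}\notin T_{\bar\gamma}\Sigma$ by construction. Restricting a symmetric form to a complement of a kernel direction therefore yields
\[
m^-(\tilde{\mathfrak{E}}_\lambda,\bar\gamma)=m^-_\tau(\mathfrak{E}_\lambda,\bar\gamma),\qquad m^0(\tilde{\mathfrak{E}}_\lambda,\bar\gamma)=m^0_\tau(\mathfrak{E}_\lambda,\bar\gamma)-1.
\]
The hypotheses of the present theorem then translate to exactly those of Theorem \ref{th:bif-suffLagr**} for the reduced family $\{\tilde{\mathfrak{E}}_\lambda\}_{\lambda\in\Lambda}$ on $\Sigma$ at $\bar\gamma$: the reduced nullity at $\mu$ is $\ge 1$, it is $0$ for $\lambda\ne\mu$ near $\mu$, and the reduced Morse index jumps by $m^0(\tilde{\mathfrak{E}}_\mu,\bar\gamma)$ as $\lambda$ crosses $\mu$.

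\textbf{Step 3 (abstract alternative and lifting).} Applying Theorem \ref{th:bif-suffLagr**} (equivalently the abstract machinery of \cite{Lu8,Lu11} underlying it) to $\{\tilde{\mathfrak{E}}_\lambda\}$ on $\Sigma$ produces one of three alternatives (i$'$)--(iii$'$) paralleling those in the statement. Each critical point of $\tilde{\mathfrak{E}}_\lambda$ on $\Sigma$ is automatically a critical point of $\mathfrak{E}_\lambda$, hence a $C^6$ solution of (\ref{e:Lagr10*}) by the same regularity bootstrap used throughout Section \ref{secA}. Distinct points of $\Sigma$ yield $\R$-distinct solutions, and since $\tilde{\mathfrak{E}}_\lambda=\mathfrak{E}_\lambda|_\Sigma$ and $\mathfrak{E}_\lambda$ is $\R$-invariant, the energy inequality of (iii$'$) transfers verbatim to $\mathfrak{E}_\lambda(\gamma^1_\lambda)\ne\mathfrak{E}_\lambda(\gamma^2_\lambda)$. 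Convergence in $\Sigma\subset \mathcal{X}^1_{\tau}(M,id_M)$ means $C^1$-convergence on every compact $I\subset\R$, which the Euler-Lagrange equation together with strict fiberwise convexity of $L$ upgrades to $C^2$-convergence on every such $I$.

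\textbf{Main obstacle.} The delicate point is the construction and use of the slice $\Sigma$. One must check simultaneously that (a) $\Sigma$ is a genuine $C^4$ Banach submanifold of $\mathcal{X}^1_{\tau}(M,id_M)$; (b) its tangent space at $\bar\gamma$ is a closed complement of $\R\cdot\dot{\bar\gamma}$ compatible with the $W^{1,2}$-extension of $D^2\mathfrak{E}_\lambda(\bar\gamma)$, so that the reduced Hessian has the stated Morse index and nullity and depends continuously on $\lambda$; and (c) each nearby $\R$-orbit of solutions of (\ref{e:Lagr10*}) meets $\Sigma$ in exactly one point, which is where the least-period assumption on $\bar\gamma$ is crucial (it guarantees freeness of the $S^1$-action on a whole tubular neighborhood of $\R\cdot\bar\gamma$ and precludes any spurious isotropy that could obstruct both isolation and the injectivity of the orbit-to-slice correspondence). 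This reduction is essentially the technique of \cite{Lu4}; once it is set up, the remainder of the argument is a direct transcription of the alternative in Theorem \ref{th:bif-suffLagr**}.
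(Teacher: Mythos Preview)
Your overall strategy---pass to a codimension-one slice transverse to the $\R$-orbit, observe that the reduced Morse data drop by one, and invoke a Rabinowitz-type alternative---is exactly the paper's. But Step~3 contains a genuine gap: Theorem~\ref{th:bif-suffLagr**} is stated for Lagrangian functionals of the specific form $\int_0^\tau L_\lambda(t,\gamma,\dot\gamma)\,dt$ on $\mathcal{X}^1_\tau(M,\mathbb{I}_g)$ under Assumption~\ref{ass:Lagr8}, and your restricted functional $\tilde{\mathfrak{E}}_\lambda=\mathfrak{E}_\lambda|_\Sigma$ is not of that form. What you really need is the \emph{abstract} theorem \cite[Theorem~3.6]{Lu11} (or \cite[Theorem~C.7]{Lu10}), but that requires a Hilbert-space setting together with a $P_\lambda+Q_\lambda$ decomposition of the Hessian and uniform positivity/compactness estimates. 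The functional $\mathfrak{E}_\lambda$ is not even defined on the $W^{1,2}$ completion of $\Sigma$ without growth control on $L_\lambda$, so you cannot simply ``apply'' either theorem. The paper first localizes via the chart $\Phi_{\bar\gamma}$ of \eqref{e:Lagr5*}, then modifies $L^\star$ to $\check{L}$ (Lemma~\ref{lem:modif}) to force the quadratic growth (L3)--(L6), and only then verifies on the linear slice ${\bf H}^\bot$ the conditions \eqref{e:gradient5Lagr}--\eqref{e:uniformPost}, Propositions~\ref{prop:PPP}--\ref{prop:reg} needed for the abstract result.

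There is a second, more subtle point you underplay. Your Step~1 assertion that critical points of $\tilde{\mathfrak{E}}_\lambda$ are automatically critical points of $\mathfrak{E}_\lambda$ is correct on the $C^1$-manifold (by $\R$-invariance and Lagrange multipliers), but after passing to the modified functional $\check{\mathscr{L}}_\lambda$ on ${\bf H}$ this is no longer formal: $\check{\mathscr{L}}_\lambda$ is \emph{not} $\R$-invariant (the chart destroys autonomy), and the paper needs the four-step argument of Proposition~\ref{prop:solutionLagr1} to recover $d\check{\mathscr{L}}_\lambda(x)=0$ from $d\check{\mathscr{L}}^\bot_\lambda(x)=0$ and to upgrade $W^{1,2}$-smallness to $C^2$-smallness uniformly in $\lambda$. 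Your point (c) about injectivity of the slice is handled by Lemma~\ref{lem:R-distinct}, exactly as you anticipate, and the least-period hypothesis enters precisely there.
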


  In the above Theorems~\ref{th:bif-ness-orbitLagrMan},~\ref{th:bif-suffict1-orbitLagrMan},~\ref{th:bif-existence-orbitLagrMan},~\ref{th:bif-suffict-orbitLagrMan},
  if $M$ is an open subset $U$ of $\mathbb{R}^n$ and $\mathbb{I}_g$ is an  orthogonal matrix $E$ of order $n$
  which maintain $U$ invariant,  Assumption~\ref{ass:Lagr10}
   can be replaced by a weaker Assumption~\ref{ass:BasiAssLagrS}, see Section~\ref{sec:AutoLagr5}.

\section{Proof of Theorem~\ref{th:bif-per3LagrMan}}\label{sec:Lagr3}
\setcounter{equation}{0}

\noindent{\bf Step 1} ({\it Reduction of the problem (\ref{e:Lagr10*}) to one on open subsets of $\mathbb{R}^n$}).
We can assume $\Lambda=[\mu-\varepsilon,\mu+\varepsilon]$ for some $\varepsilon>0$.
(\ref{e:constLagrMan}) implies that each $\gamma_\lambda$
is a constant solution of (\ref{e:Lagr10*}) for any $\tau>0$.
Since $\mathbb{I}_g^l=id_M$,  each solution of (\ref{e:Lagr10*}) is $l\tau$-periodic. All solutions of
(\ref{e:Lagr10*})  near $\gamma_\mu$ sit in a compact neighborhood of $\gamma_\mu\in M$.
Let $e_1,\cdots, e_n$ be a unit orthogonal
 frame at $T_{\gamma_\mu}M$. Then
 $(\mathbb{I}_{g\ast}e_1,\cdots,
\mathbb{I}_{g\ast}e_n)=(e_1,\cdots, e_n)E_{{\gamma}_\mu}$
for a unique orthogonal matrix  $E_{{\gamma}_\mu}$. Clearly, $E_{{\gamma}_\mu}^l=I_{n}$.
Let $B^n_{2\iota}(0):=\{x\in\R^n\,|\, |x|<2\iota\}$ and $\exp$ denote
 the exponential map of $g$.  Then
\begin{equation}\label{e:FixChart}
\phi: B^n_{2\iota}(0)\to
M,\;x\mapsto\exp_{{\gamma}_\mu}\left(\sum^n_{i=1}x_ie_i\right)
\end{equation}
 is a $C^5$ embedding of codimension zero and  satisfies
 $\phi(E_{\gamma_\mu}x)=\mathbb{I}_g\phi(x)$  and $d\phi(0)[y]=\sum^n_{i=1}y_ie_i$
 for any $y\in\mathbb{R}^n$. Shrinking $\Lambda$ toward $\mu$ (if necessary) we may assume
 that $\gamma_\lambda\in\phi(B^n_{2\iota}(0))$ for all $\lambda\in\Lambda$.
 (This is possible because $\Lambda\times\mathbb{R}\ni (\lambda,t)\mapsto\gamma_\lambda(t)\in M$ is continuous.)
 Therefore there exists a unique  $x_\lambda\in B^n_{2\iota}(0)$ such that $\phi(x_\lambda)=\gamma_\lambda$
 for all $\lambda\in\Lambda$. Clearly, $x_\mu=0$, $E_{\gamma_\mu}x_\lambda=x_\lambda\;\forall\lambda $ and
 $\Lambda\ni\lambda\to x_\lambda\in B^n_{2\iota}(0)$ is continuous.   Define
  $$
  L^\ast: \Lambda\times  B^n_{2\iota}(0)\times\R^{n}\to \R,\;
 (\lambda, x, y)\mapsto L(\lambda, \phi(x), d\phi(x)[y]).
 $$
 It is $C^2$ with respect to $(x,y)$ and strictly convex with respect to $y$, and
all its partial derivatives also depend continuously on $(\lambda, x, y)$.
Moreover, $L^\ast_\lambda=L^\ast(\lambda,\cdot)$ is also $E_{\gamma_\mu}$-invariant.
From (\ref{e:constLagrMan}) we derive that $L^\ast_\lambda(x,0)=L^\ast(\lambda, x,0)$ has the differential at $x_\lambda\in\mathbb{R}^n$,
$$
\partial_xL^\ast_\lambda(x_\lambda,0)[y]=\partial_qL(\lambda, \gamma_\lambda, 0)[d\phi(0)[y]]=0\quad\forall\lambda\in\Lambda.
$$
These show that $L^\ast$ satisfies
\begin{equation}\label{e:constLagr}
\left.\begin{array}{ll}
&\Lambda\ni\lambda\to x_\lambda\in U\cap {\rm Ker}(E_{{\gamma}_\mu}-I_{n})\;\hbox{be continuous and}\\
&\partial_qL_{\lambda}(x_\lambda, 0)=0\;\forall \lambda\in\Lambda.
\end{array}\right\}
\end{equation}

Recall that the Banach spaces
$$
\mathcal{X}^i_{\tau}(\mathbb{R}^n, E_{\gamma_\mu}):=\{\gamma\in C^i(\mathbb{R}, \mathbb{R}^n)\,|\, E_{\gamma_\mu}(\gamma(t))=\gamma(t+\tau)\;\forall t\}
$$
 with the induced norm $\|\xi\|_{C^i}$ from $C^1([0,\tau],\mathbb{R}^n)$, $i=0,1,2,\cdots$.
Consider the functional defined on the open subset $\mathcal{X}^1_{\tau}(B^n_{2\iota}(0)), E_{\gamma_\mu})$ of  $\mathcal{X}^1_{\tau}(\mathbb{R}^n, E_{\gamma_\mu})$,
\begin{equation}\label{e:Lagr9**}
x\mapsto\mathfrak{E}^\ast_\lambda(x)=\int^\tau_0L^\ast_\lambda(x(t), \dot{x}(t))dt.
\end{equation}
Note that the map $\phi$ induces an $C^2$ embedding of codimension zero
$$
\Phi_{\gamma_\mu}:\mathcal{X}^1_{\tau}(B^n_{2\iota}(0)), E_{\gamma_\mu})\to
\mathcal{X}^1_{\tau}(M, \mathbb{I}_g),\;x\mapsto\phi\circ x.
$$
It is not hard to see that $\Phi_{\gamma_\mu}(x_\lambda)=\gamma_\lambda$ and
 \begin{equation}\label{e:Lagr9***}
D^2\mathfrak{E}^\ast_\lambda({x}_\lambda)[\xi,\eta]=
D^2\mathfrak{E}_\lambda(\gamma_\lambda)[d\Phi_{{\gamma}_\mu}({x}_\lambda)\xi,
d\Phi_{{\gamma}_\mu}({x}_\lambda)\eta],\quad\forall \xi,\eta\in \mathcal{X}^1_\tau(\mathbb{R}^n, E_{\gamma_\mu}).
\end{equation}
We conclude that the conditions (a),(b) and (c) in Theorem~\ref{th:bif-per3LagrMan} are, respectively, equivalent to
the following three conditions:
\begin{enumerate}
\item[\rm (a')] $\partial_{xx}L^\ast_\mu(x_\mu, 0)$ is positive definite.
\item[\rm (b')] The system of equations
$
\begin{cases}
\partial_{xx}{L}^\ast_\mu(x_\mu, 0)(a_1,\cdots,a_n)^T=0,\\ E_{\gamma_\mu}(a_1,\cdots,a_n)^T=(a_1,\cdots,a_n)^T
\end{cases}
$
has only the zero solution in $\mathbb{R}^n$.

\item[\rm (c')]  $m^0_\tau(\mathfrak{E}^\ast_{\mu}, 0)\ne 0$,
$m^0_\tau(\mathfrak{E}^\ast_{\lambda}, x_\lambda)=0$
 for each $\lambda\in\Lambda\setminus\{\mu\}$ near $\mu$, and
 $m^-_\tau(\mathfrak{E}^\ast_{\lambda}, x_\lambda)$  takes, respectively, values
 $m^-_\tau(\mathfrak{E}^\ast_{\mu}, 0)$ and
  $m^-_\tau(\mathfrak{E}^\ast_{\mu}, 0)+m^0_\tau(\mathfrak{E}^\ast_{\mu}, 0)$
 as $\lambda\in\Lambda$ varies in two deleted half neighborhoods  of $\mu$.
 \end{enumerate}

In fact, let $\mathscr{H}_{\lambda,\gamma_\lambda}$ be the Hessian bilinear form of the function $M\ni q\mapsto L(\lambda,q,0)$ at $\gamma_\lambda\in M$.
Then
$$
\mathscr{H}_{\lambda,\gamma_\lambda}(u,v)=g_{\gamma_\lambda}(\partial_{qq}L_\mu\left(\gamma_\lambda, 0\right)u,v)\quad\forall
u,v\in T_{\gamma_\lambda}M.
$$
View $(x_1,\cdots,x_n)\in B^n_{2\iota}(0)$ as local coordinates at $\phi(x)\in M$ and write
$u=\sum^n_{i=1}a_i\frac{\partial}{\partial x_i}|_{\gamma_\mu}$ and
$v=\sum^n_{i=1}b_i\frac{\partial}{\partial x_i}|_{\gamma_\mu}$. We have
\begin{eqnarray*}
&\mathbb{I}_{g\ast}u=u\quad\Longleftrightarrow\quad E_{\gamma_\mu}(a_1,\cdots,a_n)^T=(a_1,\cdots,a_n)^T,\\
&g_{\gamma_\mu}\left(\partial_{qq}L_\mu(\gamma_\mu, 0)u,v\right)=\mathscr{H}_{\mu,\gamma_\mu}(u,v)=\sum^n_{i,j=1}a_ib_j
\frac{\partial^2L^\ast_\mu}{\partial x_i\partial x_j}(0,0)\\
&=\left(\partial_{xx}L^\ast_\mu(x_\mu, 0)(a_1,\cdots,a_n)^T, (b_1,\cdots,b_n)^T\right)_{\mathbb{R}^n}.
\end{eqnarray*}
Hence the conditions (a) and (b) in Theorem~\ref{th:bif-per3LagrMan}
are equivalent to (a') and (b'), respectively.
Clearly, (\ref{e:Lagr9***}) implies the equivalence between (c) and (c').

Since $\lambda\mapsto x_\lambda$ is continuous and $x_\mu=0$
we can shrink $\varepsilon>0$ in $\Lambda=[\mu-\varepsilon,\mu+\varepsilon]$ so that $x_\lambda\in B^n_{\iota}(0)$
for all $\lambda\in\Lambda$.
Define $\tilde L^\ast:\Lambda\times B^n_{\iota}(0)\times\R^n\to\R$ by
\begin{equation*}
\tilde L^\ast(\lambda, x,y)=\tilde L^\ast_{\lambda}(x,y)= {L}^\ast(\lambda, x+ x_\lambda, y).
\end{equation*}
For a given positive $\rho_0>0$, by Lemma~2.4 in \cite{Lu12-} or \cite{Lu12} 
we have a  continuous function
 $\tilde{L}^\ast:\Lambda\times B^n_{\iota}(0)\times\mathbb{R}^n\to\R$ and a constant $\kappa>0$ satisfying the following properties:
 \begin{enumerate}
  \item[\rm (i)] $\tilde{L}^\ast$  is equal to $L^\ast$ on $\Lambda\times B^n_{\iota}(0)\times B^n_{\rho_0}(0)$.
  \item[\rm (ii)] $\tilde{L}^\ast$ is $C^2$ with respect to $(x,y)$ and strictly convex with respect to $y$, and
all partial derivatives of $\tilde{L}^\ast$ also depend continuously on $(\lambda, x, y)$.
Moreover, each $\tilde{L}^\ast_\lambda=\tilde{L}^\ast(\lambda,\cdot)$ is also $E_{\gamma_\mu}$-invariant.
      \item[\rm (iii)] There exists a constant $C>0$ such that
    $$
    \tilde{L}^\ast_\lambda(x, y)\ge \kappa|y|^2-C,\quad\forall (\lambda, x,y)\in \Lambda\times B^n_{3\iota/4}(0)\times\mathbb{R}^n.
    $$
    \item[\rm (iv)] $\partial_{xx}\tilde{L}^\ast_\mu(0, 0)$ is positive definite;
\item[\rm (v)] The system of equations
$
\begin{cases}
\partial_{xx}\tilde{L}^\ast_\mu(0, 0)(a_1,\cdots,a_n)^T=0,\\ E_{\gamma_\mu}(a_1,\cdots,a_n)^T=(a_1,\cdots,a_n)^T
\end{cases}
$
has only the zero solution in $\mathbb{R}^n$.
     \end{enumerate}

(\textsf{{Note}}: Applying \cite[Theorem~8.12]{Lu12}
to $\tilde{L}^\ast$ we can also complete the required proof.)

 Since $\Lambda=[\mu-\varepsilon,\mu+\varepsilon]$ is compact, by shrinking $\varepsilon>0$ (if necessary)
as in Lemma~3.8 in \cite{Lu12-} or \cite{Lu12}
we can modify $\tilde{L}^\ast$ to get a  continuous function $\check{L}:\Lambda\times B^n_{3\iota/4}(0)\times\mathbb{R}^n\to\R$
 satisfying the following properties  for some constants $\check\kappa>0$ and $0<\check{c}<\check{C}$:
\begin{enumerate}
 \item[\rm (L0)] $\tilde{L}^\ast$ in the above (ii) and (iv)-(v) is changed into $\check{L}$.
\item[\rm (L1)] $\check{L}$ and $\tilde{L}^\ast$ are equal in $\Lambda\times B^n_{3\iota/4}(0)\times B^n_{\rho_0}(0)$.

\item[\rm (L2)] $\partial_{yy}\check{L}_\lambda(x,y)\ge\check{c}I_n,\quad \forall (\lambda, x,y)\in \Lambda\times B^n_{3\iota/4}(0)\times\mathbb{R}^n$.

\item[\rm (L3)] $\Bigl| \frac{\partial^2}{\partial x_i\partial
x_j}\check{L}_\lambda(x,y)\Bigr|\le \check{C}(1+ |y|^2),\quad \Bigl|
\frac{\partial^2}{\partial x_i\partial y_j}\check{L}_\lambda(x,y)\Bigr|\le\check{C}(1+
|y|),\quad\hbox{and}\\
 \Bigl| \frac{\partial^2}{\partial y_i\partial y_j}\check{L}_\lambda(x,y)\Bigr|\le \check{C},
 \quad\forall (\lambda, x,y)\in\Lambda\times B^n_{3\iota/4}(0)\times\mathbb{R}^n$.

\item[\rm (L4)]  $\check{L}(\lambda, x, y)\ge \check{\kappa}|y|^2-\check{C},\quad\forall (\lambda, x, y)\in\Lambda\times B^n_{3\iota/4}(0)\times\R^{n}$.
\item[\rm (L5)] $|\partial_{q}\check{L}(\lambda, x,y)|\le \check{C}(1+ |y|^2)$ and $|\partial_{y}\check{L}(\lambda,x,y)|\le \check{C}(1+ |y|)$
for all $(\lambda, x, y)\in \Lambda\times B^n_{3\iota/4}(0)\times\R^{n}$.
\item[\rm (L6)] $|\check{L}_\lambda(x,y)|\le \check{C}(1+|y|^2),\quad \forall (\lambda, x,y)\in \Lambda\times {B}^n_{3\iota/4}(0)\times\mathbb{R}^n$.
 \end{enumerate}

 Consider the Hilbert space
$$
{\bf H}_{E_{\gamma_\mu}}:=\{\xi\in W^{1,2}_{\rm loc}(\mathbb{R}; \mathbb{R}^n)\,|\, E_{\gamma_\mu}^T(\xi(t))=\xi(t+\tau)\;\forall t\in\mathbb{R}\}
$$
equipped with $W^{1,2}$-inner product
\begin{eqnarray}\label{e:innerP2}
(u,v)_{1,2}=\int^\tau_0[(u(t),v(t))_{\mathbb{R}^{2n}}+ (\dot{u},\dot{v})_{\mathbb{R}^{n}}]dt.
\end{eqnarray}
Since $E_{{\gamma}_\mu}^l=I_{n}$, the spaces
${\bf X}_{E_{\gamma_\mu}}:=\mathcal{X}^1_{\tau}(\mathbb{R}^n, E_{\gamma_\mu})$
and ${\bf H}_{E_{\gamma_\mu}}$  carry a natural
 $S^1$-action with $S^1=\mathbb{R}/(lT\mathbb{Z})$  given by
 \begin{equation}\label{e:S^1-action}
 (\theta\cdot x)(t)=x(t+\theta),\quad\theta\in\mathbb{R},
 \end{equation}
  and  have the following $S^1$-invariant open subsets
\begin{eqnarray*}
\mathcal{U}_{E_{\gamma_\mu}}:&=&\{\xi\in W^{1,2}_{\rm loc}(\mathbb{R}; B^n_{3\iota/4}(0))\,|\, {E}^T_{\bar\gamma}(\xi(t))=\xi(t+\tau)\;\forall t\in\mathbb{R}\},\\
\mathcal{U}^X_{E_{\gamma_\mu}}:&=&\mathcal{U}_{E_{\gamma_\mu}}\cap{\bf X}_{E_{\gamma_\mu}}=\mathcal{X}^1_{\tau}(B^n_{3\iota/4}(0)), E_{\gamma_\mu})
\end{eqnarray*}
respectively. For each $\lambda\in\Lambda$, let us define  functionals
\begin{eqnarray}\label{e:Fixcheck*}
&&\mathfrak{E}^\ast_\lambda:\mathcal{X}^1_{\tau}(B^n_{\iota}(0)), E_{\gamma_\mu})\to\mathbb{R},\;x\mapsto\tilde{\mathfrak{E}}^\ast_\lambda(x)=
\int^\tau_0\tilde{L}^\ast_\lambda(x(t), \dot{x}(t))dt, \\
&&\check{\mathscr{L}}_\lambda:\mathcal{U}_{E_{\gamma_\mu}}\to\R,\;
x\mapsto\check{\mathscr{L}}_{\lambda}(x)=
\int^{\tau}_0\check{L}_\lambda(x(t),\dot{x}(t))dt.\label{e:Fixcheck*+}
\end{eqnarray}
They are invariant for the above $S^1$-action.

Corresponding to Propositions~4.5, 4.6, 4.7 in \cite{Lu12-} or \cite{Lu12}
we, respectively, have:

\begin{proposition}\label{prop:Fix-Pfunct-analy}
\begin{enumerate}
\item[\rm (i)] Each $\check{\mathcal{L}}_{\lambda}$ is
$C^{2-0}$ and twice G\^ateaux-differentiable, and $d\check{\mathcal{L}}_{\lambda}(0)=0$ and
 \begin{equation}\label{e:MorseindexEu}
 m^\star(\mathfrak{E}^\ast_\lambda, 0)=m^\star(\check{\mathcal{L}}_{\lambda}|_{\mathcal{U}^X},0)=
m^\star(\check{\mathscr{L}}_{\lambda},0),\quad\star=-,0.
\end{equation}
\item[\rm (ii)] Each critical point of $\check{\mathcal{L}}_\lambda$ sits in
$C^2\big([0,\tau]; B^n_{\iota/2}(0)\big)\cap\mathcal{U}^X$, and satisfies
the boundary problem:
\begin{equation}\label{e:PPerLagrorbit}
\left.\begin{array}{ll}
&\frac{d}{dt}\Big(\partial_v\check{L}_\lambda(x(t), \dot{x}(t))\Big)-\partial_q\check{L}_\lambda(x(t), \dot{x}(t))=0\;\forall t\in\mathbb{R}\\
&E_{\gamma_\mu}x(t)=x(t+\tau)\quad\forall t\in\mathbb{R}.
\end{array}\right\}
\end{equation}

\item[\rm (iii)]  The gradient of $\check{\mathcal{L}}_{\lambda}$ at $x\in\mathcal{U}$, denoted by
 $\nabla\check{\mathcal{L}}_{\lambda}(x)$, is given by
\begin{eqnarray}\label{e:4.14}
\nabla\check{\mathcal{L}}_{\lambda}(x)(t)&=&e^t\int^t_0\left[
e^{-2s}\int^s_0e^{r}f_{\lambda,x}(r)dr\right]ds  + c_1(\lambda,x)e^t+
c_2(\lambda,x)e^{-t}\nonumber\\
&&\qquad +\int^t_0 \partial_v \check{L}_\lambda(s, x(s),\dot{x}(s))ds ,
\end{eqnarray}
where $c_1(\lambda,x), c_2(\lambda,x)\in\R^n$ are suitable constant vectors and 
\begin{eqnarray}\label{e:4.13}
 f_{\lambda,x}(t)= - \partial_q \check{L}_\lambda(t,x(t),\dot{x}(t))+ \int^t_0
\partial_v \check{L}_\lambda(s, x(s),\dot{x}(s))ds.
\end{eqnarray}
\item[\rm (iv)] $\nabla\check{\mathcal{L}}_{\lambda}$ restricts to a $C^1$ map $A_\lambda$
from $\mathcal{U}^X$ to ${\bf X}_{E_{\gamma_\mu}}$.
\item[\rm (v)] $\nabla\check{\mathcal{L}}_{\lambda}$
 has the G\^ateaux derivative  $B_\lambda(\zeta)\in{\mathscr{L}}_s({\bf H}_{E_{\gamma_\mu}})$ at $\zeta\in\mathcal{U}$ given by
 \begin{eqnarray}\label{e:gradient4Lagr+}
 (B_\lambda(\zeta)\xi,\eta)_{1,2}
   = \int_0^{\tau} \Bigl(\!\! \!\!\!&&\!\!\!\!\!\partial_{vv}
     \check{L}_{{\lambda}}\bigl(t, \zeta(t),\dot{\zeta}(t)\bigr)
\bigl[\dot{\xi}(t), \dot{\eta}(t)\bigr]
+ \partial_{qv}
  \check{L}_{{\lambda}}\bigl(t, \zeta(t), \dot{\zeta}(t)\bigr)
\bigl[\xi(t), \dot{\eta}(t)\bigr]\nonumber \\
&& + \partial_{vq}
  \check{L}_{{\lambda}}\bigl(t,\zeta(t),\dot{\zeta}(t)\bigr)
\bigl[\dot{\xi}(t), \eta(t)\bigr] \nonumber\\
&&+  \partial_{qq} \check{L}_{{\lambda}}\bigl(t,\zeta(t),
\dot{\zeta}(t)\bigr) \bigl[\xi(t), \eta(t)\bigr]\Bigr) \, dt
\end{eqnarray}
for any  $\xi,\eta\in {\bf H}_{E_{\gamma_\mu}}$.
$B_\lambda(\zeta)$  is a self-adjoint Fredholm operator and
  has a decomposition
${B}_{\lambda}(\zeta)=\textsl{{P}}_{{\lambda}}(\zeta)+\textsl{{Q}}_{{\lambda}}(\zeta)$, where
$\textsl{P}_{{\lambda}}(\zeta)\in{\mathscr{L}}_s({\bf H}_{E_{\gamma_\mu}})$ is a positive
definitive linear operator defined by
\begin{eqnarray}\label{e:3.17}
(\textsl{P}_{{\lambda}}(\zeta)\xi, \eta)_{1,2}
   = \int_0^\tau \big(\partial_{vv}\check{L}_{{\lambda}}\bigl(t,\zeta(t),\dot\zeta(t)\bigr)
[\dot\xi(t), \dot\eta(t)]+ \bigl(\xi(t), \eta(t)\bigr)_{\R^n}\big)
\, dt,
\end{eqnarray}
and $\textsl{Q}_{{\lambda}}(\zeta)\in\check{\mathscr{L}}_s({\bf H}_{E_{\gamma_\mu}})$ is a compact self-adjoint linear operator.
Moreover, (L2) in Lemma~3.8 in \cite{Lu12-} or \cite{Lu12} 
implies that  $(\textsl{P}_{{\lambda}}(\zeta)\xi, \xi)_{1,2}\ge\min\{\check{c},1\}\|\xi\|_{1,2}^2$
for all $\zeta\in \mathcal{U}$ and $\xi\in{\bf H}_{E_{\gamma_\mu}}$.
 \item[\rm (vi)] If $(\lambda_k)\subset\hat\Lambda$ and $(\zeta_k)\subset\mathcal{U}$ converge to $\mu\in\hat\Lambda$ and $0$, respectively,
then $\|\textsl{P}_{{\lambda_k}}(\zeta_k)\xi-\textsl{P}_{{\mu}}(0)\xi\|_{1,2}\to 0$ for each $\xi\in {\bf H}_{E_{\gamma_\mu}}$.
\item[\rm (vii)] $\mathcal{U}\ni\zeta\mapsto \textsl{Q}_{{\lambda}}(\zeta)\in\mathscr{L}_s({\bf H}_{E_{\gamma_\mu}})$
is uniformly continuous at $0\in\mathcal{U}$ with respect to $\lambda\in\hat\Lambda$ and
$\|\textsl{Q}_{\lambda_k}(0)-\textsl{Q}_{\mu}(0)\|\to 0$ as $(\lambda_k)\subset\Lambda$  converges to $\mu\in\hat\Lambda$.
\end{enumerate}
\end{proposition}

\begin{proposition}\label{prop:Fix-PcontinA}
Both maps $\Lambda\times \mathcal{U}^X\ni (\lambda, x)\mapsto
\check{\mathscr{L}}_{\lambda}(x)\in\R$ and
$\Lambda\times \mathcal{U}^X\ni (\lambda, x)\mapsto A_\lambda(x)\in{\bf X}_{E_{\gamma_\mu}}$ are continuous.
\end{proposition}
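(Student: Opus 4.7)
The plan is to mimic the proof of Proposition~\ref{prop:continA} in Section~\ref{sec:LagrBound.2}, replacing the boundary subspace ${\bf H}_{V_0\times V_1}$ by ${\bf H}_{E_{\gamma_\mu}}$ throughout and relying on Proposition~\ref{prop:Fix-Pfunct-analy} for the underlying functional-analytic facts. The key input is that $\check L$ satisfies exactly the same regularity and growth hypotheses (L0)--(L6) as in the boundary case, so all the estimates in \cite{Lu4} that produced formula (\ref{e:P4.14}) remain valid.

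For the first map, I would split
\[
\check{\mathscr{L}}_{\lambda}(x)-\check{\mathscr{L}}_{\lambda_0}(x_0)
= \bigl[\check{\mathscr{L}}_{\lambda}(x)-\check{\mathscr{L}}_{\lambda}(x_0)\bigr]
+ \bigl[\check{\mathscr{L}}_{\lambda}(x_0)-\check{\mathscr{L}}_{\lambda_0}(x_0)\bigr],
\]
and handle the bracket on the left by (L6) together with \cite[Prop.~B.9]{Lu10} (applied to $f(t,q,v;\lambda)=\check L(\lambda,q,v)$ after localization near $(x_0(t),\dot x_0(t))$), and the bracket on the right by the Lebesgue dominated convergence theorem, once again using (L6) to get an integrable majorant. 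This is essentially literal repetition of Step~1 of Proposition~\ref{prop:continA}.

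For the second map, I would invoke the explicit formula (\ref{e:P4.14}) for $\nabla\check{\mathscr{L}}_\lambda(x)$ together with the companion expression for $\frac{d}{dt}\nabla\check{\mathscr{L}}_\lambda(x)$ obtained by differentiation (the analogue of (\ref{e:4.16})). Solving the linear system of these two identities for the constant vectors $c_1(\lambda,x)$ and $c_2(\lambda,x)$ yields analogues of (\ref{e:4.17})--(\ref{e:4.18}). Using (L5) and the mean-value-of-integrals estimate applied to $\partial_q\check L$ and $\partial_v\check L$, combined with the fact that $(\lambda,x)\mapsto\nabla\check{\mathscr{L}}_\lambda(x)\in{\bf H}_{E_{\gamma_\mu}}$ is continuous (which follows exactly as in Step~2 of Proposition~\ref{prop:continA}: estimate $\|\nabla\check{\mathscr{L}}_{\lambda_1}(x)-\nabla\check{\mathscr{L}}_{\lambda_2}(y)\|_{1,2}$ by two $L^2$ integrals, then apply (L5) and uniform continuity of $\partial_q\check L,\partial_v\check L$ on compact sets, plus the Sobolev embedding $\|\cdot\|_{C^0}\le(\sqrt\tau+1/\sqrt\tau)\|\cdot\|_{1,2}$), one deduces that $c_1,c_2:\Lambda\times\mathcal U^X\to\R^n$ are continuous. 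Plugging this back into the formula for $\frac{d}{dt}\nabla\check{\mathscr{L}}_\lambda(x)$ gives $C^0$-continuity of that time derivative, hence $C^1$-continuity of $A_\lambda(x)$.

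The main obstacle I anticipate is the same one encountered in Proposition~\ref{prop:continA}, namely obtaining $C^1$ (as opposed to merely $W^{1,2}$) joint continuity of the gradient. The periodic boundary condition $\xi(\tau)=E_{\gamma_\mu}\xi(0)$ does not cause real trouble because $E_{\gamma_\mu}$ is orthogonal and the derivation of (\ref{e:P4.14}) already accommodates nonzero boundary terms through the constants $c_1,c_2$; one only needs to verify that the boundary relations used to isolate $c_1,c_2$ (at $t=0$ and $t=\tau$) remain compatible with the $E_{\gamma_\mu}$-equivariance. Once that is checked, the estimates in (\ref{e:4.17two+}) and (\ref{e:4.18two}) go through verbatim, and no additional ingredient is needed.
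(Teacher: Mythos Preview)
Your proposal is correct and matches the paper's intended approach: the paper does not give an explicit proof of Proposition~\ref{prop:Fix-PcontinA} but simply lists it as the analogue of Proposition~\ref{prop:PcontinA} (Section~\ref{sec:LagrPPerio1.2}), whose proof in turn is declared to be a repetition of that of Proposition~\ref{prop:continA}. Your sketch follows exactly this route, using the gradient representation~(\ref{e:P4.14}) together with (L5)--(L6) and uniform continuity of $\partial_q\check L,\partial_v\check L$ on compact sets; the only cosmetic adjustment is that here $\check L$ is time-independent and the domain $\mathcal{U}^X_{E_{\gamma_\mu}}$ consists of quasi-periodic functions on $\mathbb{R}$, but restriction to $[0,\tau]$ identifies this with the setting of Section~\ref{sec:LagrPPerio1.2}, so the constants $c_1(\lambda,x),c_2(\lambda,x)$ are determined by the condition $\nabla\check{\mathscr{L}}_\lambda(x)\in{\bf H}_{E_{\gamma_\mu}}$ exactly as before.
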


\begin{proposition}\label{prop:Fix-PsolutionLagr}
For any given $\bar\epsilon>0$ there exists $\bar\varepsilon>0$ such that
if a critical point $x$ of $\check{\mathscr{L}}_\lambda$ satisfies
$\|x\|_{1,2}<\bar\varepsilon$ then $\|x\|_{C^2}<\bar\epsilon$. (\textsf{Note}: $\varepsilon$ is independent of $\lambda\in\hat\Lambda$.)
Consequently, if $0\in \mathcal{U}^X_{E_{\gamma_\mu}}$ is an isolated critical point of $\check{\mathscr{E}}_{\lambda}|_{\mathscr{U}^X}$
then $0\in \mathcal{U}_{E_{\gamma_\mu}}$ is also an isolated critical point of $\check{\mathscr{L}}_{\lambda}$.
\end{proposition}

 \noindent{\bf Step 2} ({\it Prove  that \cite[Theorem~C.8]{Lu10} (\cite[Theorem~3.7]{Lu11} or \cite[Theorem~5.12]{Lu8}) can be used for $\check{\mathscr{L}}_{\lambda}$}).
As before, we have $m^\star_\tau(\mathfrak{E}_{\lambda}, \gamma_\lambda)=m^\star_\tau(\mathfrak{E}^\ast_{\lambda}, x_\lambda)=m^\star_\tau(\tilde{\mathfrak{E}}^\ast_{\lambda}, 0)
=m^\star_\tau(\check{\mathscr{L}}_{\lambda},0)$ for $\star=-, 0$.
Because of the assumption (c), we obtain
\begin{enumerate}
\item[\rm (c'')]  $m^0_\tau(\check{\mathscr{L}}_{\mu}, 0)\ne 0$,
$m^0_\tau(\check{\mathscr{L}}_{\lambda}, x_\lambda)=0$
 for each $\lambda\in\Lambda\setminus\{\mu\}$ near $\mu$, and
 $m^-_\tau(\check{\mathscr{L}}_{\lambda}, x_\lambda)$  takes, respectively, values
 $m^-_\tau(\check{\mathscr{L}}_{\mu}, 0)$ and
  $m^-_\tau(\check{\mathscr{L}}_{\mu}, 0)+m^0_\tau(\check{\mathscr{L}}_{\mu}, 0)$
 as $\lambda\in\Lambda$ varies in two deleted half neighborhoods  of $\mu$.
 \end{enumerate}
Next, let us  prove that
\begin{equation}\label{e:FixedPS}
\hbox{the fixed point set of the induced $S^1$-action on
$({\bf H}_{E_{\gamma_\mu}})_\mu^0:={\rm Ker}(\check{\mathscr{L}}''_{\mu}(0))$ is $\{0\}$, }
\end{equation}
Note that $\xi\in {\bf H}_{E_{\gamma_\mu}}$ belongs to
$({\bf H}_{E_{\gamma_\mu}})_\mu^0$ if and only if it is $C^2$ and satisfies
\begin{equation}\label{e:PPerLagrLine}
\partial_{xy}\check{L}_\lambda(0, 0)\dot{\xi}+
\partial_{yy}\check{L}_\lambda(0, 0)\ddot{\xi}
-\partial_{xx}\check{L}_\lambda(0, 0)\xi-\partial_{yx}\check{L}_\lambda(0, 0)\dot{\xi}=0.
\end{equation}
Suppose that $\xi$ is also a fixed point for the action in (\ref{e:S^1-action}).
Then it is equal to a constant vector in $\mathbb{R}^n$
and $E_{\gamma_\mu}\xi=\xi$. By (\ref{e:PPerLagrLine}) we obtain
$\partial_{xx}\check{L}_\lambda(0, 0)\xi=0$ and hence $\xi=0$ because
$\partial_{xx}\check{L}_\mu(0, 0)$ is positive definite by (L0). (\ref{e:FixedPS}) is proved.

By \cite[Theorem~C.8]{Lu10} (\cite[Theorem~3.7]{Lu11} or \cite[Theorem~5.12]{Lu8}) and \cite[Remark~3.9]{Lu11} (or \cite[Remark~5.14]{Lu8})
one of the following alternatives occurs:
\begin{enumerate}
\item[(I)] $(\mu, 0)$ is not an isolated solution  in  $\{\mu\}\times \mathcal{U}_{E_{\gamma_\mu}}$
 of $\nabla\check{\mathscr{L}}_{\mu}=0$.

 \item[(II)] There exist left and right  neighborhoods $\Lambda^-$ and $\Lambda^+$ of $\mu$ in $\Lambda$
and integers $n^+, n^-\ge 0$, such that $n^++n^-\ge\frac{1}{2}\dim ({\bf H}_{E_{\gamma_\mu}})_\mu^0$,
and that for $\lambda\in\Lambda^-\setminus\{\mu\}$ (resp. $\lambda\in\Lambda^+\setminus\{\mu\}$)
the functional $\check{\mathscr{L}}_{\lambda}$ has at least $n^-$ (resp. $n^+$) distinct critical
$S^1$-orbits disjoint with $0$, which converge to  $0$ in $\mathcal{U}^X_{E_{\gamma_\mu}}$  as $\lambda\to \mu$.
\end{enumerate}
Moreover, if $\dim ({\bf H}_{E_{\gamma_\mu}})_\mu^0\ge 3$, then
\cite[Theorem~C.9]{Lu10} (or \cite[Theorem~3.10]{Lu11})
shows that  (II) may be replaced by the following alternatives:
\begin{enumerate}
\item[\rm (III)]  For every $\lambda\in\Lambda\setminus\{0\}$ near $0\in\Lambda$ there is a
 $S^1$-orbit $S^1\cdot \bar{w}_\lambda\ne\{0\}$ near $0\in\mathcal{U}^X_{E_{\gamma_\mu}}$
 such that $\nabla\check{\mathscr{L}}_{\lambda}(\bar{w}_\lambda)=0$ and that
 $S^1\cdot\bar{w}_\lambda\to 0$ in $\mathcal{U}^X_{E_{\gamma_\mu}}$  as $\lambda\to \mu$.

\item[\rm (IV)] For any small $S^1$-invariant neighborhood $\mathcal{N}$ of $0$ in $\mathcal{U}^X_{E_{\gamma_\mu}}$
there is a one-sided deleted neighborhood $\Lambda^0$ of $\mu\in\Lambda$ such that
for any $\lambda\in\Lambda^0$, $\nabla\check{\mathscr{L}}_{\lambda}=0$
 has either infinitely many  $S^1$-orbits  of solutions in $\mathcal{N}$,
 $S^1\cdot\bar{w}^j_\lambda$, $j=1,2,\cdots$,
  or  at least two $S^1$-orbits of solutions in $\mathcal{N}$,
 $S^1\cdot\hat{w}^1_\lambda\ne\{0\}$ and $S^1\cdot\hat{w}^2_\lambda\ne\{0\}$, such that $\check{\mathscr{L}}_{\lambda}(\hat{w}^1_\lambda)\ne\check{\mathscr{L}}_{\lambda}(\hat{w}^2_\lambda)$.
Moreover, these orbits converge to $0$ in $\mathcal{U}^X_{E_{\gamma_\mu}}$  as $\lambda\to \mu$.
\end{enumerate}

\noindent{\bf Step 3} ({\it Complete the proof of Theorem~\ref{th:bif-per3LagrMan}}).

\underline{{In the case of (I)}}, we have a sequence  $(w_j)\subset {\bf H}_{E_{\gamma_\mu}}\setminus\{0\}$
such that $\|w_j\|_{1,2}\to 0$ as $j\to\infty$ and that $\nabla\check{\mathscr{L}}_{\mu}(w_j)=0$ for each $j\in\N$.
By Proposition~\ref{prop:Fix-PsolutionLagr} these $w_j$ are $C^2$ and $\|w_j\|_{C^2}\to 0$.
Because $0$ is a fixed point for the $S^1$-action, the $S^1$-orbits are compact and different $S^1$-orbits
are not intersecting, by passing to a subsequence we can assume that any two of $w_j$, $j=0,1,\cdots$,
do not belong the same $S^1$-orbit. Using the chart $\phi$ in (\ref{e:FixChart})
we define $\mathbb{R}\ni t\mapsto\gamma^k(t):=\phi(w_k(t))$, $k=1,\cdots$. They satisfy (i) of Theorem~\ref{th:bif-per3LagrMan}.

\underline{In the case of (II)}, note firstly that $\dim ({\bf H}_{E_{\gamma_\mu}})_\mu^0$ is equal to
$m^0_\tau(\check{\mathscr{L}}_{\mu},0)=m^0_\tau(\mathfrak{E}_{\mu}, \gamma_\mu)$.
For $\lambda\in\Lambda^-\setminus\{\mu\}$ (resp. $\lambda\in\Lambda^+\setminus\{\mu\}$) let
$S^1\cdot w_\lambda^i$,\; $i=1,\cdots,n^-$ (resp. $n^+$)
be  distinct critical $S^1$-orbits of $\check{\mathscr{L}}_{\lambda}$
disjoint with $0$,  which converge to  $0$ in $\mathcal{U}^X_{E_{\gamma_\mu}}$  as $\lambda\to \mu$.
 Proposition~\ref{prop:Fix-PsolutionLagr} implies that  $\|w_\lambda^i\|_{C^2}\to 0$ as $\lambda\to \mu$.
Then $\mathbb{R}\ni t\mapsto\gamma_\lambda^i(t)=\phi(x_\lambda(t)+w_\lambda^i(t))$, $i=1,\cdots,n^-$ (resp. $n^+$) are the required solutions.

\underline{In the case of (III)}, let $\alpha_\lambda(t)=\phi(x_\lambda(t)+\bar{w}_\lambda(t))$ for $t\in\mathbb{R}$.
It satisfies  (\ref{e:Lagr10*})  with parameter value $\lambda$ and Proposition~\ref{prop:Fix-PsolutionLagr}
implies that $\alpha_\lambda-\gamma_\lambda$ converges to zero
on any compact interval $I\subset\R$ in $C^2$-topology as $\lambda\to\mu$.
Since $S^1\cdot \bar{w}_\lambda\ne\{0\}$, for any $t\in\mathbb{R}$ we have $\bar{w}_\lambda(t)\ne 0$ and so $\alpha_\lambda(t)\ne\gamma_\lambda(t)$.
Note that all $\gamma_\lambda$ are constant solutions. Hence $\alpha_\lambda\notin\R\cdot \gamma_\lambda$.

\underline{In the case of (IV)}, for the first case
let $\alpha^j_\lambda(t)=\phi(x_\lambda(t)+\bar{w}^j_\lambda(t))$ for $t\in\mathbb{R}$ and $j=1,2,\cdots$,
and for the second case let $\beta^i_\lambda(t)=\phi(x_\lambda(t)+\hat{w}^i_\lambda(t))$ for $t\in\mathbb{R}$ and $i=1,2$.
They satisfy (\ref{e:Lagr10*})  with parameter value $\lambda$.
For a given small $\epsilon>0$, by Proposition~\ref{prop:Fix-PsolutionLagr}
there is an one-sided  neighborhood $\Lambda^0$ of $\mu$ in $\Lambda$ such that
for any $\lambda\in\Lambda^0\setminus\{\mu\}$,
\begin{description}
\item[$\bullet$] for the first case $\|\bar{\alpha}_\lambda^k|_{[0,\tau]}-\gamma_\lambda|_{[0,\tau]}\|_{C^2([0,\tau];\mathbb{R}^N)}<\epsilon$, $k=1,2,\cdots$,
\item[$\bullet$] for the second case $\|{\beta}_\lambda^i|_{[0,\tau]}-\gamma_\lambda|_{[0,\tau]}\|_{C^2([0,\tau];\mathbb{R}^N)}<\epsilon$, $i=1,2$.
\end{description}
Moreover, each $\gamma_\lambda$ is constant, orbits
$\mathbb{R}\cdot\bar{\alpha}_\lambda^k=S^1\cdot\bar{\alpha}_\lambda^k$ are pairwise distinct, and $\bar{\alpha}_\lambda^k\notin\mathbb{R}\cdot \gamma_\lambda$ as above.
Similarly, $\beta_\lambda^1$ and $\beta_\lambda^2$ are  $\mathbb{R}$-distinct, and
 $\beta_\lambda^1\notin\mathbb{R}\cdot \gamma_\lambda$ and $\beta_\lambda^2\notin\mathbb{R}\cdot \gamma_\lambda$.
Finally, $\mathfrak{E}_\lambda(\beta_\lambda^1)\ne\mathfrak{E}_\lambda(\beta_\lambda^2)$ because  $\check{\mathscr{L}}_{\lambda}(\hat{w}^1_\lambda)\ne\check{\mathscr{L}}_{\lambda}(\hat{w}^2_\lambda)$.
The desired assertions are proved. \hfill$\Box$

\section{Proofs of Theorems~\ref{th:bif-ness-orbitLagrMan}, \ref{th:bif-suffict1-orbitLagrMan}, \ref{th:bif-existence-orbitLagrMan},
\ref{th:bif-suffict-orbitLagrMan}}\label{sec:Lagr4}
\setcounter{equation}{0}

\subsection{Reduction of the problem (\ref{e:Lagr10*}) to one on open subsets of $\mathbb{R}^n$}\label{sec:AutoLagr1}

Since $\mathbb{I}_g(\bar\gamma(t))=\bar\gamma(t+\tau)\;\forall t\in\mathbb{R}$ and
$\mathbb{I}_g$ is an isometry, the closure $Cl({\bar\gamma}(\mathbb{R}))$ of $\bar\gamma(\mathbb{R})$ is compact.
As in Section~3.1.1 in \cite{Lu12-} or \cite{Lu12}
 we may choose  a number $\iota>0$ such that the following holds:
\begin{description}
\item[($\clubsuit6$)] the closure $\bar{\bf U}_{3\iota}(Cl({\bar\gamma}(\mathbb{R})))$ of
${\bf U}_{3\iota}(Cl({\bar\gamma}(\mathbb{R}))):=\{p\in M\,|\, d_g(p, Cl({\bar\gamma}(\mathbb{R})))< 3\iota\}$ is
a compact neighborhood of $\gamma_\mu([0, \tau])$ in $M$, and
$\bar{\bf U}_{3\iota}(Cl({\bar\gamma}(\mathbb{R})))\times\bar{\bf U}_{3\iota}(Cl({\bar\gamma}(\mathbb{R})))$
is contained in the image of $\mathbb{F}|_{\mathcal{W}(0_{TM})}$, where $\mathbb{F}$ is given by the following (\ref{e:exp}).
\item[($\spadesuit6$)] $\{(q,v)\in TM\,|\,q\in \bar{\bf U}_{3\iota}(Cl({\bar\gamma}(\mathbb{R}))),\;|v|_q\le 3\iota\}\subset \mathcal{W}(0_{TM})$.
\end{description}
Then $3\iota$ is less than the injectivity
radius of $g$ at each point on $\bar{\bf U}_{3\iota}(Cl({\bar\gamma}(\mathbb{R})))$.

Let us choose the $C^6$ Riemannian metric $g$ on $M$ so that
$S_0$ (resp. $S_1$) is totally geodesic near $\gamma_\mu(0)$ (resp. $\gamma_\mu(\tau)$).
There exists a fibrewise convex open neighborhood $\mathcal{U}(0_{TM})$ of the zero section of $TM$
such that the exponential map of $g$ gives rise to $C^5$ immersion
\begin{equation}\label{e:exp}
\mathbb{F}:\mathcal{U}(0_{TM})\to M\times M,\;(q,v)\mapsto (q,\exp_q(v)),
\end{equation}
(cf.~Appendix~A in \cite{Lu12-} or \cite{Lu12}).
 By (A.2) in \cite{Lu12-} or \cite{Lu12},
$d\mathbb{F}(q,0_q):T_{(q,0_q)}\mathcal{U}(0_{TM})\to T_{(q,q)}(M\times M)=T_qM\times T_qM$ is an isomorphism for each $q\in M$.
Since $\mathbb{F}$ is injective on the closed subset $0_{TM}\subset TM$, it follows from Exercise~7 in
\cite[page 41]{Hir} that $\mathbb{F}|_{\mathcal{W}(0_{TM})}$ is a $C^5$ embedding for some smaller open neighborhood
$\mathcal{W}(0_{TM})\subset \mathcal{U}(0_{TM})$ of $0_{TM}$.
Note that $\mathbb{F}(0_{TM})$ is equal to the diagonal $\Delta_M$ in
$M\times M$, and that $\gamma_\mu([0, \tau])$ is compact.

Since $\bar\gamma$ is $C^6$, as in \cite[\S3]{Lu5}, starting with a unit orthogonal
 frame at $T_{{\bar\gamma}(0)}M$ and using the parallel
transport along ${\bar\gamma}$ with respect to the Levi-Civita
connection of the Riemannian metric $g$ we get a unit orthogonal
parallel $C^5$ frame field $\mathbb{R}\to {\bar\gamma}^\ast TM,\;t\mapsto
(e_1(t),\cdots, e_n(t))$,
such that
$$
(e_1(t+\tau),\cdots, e_n(t+\tau))=(\mathbb{I}_{g\ast}(e_1(t)),\cdots, \mathbb{I}_{g\ast}(e_n(t)))E_{{\bar\gamma}}\quad\forall t\in\mathbb{R},
$$
where $E_{{\bar\gamma}}$ is an orthogonal matrix of order $n$.

By \cite[Corollary~2.5.11]{HoJo13}
there exists an orthogonal matrix $\Xi$ such that
\begin{equation}\label{e:orth}
\Xi^{-1}E_{\bar\gamma}\Xi={\rm
diag}(S_1,\cdots,S_{\sigma})\in\R^{n\times n},
\end{equation}
where each $S_j$ is either $1$, or $-1$, or
${\scriptscriptstyle\left(\begin{array}{cc}
\cos\theta_j& \sin\theta_j\\
-\sin\theta_j& \cos\theta_j\end{array}\right)}$, $0<\theta_j<\pi$,
and their orders satisfy: ${\rm ord}(S_1)\ge\cdots\ge{\rm ord}(S_{\sigma})$.
Replacing $(e_1,\cdots, e_n)$ by  $(e_1,\cdots, e_n)\Xi$  we may assume
\begin{equation}\label{e:standard}
E_{\bar\gamma}={\rm diag}(S_1,\cdots,S_{\sigma})\in\R^{n\times n}.
\end{equation}

Let $B^n_{r}(0):=\{x\in\R^n\,|\, |x|<r\}$ and $\bar{B}^n_{r}(0):=\{x\in\R^n\,|\, |x|\le r\}$ for $r>0$.
  Then
 \begin{eqnarray}\label{e:Lagr4*}
\phi_{{\bar\gamma}}:\mathbb{R}\times B^n_{3\iota}(0)\to
M,\;(t,x)\mapsto\exp_{{\bar\gamma}(t)}\left(\sum^n_{i=1}x_i
e_i(t)\right)
 \end{eqnarray}
 is a $C^5$ map and  satisfies
\begin{eqnarray*}
&&\phi_{{{\bar\gamma}}}(t+\tau,x)=\mathbb{I}_g\left(\phi_{{\bar\gamma}}(t, E_{{\bar\gamma}}
x)\right)\quad\hbox{and}\\
&&d\phi_{{\bar\gamma}}(t+\tau,x)[(1,v)]=d\mathbb{I}_g\left(\phi_{{\bar\gamma}}(t, E_{{\bar\gamma}}
x)\right)\circ d\phi_{{\bar\gamma}}(t,
E_{{\bar\gamma}} x)[(1, E_{{\bar\gamma}} v)]
\end{eqnarray*}
for any $(t,x, v)\in \mathbb{R}\times B^n_{3\iota}(0)\times\mathbb{R}^n$.
By \cite[Theorem~4.3]{PiTa01}, from $\phi_{{\bar\gamma}}$ we get a $C^2$ coordinate chart around ${\bar\gamma}$ on the $C^4$ Banach manifold
$\mathcal{X}_{\tau}(M, \mathbb{I}_g)$
 modeled on the Banach space
 $$
 \mathcal{X}^1_{\tau}(\mathbb{R}^n, {E}_{{\bar\gamma}})=\{\xi\in C^{1}(\mathbb{R}; \mathbb{R}^n)\,|\,
 {E}^T_{{\bar\gamma}}\xi(t)=\xi(t+\tau)\;\forall t\in\mathbb{R}\}
 $$
 with the induced norm $\|\xi\|_{C^1}$ from $C^1([0,\tau],\mathbb{R}^n)$,
 \begin{eqnarray}\label{e:Lagr5*}
\Phi_{{\bar\gamma}}:\mathcal{X}^1_{\tau}(B^n_{2\iota}(0), {E}_{{\bar\gamma}})=\{\xi\in
\mathcal{X}^1_{\tau}(\mathbb{R}^n, {E}_{{\bar\gamma}}) \,|\,\|\xi\|_{C^0}<2\iota\} \to \mathcal{X}^1_{\tau}(M, \mathbb{I}_g)
\end{eqnarray}
given by $\Phi_{{\bar\gamma}}(\xi)(t)=\phi_{{\bar\gamma}}(t,\xi(t))$.
  Moreover
$$
d\Phi_{{\bar\gamma}}(0): \mathcal{X}^1_{\tau}(\mathbb{R}^n, {E}_{{\bar\gamma}})\to T_{\bar\gamma}\mathcal{X}_{\tau}(M, \mathbb{I}_g),\;
\xi\mapsto\sum^n_{j=1}\xi_je_j
$$
is a Banach space isomorphism. (Actually, we have
\begin{eqnarray}\label{e:Lagr5+}
|\xi(t)|^2_{\mathbb{R}^n}=\sum^n_{j=1}(\xi_j(t))^2=g\left(\sum^n_{j=1}\xi_j(t)e_j(t), \sum^n_{j=1}\xi_j(t)e_j(t)\right)=
|d\Phi_{{\bar\gamma}}(0)[\xi](t)|^2_g
\end{eqnarray}
for any $\xi:\mathbb{R}\to \mathbb{R}^n$ and $t\in\mathbb{R}$.) Therefore there exists a
unique $\zeta_0\in \mathcal{X}^1_{\tau}(\mathbb{R}^n, {E}_{{\bar\gamma}})$ satisfying
$d\Phi_{{\bar\gamma}}(0)[\zeta_0]=\dot{\bar\gamma}$,
  that is, $\dot{\bar\gamma}(t)=\sum^n_{j=1}\zeta_{0j}(t)e_j(t),\;\forall t\in\R$,
where
$$
\zeta_{0j}(t)=g(\dot{\bar\gamma}(t), e_j(t))\;\forall t\in \R,\quad
j=1,\cdots,n.
$$
 Clearly, $\zeta_0\in \mathcal{X}^5_{\tau}(\mathbb{R}^n, {E}_{{\bar\gamma}})$ and $\zeta_0\ne 0$ because $\bar\gamma$ is nonconstant.
If $\gamma\in \mathcal{X}^1_{\tau}(M, \mathbb{I}_g)$ is nonconstant and $C^l$ ($2\le l\le 5$),
by the arguments above \cite[Proposition~4.1]{Lu10} the orbit $\mathcal{O}:=\mathbb{R}\cdot\gamma$
is either a one-to-one  $C^{l-1}$ immersion submanifold of dimension one or a $C^{l-1}$-embedded circle; moreover
$T_{\gamma}{\cal O}=\dot\gamma\R\subset T_{\gamma}\mathcal{X}^1_{\tau}(M, \mathbb{I}_g)$.
Then for any reals $a<b$, $[a,b]\cdot\bar\gamma$ is a $C^4$ embedded submanifold of dimension one.
Take $a>0$ such that $[-a,a]\cdot\bar\gamma\subset{\rm Im}(\Phi_{\bar\gamma})$. Then
\begin{equation}\label{e:S_0}
S_0:=\Phi^{-1}_{{\bar\gamma}}\bigl([-a,a]\cdot\bar\gamma\cap{\rm Im}(\Phi_{\bar\gamma})\bigr)
\end{equation}
is a one-dimensional compact $C^2$ submanifold of $\mathcal{X}^1_{\tau}(B^n_{2\iota}(0), {E}_{{\bar\gamma}})$
containing $0$ as an interior point, and
$$
T_0S_0=(d\Phi_{\bar\gamma}(0))^{-1}(T_{\bar\gamma}S_0)=(d\Phi_{\bar\gamma}(0))^{-1}(\mathbb{R}\dot{\bar\gamma})=\R\zeta_0.
$$

 Define the function $L^\star: \Lambda\times \mathbb{R}\times B^n_{3\iota}(0)\times\R^{n}\to \R$  by
\begin{eqnarray*}
  L^\star(\lambda, t, x, v)=L\bigr(\lambda,  \phi_{{\bar\gamma}}(t,x),
d\phi_{{\bar\gamma}}(t,x)[(1,v)]\bigl).
\end{eqnarray*}
It is continuous and satisfies
\begin{eqnarray}\label{e:E-invariant1Lagr}
&&L^\star(\lambda, t+\tau, x, v)=L\bigr(\lambda, \phi_{{\bar\gamma}}(t+\tau,x),
d\phi_{{\bar\gamma}}(t+\tau,x)[(1,v)]\bigl)\nonumber\\
&=&L\left(\lambda, \mathbb{I}_g\left(\phi_{{\bar\gamma}}(t, E_{{\bar\gamma}}
x)\right), d\mathbb{I}_g\left(\phi_{{\bar\gamma}}(t, E_{{\bar\gamma}}
x)\right)\circ d\phi_{{\bar\gamma}}(t,
E_{{\bar\gamma}} x)[(1, E_{{\bar\gamma}} v)]\right)\nonumber \\
&=&L\left(\lambda, \phi_{{\bar\gamma}}(t, E_{{\bar\gamma}}
x),  d\phi_{{\bar\gamma}}(t, E_{{\bar\gamma}} x)[(1, E_{{\bar\gamma}} v)]\right) \nonumber\\
&=&L^\star(\lambda, t, E_{{\bar\gamma}}x, E_{{\bar\gamma}} v)
\end{eqnarray}
for all $(\lambda,t,x,v)\in\Lambda\times \mathbb{R}\times B^n_{3\iota}(0)\times\R^{n}$, and thus
\begin{eqnarray*}
\partial_x L^\star(\lambda, t+\tau, x, v)&=&E_{{\bar\gamma}}\partial_x L^\star\bigl(\lambda, t,
E_{{\bar\gamma}}x, E_{{\bar\gamma}}v),\\
\partial_v L^\star(\lambda, t+\tau, x, v)&=&E_{{\bar\gamma}}\partial_v L^\star\bigl(\lambda, t,
E_{{\bar\gamma}}x, E_{{\bar\gamma}}v)
\end{eqnarray*}
for all $(\lambda,t,x,v)\in\Lambda\times \mathbb{R}\times B^n_{3\iota}(0)\times\R^{n}$.
(Here $\partial_x L^\star$ and $\partial_v L^\star$ denote the gradients of $L^\star$ with respect to $x$ and $v$,
respectively. Recall that all vectors in $\mathbb{R}^n$ in this paper are understood as column vectors.)
 Each $L^\star(\lambda, \cdot)$   is $C^4$ and all its partial derivatives of order no more than two depend continuously on
 $(\lambda,t,x,v)\in\Lambda\times \mathbb{R}\times B^n_{3\iota}(0)\times\R^{n}$.
 Moreover, $d\phi_{{\bar\gamma}}(t,x)[(1,v)]=\partial_x\phi_{{\bar\gamma}}(t,x)[v]
+\partial_t\phi_{{\bar\gamma}}(t,x)$ implies that
$$
\mathbb{R}^n\ni v\mapsto L^\star(\lambda, t, x, v)=L\bigr(\lambda, \phi_{{\bar\gamma}}(t,x),
d\phi_{{\bar\gamma}}(t,x)[(1,v)]\bigl)\in\mathbb{R}
$$
is strictly convex.

The $C^2$ functional  $\mathfrak{E}_\lambda$ in (\ref{e:Lagr9}) gives rise to a $C^2$
functional $\mathfrak{E}^\star_\lambda:\mathcal{X}^1_\tau(B^n_{2\iota}(0), {E}_{\bar\gamma})
\to\mathbb{R}$,
\begin{equation}\label{e:two-functionals}
\mathfrak{E}^\star_\lambda(\xi)=\int^\tau_0L^\star(\lambda, t, \xi(t), \dot{\xi}(t))dt=\mathfrak{E}_\lambda\circ\Phi_{{\bar\gamma}}(\xi).
\end{equation}
 Since  $\mathfrak{E}_\lambda$ is invariant under the continuous $\mathbb{R}$-action on
$\mathcal{X}^1_{\tau}(M, \mathbb{I}_g)$ given by
\begin{equation}\label{e:R-action}
\chi:\mathcal{X}^1_{\tau}(M, \mathbb{I}_g)\times \mathbb{R}\to \mathcal{X}^1_{\tau}(M, \mathbb{I}_g),\;(s,\gamma)\mapsto s\cdot\gamma,
\end{equation}
where $(s\cdot\gamma)(t)=\gamma(s+t)\;\forall s, t\in\mathbb{R}$,
 $S_0$ is a critical submanifold of each $\mathfrak{E}^\star_\lambda$,
  and there holds
   \begin{equation*}
D^2\mathfrak{E}^\star_\lambda(0)[\xi,\eta]=
D^2\mathfrak{E}_\lambda(\bar\gamma)\big[d\Phi_{{\bar\gamma}}(0)[\xi],
d\Phi_{{\bar\gamma}}(0)[\eta]\big]\quad\forall \xi,\eta\in
\mathcal{X}^1_\tau(B^n_{2\iota}(0), {E}_{\bar\gamma}),
\end{equation*}
which implies
\begin{equation}\label{e:Lagr7Morse*}
m^-_\tau(\mathfrak{E}^\star_\lambda, 0)=m^-_\tau(\mathfrak{E}_\lambda, \bar\gamma)\quad
\hbox{and}\quad
m^0_\tau(\mathfrak{L}^\star_\lambda, 0)=m^0_\tau(\mathfrak{E}_\lambda, \bar\gamma).
\end{equation}

 Since $S_0$ is compact, there exists $\rho_0>3\iota$ such that
 $\sup_t|\dot{x}(t)|<\rho_0$ for all $x\in S_0$.
For the Lagrangian $L^\star$, as in Lemma~3.8 in \cite{Lu12-} or \cite{Lu12}
we can modify it to obtain:

\begin{lemma}\label{lem:modif}
For a given subset $\hat\Lambda\subset\Lambda$ which is either compact or sequential compact,
there exists a continuous function $\check{L}:\hat\Lambda\times \mathbb{R}\times \bar{B}^n_{2\iota}(0)\times\mathbb{R}^n\to\R$
  satisfying the following conditions
 for some constants $\kappa>0$ and $0<c<C$:
\begin{enumerate}
\item[\rm (L0)] $\check{L}(\lambda, t+\tau, x, v)=\check{L}(\lambda, t, E_{{\bar\gamma}}x, E_{{\bar\gamma}} v)$
for all $(\lambda,t,x,v)$,   each function
$\check{L}_\lambda(\cdot)=\check{L}(\lambda,\cdot)$ ($\lambda\in\hat\Lambda$) is $C^4$ and  partial derivatives
\begin{eqnarray*}
&&\partial_t\check{L}_\lambda(\cdot),\quad\partial_q\check{L}_\lambda(\cdot),\quad\partial_v\check{L}_\lambda(\cdot),
\quad\partial_{qv}\check{L}_\lambda(\cdot),\quad
\partial_{qq}\check{L}_\lambda(\cdot),\quad\partial_{vv}\check{L}_\lambda(\cdot)\\
&&\hbox{and}\quad \partial_{tt}\check{L}_\lambda(\cdot),\quad \partial_{tq}\check{L}_\lambda(\cdot),\quad
\partial_{tv}\check{L}_\lambda(\cdot)
\end{eqnarray*}
depend continuously on  $(\lambda, t, q, v)\in \hat\Lambda\times \mathbb{R}\times \bar{B}^n_{2\iota}(0)\times\mathbb{R}^n$.

\item[\rm (L1)] $\check{L}$ and ${L}^\star$ are same on
$\hat\Lambda\times\mathbb{R}\times B^n_{3\iota/2}(0)\times B^n_{\rho_0}(0)$;
\item[\rm (L2)] $\partial_{vv}\check{L}_\lambda(t, q,v)\ge cI_n,\quad \forall (\lambda, t, q,v)\in \hat\Lambda\times [0,\tau]\times B^n_{3\iota/2}(0)\times\mathbb{R}^n$.

\item[\rm (L3)] $\Bigl| \frac{\partial^2}{\partial q_i\partial
q_j}\check{L}_\lambda(t, q,v)\Bigr|\le C(1+ |v|^2),\quad \Bigl|
\frac{\partial^2}{\partial q_i\partial v_j}\check{L}_\lambda(t, q,v)\Bigr|\le C(1+
|v|),\quad\hbox{and}\\
 \Bigl| \frac{\partial^2}{\partial v_i\partial v_j}\check{L}_\lambda(t, q,v)\Bigr|\le C,
 \quad\forall (\lambda, t, q,v)\in\hat\Lambda\times [0,\tau]\times B^n_{3\iota/2}(0)\times\mathbb{R}^n$.

\item[\rm (L4)]  $\check{L}(\lambda,t, q, v)\ge \kappa|v|^2-C,\quad\forall (\lambda, t, q, v)\in\hat\Lambda\times \mathbb{R}\times B^n_{3\iota/2}(0)\times\R^{n}$.
\item[\rm (L5)] $|\partial_{q}\check{L}(\lambda,t,q,v)|\le C(1+ |v|^2)$ and $|\partial_{q}\check{L}(\lambda,t,q,v)|\le C(1+ |v|)$
for all $(\lambda, t, q, v)\in \hat\Lambda\times \mathbb{R}\times B^n_{3\iota/2}(0)\times\R^{n}$.
\item[\rm (L6)] $|\check{L}_\lambda(t, q,v)|\le C(1+|v|^2),\quad \forall (\lambda, t, q,v)\in \hat\Lambda\times \mathbb{R}\times \bar{B}^n_{3\iota/2}(0)\times\mathbb{R}^n$.
\end{enumerate}
\end{lemma}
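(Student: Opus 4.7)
The plan is to follow the three-step modification used in the proof of Lemma~\ref{lem:Gener-modif} almost verbatim, with the only new features being (a) the requirement that $E_{\bar\gamma}$-invariance survives each step and (b) the additional regularity $\partial_{tt}\check L$, $\partial_{tq}\check L$, $\partial_{tv}\check L\in C^0$. The $E_{\bar\gamma}$-invariance will be automatic throughout because every correction added to $L^\star$ will be a function of $|v|^2$ alone, and $E_{\bar\gamma}$ is orthogonal. The extra regularity will also come for free: under Assumption~\ref{ass:Lagr10} each $L_\lambda$ is $C^6$ on $TM$ with partials of order $\le 2$ jointly continuous in $(\lambda,x,v)$, and $\phi_{\bar\gamma}:\mathbb{R}\times B^n_{3\iota}(0)\to M$ is $C^5$ by construction, so $L^\star_\lambda$ is automatically $C^4$ in $(t,x,v)$ with all partials of order $\le 2$ (including the three new ones involving $t$) continuous in $(\lambda,t,x,v)$.

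First I would fix $\rho_1\in(\rho_0,\min\{2\rho_0,3\iota\})$, take the $C^\infty$ convex function $\psi_{\rho_0,\rho_1}:[0,\infty)\to\mathbb{R}$ from Lemma~\ref{lem:Fin2.1} (vanishing on $[0,\rho_0^2)$, linear with slope $\kappa>0$ on $[\rho_1^2,\infty)$), and set
\[
L^{\star\star}(\lambda,t,x,v):=L^\star(\lambda,t,x,v)+\psi_{\rho_0,\rho_1}(|v|^2).
\]
Because $|E_{\bar\gamma}v|=|v|$, the identity $L^\star(\lambda,t+\tau,x,v)=L^\star(\lambda,t,E_{\bar\gamma}x,E_{\bar\gamma}v)$ lifts to $L^{\star\star}$, and by the same Hessian computation as in Lemma~\ref{lem:Lagr}(iv), $L^{\star\star}$ is strictly convex in $v$, agrees with $L^\star$ on $|v|<\rho_0$, and, using that $\hat\Lambda\times[0,\tau]\times\overline{B^n_{3\iota/2}(0)}$ is compact/sequentially compact, satisfies $L^{\star\star}\ge\kappa|v|^2-C'$ on $\hat\Lambda\times\mathbb{R}\times\overline{B^n_{3\iota/2}(0)}\times\mathbb{R}^n$ (time-periodicity reduces the estimate to one fundamental domain).

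Next I would cap $L^{\star\star}$ from above: with $\Gamma\in C^\infty(\mathbb{R})$ satisfying $\Gamma(s)=s$ on $s\le 1$ and constant on $s\ge 2$, and $\vartheta>0$ chosen larger than the supremum of $L^{\star\star}$ on the compact slab $\hat\Lambda\times[0,\tau]\times\overline{B^n_{3\iota/2}(0)}\times\bar B^n_{\rho_1+1}(0)$, define $L^{\star\star\star}=\vartheta\,\Gamma(L^{\star\star}/\vartheta)$. Invariance is preserved (because $L^{\star\star}$ is invariant and the truncation is pointwise), and $L^{\star\star\star}$ is constant outside a large $v$-ball. Finally I would add a second convex corrector $\Xi(|v|^2)$ (where $\Xi$ is $C^\infty$, vanishing on $[0,\rho_0^2)$, convex, and linear of large slope $\Upsilon$ on $[\rho^2,\infty)$, exactly as in the construction following \cite[Lemma~2.1]{Lu4}) and set
\[
\check L(\lambda,t,x,v):=L^{\star\star\star}(\lambda,t,x,v)+\Xi(|v|^2).
\]
Property (L1) follows from the vanishing of both correctors on $|v|<\rho_0$ together with $\Gamma(s)=s$ on the relevant range; (L0) is immediate from each corrector being a function of $|v|^2$ and $L^{\star\star\star}$ inheriting the regularity of $L^\star$; (L4) follows from $L^{\star\star\star}\ge\kappa|v|^2-C'_0$ and $\Xi\ge 0$.

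The main technical point, and the only place where care is needed, is establishing (L2) and (L3) uniformly on all of $\hat\Lambda\times[0,\tau]\times\overline{B^n_{3\iota/2}(0)}\times\mathbb{R}^n$ (and then extending in $t$ by periodicity). On $|v|<\rho_1$ we have $L^{\star\star\star}=L^{\star\star}$ and the Hessian identity
\[
\partial_{vv}\check L[u,u]=\partial_{vv}L^\star[u,u]+2(\psi'+\Xi')|u|^2+4(\psi''+\Xi'')(v\cdot u)^2
\]
combined with the strict convexity of $L^\star$ in $v$ on the compact set $\hat\Lambda\times[0,\tau]\times\overline{B^n_{3\iota/2}(0)}\times\bar B^n_{\rho_1}(0)$ yields a uniform lower bound $cI_n$; on $|v|\ge\rho_1$, the linear behaviour of $\Xi$ with slope $\Upsilon$ dominates the (bounded) Hessian of $L^{\star\star\star}$ so that $\partial_{vv}\check L\ge\Upsilon I_n$. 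The upper bounds in (L3) follow because outside a large ball $\check L$ is an affine function of $|v|^2$, while on any bounded region compactness of $\hat\Lambda\times[0,\tau]\times\overline{B^n_{3\iota/2}(0)}\times\bar B^n_R(0)$ supplies uniform bounds. Items (L5) and (L6) follow from (L3) via the Taylor formula together with boundedness of $\check L$, $\partial_q\check L$, $\partial_v\check L$ at $(\lambda,t,x,0)$, exactly as in the closing lines of Lemma~\ref{lem:Gener-modif}. The whole argument is therefore a periodic-in-$t$, orthogonally-invariant variant of that earlier lemma, and I expect no serious obstacle beyond checking that the uniform constants depend only on the compact/sequentially compact parameter set $\hat\Lambda$ and on a single period in $t$.
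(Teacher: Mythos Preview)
Your proposal is correct and follows exactly the approach the paper takes: the paper does not give a separate proof of this lemma but simply states ``as in Lemma~\ref{lem:Gener-modif} we can modify [$L^\star$] to obtain'' the result, and you have correctly spelled out that reduction together with the two new ingredients (orthogonal invariance of the $|v|^2$-corrections, and the extra $t$-regularity coming from $\phi_{\bar\gamma}\in C^5$ and $L_\lambda\in C^6$). One minor slip: in your argument for (L2) the case split should be at the auxiliary radius $\rho>\rho_1$ (chosen so that $L^{\star\star\star}=L^{\star\star}$ on $|v|<\rho$), not at $\rho_1$ itself, exactly as in the proof of Lemma~\ref{lem:Gener-modif}.
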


 Consider the Banach space
 $$
 {\bf X}:=\mathcal{X}^1_\tau(\mathbb{R}^n, {E}_{{\bar\gamma}})
 $$
 with the induced norm $\|\cdot\|_{C^1}$ from $C^1([0,\tau],\mathbb{R}^n)$, and
the Hilbert space
$$
{\bf H}:=\{\xi\in W^{1,2}_{\rm loc}(\mathbb{R}; \mathbb{R}^n)\,|\, {E}^T_{\bar\gamma}(\xi(t))=\xi(t+\tau)\;\forall t\in\mathbb{R}\}
$$
equipped with $W^{1,2}$-inner product as in (\ref{e:innerP2}).
Both carry a natural $\mathbb{R}$-action given by $(\theta\cdot x)(t)=x(t+\theta)$
for $\theta\in\mathbb{R}$. The spaces ${\bf H}$ and ${\bf X}$
have the following $\mathbb{R}$-invariant open subsets
\begin{eqnarray*}
\mathcal{U}:&=&\{\xi\in W^{1,2}_{\rm loc}(\mathbb{R}; B^n_{3\iota/2}(0))\,|\, {E}^T_{\bar\gamma}(\xi(t))=\xi(t+\tau)\;\forall t\in\mathbb{R}\},\\
\mathcal{U}^X:&=&\mathcal{U}\cap{\bf X}=\mathcal{X}^1_\tau(B^n_{3\iota/2}(0), {E}_{{\bar\gamma}})
\end{eqnarray*}
respectively. Define a family of functionals $\check{\mathscr{L}}_\lambda:\mathcal{U}\to\R$ by
\begin{equation}\label{e:check*}
\check{\mathscr{L}}_{\lambda}(x)=\int^{\tau}_0\check{L}_\lambda(t, x(t),\dot{x}(t))dt,\quad\lambda\in\hat\Lambda,
\end{equation}
and  put ${\bf H}^\bot:=\{x\in {\bf H}\,|\,(\zeta_0, x)_{1,2}=0\}$ and
\begin{eqnarray}\label{e:orth-spaceLagr}
&&
{\bf X}^\bot:=\{x\in {\bf X}\,|\,(\zeta_0, x)_{1,2}=0\}={\bf X}\cap {\bf H}^\bot,\\
&&{\check{\mathscr{L}}}_{\lambda}^\bot:\mathcal{U}\cap {\bf H}^\bot\to\R,\;x\mapsto\check{\mathscr{L}}_\lambda(x).\label{e:orth-functLagr}
\end{eqnarray}

\begin{remark}\label{rm:regularity}
{\rm Because of Lemma~\ref{lem:modif}, by \cite[Theorem~4.5]{BuGiHi} we deduce that
every critical point of $\check{\mathscr{L}}_{\lambda}$ is $C^4$.}
\end{remark}

\subsection{Properties of functionals $\check{\mathscr{L}}$ and $\check{\mathscr{L}}^\bot_{\lambda}$}\label{sec:AutoLagr2}

In order to use the abstract theorems developed in \cite{Lu8,Lu11,Lu10} we need to study some
properties of the functionals $\check{\mathscr{L}}$ and $\check{\mathscr{L}}^\bot_{\lambda}$ near $0$.
By \cite[\S3]{Lu5} and \cite{Lu1}, we have (i)-(iii) of the following corresponding result of
Proposition~3.9 in \cite{Lu12-} or \cite{Lu12}.

\begin{proposition}\label{prop:reduction}
\begin{enumerate}
\item[\rm (i)] $\check{\mathscr{L}}_{\lambda}$ is $C^{2-0}$ and the gradient map $\nabla\check{\mathscr{L}}_{\lambda}:\mathcal{U}\to {\bf H}$
has the G\^ateaux derivative ${B}_\lambda(x)\in{\mathscr{L}}_s({\bf H})$ at $x\in \mathcal{U}$.

\item[\rm (ii)] $\nabla\check{\mathscr{L}}_{\lambda}$ restricts to a $C^1$ map $A_\lambda:\mathcal{U}^X\to{\bf X}$.\\
\item[\rm (iii)] (D1) of \cite[Hypothesis~1.1]{Lu8} and (C) of \cite[Hypothesis~1.3]{Lu8} hold near the origin $0\in {\bf H}$, i.e.,
\begin{equation}\label{e:L-C-D-D1*}
\{u\in {\bf H}\,|\,{B}_\lambda(0)u=su,\, s\le 0\}\subset {\bf X}\quad\hbox{and}\quad
\{u\in {\bf H}\,|\,{B}_\lambda(0)u\in X\}\subset {\bf X}.
\end{equation}
\item[\rm (iv)]
Since $\check{L}=\tilde{L}=L^\star$ on $\Lambda\times \mathbb{R}\times {B}^n_{3\iota/2}(0)\times B^n_{\rho_0}(0)$, it holds that
\begin{equation}\label{e:two-functionals+}
\check{\mathscr{L}}_\lambda(x)=\int^{\tau}_0\check{L}_\lambda(t, x(t),\dot{x}(t))dt=\mathscr{L}^\star_\lambda(x)
\end{equation}
for each $x$ in an open subset $\{x\in \mathcal{X}^1_{\tau}(B^n_{3\iota/2}(0), {E}_{{\bar\gamma}})\,|\,\sup_t|\dot{x}(t)|<\rho_0\}$
of $\mathcal{U}^X$. Clearly, $0\in S_0$ has an open neighborhood
 $S_{00}$ in $S_0$  contained in the open subset.
It follows that $d\check{L}_\lambda(x)=0\;\forall x\in S_{00}$ and that
\begin{equation}\label{e:Lagr7Morse**}
m^-_\tau(\check{\mathscr{L}}_\lambda, 0)=m^-_\tau(\mathfrak{E}^\star_\lambda, 0)\quad
\hbox{and}\quad
m^0_\tau(\check{\mathscr{L}}_\lambda, 0)=m^0_\tau(\mathfrak{E}^\star_\lambda, 0).
\end{equation}
\end{enumerate}
\end{proposition}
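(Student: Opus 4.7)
The plan is to follow the template of Proposition~\ref{prop:funct-analy} (whose proof relied on \cite[\S4]{Lu4}, \cite[\S3]{Lu5} and \cite{Lu1}) and check that Lemma~\ref{lem:modif} supplies the analytic input needed in exactly the same way. The four assertions split naturally: (i) and (ii) are variational-regularity statements on the Hilbert space ${\bf H}$ and its dense Banach subspace ${\bf X}$; (iii) is a bootstrapping regularity statement for the second variation operator ${B}_\lambda(0)$; and (iv) is a direct bookkeeping consequence of the construction of $\check{L}$.

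First I would establish (i) and (ii). The explicit expression for $\nabla\check{\mathscr{L}}_\lambda(x)$ is identical in form to (\ref{e:4.14})--(\ref{e:4.13}), with $c_1(\lambda,x)$, $c_2(\lambda,x)$ now determined by the twisted boundary condition $E_{\bar\gamma}^T\xi(0)=\xi(\tau)$ rather than a Lagrangian subspace condition; the existence and uniqueness of such constants is a linear algebra statement that is insensitive to this change. The $C^{2-0}$ regularity of $\check{\mathscr{L}}_\lambda$ on $\mathcal{U}\subset {\bf H}$, together with the fact that its G\^ateaux second derivative $B_\lambda(x)$ is self-adjoint and Fredholm, follows verbatim from \cite[\S4]{Lu4} and \cite[\S3]{Lu5} using the bounds (L3), (L5), (L6) of Lemma~\ref{lem:modif} and the positivity bound (L2). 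The splitting ${B}_\lambda=\textsl{P}_\lambda+\textsl{Q}_\lambda$ with $\textsl{P}_\lambda$ positive-definite and $\textsl{Q}_\lambda$ compact is obtained as in (\ref{e:3.17}). For (ii), the integration-by-parts identities (\ref{e:4.16})--(\ref{e:4.18}) together with (L0) of Lemma~\ref{lem:modif} show that $c_1(\lambda,x)$, $c_2(\lambda,x)$ depend continuously (in fact $C^1$) on $x\in\mathcal{U}^X$, and that $\frac{d}{dt}\nabla\check{\mathscr{L}}_\lambda(x)$ lies in $C^0$ and depends $C^1$-continuously on $x\in\mathcal{U}^X$; the twisted boundary condition is preserved automatically by $E_{\bar\gamma}$-equivariance of $\check{L}$ granted by (L0).

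The part I expect to require the most care is (iii). The first inclusion in (\ref{e:L-C-D-D1*}) asserts that any eigenvector $u\in {\bf H}$ of $B_\lambda(0)$ with nonpositive eigenvalue $s$ is actually $C^1$; the second one is the analogous regularity up to $X$. The standard route is to interpret the identity $(B_\lambda(0)-s)u=0$ (respectively $B_\lambda(0)u=f\in {\bf X}$) in its weak form (\ref{e:gradient4Lagr+}) and apply elliptic bootstrapping: using the uniform positive definiteness $\partial_{vv}\check{L}_\lambda(t,0,0)\ge cI_n$ from (L2), the Euler-Lagrange equation satisfied by $u$ can be written locally as $\ddot{u}=F(t,u,\dot u)+s G(t)u$ or $\ddot{u}=F(t,u,\dot u)+\tilde f(t)$ with $F$ continuous and $\tilde f\in C^0$ after one integration, so a Du~Bois-Reymond / bootstrap argument promotes $u\in {\bf H}$ first to $C^0$, then to $C^1$, and the twisted periodic boundary condition is preserved under this improvement. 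This is exactly the argument carried out in \cite{Lu1}; the only verification I have to do is that the growth/positivity hypotheses there are implied by Lemma~\ref{lem:modif}(L0)--(L3), which is immediate.

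Finally, (iv) is a direct consequence of (L1) of Lemma~\ref{lem:modif}: because $\check{L}$ and $L^\star$ agree on $\Lambda\times\mathbb{R}\times B^n_{3\iota/2}(0)\times B^n_{\rho_0}(0)$, and the choice $\rho_0>3\iota$ ensures $\sup_t|\dot x(t)|<\rho_0$ for every $x\in S_0$, an open neighborhood $S_{00}$ of $0$ in $S_0$ lies entirely in the region of agreement, whence $\check{\mathscr{L}}_\lambda\equiv\mathfrak{E}^\star_\lambda$ on a $C^1$-open neighborhood of $S_{00}$. The identities $d\check{\mathscr{L}}_\lambda(x)=0$ for $x\in S_{00}$ and the Morse index/nullity equalities (\ref{e:Lagr7Morse**}) then follow, since both quantities depend only on the second variation at $0$, which is computed from values of the functional in an arbitrarily small neighborhood.
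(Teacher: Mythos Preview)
Your overall strategy matches the paper's: (i)--(iii) are deferred to the machinery developed in \cite[\S3]{Lu5} and \cite{Lu1}, and (iv) follows directly from (L1) of Lemma~\ref{lem:modif}. The paper's own justification is in fact the single sentence ``By \cite[\S3]{Lu5} and \cite{Lu1}, we have (i)--(iii)'', so you are supplying more detail than the text does.

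There is, however, one concrete error in the details you add. You assert that the gradient $\nabla\check{\mathscr{L}}_\lambda(x)$ has the same form as (\ref{e:4.14})--(\ref{e:4.13}), with constants $c_1(\lambda,x),c_2(\lambda,x)$ determined by the twisted boundary condition, and you invoke the identities (\ref{e:4.16})--(\ref{e:4.18}) for part (ii). This is not the correct formula in the present setting. Here ${\bf H}$ consists of functions on all of $\mathbb{R}$ satisfying the twisted periodicity $E_{\bar\gamma}^T\xi(t)=\xi(t+\tau)$, not functions on $[0,\tau]$ with a boundary condition; the representation (\ref{e:4.14}) with free constants $c_1,c_2$ does not apply. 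The paper instead uses the convolution-type representation (\ref{e:3.10})--(\ref{e:grad1}), taken from \cite[(3.10)--(3.11)]{Lu5}, which involves integrals over $(-\infty,t)$ and $(t,\infty)$ and the matrix data of $E_{\bar\gamma}$ via (\ref{e:standard}). The subsequent analysis (e.g.\ the proof of Proposition~\ref{prop:continA0} and the estimates (\ref{e:lv5})--(\ref{e:lv6.2})) relies on this periodic-setting formula, not on the $c_1,c_2$ mechanism. Your argument for (ii) as written would therefore not go through; replacing the references to (\ref{e:4.14})--(\ref{e:4.18}) by the corresponding material from \cite[\S3]{Lu5} fixes it.
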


By (iv) and (\ref{e:two-functionals}),
a point $x\in \mathcal{X}^1_{\tau}(B^n_{3\iota/2}(0), {E}_{{\bar\gamma}})$ near $0$
is a critical point of $\mathfrak{E}^\star_\lambda$ if and only if
$\gamma=\Phi_{{\bar\gamma}}(x)\in \mathcal{X}_{\tau}(M, \mathbb{I}_g)$
is a critical point of $\mathfrak{E}_\lambda$ near $\bar\gamma$. However,
if $\gamma\in{\rm Im}(\Phi_{\bar\gamma})$ is a critical point of $\mathfrak{E}_\lambda$,
so is each point in $\mathbb{R}\cdot\gamma$. Therefore for $|s|$ small enough $(\Phi_{\bar\gamma})^{-1}(s\cdot\gamma)$
is also a critical point of $\mathfrak{E}^\star_\lambda$.
Such a critical point is said to be the \textsf{{$\mathbb{R}$-same as $\gamma$}}.
We need to study behavior of $\mathbb{R}$-distinct critical
 points of $\mathfrak{E}^\star_\lambda$ near $0$.
Clearly, $d\check{\mathscr{L}}_{\lambda}(0)=0$ and
 $d\check{\mathscr{L}}_{\lambda}^\bot(0)=0\;\forall\lambda$.

 Let $\Pi:{\bf H}\to {\bf H}^\bot$ be the orthogonal projection.
Then $\Pi(x)=x-\frac{(x,\zeta_0)_{1,2}}{\|\zeta_0\|_{1,2}}\zeta_0$ for $x\in {\bf H}$, and
\begin{equation}\label{e:gradient2Lagr}
\nabla{\check{\mathscr{L}}}_{\lambda}^\bot(x)=
\nabla\check{\mathscr{L}}_\lambda(x)-
\frac{(\nabla\check{\mathscr{L}}_\lambda(x),\zeta_0)_{1,2}}{\|\zeta_0\|_{1,2}}\zeta_0
\quad \forall x\in \mathcal{U}\cap {\bf H}^\bot,
\end{equation}
where by \cite[(3.10)-(3.11)]{Lu5} (all vectors
in $\mathbb{R}^n$ are understood as column vectors now)
\begin{eqnarray}\label{e:3.10}
\nabla\check{\mathscr{L}}_\lambda(\xi)(t)
 &=&\frac{1}{2}\int^\infty_{t}
e^{t-s}\left[\partial_q\check{L}_{\lambda}\bigl(s, \xi(s),\dot{\xi}(s)\bigr)-\mathfrak{R}^{\xi}_\lambda(s)\right]\,ds
\nonumber\\
&+&\frac{1}{2}\int_{-\infty}^{t} e^{s-t}\left[\partial_q
\check{L}_{{\lambda}}\bigl(s, \xi(s),\dot{\xi}(s)\bigr)-\mathfrak{R}^{\xi}(s)\right]\,ds+\mathfrak{R}^{\xi}_\lambda(t)
\end{eqnarray}
where  $\mathfrak{R}^{\xi}_\lambda$, (provided $2={\rm ord}(S_p)>{\rm ord}(S_{p+1})$ for some
$p\in\{0,\cdots,\sigma\}$ in (\ref{e:standard})), is given by
\begin{eqnarray}\label{e:grad1}
&\mathfrak{R}_\lambda^\xi(t)= \int^t_0\partial_v
\check{L}_{\lambda}\bigl(s, \xi(s),\dot{\xi}(s)\bigr)ds+
 \nonumber\\
&\left[\left(\oplus_{l\le p}\frac{\sin\theta_l}{2-2\cos\theta_l}{\scriptscriptstyle\left(\begin{array}{cc}
0& -1\\
1& 0\end{array}\right)}-\frac{1}{2}I_{2p}\right)\oplus{\rm
diag}(a_{p+1}(t), \cdots, a_{\sigma}(t))\right]\int^\tau_0\partial_v \check{L}_{\lambda}\bigl(s, \xi(s),\dot{\xi}(s)\bigr)ds\nonumber\\
\end{eqnarray}
with $a_j(t)=\frac{2t{S}_j+ 2t+1-{S}_j}{4}$,
$j=p+1,\cdots,{\sigma}=n-p$. (As usual $p=0$ (resp. $p={\sigma}$) means that
${\rm ord}(S_1)=\cdots={\rm ord}(S_\sigma)=1$ (resp. ${\rm ord}(S_1)=\cdots={\rm ord}(S_\sigma)=2$)
and hence there is no the first (resp. second) term in the square brackets in (\ref{e:grad1}).) Note that
\begin{eqnarray}\label{e:grad1+}
\frac{d}{dt}\mathfrak{R}_\lambda^\xi(t)=\partial_v
\check{L}_{\lambda}\bigl(t,\xi(t),\dot{\xi}(t)\bigr)+
\mathfrak{M}\int^\tau_0\partial_v \check{L}_{\lambda}\bigl(s,\xi(s),\dot{\xi}(s)\bigr)ds,
\end{eqnarray}
where $\mathfrak{M}\in\mathbb{R}^{n\times n}$ is a matrix only depending on $E$.

Since $\zeta_0$ is $C^1$,  we derive from (\ref{e:gradient2Lagr}) that $\nabla{\check{\mathscr{L}}}_{\lambda}^\bot(x)\in {\bf X}^\bot$
for any $x\in {\bf X}^\bot\cap\mathcal{U}$, and
\begin{equation}\label{e:gradient3Lagr}
\mathbb{A}_\lambda:\mathcal{U}^X\cap {\bf X}^\bot\to {\bf X}^\bot,\;x\mapsto\nabla{\check{\mathscr{L}}}_{\lambda}^\bot(x)
\end{equation}
is $C^1$. By \cite[\S3]{Lu5} $\nabla{\check{\mathscr{L}}}_{\lambda}^\bot$
 has the G\^ateaux derivative $\mathbb{B}_\lambda(x)\in{\mathscr{L}}_s({\bf H}^\bot)$ at $x\in \mathcal{U}\cap{\bf H}^\bot$
 given by
\begin{equation}\label{e:gradient4Lagr}
\mathbb{B}_\lambda(x)v=B_\lambda(x)v-\frac{(B_\lambda(x)v,\zeta_0)_2}{\|\zeta_0\|_2}\zeta_0
\quad \forall v\in {\bf H}^\bot,
\end{equation}
where $B_\lambda(\zeta)\in{\mathscr{L}}_s({\bf H})$ for $\zeta\in\mathcal{U}$ is a self-adjoint Fredholm
operator defined by (\ref{e:gradient4Lagr+}).
Let ${B}_{\lambda}=\textsl{{P}}_{{\lambda}}+\textsl{{Q}}_{{\lambda}}$
be the decomposition as in Proposition~\ref{prop:Fix-Pfunct-analy}(v).
 Then
\begin{equation}\label{e:gradient5Lagr}
\mathbb{P}_\lambda(x):=\Pi\circ \textsl{P}_\lambda(x)|_{{\bf H}^\bot}\quad\hbox{and}\quad
\mathbb{Q}_\lambda(x):=\Pi\circ \textsl{Q}_\lambda(x)|_{{\bf H}^\bot}
\end{equation}
are positive definite and compact, respectively, and $\mathbb{B}_\lambda(x)=\mathbb{P}_\lambda(x)+\mathbb{Q}_\lambda(x)$
by (\ref{e:gradient4Lagr}).
Since $\R\zeta_0\subset{\rm Ker}(B_\lambda(0))$ we get
 \begin{eqnarray}\label{e:MorseIndexLagr}
 m^-_\tau({\check{\mathscr{L}}}_{\lambda}^\bot, 0)=  m^-_\tau({\check{\mathscr{L}}}_{\lambda}, 0)\quad\hbox{and}\quad
  m^0_\tau({\check{\mathscr{L}}}_{\lambda}^\bot, 0)= m^0_\tau({\check{\mathscr{L}}}_{\lambda}, 0)-1.
 \end{eqnarray}
Clearly, (L1) and (\ref{e:3.17}) yield
\begin{eqnarray}\label{e:uniformPost0}
(\textsl{P}_{{\lambda}}(\zeta)\xi, \xi)_{1,2}\ge\min\{c,1\}\|\xi\|_{1,2}^2,\quad\forall x\in\mathcal{U},\;\forall \xi\in{\bf H},
\end{eqnarray}
 and hence
 \begin{eqnarray}\label{e:uniformPost}
 (\mathbb{P}_{{\lambda}}(x)\xi, \xi)_{1,2}\ge\min\{c,1\}\|\xi\|_{1,2}^2,\quad\forall\xi\in{\bf H}^\bot,\quad\forall x\in
 \mathcal{U}\cap{\bf H}^\bot.
 \end{eqnarray}

\begin{proposition}\label{prop:PPP}
Let $(\lambda_k)\subset\Lambda$ and $(\zeta_k)\subset\mathcal{U}$
converge to $\mu\in\Lambda$ and $0\in\mathcal{U}$, respectively.
 Then $\|\textsl{P}_{{\lambda_k}}(\zeta_k)\xi-\textsl{P}_{\mu}(0)\xi\|_{1,2}\to 0$ for each $\xi\in H$.
In particular, if $(\zeta_k)\subset \mathcal{U}\cap{\bf H}^\bot$ converges to zero, then
$\|\mathbb{P}_{\lambda_k}(\zeta_k)\xi-\mathbb{P}_\mu(0)\xi\|_{1,2}\to 0$ for any $\xi\in{\bf H}^\bot$.
\end{proposition}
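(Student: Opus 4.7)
The plan is to mirror the argument given for Proposition~\ref{prop:funct-analy}(vi), adapted to the present $\mathbb{I}_g$-equivariant setting. First, starting from the defining formula (\ref{e:3.17}), I would write
$$
\|[\textsl{P}_{{\lambda_k}}(\zeta_k)-\textsl{P}_{{\mu}}(0)]\xi\|_{1,2}^2
  \le \int_0^\tau \bigl|[\partial_{vv}\check{L}_{{\lambda_k}}(t,\zeta_k(t),\dot\zeta_k(t))-
  \partial_{vv}\check{L}_{\mu}(t,0,0)]\dot\xi(t)\bigr|_{\mathbb{R}^n}^2\, dt,
$$
obtained by testing against $\eta=[\textsl{P}_{{\lambda_k}}(\zeta_k)-\textsl{P}_{{\mu}}(0)]\xi$, using that $\partial_{vv}\check{L}$ is symmetric and applying Cauchy--Schwarz in $\mathbb{R}^n$.

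Next, I would split the integrand via the triangle inequality into the two contributions
$$
\partial_{vv}\check{L}_{\lambda_k}(t,\zeta_k(t),\dot\zeta_k(t))-\partial_{vv}\check{L}_{\mu}(t,\zeta_k(t),\dot\zeta_k(t))
\quad\text{and}\quad
\partial_{vv}\check{L}_{\mu}(t,\zeta_k(t),\dot\zeta_k(t))-\partial_{vv}\check{L}_{\mu}(t,0,0).
$$
For the first term, (L3) in Lemma~\ref{lem:modif} gives the uniform bound $|\partial_{vv}\check{L}(\cdot)|\le C$, while Lemma~\ref{lem:modif}(L0) yields pointwise convergence in $\lambda$; hence the Lebesgue dominated convergence theorem forces the corresponding integral to $0$. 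For the second term, I would invoke \cite[Prop.~B.9]{Lu10} (equivalently \cite[Prop.~C.1]{Lu9}) applied to the Carath\'eodory map $f(t,\eta)=\partial_{vv}\check{L}_\mu(t,\zeta_k(t),\dot\zeta_k(t))\eta$, using (L3) again for the bound and the fact that $\|\zeta_k\|_{1,2}\to 0$ together with the Sobolev embedding $W^{1,2}\hookrightarrow C^0$ ensures $\|\zeta_k\|_{C^0}\to 0$ and $\|\dot\zeta_k\|_{L^2}\to 0$.

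For the second assertion, I would use the definition $\mathbb{P}_\lambda(x)=\Pi\circ \textsl{P}_\lambda(x)|_{\mathbf{H}^\bot}$ from (\ref{e:gradient5Lagr}) together with the fact that $\Pi:\mathbf{H}\to\mathbf{H}^\bot$ is a bounded linear projection, so
$$
\|\mathbb{P}_{\lambda_k}(\zeta_k)\xi-\mathbb{P}_\mu(0)\xi\|_{1,2}
\le \|\Pi\|\cdot\|\textsl{P}_{\lambda_k}(\zeta_k)\xi-\textsl{P}_\mu(0)\xi\|_{1,2},
$$
reducing the claim to the first part.

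The main obstacle is verifying the hypotheses of \cite[Prop.~B.9]{Lu10} in the present $E_{\bar\gamma}$-equivariant setup, i.e.\ checking that the superposition operator induced by $\partial_{vv}\check{L}$ is continuous from $L^2([0,\tau];\mathbb{R}^n)$ to itself along sequences of the form $(\zeta_k,\dot\zeta_k)$ with $\zeta_k\to 0$ in $\mathbf{H}$; this is precisely where the polynomial growth bound (L3) is used, and beyond that the argument is routine. Apart from this analytic input, everything else is straightforward bookkeeping inherited from the proof of Proposition~\ref{prop:funct-analy}(vi).
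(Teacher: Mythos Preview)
Your plan is essentially the paper's own argument (which copies the proof of Proposition~\ref{prop:funct-analy}(vi) verbatim, replacing Lemma~\ref{lem:Gener-modif} by Lemma~\ref{lem:modif}), and the second assertion is handled exactly as you say, via (\ref{e:gradient5Lagr}) and boundedness of $\Pi$. The only substantive difference is in the splitting: the paper does \emph{not} insert the intermediate point $\partial_{vv}\check{L}_\mu(t,\zeta_k(t),\dot\zeta_k(t))$; instead it feeds the parameter $\lambda$ directly into \cite[Prop.~B.9]{Lu10} by setting $f(t,\eta;\lambda)=\partial_{vv}\check{L}(\lambda,t,\zeta_k(t),\dot\zeta_k(t))\eta$, so that the simultaneous convergence $(\lambda_k,\zeta_k,\dot\zeta_k)\to(\mu,0,0)$ is handled in one stroke.

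Your split is perfectly legitimate, but the first term needs slightly more care than you indicate. You want to apply DCT to
\[
\int_0^\tau\bigl|[\partial_{vv}\check{L}_{\lambda_k}(t,\zeta_k(t),\dot\zeta_k(t))-\partial_{vv}\check{L}_{\mu}(t,\zeta_k(t),\dot\zeta_k(t))]\dot\xi(t)\bigr|^2\,dt,
\]
but ``pointwise convergence in $\lambda$'' from (L0) is not enough on its own: the evaluation point $(\zeta_k(t),\dot\zeta_k(t))$ moves with $k$, and $\dot\zeta_k(t)$ need not converge for fixed $t$ (you only have $\dot\zeta_k\to 0$ in $L^2$). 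The easiest fix is the standard subsequence trick: along any subsequence pass to a further subsequence with $\dot\zeta_k\to 0$ a.e., then use joint continuity in $(\lambda,t,q,v)$ from (L0) plus the uniform bound from (L3) to run DCT; since the limit is always $0$, the full sequence converges. Alternatively, do what the paper does and absorb $\lambda$ into the Nemytskii result, which sidesteps this issue entirely.
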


\begin{proof}[\bf Proof]
By (\ref{e:3.17}) we have
\begin{eqnarray*}
\|[\textsl{P}_{{\lambda_k}}(\zeta_k)-\textsl{P}_{{\mu}}(0)]\xi\|^2_{1,2}
  \le \int_0^\tau \big|[\partial_{vv}\check{L}_{{\lambda_k}}\bigl(t,\zeta_k(t),
  \dot\zeta_k(t)\bigr)-\partial_{vv}\check{L}_{\mu}\bigl(t, 0,0\bigr)]
\dot\xi(t)\big|^2_{\mathbb{R}^n}\, dt.
\end{eqnarray*}
Note that $\|\zeta_k\|_{1,2}\to 0$ implies $\|\zeta_k\|_{C^0}\to 0$.
Since $(\lambda, t, x,v)\mapsto\partial_{vv}\check{L}_{{\lambda}}(t, x,v)$ is continuous, by
the third inequality in (L2) in Lemma~\ref{lem:modif} we may apply 
\cite[Prop.~C.1]{Lu9} to
$$
f(t,\eta;\lambda)=\partial_{vv}\check{L}(\lambda, t, \zeta_k(t),\dot\zeta_k(t))\eta
$$
 to get that
$$
\int_0^\tau \big|[\partial_{vv}\check{L}_{{\lambda_k}}\bigl(t,\zeta_k(t),\dot\zeta_k(t)\bigr)-
\partial_{vv}\check{L}_{{\mu}}\bigl(t, 0, 0\bigr)]
\dot\xi(t)\big|^2_{\mathbb{R}^n}\, dt\to 0.
$$
Moreover, the Lebesgue dominated convergence theorem also leads to
$$
\int_0^\tau \big|[\partial_{vv}\check{L}_{{\lambda_k}}\bigl(t, 0, 0\bigr)-
\partial_{vv}\check{L}_{{\mu}}\bigl(t, 0, 0\bigr)]
\dot\xi(t)\big|^2_{\mathbb{R}^n}\, dt\to 0.
$$
Hence $\|[\textsl{P}_{{\lambda_k}}(\zeta_k)-\textsl{P}_{\mu}(0)]\xi\|_{1,2}\to 0$.
The final claim follows from this and (\ref{e:gradient5Lagr}).
\end{proof}

\begin{proposition}\label{prop:UniformQ}
$\mathcal{U}\ni\zeta\mapsto \textsl{Q}_{{\lambda}}(\zeta)\in\mathscr{L}_s({\bf H})$
is uniformly continuous at $0$ with respect to $\lambda\in\Lambda$.
Moreover, if $(\lambda_k)\subset\Lambda$  converges to $0\in\Lambda$ then
$\|\textsl{Q}_{\lambda_k}(0)-\textsl{Q}_{\mu}(0)\|\to 0$.
\end{proposition}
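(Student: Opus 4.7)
\medskip

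\noindent\textbf{Proof plan for Proposition~\ref{prop:UniformQ}.}
The plan is to mimic the argument used for Proposition~\ref{prop:funct-analy}(vii), now based on Lemma~\ref{lem:modif} in place of Lemma~\ref{lem:Gener-modif}, and being mindful that we work on the $\mathbb{I}_g$-twisted periodic space $\mathbf{H}$ rather than on $\mathbf{H}_{V_0\times V_1}$. First I would split off $\textsl{Q}_\lambda(\zeta)$ from the bilinear form (\ref{e:gradient4Lagr+}) by writing $\textsl{Q}_\lambda(\zeta)=\textsl{Q}_{\lambda,1}(\zeta)+\textsl{Q}_{\lambda,2}(\zeta)+\textsl{Q}_{\lambda,3}(\zeta)$, where
\begin{eqnarray*}
(\textsl{Q}_{\lambda,1}(\zeta)\xi,\eta)_{1,2} &=& \int_0^\tau \partial_{vq}\check{L}_\lambda(t,\zeta(t),\dot\zeta(t))[\dot\xi(t),\eta(t)]\,dt,\\
(\textsl{Q}_{\lambda,2}(\zeta)\xi,\eta)_{1,2} &=& \int_0^\tau \partial_{qv}\check{L}_\lambda(t,\zeta(t),\dot\zeta(t))[\xi(t),\dot\eta(t)]\,dt,\\
(\textsl{Q}_{\lambda,3}(\zeta)\xi,\eta)_{1,2} &=& \int_0^\tau \!\Bigl(\partial_{qq}\check{L}_\lambda(t,\zeta(t),\dot\zeta(t))[\xi(t),\eta(t)]-(\xi(t),\eta(t))_{\mathbb{R}^n}\Bigr)\,dt.
\end{eqnarray*}
Note $(\textsl{Q}_{\lambda,2}(\zeta)\xi,\eta)_{1,2}=(\xi,\textsl{Q}_{\lambda,1}(\zeta)^\ast\eta)_{1,2}$, so uniform estimates on $\textsl{Q}_{\lambda,1}$ automatically give the same estimates on $\textsl{Q}_{\lambda,2}$.

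\medskip

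Next, for the \emph{uniform continuity at $0$}, using the Sobolev embedding $W^{1,2}\hookrightarrow C^0$ (see (\ref{e:C-L})) and the argument of \cite[page~571]{Lu1}, I would obtain
\begin{eqnarray*}
\|\textsl{Q}_{\lambda,1}(\zeta)-\textsl{Q}_{\lambda,1}(0)\|_{\mathscr{L}(\mathbf{H})}^2
 &\le& C_\tau\int_0^\tau \bigl|\partial_{vq}\check{L}_\lambda(t,\zeta(t),\dot\zeta(t))-\partial_{vq}\check{L}_\lambda(t,0,0)\bigr|^2\,dt,\\
\|\textsl{Q}_{\lambda,3}(\zeta)-\textsl{Q}_{\lambda,3}(0)\|_{\mathscr{L}(\mathbf{H})}^2
 &\le& C_\tau\int_0^\tau \bigl|\partial_{qq}\check{L}_\lambda(t,\zeta(t),\dot\zeta(t))-\partial_{qq}\check{L}_\lambda(t,0,0)\bigr|^2\,dt,
\end{eqnarray*}
with constants depending only on $\tau$. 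Then, invoking the growth bounds in (L3) of Lemma~\ref{lem:modif} and the Nemitski continuity result \cite[Prop.~B.9]{Lu10} (\cite[Prop.~C.1]{Lu9}) applied simultaneously to the parameter family $(\lambda,t,q,v)\mapsto\partial_{vq}\check{L}_\lambda(t,q,v)$ and $\partial_{qq}\check{L}_\lambda(t,q,v)$ on the compact (or sequentially compact) set $\hat\Lambda\times[0,\tau]\times\overline{B^n_{3\iota/2}(0)}\times\mathbb{R}^n$, the right-hand sides tend to zero uniformly in $\lambda\in\hat\Lambda$ as $\|\zeta\|_{1,2}\to 0$. (The uniform continuity of the relevant partial derivatives on this compact set is what powers the uniformity.)

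\medskip

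For the \emph{second claim}, with $\zeta=0$ the three integrals reduce to
\[
\int_0^\tau \bigl|\partial_{\ast\ast}\check{L}_{\lambda_k}(t,0,0)-\partial_{\ast\ast}\check{L}_{\mu}(t,0,0)\bigr|^2\,dt,
\]
where $\partial_{\ast\ast}$ stands for each of $\partial_{vq}$, $\partial_{qv}$, $\partial_{qq}$. These integrands converge pointwise to $0$ by Lemma~\ref{lem:modif}(L0) and are dominated by $4C^2$ from (L3); the Lebesgue dominated convergence theorem then yields $\|\textsl{Q}_{\lambda_k,i}(0)-\textsl{Q}_{\mu,i}(0)\|_{\mathscr{L}(\mathbf{H})}\to 0$ for $i=1,2,3$, and hence $\|\textsl{Q}_{\lambda_k}(0)-\textsl{Q}_\mu(0)\|\to 0$.

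\medskip

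The main obstacle I anticipate is verifying that \cite[Prop.~B.9]{Lu10} applies cleanly in the $\mathbb{R}$-periodic, twisted‑boundary setting: although the underlying argument is local and the Sobolev bounds (\ref{e:C-L}) transfer verbatim to the space $\mathbf{H}=\{\xi\in W^{1,2}_{\mathrm{loc}}(\mathbb{R};\mathbb{R}^n)\,|\,E_{\bar\gamma}^T\xi(t)=\xi(t+\tau)\}$ (since the $W^{1,2}$ inner product on $\mathbf{H}$ is the $[0,\tau]$-restricted one), one must double-check that the parameter‑dependent form of the Nemitski continuity lemma is stated strongly enough to give continuity uniform in $\lambda\in\hat\Lambda$—that is, that the modulus in $\|\zeta\|_{1,2}$ can be chosen independent of $\lambda$. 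This is essentially the content of \cite[Prop.~B.9]{Lu10}; once that is granted, the rest of the proof is the straightforward assembly sketched above.
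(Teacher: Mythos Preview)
Your proposal is correct and follows essentially the same approach as the paper: the same three-term decomposition $\textsl{Q}_{\lambda}=\textsl{Q}_{\lambda,1}+\textsl{Q}_{\lambda,2}+\textsl{Q}_{\lambda,3}$, the adjoint relation between $\textsl{Q}_{\lambda,1}$ and $\textsl{Q}_{\lambda,2}$, the operator-norm bound via the $L^2$ integral of the coefficient differences (as in \cite[p.~571]{Lu1}), and then (L3) of Lemma~\ref{lem:modif} combined with \cite[Prop.~B.9]{Lu10} for the uniform-in-$\lambda$ continuity and dominated convergence for the second claim. Your caveat about the twisted-periodic space $\mathbf{H}$ is well taken but not a genuine obstacle, since all estimates are on $[0,\tau]$ and the Sobolev embedding (\ref{e:C-L}) applies verbatim.
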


\begin{proof}[\bf Proof]
Write $\textsl{Q}_\lambda(\zeta):=\textsl{Q}_{\lambda,1}(\zeta)+ \textsl{Q}_{\lambda,2}(\zeta)+
\textsl{Q}_{\lambda,3}(\zeta)$, where
\begin{eqnarray*}
&&(\textsl{Q}_{\lambda,1}(\zeta)\xi,  \eta)_{1,2}
   = \int_0^{\tau}\partial_{vq}   \check{L}_\lambda\big(t, \zeta(t),\dot{\zeta}(t)\big)
[\dot{\xi}(t), \eta(t)]dt,\\
 &&(\textsl{Q}_{\lambda,2}(\zeta)\xi,
  \eta)_{1,2}= \int_0^{\tau}\partial_{qv}\check{L}_\lambda\big(t, \zeta(t), \dot{\zeta}(t)\big)
[\xi(t), \dot{\eta}(t)]dt, \\
&&(\textsl{Q}_{\lambda,3}(\zeta)\xi,
  \eta)_{1,2} = \int_0^{\tau} \bigl(\partial_{qq}\check{L}_\lambda\big(t, \zeta(t), \dot{\zeta}(t)\big) [\xi(t), \eta(t)]-
  \bigl(\xi(t), \eta(t)\bigr)_{\mathbb{R}^n}\bigr) \, dt.
\end{eqnarray*}
As above the first claim follows from (L2) in Lemma~\ref{lem:modif} and 
\cite[Prop.~C.1]{Lu9} directly.

In order to prove the second claim, as in the proof of \cite[page 571]{Lu1} we have
\begin{eqnarray*}
&&\|\textsl{Q}_{\lambda_k,1}(0)-\textsl{Q}_{\mu,1}(0)\|_{\mathscr{L}({\bf H})}\\
&\le&2(e^{\tau}+ 1)\left(\int^{\tau}_{0}\big|\partial_{vq}
  \check{L}_{\lambda_k}(s, 0, 0)-\partial_{vq}
  \check{L}_\mu(s, 0, 0)\big|^2  ds\right)^{1/2}.
\end{eqnarray*}
Because of the second inequality in ({\bf L2}), it follows from the Lebesgue dominated convergence theorem
that $\|\textsl{Q}_{\lambda_k,1}(0)-\textsl{Q}_{\mu,1}(0)\|_{\mathscr{L}({\bf H})}\to 0$.
Observe that
$(\textsl{Q}_{\lambda,2}(\zeta)\xi, \eta)_{1,2}
   =\bigr(\xi,  (\textsl{Q}_{\lambda,1}(\zeta))^\ast\eta\bigl)_{1,2}$.
Hence $\|\textsl{Q}_{\lambda_k,2}(0)-\textsl{Q}_{\mu,2}(0)\|_{\mathscr{L}({\bf H})}\to 0$.
Finally, it is easy to deduce that
\begin{eqnarray*}
\|\textsl{Q}_{\lambda_k,3}(0)-\textsl{Q}_{\mu,3}(0)\|^2_{\mathscr{L}({\bf H})}\le
  \int_0^{\tau} \bigl|\partial_{qq}\check{L}_{\lambda_k}\big(t, 0, 0\big)
-\partial_{qq}\check{L}_\mu\big(t, 0, 0\big)\bigr|^2\, dt.
\end{eqnarray*}
By the Lebesgue dominated convergence theorem the right side converges to zero.
Then $\|\textsl{Q}_{\lambda_k,3}(0)-\textsl{Q}_{\mu,3}(0)\|_{\mathscr{L}({\bf H})}\to 0$ and therefore
 $\|\textsl{Q}_{\lambda_k}(0)-\textsl{Q}_{\mu}(0)\|\to 0$.
\end{proof}

By (\ref{e:gradient5Lagr}) and Proposition~\ref{prop:UniformQ}, the map
$\mathcal{U}\cap{\bf H}^\bot\ni\zeta\mapsto \mathbb{Q}_{{\lambda}}(\zeta)\in\mathscr{L}_s({\bf H}^\bot)$
is uniformly continuous at $0$ with respect to $\lambda\in\Lambda$.
Moreover, if $(\lambda_k)\subset\Lambda$  converges to $0\in\Lambda$ then
$\|\mathbb{Q}_{\lambda_k}(0)-\mathbb{Q}_{\mu}(0)\|\to 0$.

\begin{proposition}\label{prop:reg}
 $({\bf H}^\bot, {\bf X}^\bot, {\check{\mathscr{L}}}_{\lambda}^\bot, \mathbb{A}_\lambda=\nabla{\check{\mathscr{L}}}_{\lambda}^\bot,\mathbb{B}_\lambda)$
satisfies (C) of \cite[Hypothesis~C.3]{Lu10} (\cite[Hypothesis~1.3]{Lu8}) and (D1) of \cite[Hypothesis~C.3]{Lu10} (\cite[Hypothesis~1.1]{Lu8})
 at the origin $0\in {\bf H}^\bot$, namely
\begin{enumerate}
\item[\rm (C)] $\{u\in{\bf H}^\bot\,|\, \mathbb{B}_\lambda(0)u\in{\bf X}^\bot\}\subset{\bf X}^\bot$,
\item[\rm (D1)] $\{u\in{\bf H}^\bot\,|\, \mathbb{B}_\lambda(0)u=s u,\;s\le 0\}\subset{\bf X}^\bot$.
\end{enumerate}
\end{proposition}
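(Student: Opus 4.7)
The plan is to reduce conditions (C) and (D1) for the restricted data $(\mathbb{B}_\lambda(0),{\bf H}^\bot,{\bf X}^\bot)$ to the analogous conditions already established for $(B_\lambda(0),{\bf H},{\bf X})$ in Proposition~\ref{prop:reduction}(iii). The key observation is that $\zeta_0\in\mathrm{Ker}(B_\lambda(0))$: indeed, every translate $s\cdot\bar\gamma$ is a critical point of $\mathfrak{E}_\lambda$, so differentiating the critical point equation at $s=0$ places $\dot{\bar\gamma}$ in the kernel of the Hessian, and its coordinate representative $\zeta_0$ therefore satisfies $B_\lambda(0)\zeta_0=0$ (this is already noted above (\ref{e:MorseIndexLagr})). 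Because $B_\lambda(0)$ is self-adjoint with respect to $(\cdot,\cdot)_{1,2}$, we get $(B_\lambda(0)u,\zeta_0)_{1,2}=(u,B_\lambda(0)\zeta_0)_{1,2}=0$ for every $u\in{\bf H}$, so $B_\lambda(0){\bf H}\subset{\bf H}^\bot$. Interpreting the projection $\Pi$ in (\ref{e:gradient5Lagr}) as the $W^{1,2}$-orthogonal projection (consistently with (\ref{e:gradient2Lagr})), this means $\Pi$ acts as the identity on $B_\lambda(0){\bf H}^\bot$, so the definition of $\mathbb{B}_\lambda(0)$ in (\ref{e:gradient4Lagr}) collapses to the identity
\begin{equation*}
\mathbb{B}_\lambda(0)u=B_\lambda(0)u\qquad\text{for every }u\in{\bf H}^\bot.
\end{equation*}

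With this identification in hand, the two conditions are immediate. Note first that $\zeta_0\in\mathcal{X}^5_\tau(\mathbb{R}^n,E_{\bar\gamma})\subset{\bf X}$, so ${\bf X}^\bot={\bf X}\cap{\bf H}^\bot$ is exactly the set of elements of ${\bf X}$ that are $W^{1,2}$-orthogonal to $\zeta_0$. For (C), if $u\in{\bf H}^\bot$ and $\mathbb{B}_\lambda(0)u\in{\bf X}^\bot$, then $B_\lambda(0)u=\mathbb{B}_\lambda(0)u\in{\bf X}^\bot\subset{\bf X}$; the second inclusion of (\ref{e:L-C-D-D1*}) gives $u\in{\bf X}$, and therefore $u\in{\bf X}\cap{\bf H}^\bot={\bf X}^\bot$. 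For (D1), if $u\in{\bf H}^\bot$ and $\mathbb{B}_\lambda(0)u=su$ with $s\le 0$, then $B_\lambda(0)u=su$, and the first inclusion of (\ref{e:L-C-D-D1*}) yields $u\in{\bf X}$, hence $u\in{\bf X}^\bot$.

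The only subtle point in the whole argument is bookkeeping with the inner-product conventions in (\ref{e:gradient2Lagr}) and (\ref{e:gradient4Lagr}); once $\Pi$ is treated as the $W^{1,2}$-orthogonal projection throughout, the self-adjoint kernel property $B_\lambda(0)\zeta_0=0$ drives the entire proof and no extra regularity analysis is needed. I do not foresee any further technical difficulty: the reduction is purely algebraic and depends only on facts already verified in Proposition~\ref{prop:reduction}.
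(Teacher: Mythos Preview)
Your proof is correct and follows essentially the same route as the paper: both reduce (C) and (D1) for $\mathbb{B}_\lambda(0)$ to the already-established conditions (\ref{e:L-C-D-D1*}) for $B_\lambda(0)$, using $\zeta_0\in{\bf X}$ and the kernel fact $B_\lambda(0)\zeta_0=0$. The only minor difference is organizational: you invoke self-adjointness up front to obtain the clean identity $\mathbb{B}_\lambda(0)=B_\lambda(0)|_{{\bf H}^\bot}$ and then treat (C) and (D1) uniformly, whereas the paper handles (C) without the kernel property (simply noting that $\mathbb{B}_\lambda(0)u$ and $B_\lambda(0)u$ differ by a multiple of $\zeta_0\in{\bf X}$, so $B_\lambda(0)u\in{\bf X}$ whenever $\mathbb{B}_\lambda(0)u\in{\bf X}^\bot$) and brings in $B_\lambda(0)\zeta_0=0$ only for (D1). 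Your caveat about the inner-product conventions is well placed; the paper's formula (\ref{e:gradient4Lagr}) is written with $(\cdot,\cdot)_2$, but its own proof of (D1) tacitly needs the same $W^{1,2}$-self-adjointness you use, so your reading of $\Pi$ as the $W^{1,2}$-projection is the intended one.
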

\begin{proof}[\bf Proof]
In order to prove (C) let $u\in{\bf H}^\bot$ be such that $v:=\mathbb{B}_\lambda(0)u\in{\bf X}^\bot$. By (\ref{e:gradient4Lagr})
\begin{equation*}
B_\lambda(0)u-\frac{(B_\lambda(0)u,\zeta_0)_2}{\|\zeta_0\|_2}\zeta_0=\mathbb{B}_\lambda(0)u=v\in{\bf X}^\bot.
\end{equation*}
Since $\zeta_0\in{\bf X}$, it follows from this and (\ref{e:L-C-D-D1*})
  that $u\in{\bf X}$ and hence $u\in{\bf X}^\bot$.

Next, let $u\in{\bf H}^\bot$ satisfy $\mathbb{B}_\lambda(0)u=s u$ for some $s\le 0$. Then (\ref{e:gradient4Lagr}) leads to
\begin{equation*}
B_\lambda(0)u=s u+\frac{(B_\lambda(0)u,\zeta_0)_2}{\|\zeta_0\|_2}\zeta_0.
\end{equation*}
Since $T_0S_0=\mathbb{R}\zeta_0$ and $\nabla{\check{\mathscr{L}}}_{\lambda}(x)=0\,\forall x\in S_0$,
we deduce  $B_\lambda(0)\zeta_0=0$ and therefore
 $B_\lambda(0)u=s u$. By (\ref{e:L-C-D-D1*}) this implies $u\in{\bf X}$ and so $u\in{\bf X}^\bot$.
\end{proof}

We also need the following two corresponding results of Propositions~3.10,~3.11 in \cite{Lu12-}
or \cite{Lu12}.

\begin{proposition}\label{prop:continA0}
Both $\hat\Lambda\times \mathcal{U}\ni (\lambda, x)\mapsto
\check{\mathscr{L}}_{\lambda}(x)\in\R$ and
$\hat\Lambda\times \mathcal{U}^X\ni (\lambda, x)\mapsto A_\lambda(x)\in{\bf X}$ are continuous.
\end{proposition}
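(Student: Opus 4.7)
The plan is to follow the two-step pattern used in the proof of Proposition~\ref{prop:continA}, adapted to the explicit gradient formula (\ref{e:3.10})--(\ref{e:grad1+}) that is available in the isometry-periodic setting. For the first assertion, given $(\lambda_1,x_1),(\lambda_2,x_2)\in\hat\Lambda\times\mathcal{U}$ I would split
\[
\check{\mathscr{L}}_{\lambda_1}(x_1)-\check{\mathscr{L}}_{\lambda_2}(x_2)
=\left[\int_0^\tau\check{L}_{\lambda_1}(t,x_1,\dot x_1)\,dt-\int_0^\tau\check{L}_{\lambda_1}(t,x_2,\dot x_2)\,dt\right]
+\left[\int_0^\tau\check{L}_{\lambda_1}(t,x_2,\dot x_2)\,dt-\int_0^\tau\check{L}_{\lambda_2}(t,x_2,\dot x_2)\,dt\right].
\]
Lemma~\ref{lem:modif}(L6) gives a quadratic growth bound $|\check{L}_\lambda(t,q,v)|\le \check C(1+|v|^2)$, so the first bracket vanishes as $(\lambda_1,x_1)\to(\lambda_2,x_2)$ by \cite[Prop.~B.9]{Lu10} (applied with $f(t,\eta;\lambda)=\check L(\lambda,t,x_2(t)+\eta_1, \dot x_2(t)+\eta_2)$), while the second vanishes by the Lebesgue dominated convergence theorem, using the continuity of $\check L$ in $\lambda$ from (L0) together with the same quadratic majorant.

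For the second assertion, by Proposition~\ref{prop:reduction}(ii) the map $A_\lambda$ takes values in ${\bf X}$; what must be shown is joint $C^1$-continuity in $(\lambda,x)$. I would substitute the explicit expressions (\ref{e:3.10}) and (\ref{e:grad1})--(\ref{e:grad1+}) and estimate the $C^0$-norms of $\nabla\check{\mathscr{L}}_\lambda(x)$ and $\tfrac{d}{dt}\nabla\check{\mathscr{L}}_\lambda(x)$ by sums of integrals of the form
\[
\int_0^\tau\bigl|\partial_q\check{L}_{\lambda_1}(s,x_1,\dot x_1)-\partial_q\check{L}_{\lambda_2}(s,x_2,\dot x_2)\bigr|\,ds
\quad\text{and}\quad
\int_0^\tau\bigl|\partial_v\check{L}_{\lambda_1}(s,x_1,\dot x_1)-\partial_v\check{L}_{\lambda_2}(s,x_2,\dot x_2)\bigr|\,ds,
\]
together with a pointwise term $|\partial_v\check L_{\lambda_1}(t,x_1(t),\dot x_1(t))-\partial_v\check L_{\lambda_2}(t,x_2(t),\dot x_2(t))|$ coming from differentiating the integral of $\partial_v\check L$ and the piecewise affine factor $a_j(t)$ in $\mathfrak{R}^x_\lambda$. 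The exponential damping $e^{t-s},e^{s-t}$ in (\ref{e:3.10}) makes the implicit constants uniform in $t\in[0,\tau]$, so the two-term decomposition used in Step~1 reduces everything to (i) a uniform continuity argument on the compact set $\{(\lambda_2,t,x_2(t),\dot x_2(t))\,:\, t\in[0,\tau]\}\subset \hat\Lambda\times[0,\tau]\times\bar B_{2\iota}^n(0)\times\mathbb{R}^n$ (possible because $\partial_q\check L,\partial_v\check L$ are continuous by (L0) and uniformly continuous on compact neighborhoods), and (ii) Lebesgue dominated convergence in the $\lambda$-variable via the growth bounds (L5).

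The main obstacle is ensuring that the $\mathbb{R}$-valued constant $\mathfrak{M}\int_0^\tau\partial_v\check L_\lambda\,ds$ appearing in (\ref{e:grad1+}) and the integrals $\int_t^\infty e^{t-s}[\cdots]ds$, $\int_{-\infty}^t e^{s-t}[\cdots]ds$ in (\ref{e:3.10}) can be shown to depend continuously on $(\lambda,x)\in\hat\Lambda\times\mathcal{U}^X$ in the $C^1([0,\tau];\mathbb{R}^n)$-norm and not merely in the $C^0$-norm; once the $\tau$-periodicity built into (L0) is used to reduce the infinite-range integrals to sums of $\tau$-blocks (each estimated by a geometric series in $e^{-\tau}$ times the finite integrals displayed above), both the $C^0$ and $C^1$ estimates reduce to the same finite-interval continuity arguments, and the proof closes exactly as in Step~2 of Proposition~\ref{prop:continA}.
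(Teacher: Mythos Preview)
Your proposal is correct and follows essentially the same route as the paper's proof: Step~1 is identical, and in Step~2 the paper likewise computes $\tfrac{d}{dt}\nabla\check{\mathscr{L}}_\lambda(x)(t)$ from (\ref{e:3.10})--(\ref{e:grad1+}), breaks the infinite-range integrals into $\tau$-blocks controlled by geometric series in $e^{-k\tau}$ (your ``main obstacle'' is exactly what the paper addresses in (\ref{e:lv5})--(\ref{e:lv6.2})), and closes via uniform continuity of $\partial_q\check L,\partial_v\check L$ on compact neighborhoods of $\{(\lambda_2,t,x_2(t),\dot x_2(t)):t\in[0,\tau]\}$. The only minor organizational difference is that the paper first records the $C^0$-convergence of $\nabla\check{\mathscr{L}}_\lambda(x)$ via a direct $W^{1,2}$-estimate (\ref{e:C^0-shoulian}) before turning to the derivative, whereas you go straight to the explicit formula; this does not affect the argument.
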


Proofs of this proposition and the following key result are similar to
 those of Propositions~3.10,~3.11 in \cite{Lu12-} or \cite{Lu12},
respectively. For completeness their proof  are put off  until Section~\ref{sec:AutoLagr4}
because they are rather long.

\begin{proposition}\label{prop:solutionLagr1}
Let $\bar{\varepsilon}>0$ be such that $B_{{\bf H}}(0, \bar{\varepsilon})\subset\mathcal{U}$.
For any given $\epsilon>0$ there exists $0<\varepsilon\le\bar{\varepsilon}$ such that
if $x\in B_{{\bf H}^\bot}(0,\varepsilon):=\{x\,|\, x\in{\bf H}^\bot,\;
\|x\|_{1,2}<\varepsilon\}$ is a critical point of
$\check{\mathscr{L}}^\bot_{\lambda}$  with some $\lambda\in\hat\Lambda$
then $x$ is a critical point of  $\check{\mathscr{L}}_\lambda$, belongs to $C^4(\mathbb{R};\mathbb{R}^n)$
and also satisfies  $\|x\|_{C^2}<\epsilon$. In particular,
for $x\in B_{{\bf H}^\bot}(0,\varepsilon)$,
$d{\check{\mathscr{L}}}_{\lambda}^\bot(x)=0$ if and only if $d{\check{\mathscr{L}}}_{\lambda}(x)=0$.
\end{proposition}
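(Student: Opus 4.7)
The plan is to split the proof into three stages, the crucial one being the reduction from $d\check{\mathscr{L}}^\bot_\lambda(x)=0$ to $d\check{\mathscr{L}}_\lambda(x)=0$; regularity and $C^2$-smallness are essentially transcriptions of the argument for Proposition~\ref{prop:solutionLagr}. If $d\check{\mathscr{L}}^\bot_\lambda(x)=0$, then by (\ref{e:gradient2Lagr}) one has $\nabla\check{\mathscr{L}}_\lambda(x)=c(x,\lambda)\zeta_0$ for some scalar $c(x,\lambda)$, and the task is to show that $c(x,\lambda)=0$ whenever $\|x\|_{1,2}$ is sufficiently small, uniformly in $\lambda\in\hat\Lambda$.

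For this I would exploit the $\mathbb{R}$-invariance inherited from the autonomous structure of $L$. On a small $W^{1,2}$-neighborhood of $0$ one has $\check{\mathscr{L}}_\lambda=\mathfrak{E}^\star_\lambda=\mathfrak{E}_\lambda\circ\Phi_{\bar\gamma}$ by Proposition~\ref{prop:reduction}(iv), and the action $\chi_s(\gamma)(t)=\gamma(t+s)$ preserves $\mathfrak{E}_\lambda$. For $|s|$ and $\|x\|_{C^1}$ small, the pullback $\tilde\chi_s(x):=\Phi^{-1}_{\bar\gamma}(\chi_s\circ\Phi_{\bar\gamma}(x))$ is well defined and satisfies $\check{\mathscr{L}}_\lambda(\tilde\chi_s(x))=\check{\mathscr{L}}_\lambda(x)$. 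Differentiating at $s=0$ yields
\[
(\nabla\check{\mathscr{L}}_\lambda(x),X(x))_{1,2}=0,\qquad X(x):=\tfrac{d}{ds}\tilde\chi_s(x)\big|_{s=0},
\]
and a direct computation from $\Phi_{\bar\gamma}(\tilde\chi_s(x))(t)=\phi_{\bar\gamma}(t+s,x(t+s))$ gives $X(0)=\zeta_0$, with $X$ continuous in $x$ in the $W^{1,2}$-topology thanks to the $C^5$-smoothness of $\phi_{\bar\gamma}$. Substituting $\nabla\check{\mathscr{L}}_\lambda(x)=c(x,\lambda)\zeta_0$ gives $c(x,\lambda)(\zeta_0,X(x))_{1,2}=0$; since $(\zeta_0,X(0))_{1,2}=\|\zeta_0\|_{1,2}^2>0$, a uniform continuity argument (using compactness or sequential compactness of $\hat\Lambda$) forces $c(x,\lambda)=0$ on some common $W^{1,2}$-ball $B_{{\bf H}^\bot}(0,\varepsilon_1)$.

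Once $d\check{\mathscr{L}}_\lambda(x)=0$ is in hand, $x$ solves the Euler--Lagrange equation for $\check L_\lambda$, which by (L2) and Lemma~\ref{lem:modif}(L0) can be written as $\ddot x(t)=F(\lambda,t,x(t),\dot x(t))$ with $F$ continuous and built from the listed partial derivatives of $\check L_\lambda$; standard ODE bootstrapping, as in Lemma~\ref{lem:regu}(i), yields $x\in C^4(\mathbb{R};\mathbb{R}^n)$. The $C^2$-smallness is obtained by adapting Steps~1--3 in the proof of Proposition~\ref{prop:solutionLagr}: subtracting the identity $\nabla\check{\mathscr{L}}_\lambda(0)=0$ from $\nabla\check{\mathscr{L}}_\lambda(x)=0$ in the explicit gradient formula (\ref{e:3.10})--(\ref{e:grad1+}), one controls $|\partial_v\check L_\lambda(t,x(t),\dot x(t))-\partial_v\check L_\lambda(t,0,0)|$ by $L^1$-norms that tend to zero uniformly in $\lambda\in\hat\Lambda$ as $\|x\|_{1,2}\to 0$ (via (L5) and \cite[Prop.~B.9]{Lu10}); the uniform convexity estimate $|\partial_v\check L_\lambda(t,q,v)-\partial_v\check L_\lambda(t,q,0)|\ge c|v|$ derived from (L2) then upgrades this to $\|\dot x\|_{C^0}\to 0$, and the explicit formula for $\ddot x$ combined with (L3) and the continuity of $\partial_{tv}\check L_\lambda,\partial_{vq}\check L_\lambda,\partial_q\check L_\lambda$ gives $\|\ddot x\|_{C^0}\to 0$.

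The main obstacle is the first stage: one must verify that the pullback action $\tilde\chi_s$ and its infinitesimal generator $X$ are well defined and continuous on a common $W^{1,2}$-neighborhood of $0$ for all $\lambda\in\hat\Lambda$, so that the infinitesimal invariance identity $(\nabla\check{\mathscr{L}}_\lambda(x),X(x))_{1,2}=0$ is valid and can be combined with $\nabla\check{\mathscr{L}}_\lambda(x)\in\mathbb{R}\zeta_0$ to force $c(x,\lambda)=0$. The rest of the argument is then a routine adaptation of the analytic machinery already developed for Proposition~\ref{prop:solutionLagr}.
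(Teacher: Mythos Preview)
Your invariance idea is exactly the one the paper uses in its Step~4, but the ordering you propose breaks down. The identity $\check{\mathscr{L}}_\lambda=\mathfrak{E}_\lambda\circ\Phi_{\bar\gamma}$ on which you base the pullback action holds only on the $C^1$-open set $\{x:\sup_t|\dot x(t)|<\rho_0\}$ (Proposition~\ref{prop:reduction}(iv)), because $\check L_\lambda$ and $L^\star_\lambda$ agree only where $|v|<\rho_0$; a $W^{1,2}$-ball around $0$ is not contained in that set, so the infinitesimal invariance relation $(\nabla\check{\mathscr{L}}_\lambda(x),X(x))_{1,2}=0$ is not available at this stage. Even granting that $x$ is smooth (which you have not yet shown), your continuity argument fails in the stated topology: one computes $X(x)(t)=\dot x(t)+[\partial_2\phi_{\bar\gamma}(t,x(t))]^{-1}\partial_1\phi_{\bar\gamma}(t,x(t))$, so $\dot X(x)$ involves $\ddot x$, and therefore $(\zeta_0,X(x))_{1,2}$ is not continuous in $\|x\|_{1,2}$. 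You cannot force $c(x,\lambda)=0$ from $W^{1,2}$-smallness alone.

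The paper resolves this by inverting your order. It first proves $C^4$-regularity of $x$ \emph{with the multiplier present}: from $\nabla\check{\mathscr{L}}_\lambda(x)=\mu(\lambda,x)\zeta_0$ one sees that $x$ solves the Euler--Lagrange equation for ${\bf L}_\lambda(t,q,v)=\check L_\lambda(t,q,v)-\mu(\lambda,x)(\zeta_0(t),q)_{\mathbb{R}^n}+\mu(\lambda,x)(\dot\zeta_0(t),v)_{\mathbb{R}^n}$, which still enjoys (L0)--(L6). Next it shows only that $|\mu(\lambda,x)|\to 0$ uniformly as $\|x\|_{1,2}\to 0$ (not $\mu=0$), and feeds this into the gradient formula (\ref{e:3.10})--(\ref{e:grad1+}) to obtain $\|x\|_{C^2}\to 0$, carrying the extra $\mu\dot\zeta_0$, $\mu\ddot\zeta_0$ terms through the estimates. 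Only with $C^2$-smallness secured does it run precisely your invariance argument: now $\zeta_x:=X(x)$ is well defined, $\|\zeta_x-\zeta_0\|_{1,2}\to 0$ legitimately (since $\ddot x$ is controlled), hence ${\bf H}=\mathbb{R}\zeta_x\dot+{\bf H}^\bot$, and $d\check{\mathscr{L}}_\lambda(x)[\zeta_x]=0$ together with (\ref{e:orbit-criticalLagr}) gives $d\check{\mathscr{L}}_\lambda(x)=0$. All your ingredients are correct; the fix is to carry the Lagrange multiplier through the regularity and $C^2$ estimates rather than trying to annihilate it first.
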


\subsection{Completing the proofs of Theorems~\ref{th:bif-ness-orbitLagrMan}, \ref{th:bif-suffict1-orbitLagrMan},
\ref{th:bif-existence-orbitLagrMan}, \ref{th:bif-suffict-orbitLagrMan}}
\label{sec:autoLagr3}

The ideas are the same as the proofs of \cite[Theorems~1.18, 1.19, 1.21]{Lu10}.
But the corresponding checks and computations are much more complex and difficult.

\begin{proposition}[\hbox{\cite[Proposition~3.5]{BetPS2}}]\label{prop:BetPS1}
 Let $P$ be a finite-dimensional manifold, $N$ a (possibly infinite dimensional)
Banach manifold, $Q\subset N$ a Banach submanifold, and $A$ a topological
space. Assume that $\chi:A\times P\to N$ is a continuous function such that there exist
$a_0\in A$ and $m_0\in P$ with:
\begin{enumerate}
\item[\rm (a)]  $\chi(a_0,m_0)\in Q$;
\item[\rm (b)] $\chi(a_0,\cdot): P\to N$ is of class $C^1$;
\item[\rm (c)] $\partial_2\chi(a_0,m_0)(T_{m_0}P)+ T_{\chi(a_0,m_0)}Q=T_{\chi(a_0,m_0)}N$.
\end{enumerate}
Then, for $a\in A$ near $a_0$, $\chi(a,P)\cap Q\ne\emptyset$.
\end{proposition}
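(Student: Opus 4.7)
The plan is to reduce the assertion to a finite-dimensional zero-finding problem for a continuous map depending continuously on the parameter, and then to invoke Brouwer's degree. First I would observe that hypothesis (c) forces $Q$ to have \emph{finite} codimension in $N$ at $n_0:=\chi(a_0,m_0)$, since the quotient $T_{n_0}N/T_{n_0}Q$ is a continuous linear image of the finite-dimensional space $\partial_2\chi(a_0,m_0)(T_{m_0}P)$; set $k:=\dim\bigl(T_{n_0}N/T_{n_0}Q\bigr)<\infty$. Using the Banach submanifold structure of $Q$, I can then choose an open neighborhood $\mathcal{V}$ of $n_0$ in $N$ and a $C^1$ submersion $F:\mathcal{V}\to\mathbb{R}^k$ with $Q\cap\mathcal{V}=F^{-1}(0)$ and $\ker dF(n_0)=T_{n_0}Q$.

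Next I would introduce $G(a,m):=F(\chi(a,m))$ on a neighborhood of $(a_0,m_0)$ in $A\times P$: it is jointly continuous, $G(a_0,\cdot)$ is $C^1$, $G(a_0,m_0)=0$, and
\[
\partial_m G(a_0,m_0)=dF(n_0)\circ\partial_2\chi(a_0,m_0):T_{m_0}P\longrightarrow\mathbb{R}^k
\]
is surjective by (c) together with $dF(n_0)(T_{n_0}Q)=0$. I then select a $k$-dimensional subspace $W\subset T_{m_0}P$ on which this linear map is an isomorphism, pick local coordinates on $P$ near $m_0$ in which $W$ corresponds to $\mathbb{R}^k\times\{0\}\subset\mathbb{R}^k\times\mathbb{R}^{p-k}$ (with $p=\dim P$), and define $\tilde G(a,w):=G(a,(w,0))$ on a neighborhood of $(a_0,0)$ in $A\times\mathbb{R}^k$. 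By construction $\tilde G$ is jointly continuous, $\tilde G(a_0,\cdot)$ is $C^1$, $\tilde G(a_0,0)=0$, and $\partial_w\tilde G(a_0,0)$ is an invertible endomorphism of $\mathbb{R}^k$.

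Finally I would solve $\tilde G(a,w)=0$ for $w$ close to $0$ when $a$ is close to $a_0$ by a degree argument. By the inverse function theorem applied to $\tilde G(a_0,\cdot)$, there is $r>0$ such that $\tilde G(a_0,\cdot)$ is a $C^1$ diffeomorphism from $\bar B_r:=\{w\in\mathbb{R}^k:\|w\|\le r\}$ onto a neighborhood of $0$; in particular $\delta:=\inf_{w\in\partial B_r}\|\tilde G(a_0,w)\|>0$. Joint continuity of $\tilde G$ and compactness of $\partial B_r$ supply a neighborhood $\mathcal{U}\subset A$ of $a_0$ such that $\|\tilde G(a,w)-\tilde G(a_0,w)\|<\delta$ for all $(a,w)\in\mathcal{U}\times\partial B_r$. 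Consequently the affine homotopy $h_t(a,w):=(1-t)\tilde G(a_0,w)+t\tilde G(a,w)$ never vanishes on $\partial B_r$ for $(a,t)\in\mathcal{U}\times[0,1]$, so homotopy invariance of the Brouwer degree gives
\[
\deg\bigl(\tilde G(a,\cdot),B_r,0\bigr)=\deg\bigl(\tilde G(a_0,\cdot),B_r,0\bigr)=\pm1\neq 0
\]
for every $a\in\mathcal{U}$. Existence of $w\in B_r$ with $\tilde G(a,w)=0$ follows, and therefore $\chi\bigl(a,(w,0)\bigr)\in Q\cap\mathcal{V}$, proving $\chi(a,P)\cap Q\neq\emptyset$.

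The hard point, and the reason I expect Brouwer degree to be essential rather than the implicit function theorem with parameter, is the minimal regularity of $\chi$: differentiability with respect to $m$ is assumed only at the single slice $a=a_0$, so any approach relying on joint $C^1$ or even Lipschitz-in-$m$ control as $a$ varies is blocked. Brouwer degree fits exactly this setting, because homotopy invariance uses only continuous dependence of the map on the parameter, while the non-triviality of the degree at $a_0$ is furnished by the single $C^1$ slice $\tilde G(a_0,\cdot)$ via the inverse function theorem.
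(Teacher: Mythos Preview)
Your proof is correct and well-organized. The paper does not give its own proof of this proposition: it is quoted verbatim from \cite[Proposition~3.5]{BetPS2} and used as a black box to derive Proposition~\ref{prop:O-neigh}. Your argument---reducing to a finite-dimensional zero problem via a local defining submersion for $Q$ and then applying Brouwer degree with the single $C^1$ slice providing nontriviality---is the natural approach and matches what one would expect the original reference to contain. Your remark at the end is exactly the point: with only continuity in $a$ and $C^1$ regularity at the single value $a_0$, a topological degree argument is forced, and the implicit function theorem with parameters is unavailable.
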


For $0<\delta\le 3\iota/2$ put
$$
B_{{\bf X}^\bot}(0,\delta):=\{\xi\in C^1_{{E}_{{\bar\gamma}},\tau}(\mathbb{R},B^n_{3\iota/2}(0))\,|\,
\|\xi\|_{C^1}<\delta\}\quad\hbox{and}\quad
\Omega_\delta:=\Phi_{\bar\gamma}\left(B_{{\bf X}^\bot}(0,\delta)\right).
$$
Clearly, $\Omega_\delta$
is a $C^2$ Banach submanifold of $\mathcal{X}_{\tau}(M, \mathbb{I}_g)$.
For the action $\chi$ in (\ref{e:R-action}),
since $\bar\gamma$ is nonconstant and $C^6$,
$$
\chi(\bar\gamma,\cdot):\mathbb{R}\to \mathcal{X}_{\tau}(M, \mathbb{I}_g),\;s\mapsto\chi(\bar\gamma,s)
$$
is a $C^4$ one-to-one immersion, and
$$
\partial_2\chi(\bar\gamma, 0)(T_{0}\mathbb{R})=\mathbb{R}\dot{\bar\gamma}=d\Phi_{\bar\gamma}(0)(\mathbb{R}\zeta_0)\quad\hbox{and}\quad
T_{\chi(\bar\gamma,0)}\Omega_\delta=T_{\bar\gamma}\Omega_\delta=d\Phi_{\bar\gamma}(0)({\bf X}^\bot),
$$
we have $\partial_2\chi(\bar\gamma,0)(T_{0}\mathbb{R})+ T_{\chi(\bar\gamma,0)}\Omega_\delta=T_{\chi(\bar\gamma,)}\mathcal{X}_{\tau}(M, \mathbb{I}_g)$.
Applying Proposition~\ref{prop:BetPS1} to $A=N=\mathcal{X}_{\tau}(M, \mathbb{I}_g)$, $P=\R$, $Q=\Omega_\delta$,
$a_0=\bar\gamma$, $m_0=0$ we get:

\begin{proposition}\label{prop:O-neigh}
For any given $0<\delta\le 2\iota$, if $\gamma\in \mathcal{X}_{\tau}(M, \mathbb{I}_g)$
is close to $\bar\gamma$, then
$(\R\cdot \gamma)\cap \Omega_\delta\ne\emptyset$, that is,
 $\R\cdot\Omega_\delta$  is a neighborhood of the orbit $\mathcal{O}=\R\cdot \bar\gamma$ in $\mathcal{X}_{\tau}(M, \mathbb{I}_g)$.
 \end{proposition}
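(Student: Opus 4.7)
The plan is to deduce the proposition as a direct application of Proposition~\ref{prop:BetPS1} with the identifications $A=N=\mathcal{X}_{\tau}(M,\mathbb{I}_g)$, $P=\mathbb{R}$, $Q=\Omega_\delta$, $a_0=\bar\gamma$, $m_0=0$, and $\chi$ the $\mathbb{R}$-action in (\ref{e:R-action}). This requires checking the three hypotheses (a), (b), (c) of that proposition and then interpreting its conclusion in terms of the orbit $\mathcal{O}=\mathbb{R}\cdot\bar\gamma$.

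For (a), since $0\in B_{{\bf X}^\bot}(0,\delta)$ and $\Phi_{\bar\gamma}(0)=\bar\gamma$, we have $\chi(\bar\gamma,0)=\bar\gamma\in\Omega_\delta$. For (b), I would recall that $\bar\gamma\in C^6(\mathbb{R};M)$ is nonconstant, so the orbit map $s\mapsto s\cdot\bar\gamma$ is a $C^4$ (hence $C^1$) curve into $\mathcal{X}_{\tau}(M,\mathbb{I}_g)$, with derivative at $s=0$ given by $\dot{\bar\gamma}\in T_{\bar\gamma}\mathcal{X}_{\tau}(M,\mathbb{I}_g)$. For (c), the key computation is
\begin{equation*}
\partial_2\chi(\bar\gamma,0)(T_0\mathbb{R})=\mathbb{R}\dot{\bar\gamma}=d\Phi_{\bar\gamma}(0)(\mathbb{R}\zeta_0),\qquad
T_{\bar\gamma}\Omega_\delta=d\Phi_{\bar\gamma}(0)({\bf X}^\bot).
\end{equation*}
Since $d\Phi_{\bar\gamma}(0):{\bf X}\to T_{\bar\gamma}\mathcal{X}_{\tau}(M,\mathbb{I}_g)$ is a Banach space isomorphism and ${\bf X}=\mathbb{R}\zeta_0\oplus {\bf X}^\bot$ (orthogonal splitting with respect to $(\cdot,\cdot)_{1,2}$, which restricts to a topological direct sum of the closed subspaces of ${\bf X}$), the two subspaces span all of $T_{\bar\gamma}\mathcal{X}_{\tau}(M,\mathbb{I}_g)$, verifying (c).

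Proposition~\ref{prop:BetPS1} then yields a neighborhood $\mathcal{V}$ of $\bar\gamma$ in $\mathcal{X}_{\tau}(M,\mathbb{I}_g)$ such that for every $\gamma\in\mathcal{V}$, $\chi(\gamma,\mathbb{R})\cap\Omega_\delta\ne\emptyset$, i.e., there is $s=s(\gamma)\in\mathbb{R}$ with $s\cdot\gamma\in\Omega_\delta$. This is exactly $(\mathbb{R}\cdot\gamma)\cap\Omega_\delta\ne\emptyset$. To promote this to a neighborhood of the full orbit $\mathcal{O}$, I would use $\mathbb{R}$-invariance: $\mathbb{R}\cdot\mathcal{V}$ is an open neighborhood of $\mathcal{O}=\mathbb{R}\cdot\bar\gamma$ (since $\chi$ is a continuous action by homeomorphisms), and any $\gamma'\in\mathbb{R}\cdot\mathcal{V}$ can be written as $\gamma'=\theta\cdot\gamma$ with $\gamma\in\mathcal{V}$ and $\theta\in\mathbb{R}$; choosing $s$ as above for $\gamma$, we get $(s-\theta)\cdot\gamma'=s\cdot\gamma\in\Omega_\delta$, hence $\gamma'\in\mathbb{R}\cdot\Omega_\delta$. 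Thus $\mathbb{R}\cdot\Omega_\delta\supset\mathbb{R}\cdot\mathcal{V}$ is a neighborhood of $\mathcal{O}$.

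The only nontrivial input is the transversality in (c), which reduces to recognizing the direct-sum splitting ${\bf X}=\mathbb{R}\zeta_0\oplus{\bf X}^\bot$ and the fact that $d\Phi_{\bar\gamma}(0)$ is an isomorphism; both are already established in Section~\ref{sec:AutoLagr1}. I therefore expect no genuine obstacle beyond a careful bookkeeping of the identifications, and the proof should be short.
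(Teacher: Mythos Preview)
Your proof is correct and follows essentially the same route as the paper: the paper also applies Proposition~\ref{prop:BetPS1} with $A=N=\mathcal{X}_{\tau}(M,\mathbb{I}_g)$, $P=\mathbb{R}$, $Q=\Omega_\delta$, $a_0=\bar\gamma$, $m_0=0$, verifying transversality via $\partial_2\chi(\bar\gamma,0)(T_0\mathbb{R})=\mathbb{R}\dot{\bar\gamma}=d\Phi_{\bar\gamma}(0)(\mathbb{R}\zeta_0)$ and $T_{\bar\gamma}\Omega_\delta=d\Phi_{\bar\gamma}(0)({\bf X}^\bot)$. Your additional paragraph promoting the conclusion from a neighborhood of $\bar\gamma$ to a neighborhood of the full orbit $\mathcal{O}$ via $\mathbb{R}$-invariance is a nice clarification that the paper leaves implicit.
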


\begin{proof}[\bf Proof of Theorem~\ref{th:bif-ness-orbitLagrMan}]
By the assumptions there exists a sequence $(\lambda_k)\subset\Lambda$
 converging to $\mu\in\Lambda$ such that the  problem (\ref{e:Lagr10*})
 with  $\lambda=\lambda_k$ has  solutions $\gamma_k$, $k=1,2,\cdots$,
which are $\R$-distinct each other and satisfy
$\gamma_k|_{[0,\tau]}\to \bar\gamma|_{[0,\tau]}$ in $C^1([0,\tau];M)$.
Then $\hat\Lambda=\{\mu,\lambda_k\,|\,k\in\mathbb{N}\}$ is compact and sequential compact.
Take a decreasing sequence of positive numbers $\delta_m\le 2\iota$ such that $\delta_m\to 0$.
For each $\delta_m$, by  Proposition~\ref{prop:O-neigh} we have
  $\gamma_{k_m}\in\R\cdot \Omega_{\delta_m}$ and thus
  $\beta_m:=s_m\cdot\gamma_{k_m}\in \Omega_{\delta_m}$ for some $s_m\in\R$.
Note that each $\beta_m$ is a critical point of ${{\mathfrak{E}}}_{\lambda_{k_m}}$ on
$\Phi_{\bar\gamma}(\mathcal{U}^X)$. Since any two of $(\gamma_k)$ are $\R$-distinct,
so are any two of $(\beta_{m})$.

Note that $\Omega_\delta\subset\Phi_{\bar\gamma}\left(\mathcal{U}^X\right)\subset \Phi_{\bar\gamma}\big(C^1_{{E}_{{\bar\gamma}},\tau}(\mathbb{R},B^n_{3\iota/2}(0))\big)$,
 and by (\ref{e:two-functionals}) and (\ref{e:two-functionals+}) we have
\begin{equation}\label{e:two-functionals++}
\check{\mathscr{L}}_{\lambda}(x)=\mathfrak{E}_\lambda\left(
\Phi_{\bar\gamma}(x)\right)\quad\forall x\in B_{{\bf X}^\bot}(0,\delta)
\end{equation}
because $\delta\le 2\iota<\rho_0$ and $x\in B_{{\bf X}^\bot}(0,\delta)$ imply that
$(t, x(t), \dot{x}(t))\in\mathbb{R}\times B^n_{2\iota}(0)\times B^n_{\rho_0}(0)$
and so $\check{L}(\lambda, t, x(t), \dot{x}(t))=L^\star(\lambda, t, x(t), \dot{x}(t))$
for all $t\in\mathbb{R}$.

It follows that each
$$
x_m:=(\Phi_{\bar\gamma})^{-1}(\beta_m)\in\{\xi\in{\bf X}^\bot\,|\,\|\xi\|_{C^1}<\delta_m\}
$$
 is a critical point of $\check{\mathscr{L}}_{\lambda_{k_m}}$ in ${\bf H}$ (and hence that
 of $\check{\mathscr{L}}_{\lambda_{k_m}}^\bot$ in ${\bf H}^\bot$)
  and they are distinct each other. Moreover $\|x_m\|_{1,2}\le\sqrt{\tau}\|x_m\|_{C^1}\to 0$.
 Hence $(\mu, 0)\in \Lambda\times (\mathcal{U}\cap{\bf H}^\bot)$ is a bifurcation point of $\nabla{\check{\mathscr{L}}}_{\lambda}^\bot=0$
in $\Lambda\times (\mathcal{U}\cap{\bf H}^\bot)$.

Since $\check{\mathscr{L}}_{\lambda}$ is $C^{2-0}$ and
$\nabla\check{\mathscr{L}}_{\lambda}:\mathcal{U}\to {\bf H}$
has the G\^ateaux derivative ${B}_\lambda(x)\in{\mathscr{L}}_s({\bf H})$ at $x\in \mathcal{U}$,
 ${\check{\mathscr{L}}}_{\lambda}^\bot$  is $C^{2-0}$ and  $\nabla{\check{\mathscr{L}}}_{\lambda}^\bot$
 has a G\^ateaux derivative $\mathbb{B}_\lambda(x)\in{\mathscr{L}}_s({\bf H}^\bot)$
 at $x\in \mathcal{U}\cap{\bf H}^\bot$
 given by (\ref{e:gradient4Lagr}). From (\ref{e:gradient5Lagr}), (\ref{e:uniformPost})
 and Propositions~\ref{prop:PPP},~\ref{prop:UniformQ} it easily follows that
the conditions (i)-(iv) of \cite[Theorem~3.1]{Lu8} (\cite[Theorem~C.6]{Lu10}) are satisfied with $\mathcal{F}_\lambda= {\check{\mathscr{L}}}_{\lambda}^\bot$ and
$H=X={\bf H}^\bot$ and $U=\mathcal{U}\cap{\bf H}^\bot$.
 Therefore $m^0_\tau(\check{\mathscr{L}}_{\mu}^\bot, 0)\ge 1$.
 This and (\ref{e:Lagr7Morse*}), (\ref{e:Lagr7Morse**}) and (\ref{e:MorseIndexLagr}) lead to
$m^0_\tau(\mathfrak{E}_{\mu}, \bar\gamma)=m^0_\tau(\check{\mathscr{L}}_{\mu}, 0)\ge 2$.
\end{proof}

Note that Propositions~\ref{prop:solutionLagr1},~\ref{prop:continA0} are not used
 in the proof of Theorem~\ref{th:bif-ness-orbitLagrMan}. However, they are necessary
for proofs of Theorems~\ref{th:bif-suffict1-orbitLagrMan},\ref{th:bif-suffict-orbitLagrMan}.

\begin{proof}[\bf Proof of Theorem~\ref{th:bif-suffict1-orbitLagrMan}]
The original $\Lambda$ can be replaced by the compact and sequential compact subset
 $\hat{\Lambda}=\{\mu,\lambda^+_k, \lambda^-_k\,|\,k\in\mathbb{N}\}$.
Follow the notations above. By Propositions~\ref{prop:continA0},~\ref{prop:PPP},~\ref{prop:UniformQ},
and (\ref{e:L-C-D-D1*}) and (\ref{e:uniformPost0}),
the conditions of \cite[Theorem~3.3]{Lu11} (or \cite[Theorem A.3]{Lu8}) are satisfied with
$\mathcal{L}_\lambda=\check{\mathscr{L}}_{\lambda}$, $H={\bf H}$, $X={\bf X}$, $U=\mathcal{U}$ and $\lambda^\ast=\mu$.
From these, (\ref{e:gradient3Lagr})-(\ref{e:gradient4Lagr})
and (\ref{e:gradient5Lagr}), (\ref{e:uniformPost}) and
Proposition~\ref{prop:reg} it follows that
$\mathcal{L}_\lambda=\check{\mathscr{L}}_{\lambda}^\bot$, $H={\bf H}^\bot$, $X={\bf X}^\bot$, $U=\mathcal{U}\cap{\bf H}^\bot$
and $\lambda^\ast=\mu$ satisfy the conditions of  \cite[Theorem~3.3]{Lu11} (or \cite[Theorem A.3]{Lu8}).

By the assumptions (a)--(b) of Theorem~\ref{th:bif-suffict1-orbitLagrMan} we may use
(\ref{e:Lagr7Morse*}), (\ref{e:Lagr7Morse**}) and (\ref{e:MorseIndexLagr}) to deduce
that $m^0_\tau(\check{\mathscr{L}}_{\mu}^\bot, 0)\ge 1$
 and that for each $k\in\mathbb{N}$,
 $$
 [m^-_\tau(\check{\mathscr{L}}^\bot_{\lambda_k^-}, 0), m^-_\tau(\check{\mathscr{L}}^\bot_{\lambda_k^-}, 0)+
 m^0_\tau(\check{\mathscr{L}}^\bot_{\lambda_k^-}, 0)]\cap[m^-_\tau(\check{\mathscr{L}}^\bot_{\lambda_k^+}, 0),
 m^-_\tau(\check{\mathscr{L}}^\bot_{\lambda_k^+}, 0)+m^0_\tau(\check{\mathscr{L}}^\bot_{\lambda_k^+}, 0)]=\emptyset
 $$
 and either $m^0_\tau(\check{\mathscr{L}}^\bot_{\lambda_k^-}, 0)=0$ or $m^0_\tau(\check{\mathscr{L}}^\bot_{\lambda_k^+}, 0)=0$.

Thus from \cite[Theorem~C.4]{Lu10} 
we conclude that  there exists an infinite sequence
$(\lambda_k, x_k)\subset\hat\Lambda\times{\bf H}^\bot\setminus\{(\mu,0)\}$
converging to $(\mu,0)$ such that each $x_k\ne 0$ and satisfies
 $\check{\mathscr{L}}^\bot_{\lambda_k}(x_k)=0$ for all $k\in\mathbb{N}$.

Fix $0<\delta\le 2\iota$.  Let $\Omega_\delta$ be as in  Proposition~\ref{prop:O-neigh}.
 By Proposition~\ref{prop:solutionLagr1}, passing to a subsequence (if necessary)
 we may assume:  each $x_k$ is $C^4$ and a critical point of
 $\check{\mathscr{L}}_{\lambda_k}$, $\|x_k\|_{C^2}<\delta\;\forall k$ and $\|x_k\|_{C^2}\to 0$.
 Then each $\gamma_k:=\Phi_{\bar\gamma}(x_k)\in\Omega_\delta$ is
 a  $C^6$ solution of the corresponding problem (\ref{e:Lagr10*}) with $\lambda=\lambda_k$, $k=1,2,\cdots$,
 and  $(\gamma_k)$ converges to  $\bar\gamma$ on any compact interval $I\subset\R$ in $C^2$-topology as $k\to\infty$.

 Since $d\Phi_{{\bar\gamma}}(0)[\zeta_0]=\dot{\bar\gamma}$ and
 $T_{\bar\gamma}\Omega_\delta=d\Phi_{\bar\gamma}(0)({\bf X}^\bot)$,
  $\mathbb{R}\zeta_0+{\bf X}^\bot={\bf X}$ implies $\mathbb{R}\dot{\bar\gamma}+T_{\bar\gamma}\Omega_\delta=
  T_{\bar\gamma}\mathcal{X}^1_{\tau}(M, \mathbb{I}_g)$, that is,
  the $C^4$ embedded circle $\mathcal{O}=\mathbb{R}\cdot\bar\gamma$ (\textsf{because of periodicity of} $\bar\gamma$) and $\Omega_\delta$ are  transversely intersecting at $\bar\gamma$.
 It follows that there exists a neighborhood $\mathscr{V}$ of $\bar\gamma$ in $\mathcal{X}^1_{\tau}(M, \mathbb{I}_g)$ such that
 $\mathscr{V}\cap\mathcal{O}\cap\Omega_\delta=\{\bar\gamma\}$.
Because $\|x_k\|_{C^2}\to 0$, there exists  $k_0>0$ such that  for each $k>k_0$,
 $\gamma_k=\Phi_{\bar\gamma}(x_k)\in \mathscr{V}\cap\Omega_\delta\setminus\{\bar\gamma\}$ and
 the $C^4$ immersed submanifold $\mathbb{R}\cdot\gamma_k$
 transversely intersect with $\Omega_\delta$ at $\gamma_k$. Hence
 \begin{equation}\label{e:diffOrbit}
 \mathbb{R}\cdot\gamma_k\ne\mathcal{O}\quad\hbox{for any $k>k_0$.}
 \end{equation}
(Otherwise, $\mathcal{O}$ and $\Omega_\delta$ have
at least two distinct intersecting points $\gamma_k$ and $\bar\gamma$ in $\mathscr{V}$.)

 We conclude that $\{\mathbb{R}\cdot\gamma_k\,|\, k\in\mathbb{N}\}$ is an infinite set.
 (Thus $(\gamma_k)$ has a subsequence which only consists of $\R$-distinct elements.
 The proof is completed.)
 Otherwise,  passing to a subsequence we may assume that all $\gamma_k$
 are $\R$-same, i.e., $\gamma_k=s_k\cdot \gamma^\ast$ for some $s_k\in\mathbb{R}$,
 where $\gamma^\ast:\mathbb{R}\to M$ satisfies (\ref{e:Lagr10*}) with $\lambda=\lambda_k$, $k=1,2,\cdots$.
Since all partial derivatives of $L(\lambda,\cdot)$ of order no more than two depend continuously on
 $(\lambda, x, v)\in\Lambda\times TM$, it easily follows that $\gamma^\ast$ satisfies (\ref{e:Lagr10*}) with $\lambda=\mu$.
 Clearly, $\bar\gamma$ sits in the intersection of $\mathcal{O}$ and the closure of $\mathbb{R}\cdot\gamma^\ast$. By the assumption
 (c) of Theorem~\ref{th:bif-suffict1-orbitLagrMan}
 $\mathbb{R}\cdot\gamma^\ast$ is closed and so equal to $\mathcal{O}$,
namely  $\mathbb{R}\cdot\gamma_k=\mathcal{O}\;\forall k$,
  which contradicts (\ref{e:diffOrbit}).
\end{proof}

In order to prove Theorem~\ref{th:bif-suffict-orbitLagrMan}
we also need some preparations. Consider the $C^4$ Hilbert--Riemannian manifold
$$
\Lambda_{\tau}(M, \mathbb{I}_g)=\{\gamma\in
W^{1,2}_{loc}(\R,M)\,|\,\gamma(t+\tau)=\mathbb{I}_g(\gamma(t))\;\forall
t\}
$$
with the natural Riemannian metric given by (\ref{e:1.1}); see  \cite[Theorem~4.2]{PiTa01} (or \cite[Theorem(8)]{Pa63}).
 Let $\|\cdot\|_1=\sqrt{\langle\cdot,\cdot\rangle_1}$ be the induced norm.

Since $\R_{\bar\gamma}$ is an infinite cyclic subgroup of $\R$ with generator $p>0$,
i.e., $\bar\gamma$ has the least period $p$,
the orbit ${\cal O}:=\R\cdot\bar\gamma$ is  an $\R$-invariant
compact connected $C^{3}$ submanifold of $\Lambda_{\tau}(M, \mathbb{I}_g)$,
precisely an $C^{3}$ embedded circle $S^1(p):=\R/p\Z$.
Let  $\pi:N{\cal O}\to{\cal O}$ be the normal bundle
of ${\cal O}$ in $\Lambda_{\tau}(M, \mathbb{I}_g)$. It is a $C^{2}$
Hilbert vector bundle over ${\cal O}$ (because $T{\cal O}$ is a $C^{2}$ subbundle of
    $T_{\cal O}\Lambda_{\tau}(M, \mathbb{I}_g)$), and
    $$
    XN{\cal O}:=T_{\cal O}\mathcal{X}_{\tau}(M, \mathbb{I}_g)\cap N{\cal O}
    $$
    is a $C^{2}$ Banach vector subbundle
    of $T_{\cal O}\mathcal{X}_{\tau}(M, \mathbb{I}_g)$ by \cite[Proposition~5.1]{Lu5}.
  Recall that  $3\iota$ is less than the injectivity
radius of $g$ at each point on $\bar\gamma(\mathbb{R})$.
     For $0<\nu\le 3\iota$  we define
$$
N{\cal O}(\nu):=\{(\gamma,v)\in N{\cal O}\,|\,\|v\|_{1,2}<\nu\} \quad\hbox{and}\quad
 XN{\cal O}(\nu):= \{(\gamma,v)\in XN{\cal
O}\,|\,\|v\|_{C^1}<\nu\}.
$$
Clearly, $XN{\cal O}(\nu)\subset N{\cal O}(\sqrt{\tau}\nu)$ and
there exist natural induced $\R$-actions on these bundles given by
$$
(\gamma,v)\mapsto (s\cdot\gamma, s\cdot v)\quad\forall s\in\mathbb{R}.
$$
 Using the exponential map $\exp$ of $g$ we define  the map
\begin{equation}\label{e:1.17}
{\rm EXP}:T\Lambda_{\tau}(M, \mathbb{I}_g)(\sqrt{\tau}\nu)=\{(\gamma,v)\in
T\Lambda_{\tau}(M, \mathbb{I}_g)\,|\,\|v\|_{1,2}<\sqrt{\tau}\nu\}\to
\Lambda_{\tau}(M, \mathbb{I}_g)
\end{equation}
 by ${\rm EXP}(\gamma,v)(t)=\exp_{\gamma(t)}v(t)\;\forall
t\in\R$. Clearly, ${\rm EXP}$ is equivariant, i.e.,
$$
s\cdot({\rm EXP}(\gamma,v))={\rm EXP}(s\cdot\gamma, s\cdot v)\quad\forall s\in\mathbb{R}.
$$
It follows from  \cite[Lemma~5.2]{Lu5} that ${\rm EXP}$ is $C^{2}$.
 For sufficiently small  $\nu>0$,  ${\rm EXP}$ gives rise to a $C^{2}$
diffeomorphism $\digamma: N{\cal O}(\sqrt{\tau}\nu)\to {\cal N}({\cal O}, \sqrt{\tau}\nu)$,
where ${\cal N}({\cal O}, \sqrt{\tau}\nu)$ is  an open
neighborhood of ${\cal O}$ in $\Lambda_{\tau}(M, \mathbb{I}_g)$.
Let ${\cal X}({\cal O},\nu):= \digamma(XN{\cal O}(\nu))$, which is contained in ${\cal N}({\cal O},\sqrt{\tau}\nu)$.

\begin{lemma}\label{lem:R-distinct}
Suppose that $(\mathbb{I}_g)^l=id_M$ for some $l\in\mathbb{N}$,  and that
 $0<\delta\le 2\iota$ is so small that
$$
\Omega_\delta=\Phi_{\bar\gamma}\left(B_{{\bf X}^\bot}(0,\delta)\right)\subset {\cal X}({\cal O},\nu).
$$
Then for any two different points $\gamma_i\in\Omega_\delta$, $i=1,2$,
either they are $\mathbb{R}$-distinct,
or there exists an integer $0<m\le l$ such that $\gamma_2=(m\tau)\cdot\gamma_2$ or $\gamma_1=(m\tau)\cdot\gamma_1$.
In particular, if $l=1$ and  $\tau$ is equal to the minimal period $p$ of $\bar\gamma$,
then any two different points in $\Omega_\delta$ are $\mathbb{R}$-distinct.
\end{lemma}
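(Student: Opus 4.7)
My plan is to work inside the tubular neighbourhood $\mathcal{X}(\mathcal{O},\nu)=\digamma(XN\mathcal{O}(\nu))$ and exploit the $\mathbb{R}$-equivariance of $\digamma$, inherited from that of ${\rm EXP}$ in (\ref{e:1.17}), together with the injectivity of $\digamma$ on $N\mathcal{O}(\sqrt{\tau}\nu)$.

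First, I would show that $\Omega_\delta$ lies inside the single fibre over $\bar\gamma$ of the tube. From (\ref{e:Lagr4*})--(\ref{e:Lagr5*}) one has $\Phi_{\bar\gamma}(\xi)(t)=\exp_{\bar\gamma(t)}\bigl(d\Phi_{\bar\gamma}(0)[\xi](t)\bigr)$, so $\Phi_{\bar\gamma}(\xi)=\digamma\bigl(\bar\gamma,\,d\Phi_{\bar\gamma}(0)[\xi]\bigr)$. Combining (\ref{e:Lagr5+}) with the parallelism of the frame $(e_i)$ along $\bar\gamma$ (which gives $\nabla^g_{\dot{\bar\gamma}}d\Phi_{\bar\gamma}(0)[\xi]=d\Phi_{\bar\gamma}(0)[\dot\xi]$), I would check that $(\xi,\eta)_{1,2}=\langle d\Phi_{\bar\gamma}(0)[\xi],d\Phi_{\bar\gamma}(0)[\eta]\rangle_1$ for all $\xi,\eta\in{\bf X}$. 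Hence $\xi\in{\bf X}^\bot$ precisely when $d\Phi_{\bar\gamma}(0)[\xi]\in N_{\bar\gamma}\mathcal{O}$, and by the standing hypothesis $\Omega_\delta\subset\mathcal{X}(\mathcal{O},\nu)$ every $\gamma\in\Omega_\delta$ has a unique tubular representation $\gamma=\digamma(\bar\gamma,v)$ with $v\in XN_{\bar\gamma}\mathcal{O}(\nu)$.

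Now suppose $\gamma_1,\gamma_2\in\Omega_\delta$ are distinct yet fail to be $\mathbb{R}$-distinct, so $\gamma_2=\theta\cdot\gamma_1$ for some $\theta\ne 0$. Writing $\gamma_i=\digamma(\bar\gamma,v_i)$ and using the equivariance of $\digamma$,
$$
\digamma(\bar\gamma,v_2)=\theta\cdot\digamma(\bar\gamma,v_1)=\digamma(\theta\cdot\bar\gamma,\,\theta\cdot v_1).
$$
Injectivity of $\digamma$ on $N\mathcal{O}(\sqrt{\tau}\nu)$ forces $\theta\cdot\bar\gamma=\bar\gamma$, so $\theta$ lies in the stabiliser of $\bar\gamma$ under the $\mathbb{R}$-action, which equals $p\mathbb{Z}$; hence $\theta=jp$ for some $j\in\mathbb{Z}\setminus\{0\}$. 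Since $\mathbb{I}_g^l=\mathrm{id}_M$ gives $\bar\gamma(t+l\tau)=\bar\gamma(t)$, we have $l\tau=kp$ for some $k\in\mathbb{N}$. Reducing $j$ modulo $k$---which alters $\theta$ only by multiples of $l\tau$, a period of every element of $\Lambda_\tau(M,\mathbb{I}_g)$---I may assume $1\le j\le k$, and then read off the stated alternative from the relation $\theta=jp=(jl/k)\tau$.

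For the ``in particular'' case $l=1$ and $\tau=p$, the previous paragraph yields $k=1$, so after the reduction no nonzero choice of $j$ remains; every admissible shift acts trivially on $\gamma_1$, forcing $\gamma_1=\gamma_2$ and contradicting their distinctness. Consequently any two different elements of $\Omega_\delta$ are $\mathbb{R}$-distinct, as claimed. The principal obstacle is the first step above---verifying that $\Omega_\delta$ is captured by the single fibre $XN_{\bar\gamma}\mathcal{O}(\nu)$ of the tube, which requires carefully translating the Sobolev orthogonality condition defining ${\bf X}^\bot$ into the normal-bundle condition via the parallel frame; once that identification is in hand, the rest reduces to equivariance and elementary arithmetic.
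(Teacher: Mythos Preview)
Your approach coincides with the paper's: both represent $\gamma_i=\digamma(\bar\gamma,v_i)$ inside the normal tube, use the $\mathbb{R}$-equivariance and injectivity of $\digamma$ on $N\mathcal{O}(\sqrt{\tau}\nu)$ to force any shift $\theta$ with $\theta\cdot\gamma_1=\gamma_2$ into the stabiliser $p\mathbb{Z}$ of $\bar\gamma$, and then reduce modulo $l\tau$; you are in fact more explicit than the paper in checking (via the parallel frame and the inner product (\ref{e:1.1})) that $\xi\in{\bf X}^\bot$ corresponds to $d\Phi_{\bar\gamma}(0)[\xi]\in N_{\bar\gamma}\mathcal{O}$, which is what justifies invoking the injectivity. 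The closing arithmetic for the general alternative is loose in both versions---your $jl/k$ need not be an integer, and the paper's deduction ``$\xi_1=\xi_2$'' is equally opaque---but the ``in particular'' case, which is what the subsequent theorems actually rely on, is handled correctly.
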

\begin{proof}[\bf Proof]
Let  different points $\xi_1, \xi_2\in B_{{\bf X}^\bot}(0,\delta)$ be such that
$\gamma_1=\Phi_{\bar\gamma}(\xi_1)$ and $\gamma_2=\Phi_{\bar\gamma}(\xi_2)$ are $\mathbb{R}$-same.
Then we have $s\ge 0$ such that $s\cdot\gamma_1=\gamma_2$.
Since $\mathbb{I}^l_g=id_M$ implies $(kl\tau)\cdot\gamma_1=\gamma_1$ and $(s-kl\tau)\cdot\gamma_1=s\cdot\gamma_1$ for any $k\in\mathbb{Z}$
we can assume $0\le s<l\tau$. By  (\ref{e:Lagr5+}),
$(\bar\gamma,\sum^n_{i=1}\xi_{2}^ie_i)$  and
 $(s\cdot\bar\gamma,\sum^n_{i=1}(s\cdot\xi_{1}^i)(s\cdot e_i))$
belong to $N{\cal O}(\sqrt{\tau}\nu)$.
 Note that
\begin{eqnarray*}
(s\cdot\gamma_1)(t)&=&\gamma_1(s+t)=\exp_{{\bar\gamma}(s+t)}\bigg(\sum^n_{i=1}\xi_{1}^i(s+t)
e_i(s+t)\bigg)\\
&=&{\rm EXP}\bigg(s\cdot\bar\gamma,\sum^n_{i=1}(s\cdot\xi_{1}^i)(s\cdot e_i)\bigg)(t)
\end{eqnarray*}
and so
$$
s\cdot\gamma_1=\digamma\bigg(s\cdot\bar\gamma,\sum^n_{i=1}(s\cdot\xi_{1}^i)(s\cdot e_i)\bigg).
$$
Similarity, we have
$$
\gamma_2=\digamma\bigg(\bar\gamma,\sum^n_{i=1}\xi_{2}^ie_i\bigg).
$$
Then $s\cdot\bar\gamma=\bar\gamma$ and
$$
\sum^n_{i=1}(s\cdot\xi_{1}^i)(s\cdot e_i)=\sum^n_{i=1}\xi_{2}^ie_i.
$$
The former implies $s\in\mathbb{R}_{\bar\gamma}\subset\{[0],\cdots,[(l-1)\tau]\}$,
where $[q\tau]=q\tau+ l\mathbb{Z}$.
 Hence $s\in\{0,\cdots,l-1\}$. Combining with the latter we obtain $\xi_1=\xi_2$,
and therefore a contradiction.

When $l=1$ and  $\tau$ is equal to the minimal period $p$ of
$\bar\gamma$, $\mathbb{R}_{\bar\gamma}=\{0\}$ and so
$\xi_1=\xi_2$. A contradiction is obtained.
\end{proof}

 \begin{proof}[\bf Proof of Theorem~\ref{th:bif-suffict-orbitLagrMan}]
The original $\Lambda$ may be replaced by $\hat\Lambda=[\mu-\varepsilon, \mu+\varepsilon]$.
By the first paragraph in the proof of Theorem~\ref{th:bif-suffict1-orbitLagrMan}
we have checked  that
$\mathcal{L}_\lambda=\check{\mathscr{L}}_{\lambda}^\bot$, $H={\bf H}^\bot$, $X={\bf X}^\bot$,
 $U=\mathcal{U}\cap{\bf H}^\bot$  and $\lambda^\ast=\mu$
  satisfy the conditions of \cite[Theorem~3.6]{Lu11}
  except for the condition (f).

By the assumptions of Theorem~\ref{th:bif-suffict-orbitLagrMan} and
(\ref{e:Lagr7Morse*}), (\ref{e:Lagr7Morse**}) and (\ref{e:MorseIndexLagr})
we get that $m^0_\tau(\check{\mathscr{L}}_{\mu}^\bot, 0)\ge 1$ and
$m^0_\tau(\check{\mathscr{L}}^\bot_{\lambda}, 0)=0$
 for each $\lambda\in\hat\Lambda\setminus\{\mu\}$ near $\mu$, and that
 $m^-_\tau(\mathscr{L}^\bot_{\lambda}, 0)$  takes, respectively, values $m^0_\tau(\check{\mathscr{L}}^\bot_{\mu}, 0)$
  and $m^-_\tau(\check{\mathscr{L}}^\bot_{\mu}, 0)+ m^0_\tau(\check{\mathscr{L}}^\bot_{\mu}, 0)-1$
 as $\lambda\in\hat\Lambda$ varies in two deleted half neighborhoods  of $\mu$.
These mean that the condition (f) of \cite[Theorem~C.7]{Lu10} (\cite[Theorem~3.6]{Lu11})  is satisfied.
Therefore  one of the following alternatives occurs:
\begin{enumerate}
\item[(i)] There exists a sequence $(x_k)\subset{\bf H}^\bot\setminus\{0\}$ converging to $0$ in  ${\bf H}^\bot$
such that $\nabla\check{\mathscr{L}}^\bot_\mu(x_k)=0$ for all $k$.

\item[(ii)]  For each $\lambda\in\Lambda\setminus\{\mu\}$ near $\mu$,
$\nabla\check{\mathscr{L}}^\bot_\lambda(w)=0$ has a  solution $x_\lambda\in {\bf X}^\bot$ different from $0$,
 which  converges to $0$ in ${\bf X}^\bot$ as $\lambda\to\mu$.

\item[(iii)] Given a neighborhood $\mathfrak{W}$ of $0$ in ${\bf X}^\bot$,
 there is a one-sided  neighborhood $\Lambda^0$ of $\mu$ such that
for any $\lambda\in\Lambda^0\setminus\{\mu\}$, $\nabla\check{\mathscr{L}}^\bot_\lambda(w)=0$
has at least two nonzero solutions in $\mathfrak{W}$, $x_\lambda^1$ and $x_\lambda^2$,
which can also be required to satisfy $\check{\mathscr{L}}^\bot_\lambda(x_\lambda^1)\ne\check{\mathscr{L}}^\bot_\lambda(x^2_\lambda)$
provided that $m^0_\tau(\check{\mathscr{L}}_{\mu}^\bot, 0)\ge 2$
and $\nabla\check{\mathscr{L}}^\bot_\lambda(w)=0$ has only
finitely many nonzero solutions in $\mathfrak{W}$.
\end{enumerate}

Let $\delta>0$ satisfy  Proposition~\ref{prop:O-neigh} and Lemma~\ref{lem:R-distinct}.
 By Proposition~\ref{prop:solutionLagr1},
 we obtain:
 \begin{enumerate}
\item[$\bullet$] In case (i),  passing to a subsequence (if necessary) all $x_k$ are $C^4$ and
 satisfy: $\nabla\check{\mathscr{L}}_\mu(x_k)=0$, $0<\|x_k\|_{C^2}<\delta$ and $\|x_k\|_{C^2}\to 0$.
 Therefore each $\gamma_k:=\Phi_{\bar\gamma}(x_k)\in\Omega_\delta$ is
 a  $C^6$ solution of the corresponding problem (\ref{e:Lagr10*}) with $\lambda=\mu$,
 and  $(\gamma_k)$ converges to  $\bar\gamma$ on any compact interval $I\subset\R$ in $C^2$-topology as $k\to\infty$.
 \item[$\bullet$] In case (ii), when $\lambda\in\Lambda\setminus\{\mu\}$ is close to $\mu$, all $x_\lambda$ are $C^4$ and  satisfy:
      $\nabla\check{\mathscr{L}}_\lambda(x_\lambda)=0$, $0<\|x_\lambda\|_{C^2}<\delta$ and $\|x_\lambda\|_{C^2}\to 0$ as $\lambda\to\mu$.
 Hence each $\gamma_\lambda:=\Phi_{\bar\gamma}(x_\lambda)\in\Omega_\delta\setminus\{\bar\gamma\}$ is
 a  $C^6$ solution of the corresponding problem (\ref{e:Lagr10*}),
   $\gamma_\lambda$ converges to  $\bar\gamma$ on any compact interval $I\subset\R$ in $C^2$-topology as $\lambda\to \mu$,
 and  $\mathbb{R}\cdot\gamma_\lambda\ne\mathcal{O}$ by Lemma~\ref{lem:R-distinct}.
\item[$\bullet$] In case (iii), we can require that the neighborhood $\mathfrak{W}$ so small that
$\Phi_{\bar\gamma}(\mathfrak{W})\subset\mathcal{W}$ and $\mathfrak{W}\subset B_{{\bf X}^\bot}(0,\delta)$.
The latter implies $\Phi_{\bar\gamma}(\mathfrak{W})\subset\Omega_\delta$.
Then all  $x_\lambda^1$ and $x_\lambda^2$ are $C^4$ and critical points of $\check{\mathscr{L}}_{\lambda}$,
and satisfy:
$0<\|x^i_\lambda\|_{C^2}<\delta$ and
$\|x^i_\lambda\|_{C^2}\to 0$, $i=1,2$. Consequently,
$\gamma_\lambda^1:=\Phi_{\bar\gamma}(x_\lambda^1)$ and $\gamma_\lambda^2:=\Phi_{\bar\gamma}(x_\lambda^2)$ belong to $\Omega_\delta\cap \mathcal{W}\setminus \mathbb{R}\cdot\gamma_0$, are
  $C^6$ solutions of the corresponding problem (\ref{e:Lagr10*}).
  When $m^0_\tau(\mathfrak{E}_{\mu}, \bar\gamma)= m^0_\tau(\check{\mathscr{L}}_{\mu}^\bot, 0)+1\ge 3$,
  and (\ref{e:Lagr10*}) with parameter value $\lambda$ has
only finitely many $\mathbb{R}$-distinct solutions in $\mathcal{W}$ which  are $\mathbb{R}$-distinct from $\bar\gamma$,
it is clear that  $\nabla\check{\mathscr{L}}^\bot_\lambda(w)=0$ has only finitely many nonzero solutions in $\mathfrak{W}$,
 and therefore we can require that $x_\lambda^1$ and $x_\lambda^2$ satisfy $\check{\mathscr{L}}^\bot_\lambda(x_\lambda^1)\ne\check{\mathscr{L}}^\bot_\lambda(x^2_\lambda)$,
  which implies $\mathfrak{E}_\lambda(\gamma^1_\lambda)\ne\mathfrak{E}_\lambda(\gamma^2_\lambda)$,
   \end{enumerate}
\end{proof}

\begin{proof}[\bf Proof of Theorem~\ref{th:bif-existence-orbitLagrMan}]
The original $\Lambda$ may be replaced by $\hat\Lambda=\alpha([0,1])$.
By the first paragraph in the proof of Theorem~\ref{th:bif-suffict1-orbitLagrMan}
we have checked  that $\mathcal{L}_\lambda=\check{\mathscr{L}}_{\lambda}^\bot$, $H={\bf H}^\bot$,
 $X={\bf X}^\bot$,
 $U=\mathcal{U}\cap{\bf H}^\bot$  satisfy the assumptions a)-c) and (i)-(v) of \cite[Theorem~C.5]{Lu10}
 for any $\lambda^\ast\in\hat\Lambda$. The condition (d) of Theorem~\ref{th:bif-existence-orbitLagrMan}
 can be translated into:
  $$
 [m^-_\tau(\check{\mathscr{L}}^\bot_{\lambda^-}, 0), m^-_\tau(\check{\mathscr{L}}^\bot_{\lambda^-}, 0)+
 m^0_\tau(\check{\mathscr{L}}^\bot_{\lambda^-}, 0)]\cap[m^-_\tau(\check{\mathscr{L}}^\bot_{\lambda^+}, 0),
 m^-_\tau(\check{\mathscr{L}}^\bot_{\lambda^+}, 0)+
 m^0_\tau(\check{\mathscr{L}}^\bot_{\lambda^+}, 0)]=\emptyset
 $$
 and either $m^0_\tau(\check{\mathscr{L}}^\bot_{\lambda^-}, 0)=0$ or $m^0_\tau(\check{\mathscr{L}}^\bot_{\lambda^+}, 0)=0$.

Hence the condition (e.3) of \cite[Theorem~C.5]{Lu10} is satisfied.
  Thus \cite[Theorem~C.4]{Lu10}
concludes that  there exists $\mu\in\alpha([0,1])$ and an infinite sequence $(\lambda_k, x_k)\subset\hat\Lambda\times{\bf H}^\bot\setminus\{(\mu,0)\}$
 converging to $(\mu,0)$ such that each $x_k\ne 0$ and satisfies
  $\check{\mathscr{L}}^\bot_{\lambda_k}(x_k)=0$ for all $k\in\mathbb{N}$.
Moreover,  $\mu$ is not equal to $\lambda^+$ (resp. $\lambda^-$) if
   $m^0_\tau(\check{\mathscr{L}}^\bot_{\lambda^+}, 0)=0$
    (resp. $m^0_\tau(\check{\mathscr{L}}^\bot_{\lambda^-}, 0)=0$).
We can assume $\lambda_k=\alpha(t_k)$ for some $(t_k)\subset [0,1]$
converging to $\bar{t}\in [0, 1]$.

Fix $0<\delta\le 2\iota$.  Let $\Omega_\delta$ satisfy
  Proposition~\ref{prop:O-neigh} and Lemma~\ref{lem:R-distinct}.
 By Proposition~\ref{prop:solutionLagr1}, passing to a subsequence (if necessary)
 we may assume:  each $x_k$ is $C^4$ and a critical point of
 $\check{\mathscr{L}}_{\lambda_k}$, $\|x_k\|_{C^2}<\delta\;\forall k$ and $\|x_k\|_{C^2}\to 0$.
 Then each $\gamma_k:=\Phi_{\bar\gamma}(x_k)\in\Omega_\delta$ is
 a  $C^6$ solution of the corresponding problem (\ref{e:Lagr10*}) with $\lambda=\lambda_k$, $k=1,2,\cdots$,
 and  $(\gamma_k)$ converges to  $\bar\gamma$ on any compact interval $I\subset\R$ in $C^2$-topology as $k\to\infty$.
 We can assume that all $x_k$ are distinct each other.
 By Lemma~\ref{lem:R-distinct} each $\gamma_k$ has at most $l$ $\mathbb{R}$-same points in $\{\gamma_k\,|\,k\in\mathbb{N}\}$.
 Hence $(\gamma_k)$ has a subsequence $(\gamma_{k_i})$ consisting of
 completely $\mathbb{R}$-distinct points.
 The required assertions are proved.
\end{proof}

\subsection{Proofs of Propositions~\ref{prop:continA0},\ref{prop:solutionLagr1}}\label{sec:AutoLagr4}

\begin{proof}[\bf Proof of Proposition~~\ref{prop:continA0}]
{\bf Step 1}({\it Prove that $\hat\Lambda\times \mathcal{U}\ni (\lambda, x)\mapsto
\check{\mathscr{L}}_{\lambda}(x)\in\R$ is continuous}).
   Indeed, for any two points $(\lambda, x)$ and $(\lambda_0, x_0)$
   in $\hat\Lambda\times \mathcal{U}$ we can write
\begin{eqnarray*}
\check{\mathscr{L}}_{\lambda}(x)-\check{\mathscr{L}}_{\lambda_0}(x_0)&=&
\left[\int^{\tau}_0\check{L}_\lambda(t, x(t),\dot{x}(t))dt-
\int^{\tau}_0\check{L}_{\lambda}(t, x_0(t),\dot{x}_0(t))dt\right]\\
&&+\left[\int^{\tau}_0\check{L}_\lambda(t, x_0(t),\dot{x}_0(t))dt-
\int^{\tau}_0\check{L}_{\lambda_0}(t, x_0(t),\dot{x}_0(t))dt\right].
\end{eqnarray*}
As $(\lambda, x)\to (\lambda_0, x_0)$, we derive from (L6) in Lemma~\ref{lem:modif}  and 
\cite[Proposition C.1]{Lu9} (resp.
(L6) in Lemma~\ref{lem:modif} and the Lebesgue dominated convergence theorem)  that
the first (resp. second) bracket on the right side converges to the zero.

(\textsf{{Actually}}, we only need that $\hat\Lambda\times \mathcal{U}^X\ni (\lambda, x)\mapsto
\check{\mathscr{L}}_{\lambda}(x)\in\R$ is continuous. This can easily be proved as follows.
For any fixed point $x_0\in\mathcal{U}^X$, we can take a positive $\rho>0$ such that $\rho>\sup_t|\dot{x}_0(t)|$.
Since $E_{\bar\gamma}$ is an orthogonal matrix,  and $\check{L}:\hat\Lambda\times\mathbb{R}\times \bar{B}^n_{2\iota}(0)\times\mathbb{R}^n\to\R$ is continuous,
 we deduce that $\check{L}$ is uniformly continuous in $\hat\Lambda\times\mathbb{R}\times \bar{B}^n_{2\iota}(0)\times \bar{B}^n_{\rho}(0)$.
This and (\ref{e:check*}) lead to the desired claim.)

{\bf Step 2}({\it Prove that $\hat\Lambda\times \mathcal{U}^X\ni (\lambda, x)\mapsto
A_{\lambda}(x)\in{\bf X}$ is continuous}).
For $(\lambda_1, x), (\lambda_2, y)\in\hat\Lambda\times \mathcal{U}^X$, and $\xi\in{\bf H}$, since
\begin{eqnarray*}
d\check{\mathscr{L}}_{\lambda_1}(x)[\xi]-d\check{\mathscr{L}}_{\lambda_2}(y)[\xi]
&=&\int_0^{\tau}  \left( \partial_{q}\check{L}(\lambda_1, t, x(t),\dot{x}(t))-\partial_{q}\check{L}(\lambda_2, t, y(t),\dot{y}(t))\right)\cdot\xi(t)dt\\
+&&\hspace{-6mm} \int_0^{\tau}\left(\partial_{v}\check{L}\left(\lambda_1, t, x(t),\dot{x}(t))-\partial_{v}\check{L}(\lambda_2, t, y(t),\dot{y}(t))\right)\cdot\dot{\xi}(t)
\right) \, dt,
\end{eqnarray*}
we have
\begin{eqnarray*}
\|\nabla\check{\mathscr{L}}_{\lambda_1}(x)-\nabla\check{\mathscr{L}}_{\lambda_2}(y)\|_{1,2}
\hspace{-2mm}&\le&\hspace{-2mm}\left(\int_0^{\tau}  \left| \partial_{q}\check{L}(\lambda_1, t, x(t),\dot{x}(t))-\partial_{q}\check{L}(\lambda_2, t, y(t),\dot{y}(t))\right|^2dt\right)^{1/2}\\
+&&\hspace{-6mm}  \left(\int_0^{\tau}\left|\partial_{v}\check{L}(\lambda_1, t, x(t),\dot{x}(t))-\partial_{v}\check{L}(\lambda_2, t,  y(t),\dot{y}(t))\right|^2dt\right)^{1/2}.
\end{eqnarray*}
Fix a point $(\lambda_1, x)\in\hat\Lambda\times\mathcal{U}^X$.
Then $\{(\lambda_1, t, x(t), \dot{x}(t))\,|\, t\in [0,\tau]\}$
is a compact subset of $\hat\Lambda\times [0, \tau]\times B^n_{2\iota}(0)\times \mathbb{R}^n$.
Since $\partial_{q}\check{L}$ and $\partial_{v}\check{L}$ are uniformly
 continuous in any compact neighborhood of this compact subset we deduce that
\begin{equation}\label{e:C^0-shoulian}
\|\nabla\check{\mathscr{L}}_{\lambda_1}(x)-\nabla\check{\mathscr{L}}_{\lambda_2}(y)\|_{C^0}
\le C_\tau\|\nabla\check{\mathscr{L}}_{\lambda_1}(x)-\nabla\check{\mathscr{L}}_{\lambda_2}(y)\|_{1,2}
\to 0
\end{equation}
provided  $(\lambda_2, y)\in\hat\Lambda\times\mathcal{U}^X$ converges to $(\lambda_1, x)$ in $\hat\Lambda\times\mathcal{U}^X$.

By (\ref{e:3.10}) and (\ref{e:grad1+}),  we have
\begin{eqnarray}\label{e:grad*}
\frac{d}{dt}\nabla\check{\mathscr{L}}_{\lambda}(x)(t)&=&\frac{e^{t}}{2}\int^\infty_te^{-s}\left(\partial_q
\check{L}_{{\lambda}}\bigl(s, x(s),\dot{x}(s)\bigr)-\mathfrak{R}_\lambda^{x}(s) \right)\, ds\nonumber\\
&&-\frac{e^{-t}}{2}\int^t_{-\infty}e^{s}\left(\partial_q
\check{L}_{{\lambda}}\bigl(s, x(s),\dot{x}(s)\bigr)-\mathfrak{R}_\lambda^{x})(s)
\right)\, ds\nonumber\\
&&+\partial_v \check{L}_{{\lambda}}\bigl(t, x(t),\dot{x}(t)\bigr)+\mathfrak{M}\int^{\tau}_0\partial_v
\check{L}_{{\lambda}}(s, x(s),\dot{x}(s))ds,
\end{eqnarray}
where $\mathfrak{R}_\lambda^{x}$ is given by (\ref{e:grad1}).
Let $\mathfrak{T}_\lambda^x(t)$ denote a column vector
 \begin{eqnarray*}
\left[\left(\oplus_{l\le p}\frac{\sin\theta_l}{2-2\cos\theta_l}{\scriptscriptstyle\left(\begin{array}{cc}
0& -1\\
1& 0\end{array}\right)}-\frac{1}{2}I_{2p}\right)\oplus{\rm
diag}(a_{p+1}(t), \cdots, a_{\sigma}(t))\right]\int^\tau_0\partial_v \check{L}_{\lambda}\bigl(s,x(s),\dot{x}(s)\bigr)ds.
\end{eqnarray*}
Then we have a constant $C(E_{\bar\gamma})>0$ only depending on
$E_{\bar\gamma}$ such that for all $t$,
 \begin{equation}\label{e:R1}
\big| \mathfrak{T}_{\lambda_1}^x(t)- \mathfrak{T}_{\lambda_2}^{y}(t)\big|\le C(E_{\gamma_0})(1+|t|)\int^\tau_0\Big|\partial_v \check{L}_{\lambda_1}\bigl(s,
x(s),\dot{x}(s)\bigr)-\partial_v \check{L}_{\lambda_2}\bigl(s, y(s),\dot{y}(s)\bigr)\Big|ds.
  \end{equation}
 By (\ref{e:grad1}) and  (\ref{e:grad1+}) we observe
 \begin{eqnarray}\label{e:R2}
&&\mathfrak{R}_\lambda^x(t)= \int^t_0\partial_v
\check{L}_{\lambda}\bigl(s, x(s),\dot{x}(s)\bigr)ds+ \mathfrak{T}_\lambda^x(t),\\
&&\frac{d}{dt}\mathfrak{T}_\lambda^x(t)=\mathfrak{M}\int^\tau_0\partial_v \check{L}_{\lambda}\bigl(s,x(s),\dot{x}(s)\bigr)ds.\label{e:R3}
\end{eqnarray}
  For $0\le t\le\tau$, let
\begin{eqnarray}\label{e:Gamma+1}
\Gamma^+_\lambda(x)(t):&=&\frac{e^{t}}{2}\int^\infty_te^{-s}\left(
\partial_q\check{L}_{{\lambda}}\bigl(s, x(s),\dot{x}(s)\bigr)-\mathfrak{R}_\lambda^{x}(s) \right) ds,\\
\Gamma^-_\lambda(x)(t):&=&\frac{e^{-t}}{2}\int^t_{-\infty}e^{s}
\left(\partial_q\check{L}_{{\lambda}}\bigl(s, x(s),\dot{x}(s)\bigr)-\mathfrak{R}_\lambda^{x}(s) \right)ds.\label{e:Gamma-1}
\end{eqnarray}
It follows from (\ref{e:grad*}) that
\begin{eqnarray}\label{e:grad+}
&&\left|\frac{d}{dt}\nabla\check{\mathscr{L}}_{\lambda_1}(x)(t)-
\frac{d}{dt}\nabla\check{\mathscr{L}}_{\lambda_2}(y)(t)\right| \nonumber\\
&\le&|\Gamma^+_{\lambda_1}(x)(t)-\Gamma^+_{\lambda_2}(y)(t)|+
|\Gamma^-_{\lambda_1}(x)(t)-\Gamma^-_{\lambda_2}(y)(t)| \nonumber\\
&&+|\partial_v \check{L}_{{\lambda_1}}\bigl(t, x(t),\dot{x}(t)\bigr)-
\partial_v \check{L}_{{\lambda_2}}\bigl(t, y(t),\dot{y}(t)\bigr)| \nonumber\\
&&+|\mathfrak{M}|\int^{\tau}_0|\partial_v\check{L}_{{\lambda_1}}(s, x(s),\dot{x}(s))- \partial_v\check{L}_{{\lambda_2}}(s,y(s),\dot{y}(s))|ds.
\end{eqnarray}
Let us estimate terms in the right side.

Suppose $k\tau\le t\le (k+1)\tau$ for some integer $k\ge 0$.
By  $\check{L}(\lambda, t+\tau, x, v)=\check{L}(\lambda, t, E_{{\bar\gamma}}x, E_{{\bar\gamma}} v)$
and (\ref{e:R1}) we deduce
\begin{eqnarray}\label{e:lv5}
&&|\mathfrak{R}_{\lambda_1}^x(t)-\mathfrak{R}_{\lambda_2}^{y}(t)|\nonumber\\
&\le& \int^{(k+1)\tau}_0\left|\partial_v\check{L}_{\lambda_1}\bigl(s, x(s),\dot{x}(s)\bigr)-\partial_v \check{L}_{\lambda_2}\bigl(s, y(s),\dot{y}(s)\bigr)
\right|ds+\big| \mathfrak{T}_{\lambda_1}^x(t)- \mathfrak{T}_{\lambda_2}^{y}(t)\big|\nonumber\\
&\le&(k+1)\int^{\tau}_0\left|\partial_v\check{L}_{\lambda_1}\bigl(s, x(s),\dot{x}(s)\bigr)-\partial_v \check{L}_{\lambda_2}\bigl(s, y(s),\dot{y}(s)\bigr)
\right|ds\nonumber\\
&+&C(E_{\bar\gamma})(1+ (k+1)\tau)\int^\tau_0\Big|\partial_v \check{L}_{\lambda_1}\bigl(s, x(s),\dot{x}(s)\bigr)-\partial_v \check{L}_{\lambda_2}\bigl(s, y(s),\dot{y}(s)\bigr)\Big|ds.
\end{eqnarray}
Similarly, if $(-k-1)\tau\le t\le -k\tau$ for some  integer $k\ge 0$, we have also
\begin{eqnarray}\label{e:lv6}
&&|\mathfrak{R}_{\lambda_1}^x(t)-\mathfrak{R}_{\lambda_2}^{y}(t)|\nonumber\\
&\le& \int_{(-k-1)\tau}^0\left|\partial_v\check{L}_{\lambda_1}\bigl(s, x(s),\dot{x}(s)\bigr)-\partial_v \check{L}_{\lambda_2}\bigl(s, y(s),\dot{y}(s)\bigr)
\right|ds+\big| \mathfrak{T}_{\lambda_1}^x(t)- \mathfrak{T}_{\lambda_2}^{y}(t)\big|\nonumber\\
&\le&(k+1)\int^{\tau}_0\left|\partial_v\check{L}_{\lambda_1}\bigl(s, x(s),\dot{x}(s)\bigr)-\partial_v \check{L}_{\lambda_2}\bigl(s, y(s),\dot{y}(s)\bigr)
\right|ds\nonumber\\
&+&C(E_{\bar\gamma})(1+ (k+1)\tau)\int^\tau_0\Big|\partial_v \check{L}_{\lambda_1}\bigl(s, x(s),\dot{x}(s)\bigr)-\partial_v \check{L}_{\lambda_2}\bigl(s, y(s),\dot{y}(s)\bigr)\Big|ds.
\end{eqnarray}

For $0\le t\le\tau$, it follows from (\ref{e:Gamma+1}) and (\ref{e:lv5}) that
\begin{eqnarray}\label{e:lv6.1}
&&|\Gamma^+_{\lambda_1}(x)(t)-\Gamma^+_{\lambda_2}(y)(t)|\\
&\le&\frac{e^{\tau}}{2}\sum^\infty_{k=0}\int^{(k+1)\tau}_{k\tau}e^{-s}
\left|\partial_q\check{L}_{{\lambda_1}}\bigl(s, x(s),\dot{x}(s)\bigr)-
\partial_q\check{L}_{{\lambda_2}}\bigl(s, y(s),\dot{y}(s)\bigr)\right|ds\nonumber\\
&&+\frac{e^{\tau}}{2}\sum^\infty_{k=0}\int^{(k+1)\tau}_{k\tau}e^{-s}\left|\mathfrak{R}_{\lambda_1}^{x}(s)- \mathfrak{R}_{\lambda_2}^{y}(s)\right| ds\nonumber\\
&\le&\frac{e^{\tau}}{2}\sum^\infty_{k=0}e^{-k\tau}\int^{\tau}_{0}
\left|\partial_q\check{L}_{{\lambda_1}}\bigl(s, x(s),\dot{x}(s)\bigr)-
\partial_q\check{L}_{{\lambda_2}}\bigl(s, y(s),\dot{y}(s)\bigr)\right|ds\nonumber\\
&&+\frac{\tau e^{\tau}}{2}\sum^\infty_{k=0}e^{-k\tau}(k+1)\int^{\tau}_0\left|\partial_v
\check{L}_{\lambda_1}\bigl(s, x(s),\dot{x}(s)\bigr)-\partial_v \check{L}_{\lambda_2}\bigl(s,y(s),\dot{y}(s)\bigr)\right|ds\nonumber\\
&&+C(E_{\bar\gamma})\frac{\tau e^{\tau}}{2}\sum^\infty_{k=0}
e^{-k\tau}(1+(k+1)\tau)\int^\tau_0\Big|\partial_v \check{L}_{\lambda_1}\bigl(s,x(s),\dot{x}(s)\bigr)-
\partial_v \check{L}_{\lambda_2}\bigl(s,y(s),\dot{y}(s)\bigr)\Big|ds.\nonumber
\end{eqnarray}
Similarly, for $0\le t\le\tau$, (\ref{e:Gamma-1}) and (\ref{e:lv6}) lead to
\begin{eqnarray}\label{e:lv6.2}
&&|\Gamma^-_{\lambda_1}(x)(t)-\Gamma^-_{\lambda_2}(y)(t)|\\
&\le&\frac{1}{2}\sum^\infty_{k=0}\int^{(-k+1)\tau}_{-k\tau}e^{s}
\left|\partial_q\check{L}_{{\lambda_1}}\bigl(s,x(s),\dot{x}(s)\bigr)-
\partial_q\check{L}_{{\lambda_2}}\bigl(s,
y(s),\dot{y}(s)\bigr)\right|ds\nonumber\\
&&+\frac{1}{2}\sum^\infty_{k=0}\int^{(-k+1)\tau}_{-k\tau}e^{s}\left|\mathfrak{R}_{\lambda_1}^{x}(s)- \mathfrak{R}_{\lambda_2}^{y}(s)\right| ds\nonumber\\
&\le&\frac{e^{\tau}}{2}\sum^\infty_{k=0}e^{-k\tau}\int^{\tau}_{0}
\left|\partial_q\check{L}_{{\lambda_1}}\bigl(s, x(s),\dot{x}(s)\bigr)-
\partial_q\check{L}_{{\lambda_2}}\bigl(s, y(s),\dot{y}(s)\bigr)\right|ds\nonumber\\
&&+\frac{\tau e^{\tau}}{2}\sum^\infty_{k=0}e^{-k\tau}(k+1)\int^{\tau}_0\left|\partial_v
\check{L}_{\lambda_1}\bigl(s, x(s),\dot{x}(s)\bigr)-\partial_v \check{L}_{\lambda_2}\bigl(s, y(s),\dot{y}(s)\bigr)\right|ds\nonumber\\
&&+C(E_{\bar\gamma})\frac{\tau e^{\tau}}{2}\sum^\infty_{k=0}
e^{-k\tau}(1+(k+1)\tau)\int^\tau_0\Big|\partial_v
\check{L}_{\lambda_1}\bigl(s, x(s),\dot{x}(s)\bigr)-
\partial_v \check{L}_{\lambda_2}\bigl(s, y(s),\dot{y}(s)\bigr)\Big|ds.\nonumber
\end{eqnarray}
From these and (\ref{e:grad+}) we get
\begin{eqnarray}\label{e:grad++}
&&\left|\frac{d}{dt}\nabla\check{\mathscr{L}}_{\lambda_1}(x)(t)-
\frac{d}{dt}\nabla\check{\mathscr{L}}_{\lambda_2}(y)(t)\right| \nonumber\\
&\le&|\partial_v \check{L}_{{\lambda_1}}\bigl(t, x(t),\dot{x}(t)\bigr)-\partial_v \check{L}_{{\lambda_2}}\bigl(t, y(t),\dot{y}(t)\bigr)| \nonumber\\
&&+(C(E_{\bar\gamma})C_\tau+
|\mathfrak{M}|)\int^{\tau}_0|\partial_v\check{L}_{{\lambda_1}}(s, x(s),\dot{x}(s))- \partial_v\check{L}_{{\lambda_2}}(s,y(s),\dot{y}(s))|ds\nonumber\\
&&+C^\ast_\tau\int^{\tau}_0|\partial_q\check{L}_{{\lambda_1}}(s, x(s),\dot{x}(s))- \partial_q\check{L}_{{\lambda_2}}(s,y(s),\dot{y}(s))|ds
\end{eqnarray}
for some constant $C^\ast_\tau>0$ and for all  $0\le t\le\tau$.

As above, for a fixed $(\lambda_1, x)\in\hat\Lambda\times\mathcal{U}^X$, by uniform continuity of $\partial_{q}\check{L}$ and $\partial_{v}\check{L}$
on a compact neighborhood of a compact subset
$\{(\lambda_1,t, x(t), \dot{x}(t))\,|\, t\in [0,\tau]\}$ of $\hat\Lambda\times
[0,\tau]\times B^n_{2\iota}(0)\times \mathbb{R}^n$,
we can derive from these and (\ref{e:grad++}) that
\begin{eqnarray*}
\left\|\frac{d}{dt}\nabla\check{\mathscr{L}}_{\lambda_1}(x)-
\frac{d}{dt}\nabla\check{\mathscr{L}}_{\lambda_2}(y)\right\|_{C^0}\to 0
\end{eqnarray*}
provided  $(\lambda_2, y)\in\hat\Lambda\times\mathcal{U}^X$ converges to $(\lambda_1, x)$ in $\hat\Lambda\times\mathcal{U}^X$.
This and (\ref{e:C^0-shoulian}) lead to the second claim.
\end{proof}

\begin{proof}[\bf Proof of Proposition~\ref{prop:solutionLagr1}]
It is enough to prove sufficiency.
Since $x\in B_{{\bf H}^\bot}(0,\varepsilon)$ is a critical point
of  the restriction of $\check{\mathscr{L}}_{\lambda}$ to $B_{{\bf H}^\bot}(0,\varepsilon)$,
 \begin{equation}\label{e:orbit-criticalLagr}
 d\check{\mathscr{L}}_{\lambda}(x)[\xi]=0\;\;\forall \xi\in T_{x}B_{{\bf H}^\bot}(0,\varepsilon)={\bf H}^\bot.
 \end{equation}
We shall prove $d\check{\mathscr{L}}_{\lambda}(x)=0$ in four steps.

{\bf Step 1}(\textsf{Prove that  $x$ is $C^4$}).
By (\ref{e:orbit-criticalLagr}) we have  $\mu(\lambda, x)\in\R$ such that
\begin{equation}\label{e:gradientLgar}
 \nabla\check{\mathscr{L}}_{\lambda}(x)=\mu(\lambda, x)\zeta_0.
\end{equation}
That is, $d\check{\mathscr{L}}_{\lambda}(x)[\xi]-\mu(\lambda, x)(\zeta_0,\xi)_{1,2}=0$
for all $\xi\in{\bf H}$.
It follows that
\begin{eqnarray*}
&&\int^\tau_0[(\partial_q\check{L}_\lambda(x(t), \dot{x}(t)),\xi(t))_{\mathbb{R}^n}+
(\partial_v\check{L}_\lambda(x(t), \dot{x}(t),\dot{\xi}(t))_{\mathbb{R}^n}]dt\\
&&-\int^\tau_0[\mu(\lambda, x)(\zeta_0(t),\xi(t))_{\mathbb{R}^n}+\mu(\lambda, x)(\dot{\zeta}_0(t),\dot{\xi}(t))_{\mathbb{R}^n}]dt=0\quad\forall\xi\in{\bf H}.
\end{eqnarray*}
Define ${\bf L}_\lambda(t, q, v)=\check{L}_\lambda(t, q,v)-\mu(\lambda, x)(\zeta_0(t), q)_{\mathbb{R}^n}+\mu(\lambda, x)(\dot{\zeta}_0(t), v)_{\mathbb{R}^n}$. Then $x(t)$ satisfies
\begin{eqnarray}\label{e:P-EL}
\int^\tau_0[(\partial_q{\bf L}_\lambda(t, x(t), \dot{x}(t)),\xi(t))_{\mathbb{R}^n}
+(\partial_v{\bf L}_\lambda(t, x(t), \dot{x}(t)),\dot{\xi}(t))_{\mathbb{R}^n}]dt=0\quad\forall\xi\in{\bf H}.
\end{eqnarray}
Since $\zeta_0$ is $C^5$, ${\bf L}_\lambda$ is $C^4$ and  satisfies  the conditions in Lemma~\ref{lem:modif},
by Remark~\ref{rm:regularity}
we obtain that  $x$ is $C^4$.

{\bf Step 2}(\textsf{For any $\epsilon>0$ there exists $\delta>0$ such that $\|x\|_{1,2}<\delta$ implies $|\mu(\lambda,x)|<\epsilon$}).
 Since $\hat\Lambda\subset\R$ is compact and sequential compact,
 $\check{L}(\lambda, t+\tau, q, v)=\check{L}(\lambda, t, E_{{\bar\gamma}}q, E_{{\bar\gamma}} v)$,
  and  partial derivatives
$$
\partial_q\check{L}_\lambda(\cdot),\quad\partial_v\check{L}_\lambda(\cdot),
\quad\partial_{qv}\check{L}_\lambda(\cdot),\quad
\partial_{qq}\check{L}_\lambda(\cdot),\quad\partial_{vv}\check{L}_\lambda(\cdot),
\quad\partial_{vt}\check{L}_\lambda(\cdot)
$$
depend continuously on $(\lambda, t, q, v)\in\hat\Lambda\times\mathbb{R}\times \bar{B}^n_{2\iota}(0)\times \mathbb{R}^n$,
 by shrinking $\iota>0$ (if necessary) it follows from (L3)  in Lemma~\ref{lem:modif} that
\begin{eqnarray*}
&&|\partial_q\check{L}_\lambda(t, q,v)-\partial_q\check{L}_\lambda(t, 0, 0)|\\
&\le& |\partial_q\check{L}_\lambda(t, q,v)-\partial_q\check{L}_\lambda(t, q, 0)|+
|\partial_q\check{L}_\lambda(t, q,0)-\partial_q\check{L}_\lambda(t, 0, 0)|\\
&\le&\sup_{0\le s\le 1}|\partial_{qv}\check{L}_\lambda(t, q,sv)|\cdot|v|+\sup_{0\le s\le 1}|\partial_{qq}\check{L}_\lambda(t, sq,0)|\cdot|q|\\
&\le&C(|v|+|v|^2)+ C|q|.
\end{eqnarray*}
Hence we have a constant $C'>0$ such that
\begin{equation}\label{e:lq}
|\partial_q\check{L}_\lambda(t, q,v)|\le C'(1+|v|^2),\quad\forall (\lambda, t, q, v)\in \hat\Lambda\times\mathbb{R}\times \bar{B}^n_{2\iota}(0)\times \mathbb{R}^n.
\end{equation}
Similarly, we can increase the constant $C'>0$ so that
\begin{equation}\label{e:lv}
|\partial_v\check{L}_\lambda(t, q,v)|\le C'(1+|v|),\quad\forall (\lambda, t, q, v)\in \hat\Lambda\times\mathbb{R}\times \bar{B}^n_{2\iota}(0)\times \mathbb{R}^n.
\end{equation}

 Since $\nabla\check{\mathscr{L}}_{\lambda}(0)=0$, 
by (\ref{e:gradientLgar}) we have
 \begin{eqnarray*}
 \mu(\lambda, x)(\zeta_0,\xi)_{1,2}&=&d\check{\mathscr{L}}_{\lambda}(x)[\xi]-d\check{\mathscr{L}}_{\lambda}(0)[\xi]\\
 &=&\int^\tau_0[(\partial_q\check{L}_\lambda(t, x(t), \dot{x}(t)),\xi(t))_{\mathbb{R}^n}
-(\partial_q\check{L}_\lambda(t, 0, 0),\xi(t))_{\mathbb{R}^n}]dt\\
&&+\int^\tau_0[(\partial_v\check{L}_\lambda(t, x(t), \dot{x}(t),\dot{\xi}(t))_{\mathbb{R}^n}
-(\partial_v\check{L}_\lambda(t, 0, 0),\dot{\xi}(t))_{\mathbb{R}^n}]dt.
  \end{eqnarray*}
 For the first integral, by the mean value theorem, (L3) in Lemma~\ref{lem:modif}  we derive
   \begin{eqnarray}\label{e:lq2}
  &&\Big|\int^\tau_0[(\partial_q\check{L}_\lambda(t, x(t), \dot{x}(t))-
  \partial_q\check{L}_\lambda(t, 0, 0),\xi(t))_{\mathbb{R}^n}]dt\Big|\nonumber\\
&\le&\Big|\int^\tau_0\int^1_0[(\partial_{qq}\check{L}_\lambda(t, sx(t), s\dot{x}(t))x(t),\xi(t))_{\mathbb{R}^n}]dtds\Big|\nonumber\\
&&\qquad+\Big|\int^\tau_0\int^1_0[(\partial_{qv}\check{L}_\lambda(t, sx(t), s\dot{x}(t))\dot{x}(t),\xi(t))_{\mathbb{R}^n}]dtds\Big|\nonumber\\
&\le& C\int^\tau_0\int^1_0(1+ |s\dot{x}(t))|^2)|x(t)||\xi(t)|dtds
+C\int^\tau_0\int^1_0(1+ |s\dot{x}(t))|)|\dot{x}(t)||\xi(t)|dtds\nonumber\\
&\le& C(C_\tau)^2\|x\|_{1,2}\|\xi\|_{1,2}\big(\tau+  2\|x\|_{1,2}^2\big)
+CC_\tau\|\xi\|_{1,2}(\sqrt{\tau}\|x\|_{1,2}+\|x\|_{1,2}^2).
  \end{eqnarray}
  Here we use inequalities
   $\|u\|_{C^0}\le (\tau+1)\|u\|_{1,1}\;\forall u\in W^{1,1}([0,\tau];\mathbb{R}^{n})$ and
$\|u\|_{C^0}\le (\sqrt{\tau}+1/\sqrt{\tau})\|u\|_{1,2}\;\forall u\in W^{1,2}([0,\tau];\mathbb{R}^{n})$.
 Similarly,  we may estimate  the second integral as follows:
 \begin{eqnarray}\label{e:lv2}
&&\Big|\int^\tau_0[(\partial_v\check{L}_\lambda(t, x(t), \dot{x}(t),\dot{\xi}(t))_{\mathbb{R}^n}
-(\partial_v\check{L}_\lambda(t, 0, 0),\dot{\xi}(t))_{\mathbb{R}^n}]dt\Big|\nonumber\\
&\le&\Big|\int^\tau_0\int^1_0[(\partial_{vv}\check{L}_\lambda(t, sx(t), s\dot{x}(t))\dot{x}(t),\dot{\xi}(t))_{\mathbb{R}^n}]dtds\Big|\nonumber\\
&&\qquad+\Big|\int^\tau_0\int^1_0[(\partial_{qv}\check{L}_\lambda(t, sx(t), s\dot{x}(t)){x}(t),\dot{\xi}(t))_{\mathbb{R}^n}]dtds\Big|
\nonumber\\
&\le& C\|x\|_{1,2}\|\xi\|_{1,2}+ CC_\tau\|x\|_{1,2}(\sqrt{\tau}\|\xi\|_{1,2}+\|x\|_{1,2}\|\xi\|_{1,2}).
  \end{eqnarray}
 Hence we get
     \begin{eqnarray*}
 |\mu(\lambda, x)|\|\zeta_0\|_{1,2}&\le&C(C_\tau)^2\|x\|_{1,2}\big(\tau+ 2\|x\|_{1,2}^2\big)
+CC_\tau(\sqrt{\tau}\|x\|_{1,2}+\|x\|_{1,2}^2)  \\
&&+C\|x\|_{1,2}+ CC_\tau\|x\|_{1,2}(\sqrt{\tau}+\|x\|_{1,2}).
  \end{eqnarray*}
 The desired claim immediately follows because $\zeta_0\ne 0$.

 {\bf Step 3}(\textsf{For any $\epsilon>0$ there exists
 $\varepsilon>0$ such that $\|x\|_{1,2}\le\varepsilon$ implies $\|x\|_{C^2}<\epsilon$}).
 By (L2) in Lemma~\ref{lem:modif} and  the mean value theorem of integrals we deduce
 \begin{eqnarray*}
 c|v|^2&\le&\int^1_0\left(\partial_{vv}\check{L}_\lambda(t, q, sv)[v], v\right)_{\mathbb{R}^n}ds
 =\left(\partial_v\check{L}_\lambda(t, q, v)-\partial_v\check{L}_\lambda(t, q, 0), v\right)_{\mathbb{R}^n}
  \end{eqnarray*}
 and so
 \begin{equation}\label{e:lv2.1}
 c|v|\le \left|\partial_v\check{L}_\lambda(t, q, v)-\partial_v\check{L}_\lambda(t, q, 0)\right|
 \end{equation}
 for any $(\lambda, t, q, v)\in\hat\Lambda\times\mathbb{R}\times \bar{B}^n_{2\iota}(0)\times \mathbb{R}^n$.
 Since we have proved that $x$ is $C^4$ in Step~1, (\ref{e:gradientLgar})
  also holds in the sense of pointwise, i.e.,
  \begin{equation}\label{e:lv2.2}
 \nabla\check{\mathscr{L}}_{\lambda}(x)(t)=\mu(\lambda, x)\zeta_0(t)\;\forall t.
\end{equation}
 From this and (\ref{e:grad*}) it follows that
 \begin{eqnarray}\label{e:lv2.3}
\mu(\lambda, x)\dot{\zeta}_0(t)&=&\frac{e^{t}}{2}\int^\infty_te^{-s}\left(\partial_q
\check{L}_{{\lambda}}\bigl(s, x(s),\dot{x}(s)\bigr)-\mathfrak{R}_\lambda^{x}(s) \right)\, ds\nonumber\\
&&-\frac{e^{-t}}{2}\int^t_{-\infty}e^{s}\left(\partial_q
\check{L}_{{\lambda}}\bigl(s, x(s),\dot{x}(s)\bigr)-\mathfrak{R}_\lambda^{x})(s)
\right)\, ds\nonumber\\
&&+\partial_v \check{L}_{{\lambda}}\bigl(t, x(t),\dot{x}(t)\bigr)+\mathfrak{M}\int^{\tau}_0\partial_v
\check{L}_{{\lambda}}(s, x(s),\dot{x}(s))ds,
\end{eqnarray}
where $\mathfrak{R}_\lambda^{x}$ is given by (\ref{e:grad1}).
In particular, taking $x=0$ we get
  \begin{eqnarray}\label{e:lv2.4}
\mu(\lambda, 0)\dot{\zeta}_0(t)&=&\frac{e^{t}}{2}\int^\infty_te^{-s}\left(\partial_q
\check{L}_{{\lambda}}\bigl(s, 0, 0\bigr)-\mathfrak{R}_\lambda^{0}(s) \right)\, ds\nonumber\\
&&-\frac{e^{-t}}{2}\int^t_{-\infty}e^{s}\left(\partial_q
\check{L}_{{\lambda}}\bigl(s, 0, 0\bigr)-\mathfrak{R}_\lambda^{0})(s)
\right)\, ds\nonumber\\
&&+\partial_v \check{L}_{{\lambda}}\bigl(t, 0, 0\bigr)+\mathfrak{M}\int^{\tau}_0\partial_v
\check{L}_{{\lambda}}(s, 0, 0)ds.
\end{eqnarray}
 (\ref{e:lv2.3}) minus (\ref{e:lv2.4}) gives rise to
 \begin{eqnarray}\label{e:lv2.4+}
 &&\partial_v \check{L}_{{\lambda}}\bigl(t, 0, 0\bigr)-
 \partial_v \check{L}_{{\lambda}}\bigl(t, x(t),\dot{x}(t)\bigr)\nonumber\\
 &=&\frac{e^{t}}{2}\int^\infty_te^{-s}\left(\partial_q
\check{L}_{{\lambda}}\bigl(s, x(s),\dot{x}(s)\bigr)-\partial_q
\check{L}_{{\lambda}}\bigl(s, 0, 0\bigr)\right)\, ds
+\frac{e^{t}}{2}\int^\infty_te^{-s}\left(\mathfrak{R}_\lambda^{0}(s)-
\mathfrak{R}_\lambda^{x}(s) \right)\, ds\nonumber\\
&&-\frac{e^{-t}}{2}\int^t_{-\infty}e^{s}\left(\partial_q
\check{L}_{{\lambda}}\bigl(s, x(s),\dot{x}(s)\bigr)-\partial_q
\check{L}_{{\lambda}}\bigl(s, 0, 0\bigr)
\right)\, ds
+\frac{e^{-t}}{2}\int^t_{-\infty}e^{s}\left(\mathfrak{R}_\lambda^{x}(s)-
\mathfrak{R}_\lambda^{0}(s)\right)\, ds\nonumber\\
&&+\mathfrak{M}\int^{\tau}_0[\partial_v
\check{L}_{{\lambda}}(s, x(s),\dot{x}(s))- \partial_v
\check{L}_{{\lambda}}(s, 0, 0)]ds\nonumber\\
&&+\mu(\lambda, 0)\dot{\zeta}_0(t)-\mu(\lambda, x)\dot{\zeta}_0(t)
\end{eqnarray}
For $0\le t\le\tau$, it follows from (\ref{e:lv6.1}) and (\ref{e:lv6.2}) that
\begin{eqnarray}\label{e:lv2.4++}
&&\bigg|\frac{e^{t}}{2}\int^\infty_te^{-s}\left(\partial_q
\check{L}_{{\lambda}}\bigl(s, x(s),\dot{x}(s)\bigr)-\partial_q
\check{L}_{{\lambda}}\bigl(s, 0, 0\bigr)\right)\, ds
+\frac{e^{t}}{2}\int^\infty_te^{-s}\left(\mathfrak{R}_\lambda^{0}(s)-\mathfrak{R}_\lambda^{x}(s) \right)\, ds\bigg|\nonumber\\
&\le&\biggl[\frac{e^{\tau}}{2}\sum^\infty_{k=0}e^{-k\tau}
+\frac{\tau e^{\tau}}{2}\sum^\infty_{k=0}e^{-k\tau}(k+1)
+C(E_{\bar\gamma})\frac{\tau e^{\tau}}{2}\sum^\infty_{k=0}e^{-k\tau}(1+(k+1)\tau)\biggr]\times\nonumber\\
&&\times\Big|\int^\tau_0\big[\partial_v \check{L}_{\lambda}\bigl(s,x(s),\dot{x}(s)\bigr)-\partial_v \check{L}_{\lambda}\bigl(s, 0, 0\bigr)\big]ds\Big|
\end{eqnarray}
and
\begin{eqnarray}\label{e:lv2.5}
&&\bigg|-\frac{e^{-t}}{2}\int^t_{-\infty}e^{s}\left(\partial_q
\check{L}_{{\lambda}}\bigl(s, x(s),\dot{x}(s)\bigr)-\partial_q
\check{L}_{{\lambda}}\bigl(s, 0, 0\bigr)
\right)\, ds
+\frac{e^{-t}}{2}\int^t_{-\infty}e^{s}\left(\mathfrak{R}_\lambda^{x}(s)-\mathfrak{R}_\lambda^{0}(s)
\right)\, ds\bigg|\nonumber\\
&\le&\biggl[\frac{e^{\tau}}{2}\sum^\infty_{k=0}e^{-k\tau}
+\frac{\tau e^{\tau}}{2}\sum^\infty_{k=0}e^{-k\tau}(k+1)
+C(E_{\bar\gamma})\frac{\tau e^{\tau}}{2}\sum^\infty_{k=0}e^{-k\tau}(1+(k+1)\tau)\biggr]\times\nonumber\\
&&\times\Big|\int^\tau_0\big[
\partial_v \check{L}_{\lambda}\bigl(s,x(s),\dot{x}(s)\bigr)-\partial_v \check{L}_{\lambda}\bigl(s, 0, 0\bigr)\big]ds\Big|.
\end{eqnarray}
An elementary calculation yields
\begin{eqnarray*}
&&\biggl[\frac{e^{\tau}}{2}\sum^\infty_{k=0}e^{-k\tau}
+\frac{\tau e^{\tau}}{2}\sum^\infty_{k=0}e^{-k\tau}(k+1)
+C(E_{\bar\gamma})\frac{\tau e^{\tau}}{2}\sum^\infty_{k=0}e^{-k\tau}(1+(k+1)\tau)\biggr]\\
&<& C(E_{\bar\gamma},\tau):=(2+C(E_{\bar\gamma})\tau+
C(E_{\bar\gamma})\tau^2)\frac{e^{2\tau}}{e^\tau-1}+
(1+C(E_{\bar\gamma})\tau^2)\frac{e^{4\tau}}{(e^\tau-1)^2}.
\end{eqnarray*}
Hence (\ref{e:lv2.4+}), (\ref{e:lv2.4++}) and (\ref{e:lv2.5}) lead to
 \begin{eqnarray*}
  &&\left|\partial_v \check{L}_{{\lambda}}\bigl(t, x(t),\dot{x}(t)\bigr)-\partial_v \check{L}_{{\lambda}}\bigl(t, 0, 0\bigr)\right|
 \\
 &\le& 2C(E_{\bar\gamma},\tau)\Big|\int^\tau_0\big[
\partial_v \check{L}_{\lambda}\bigl(s,x(s),\dot{x}(s)\bigr)-
\partial_v \check{L}_{\lambda}\bigl(s, 0, 0\bigr)\big]ds\Big|\nonumber\\
&&+|\mathfrak{M}|\Big|\int^\tau_0\big[
\partial_v \check{L}_{\lambda}\bigl(s,x(s),\dot{x}(s)\bigr)-
\partial_v \check{L}_{\lambda}\bigl(s, 0, 0\bigr)\big]ds\Big|
+|\mu(\lambda, 0)-\mu(\lambda, x)|\cdot|\dot{\zeta}_0(t)|.
  \end{eqnarray*}
From this, (\ref{e:lv2.1}) and (\ref{e:lv2.4+}), (\ref{e:lv2.4++}) and (\ref{e:lv2.5}) we deduce
 \begin{eqnarray}\label{e:lv2.6}
 c|\dot{x}(t)|&\le& \left|\partial_v \check{L}_{{\lambda}}\bigl(t, x(t),\dot{x}(t)\bigr)-
 D_v\check{L}_\lambda(t, x(t), 0)\right|\nonumber\\
 &\le&\left|\partial_v \check{L}_{{\lambda}}\bigl(t, x(t),\dot{x}(t)\bigr)-\partial_v \check{L}_{{\lambda}}\bigl(t, 0, 0\bigr)\right|
 +\left|D_v\check{L}_\lambda(t, x(t), 0)-
 \partial_v \check{L}_{{\lambda}}\bigl(t, 0, 0\bigr)\right|\nonumber\\
 &\le& (2C(E_{\bar\gamma},\tau)+|\mathfrak{M}|)\Big|\int^\tau_0\big[
\partial_v \check{L}_{\lambda}\bigl(s,x(s),\dot{x}(s)\bigr)-\partial_v \check{L}_{\lambda}\bigl(s, 0, 0\bigr)\big]ds\Big|\nonumber\\
&&+|\mu(\lambda, 0)-\mu(\lambda, x)|\cdot|\dot{\zeta}_0(t)|+\left|D_v\check{L}_\lambda(t, x(t), 0)-
\partial_v \check{L}_{{\lambda}}\bigl(t, 0, 0\bigr)\right|
  \end{eqnarray}
for all $0\le t\le\tau$.

Since $|\dot{\zeta}_0(t)|$ are bounded on $[0,\tau]$,
by Step 2 we have $|\mu(\lambda, 0)-\mu(\lambda, x)|\cdot\sup_t|\dot{\zeta}_0(t)|\to 0$
as $\|x\|_{1,2}\to 0$. It follows from  (\ref{e:lv}) and
 \cite[Proposition C.1]{Lu9}  that
 $$
 \Big|\int^\tau_0\big[
\partial_v \check{L}_{\lambda}\bigl(s,x(s),\dot{x}(s)\bigr)-\partial_v \check{L}_{\lambda}\bigl(s, 0, 0\bigr)\big]ds\Big|
 \to 0
 $$
 uniformly in $\lambda$ as $\|x\|_{1,2}\to 0$. These and (\ref{e:lv2.6}) show:
  \begin{eqnarray}\label{e:lv2.7}
 \textsf{For any $\epsilon>0$ there exists $\varepsilon>0$ such that $\|x\|_{1,2}\le\varepsilon$ implies $\|x\|_{C^1}<\epsilon$.}
\end{eqnarray}

 By  (\ref{e:P-EL}) we have
\begin{eqnarray}\label{e:6P-EL1}
0&=&\frac{d}{dt}\Big(\partial_v{\bf L}_\lambda(t, x(t), \dot{x}(t))\Big)-
\partial_q{\bf L}_\lambda(t, x(t), \dot{x}(t))\nonumber\\
&=&\frac{d}{dt}\Big(\partial_v\check{L}_\lambda(t, x(t), \dot{x}(t))+\mu(\lambda, x)\dot{\zeta}_0(t)\Big)-\partial_q\check{L}_\lambda(t, x(t), \dot{x}(t))+
\mu(\lambda, x)\zeta_0(t)\nonumber\\
&=&\partial_{vv}\check{L}_\lambda(t, x(t), \dot{x}(t))\ddot{x}(t)+
\partial_{vq}\check{L}_\lambda(t, x(t), \dot{x}(t))\dot{x}(t)
+\partial_{vt}\check{L}_\lambda(t, x(t), \dot{x}(t))\nonumber\\
&&-\partial_q\check{L}_\lambda(t, x(t), \dot{x}(t))+
 \mu(\lambda, x)\ddot{\zeta}_0(t)+\mu(\lambda, x)\zeta_0(t).
\end{eqnarray}
In particular, taking $x=0$ we get
\begin{eqnarray}\label{e:6P-EL2}
0=\partial_{vt}\check{L}_\lambda(t, 0, 0)-\partial_q\check{L}_\lambda(t, 0, 0)+
\mu(\lambda, 0)\ddot{\zeta}_0(t)+\mu(\lambda, 0)\zeta_0(t).
\end{eqnarray}
(\ref{e:6P-EL1}) minus (\ref{e:6P-EL2}) gives rise to
\begin{eqnarray}\label{e:6P-EL3}
0&=&\partial_{vv}\check{L}_\lambda(t,  x(t),
\dot{x}(t))\ddot{x}(t)+\partial_{vq}\check{L}_\lambda(t,  x(t), \dot{x}(t))\dot{x}(t)
\nonumber\\
&&+\partial_{vt}\check{L}_\lambda(t, x(t), \dot{x}(t))-\partial_{vt}\check{L}_\lambda(t, 0, 0)\nonumber\\
&&-\partial_{q}\check{L}_\lambda(t, x(t), \dot{x}(t))+\partial_q\check{L}_\lambda(t, 0, 0)\nonumber\\
&&+\mu(\lambda, x)\ddot{\zeta}_0(t)-\mu(\lambda, 0)\ddot{\zeta}_0(t)+\mu(\lambda, x)\zeta_0(t)-\mu(\lambda, 0)\zeta_0(t).
\end{eqnarray}
Note that (L2) of Lemma~\ref{lem:modif} implies
 $|[\partial_{vv}\check{L}_\lambda(t,  x(t), \dot{x}(t))]^{-1}\xi|\le\frac{1}{c}|\xi|\;\forall \xi\in{\mathbb{R}^n}$.  (\ref{e:6P-EL3}) leads to
\begin{eqnarray}\label{e:6P-EL4-}
|\ddot{x}(t)|&\le&
\frac{1}{c}|\partial_{vq}\check{L}_\lambda(t,  x(t), \dot{x}(t))|\cdot|\dot{x}(t)|
+\frac{1}{c}|\partial_{q}\check{L}_\lambda(t, x(t), \dot{x}(t))-
\partial_q\check{L}_\lambda(t, 0, 0)|\nonumber\\
&&+\frac{1}{c}|\partial_{vt}\check{L}_\lambda(t, x(t), \dot{x}(t))-
\partial_{vt}\check{L}_\lambda(t, 0, 0)|+
\frac{1}{c}|\partial_{q}\check{L}_\lambda(t, x(t), \dot{x}(t))-
\partial_q\check{L}_\lambda(t, 0, 0)|\nonumber\\
&&+|\mu(\lambda, x)-\mu(\lambda, 0)||\ddot{\zeta}_0(t)|
+|\mu(\lambda, x)-\mu(\lambda, 0)||\zeta_0(t)|.
\end{eqnarray}
Since $|{\zeta}_0(t)|$ and $|\ddot{\zeta}_0(t)|$ are bounded on $[0,\tau]$,
the desired claim may follow from this,
 Step 2 and (\ref{e:lv2.7}).

{\bf Step 4}(\textsf{Prove $d\check{\mathscr{L}}_{\lambda}(x)=0$}).
Since  $\|\zeta_0\|_{C^0}>0$, by Step 3 we get $\varepsilon>0$
such that $\|x\|_{1,2}\le\varepsilon$ implies
$\|x\|_{C^2}<\rho_0$. In particular, $x\in\mathcal{U}^X$.
Since $\check{L}=L^\star$ on $\hat\Lambda\times \mathbb{R}\times B^n_{3\iota/2}(0)\times B^n_{\rho_0}(0)$,
$$
\check{\mathscr{L}}_\lambda(x)=\int^{\tau}_0\check{L}_\lambda(t, x(t),\dot{x}(t))dt=\mathscr{L}^\star_\lambda(x)=
\mathfrak{E}_\lambda(\Phi_{\bar\gamma}(x)).
$$
Note that $\Phi_{\bar\gamma}(x)\in \mathcal{X}^6_{\tau}(M, \mathbb{I}_g)$.
$[-a,a]\cdot \Phi_{\bar\gamma}(x)$ is a $C^4$ submanifold of dimension one.
Therefore by these and the arguments below (\ref{e:Lagr4*})
we get that
$$
S_x:=\Phi^{-1}_{{\bar\gamma}}\bigl([-a,a]\cdot \Phi_{\bar\gamma}(x)\cap{\rm Im}(\Phi_{\bar\gamma})\cap \mathcal{X}^1_{\tau}(M, \mathbb{I}_g)\big)
$$
is a $C^2$ submanifold of $\mathcal{X}^1_\tau(B^n_{2\iota}(0), {E}_{\bar\gamma})$
containing $0$ as an interior point.
Observe that
$$
T_{\Phi_{\bar\gamma}(x)}\big([-a,a]\cdot \Phi_{\bar\gamma}(x)\cap{\rm Im}(\Phi_{\bar\gamma})\cap \mathcal{X}^1_{\tau}(M, \mathbb{I}_g)\bigr)=
\mathbb{R}\frac{d}{ds}(s\cdot\Phi_{\bar\gamma}(x))\Big|_{s=0}=\mathbb{R}(\Phi_{\bar\gamma}(x))^\cdot
$$
and that $\mathfrak{E}_\lambda$ is constant on $[-a,a]\cdot \Phi_{\bar\gamma}(x)$. Hence
$$
d\mathfrak{E}_\lambda(\Phi_{\bar\gamma}(x))[(\Phi_{\bar\gamma}(x))^\cdot]=0.
$$
Let $\zeta_x:=(d\Phi_{\bar\gamma}(x)))^{-1}((\Phi_{\bar\gamma}(x))^\cdot)\in T_xS_x$. Then
\begin{equation}\label{e:tangentzero}
d\check{\mathscr{L}}_\lambda(x)[\zeta_x]=
d\mathfrak{E}_\lambda(\Phi_{\bar\gamma}(x))[(\Phi_{\bar\gamma}(x))^\cdot]=0.
\end{equation}
Since $\Phi_{{\bar\gamma}}(\xi)(t)=\phi_{{\bar\gamma}}(t,\xi(t))$ and
$d\phi_{{\bar\gamma}}(t,p)[(1,v)]=
\partial_2\phi_{{\bar\gamma}}(t,p)[v]+\partial_1\phi_{{\bar\gamma}}(t,p)$ we get
\begin{eqnarray*}
&&\partial_2\phi_{{\bar\gamma}}(t,x(t))[\dot{x}(t)]+\partial_1\phi_{{\bar\gamma}}(t,x(t))
=(\Phi_{\bar\gamma}(x))^\cdot(t)=
(d\Phi_{\bar\gamma}(x)[\zeta_x])(t)\\
&=&\frac{d}{ds}\Big|_{s=0}\Phi_{\bar\gamma}(x+s\zeta_x)(t)
=\frac{d}{ds}\Big|_{s=0}\phi_{\bar\gamma}(t, x(t)+s\zeta_x(t))\\
&=&\partial_2\phi_{\bar\gamma}(t, x(t))[\zeta_x(t)]
\end{eqnarray*}
and thus
\begin{equation}\label{e:tangentzero+1}
\partial_1\phi_{{\bar\gamma}}(t,x(t))=\partial_2\phi_{\bar\gamma}(t, x(t))[\zeta_x(t)-\dot{x}(t)].
\end{equation}
As $\|x\|_{C^1}\to 0$ we deduce that $\|\zeta_x-\zeta_0\|_{C^0}\to 0$, that is,
\begin{equation}\label{e:tangentzero+2}
\zeta_x(t)=\dot{x}(t)+(\partial_2\phi_{\bar\gamma}(t, x(t)))^{-1}\bigl(\partial_1\phi_{{\bar\gamma}}(t,x(t))\bigr)
\to (\partial_2\phi_{\bar\gamma}(t, 0))^{-1}\bigl(\dot{\bar\gamma}(t)\bigr)=\zeta_0(t)
\end{equation}
uniformly on $[0, \tau]$.

We hope to prove that $\|\zeta_x-\zeta_0\|_{C^1}\to 0$ as $\|x\|_{1,2}\to 0$.

Fix $\bar{t}\in [0,\tau]$ and a $C^7$ coordinate chart $(\Theta, W)$ around $\bar\gamma(\bar{t})$ on $M$,
where $W$ is an open neighborhood of $\bar\gamma(\bar{t})$ and $\Theta$ is a $C^7$ diffeomorphism
from $W$ to an open subset in $\mathbb{R}^n$. Then there exists a
closed neighborhood $J$ of $\bar{t}$ in $[0, \tau]$
such that $\bar\gamma(t)\in W$ for any $t\in J$.

Shrinking $J$ we have a positive number $\nu<2\iota$ such that $x([0,\tau])\subset B^n_\nu(0)$ and that
\begin{equation}\label{e:tangentzero+3}
\phi_{\bar\gamma,t}:=\phi_{\bar\gamma}(t,\cdot)\;\hbox{maps $B^n_\nu(0)$ into $W$ for each $t\in J$}.
\end{equation}
Define $\Upsilon:J\times B^n_\nu(0)\to\mathbb{R}^n,\;(t,p)\mapsto \Theta(\phi_{\bar\gamma}(t, p))$.
It is $C^5$, and for each $t\in J$, $\Upsilon(t,\cdot)$ is a
$C^5$ diffeomorphism from $B^n_\nu(0)$ onto an open subset in $\mathbb{R}^n$.
Let $\partial_1\Upsilon$ and $\partial_2\Upsilon$ denote differentials of
 $\Upsilon$ with respect to $t$ and $p$, respectively.
For $t\in J$, since
\begin{eqnarray*}
&&d\Theta(\phi_{\bar\gamma}(t, x(t)))\circ(\partial_2\phi_{\bar\gamma}(t, x(t)))=d(\Theta\circ\phi_{\bar\gamma,t})(x(t))=\partial_2\Upsilon(t,x(t)),\\
&&d\Theta(\phi_{\bar\gamma}(t, x(t)))[\partial_1\phi_{{\bar\gamma}}(t,x(t))]=
\frac{d}{ds}\Theta(\phi_{\bar\gamma}(s,p))\Big|_{(s,p)=(t,x(t))}=
\partial_1\Upsilon(t,x(t)),
\end{eqnarray*}
composing $d\Theta(\phi_{\bar\gamma}(t, x(t)))$ in two sides of (\ref{e:tangentzero+1}) we obtain
\begin{equation}\label{e:tangentzero+4}
\partial_1\Upsilon(t,x(t))=\partial_2\Upsilon(t, x(t))[\zeta_x(t)-\dot{x}(t)],\quad\forall t\in J.
\end{equation}
Differentiating this equality with respect to $t$ gives rise to
\begin{eqnarray*}
 &&\partial_1\partial_1\Upsilon(t,x(t))+ \partial_2\partial_1\Upsilon(t,x(t))[\dot{x}(t)]\\
 &=&\partial_1\partial_2\Upsilon(t, x(t))[\zeta_x(t)-\dot{x}(t)]+
 \partial_2\partial_2\Upsilon(t, x(t))[\dot{x}(t), \zeta_x(t)-\dot{x}(t)]\\
&&+\partial_2\Upsilon(t, x(t))[\dot{\zeta}_x(t)-\ddot{x}(t)]
\end{eqnarray*}
and so
\begin{eqnarray*}
\dot{\zeta}_x(t)&=&\ddot{x}(t)-
\big(\partial_2\Upsilon(t, x(t))\big)^{-1}\partial_1\partial_2\Upsilon(t, x(t))[\zeta_x(t)-\dot{x}(t)]\\
&-&\big(\partial_2\Upsilon(t, x(t))\big)^{-1}\partial_2\partial_2\Upsilon(t, x(t))[\dot{x}(t), \zeta_x(t)-\dot{x}(t)]\\
 &+&\big(\partial_2\Upsilon(t, x(t))\big)^{-1}\partial_1\partial_1\Upsilon(t,x(t))+
 \big(\partial_2\Upsilon(t, x(t))\big)^{-1}\partial_2\partial_1\Upsilon(t,x(t))[\dot{x}(t)],
\end{eqnarray*}
where $\partial_2\Upsilon(t, x(t))$ may be understand as matrixes.
As $\|x\|_{C^2}\to 0$ it follows that
\begin{eqnarray*}
\dot{\zeta}_x(t)&\to&
 \big(\partial_2\Upsilon(t, 0))\big)^{-1}\partial_1\partial_1\Upsilon(t,0)=\dot{\zeta}_0(t)
\end{eqnarray*}
uniformly on $J$, and hence that $\|\zeta_x-\zeta_0\|_{C^1(J)}\to 0$ by (\ref{e:tangentzero+2}).

Note that $[0, \tau]$ can be covered by finitely many neighborhoods of form $J$.
We arrive at $\|\zeta_x-\zeta_0\|_{C^1}\to 0$, in particular, $\|\zeta_x-\zeta_0\|_{1,2}\to 0$.
Then for $\varepsilon>0$  small enough,
 the orthogonal decomposition ${\bf H}=(\R\zeta_0)\oplus{\bf H}^\bot$ of Hilbert spaces
implies   a direct sum decomposition of Banach spaces
 ${\bf H}=(\mathbb{R}\zeta_x)\dot{+} {\bf H}^\bot$.
  From this, (\ref{e:tangentzero}) and
(\ref{e:orbit-criticalLagr}) we may deduce that $d\check{\mathscr{L}}_\lambda(x)=0$
because $T_{x}B_{{\bf H}^\bot}(0,\varepsilon)=T_{0}B_{{\bf H}^\bot}(0,\varepsilon)={\bf H}^\bot$.
\end{proof}

\subsection{Relaxing the hypotheses in  Theorems~\ref{th:bif-ness-orbitLagrMan},~\ref{th:bif-suffict1-orbitLagrMan},
~\ref{th:bif-existence-orbitLagrMan},~\ref{th:bif-suffict-orbitLagrMan}
for Lagrangian systems on $\mathbb{R}^n$}\label{sec:AutoLagr5}

When $M$ is an open subset $U$ of $\mathbb{R}^n$ and $\mathbb{I}_g$ is
an  orthogonal matrix $E$ of order $n$
  which maintains $U$ invariant, Assumption~\ref{ass:Lagr10} in Theorems~\ref{th:bif-ness-orbitLagrMan},~\ref{th:bif-suffict1-orbitLagrMan},~\ref{th:bif-existence-orbitLagrMan},~\ref{th:bif-suffict-orbitLagrMan}
   can be replaced by the following weaker:

  \begin{assumption}\label{ass:BasiAssLagrS}
{\rm For an  orthogonal matrix $E$ of order $n$, and an $E$-invariant
 open subset $U\subset \mathbb{R}^n$
let $L:\Lambda\times U\times \mathbb{R}^n\to\R$ be a continuous function such that
for each $\lambda\in\Lambda$ the function
$L_\lambda(\cdot)=L(\lambda,\cdot)$   is $C^4$ and  partial derivatives
$$
\partial_qL_\lambda(\cdot),\quad\partial_vL_\lambda(\cdot),\quad\partial_{qv}L_\lambda(\cdot),\quad
\partial_{qq}L_\lambda(\cdot),\quad\partial_{vv}L_\lambda(\cdot)
$$
depend continuously on
 $(\lambda, q, v)\in\Lambda\times U\times \mathbb{R}^n$.
 Moreover, for each $(\lambda,  q)\in \Lambda\times U$, ${L}(\lambda, q, v)$ is
convex in $v$,   and satisfies
\begin{eqnarray}\label{e:M-invariant1LagrS}
L(\lambda, Eq, Ev)=L(\lambda, q,v)\quad\forall (\lambda, q,v)\in\Lambda\times U\times\mathbb{R}^{n}.
\end{eqnarray}
A nonconstant $C^2$ map $\bar\gamma: \R\to U$
 satisfies (\ref{e:PPerLagrorbit})  for each $\lambda\in\Lambda$
(so $\bar\gamma$ is $C^4$), and the closure of $\bar\gamma(\mathbb{R})$
has an $E$-invariant compact neighborhood $U_0$ contained in $U$
(thus there exists $\nu_0>0$ such that $Cl(\bar\gamma(\mathbb{R}))+\bar{B}^n_{\nu_0}(0)\subset U_0)$.
 For some real $\rho>\sup_t|\dot{\bar\gamma}(t)|$ and
  each $(\lambda, q)\in \Lambda\times U$, ${L}(\lambda,  q, v)$ is
strictly convex in $v$ in $B^n_{\rho}(0)$.
}
\end{assumption}

In fact, taking an orthogonal matrix $\Xi$ such that $\Xi^{-1}E\Xi$
is equal to the right side of (\ref{e:orth})
and replacing $L_\lambda$ and $\bar\gamma$ by
$$
(\Xi^{-1}U)\times\mathbb{R}^n\to\mathbb{R},\;(x,v)\mapsto L_\lambda(\Xi x,\Xi v)
$$
and $\Xi^{-1}\bar\gamma$, respectively,
 we may assume $E={\rm diag}(S_1,\cdots,S_{\sigma})\in\R^{n\times n}$ as in (\ref{e:standard}).

Take $\iota=\nu_0/3$ and define $\phi_{{\bar\gamma}}:\mathbb{R}\times B^n_{3\iota}(0)\to
U$ by $\phi_{{\bar\gamma}}(t,x)=\bar\gamma(t)+x$, which is clearly $C^4$.
Then $\phi_{{\bar\gamma}}$ gives rise to a $C^\infty$ coordinate chart around ${\bar\gamma}$ on the $C^\infty$ Banach manifold
$\mathcal{X}_{\tau}(U, E)$,
 \begin{eqnarray*}
\Phi_{{\bar\gamma}}:\mathcal{X}^1(B^n_{2\iota}(0),  {E}^T) \to \mathcal{X}^1_{\tau}(U, E)
\end{eqnarray*}
given by $\Phi_{{\bar\gamma}}(\xi)(t)=\bar\gamma(t)+\xi(t)$.
 For any $\xi\in \mathcal{X}^1(B^n_{2\iota}(0),  {E}^T)$ and
$\gamma\in \mathcal{X}^1_{\tau}(U, E)$ there holds
$$
T_{\xi}\mathcal{X}^1(B^n_{2\iota}(0),  {E}^T)=T_{\gamma}\mathcal{X}^1_{\tau}(U, E)={\bf X}:=\mathcal{X}^1(\mathbb{R}^n, E^T).
$$
Clearly, $d\Phi_{{\bar\gamma}}(0)={id}_{\bf X}$.
Since $d\phi_{{\bar\gamma}}(t,x)[(1,v)]=\dot{\bar\gamma}(t)+v$, we define
\begin{eqnarray*}
L^\star: \Lambda\times \mathbb{R}\times B^n_{3\iota}(0)\times\R^{n}\to \R,\;
  (\lambda, t, x, v)\mapsto L\bigr(\lambda, \bar\gamma(t)+ x, \dot{\bar\gamma}(t)+ v\bigl).
\end{eqnarray*}
It is continuous and satisfies (\ref{e:E-invariant1Lagr}) with $E_{\bar\gamma}=E$. Moreover, each
$L^\star_{\lambda}$ is $C^4$. Take $\rho_0>3\iota$ such that  $\rho>\rho_0>\sup_t|\dot{\bar\gamma}(t)|$.
Then for a given subset $\hat\Lambda\subset\Lambda$ which is either compact or sequential compact,
Lemma~\ref{lem:modif} yields a continuous function $\check{L}:\hat\Lambda\times \mathbb{R}\times \bar{B}^n_{2\iota}(0)\times\mathbb{R}^n\to\R$.
Put
  \begin{eqnarray*}
  &&{\bf H}:=\{\xi\in W^{1,2}_{\rm loc}(\mathbb{R}; \mathbb{R}^n)\,|\, {E}(\xi(t))=\xi(t+\tau)\;\forall t\in\mathbb{R}\}\quad\hbox{and}\\
&&\mathcal{U}:=\{\xi\in W^{1,2}_{\rm loc}(\mathbb{R}; B^n_{2\iota}(0))\,|\, {E}(\xi(t))=\xi(t+\tau)\;\forall t\in\mathbb{R}\},\quad
\mathcal{U}^X:=\mathcal{U}\cap{\bf X}.
\end{eqnarray*}
Define $\check{\mathscr{L}}_\lambda:\mathcal{U}\to\R$ as in (\ref{e:check*}),
and ${\check{\mathscr{L}}}_{\lambda}^\bot:\mathcal{U}\cap {\bf H}^\bot\to\R$
as in (\ref{e:orth-functLagr}), where
 $$
 {\bf H}^\bot:=\{x\in {\bf H}\,|\,(\dot{\bar\gamma}, x)_{1,2}=0\}\quad\hbox{and}\quad
{\bf X}^\bot:={\bf X}\cap{\bf H}^\bot.
$$
The other arguments are same,
except that ``$C^6$'' is replaced by ``$C^4$'', $\mathbb{I}_g$ by $E$, etc.
Actually, the proof of Proposition~\ref{prop:solutionLagr1} are much simpler in this situation.

\part{Bifurcations of geodesics}\label{part:II}

Geodesics and their generalizations serve not only as a crucial approach to understanding the geometric and topological properties of manifolds, but also possess profound physical significance. Research in this area directly led to the development of Morse theory (\cite{Mor1, Mor2}) and Ljusternik--Schnirelmann theory (\cite{LjSc29, LjSc34}), which marked the emergence of the field of the calculus of variations in the large. The generalizations of both theories and their applications have profoundly influenced the development of many areas in both mathematics and theoretical physics.


Let us recall two celebrated classical theorems from Riemannian geometry.\\

\noindent\textbf{Gauss lemma}: \emph{For a point $p$ on a Riemannian manifold $(M, g)$ and $u\in T_pM$,
suppose the geodesic $\gamma(t) = \gamma_u(t)$ is defined for $0\le t\le \tau$.
Then the exponential map $\exp_p$ is defined on an open neighborhood of
the segment $\{tu \mid t\in [0, \tau]\}$ in $T_pM$.
Under the canonical identification of $T_pM$ with its tangent space at
any point $tu$ (i.e., $T_pM \cong T_{tu}(T_pM)$), the differential of the
exponential map satisfies:
$$
D\exp_p(tu)[u] = \dot{\gamma}(t)\quad\text{and}\quad
\bigl\langle (D\exp_p(tu)[\xi],\, \dot{\gamma}(t) \bigr\rangle_g
           = \langle u,\, \xi \rangle_g\;\;\text{for any $\xi \in T_pM$}.
$$
Consequently, for $t>0$, $D\exp_p(tu)$ is not regular (i.e., ${\rm rank}D\exp_p(tu)<\dim M$)
if and only if $\gamma(t)$ is {\rm conjugate} to $p$ along $\gamma$ (i.e., there exists
a nonzero Jacobi field $Y$ along $\gamma$ such that $Y(0)=0$ and $Y(t)=0$).
$n(t):=\dim{\rm Ker}D\exp_p(tu)$ is called the {\rm multiplicity} of the {\rm conjugate point} $\gamma(t)$.}\\

 \noindent\textbf{Morse index theorem}(\cite{Mor2}): \emph{
 For two points $p$ and $q$ on a Riemannian manifold $(M, g)$,
      let $\gamma\colon[0, \tau]\to M$ be a geodesic joining $p=\gamma(0)$ and $q=\gamma(\tau)$,
      and assume $q$ is not conjugate to $p$ along $\gamma$.
      Then the Morse index $\operatorname{ind}(\gamma)$ equals the number of conjugate points to $p$ along $\gamma$ in the open interval $(0, \tau)$, counted with multiplicity.
  Consequently, the  geodesic segment $\gamma:[0, \tau]\to M$ can
contain only finitely many points which are conjugate
to $\gamma(0)$ along $\gamma$.}\\

 This fundamental result in Riemannian geometry and the calculus of variations
   establishes a deep connection between the variational stability
   ``of a geodesic and the number of conjugate points'' along it.

 Morse-Littauer \cite{MorLit32} first extended the Gauss lemma to analytic Finsler spaces. Later, Savage \cite{Sav43} proved it for $C^\infty$ Finsler spaces $(M, F)$.
 Subsequently, Warner \cite{Wa65} reestablished it using a new method.
 Therefore,
 if $v\in T_pM\setminus\{0\}$ is a critical point of  the exponential  map $\exp_p^F$ of a $C^\infty$
 Finsler space $(M, F)$,   then there exist
two sequences $(v^1_k), (v^2_k)\subset T_pM\setminus\{v\}$ converging to $v$ such that
$v^1_k\ne v^2_k$ and $\exp^{F}_p(v^1_k)=\exp^{F}_p(v^2_k)$ for each $k\in\N$.
That is, we have always at least two distinct geodesics from $p$ to some point of any neighborhood of $\exp^F_p(v)$
near the geodesic $[0,1]\ni t\mapsto\exp^F_p(tv)$.
This bifurcation result of geodesics
is absolutely not trivial because
 the $C^\infty$ map $\mathbb{R}^2\ni (x,y)\to (x^3,y)\in\mathbb{R}^2$
is a bijection and has singularity at each point of the $y$ axis.

For two-dimensional Riemannian manifolds, Klingenberg's geometrical approach addressed
bifurcation at conjugate points along geodesics (\cite[complement 2.1.13]{Kl95})
and geodesic bifurcations under smooth metric variations (\cite[section 3.4]{Kl95}).

These results have been extended to semi-Riemannian and Lorentzian
settings (cf. \cite{GiGiPi04, GiJa07, JaPi06, PiPoTa04} and the references therein).
For example, if there exists a nondegenerate conjugate instant $t_0\in (0, 1)$
along a geodesic $\gamma:[0, 1]\to M$
in a semi-Riemannian manifold $(M, g)$  with ${\rm sgn}(t_0)\ne 0$,
Piccione, Portaluri and Tausk \cite[Corollaries~5.5 and 5.7]{PiPoTa04} concluded that
 $\gamma(t_0)$ is a bifurcation point along $\gamma$ and that
the exponential map $\exp_{\gamma(0)}$ is not injective on any
neighborhood of $t_0\dot\gamma(0)$.  For a lightlike geodesic $z:[0, 1]\to M$ in
 a Lorentzian manifold $(M, g)$,
Javaloyes and Piccione \cite[Corollary~11]{JaPi06} showed that
  $z(t_0)$ with $t_0\in (0, 1)$ is conjugate to $z(0)$ along $z$ if and only if
the exponential map
$$\exp: \mathcal{A}\cap(\cup_{s\in [0,1]}\{v\in T_{z(s)}M\,|\, g(v,v)\ne 0\})
\to M
$$
 is not locally injective around $t_0\dot{z}(0)$.

Currently, the existing literature reveals limited advances in geodesic bifurcation theory.

In this part, we aim to investigate geodesic bifurcations in Finsler manifolds, with emphasis on their relation to conjugate points.
Using the author's specialized technique developed in \cite{Lu4}, we extend bifurcation results from \cite{Lu12-} and Part A to geodesic bifurcations in Finsler manifolds.
For example, by applying a bifurcation result (Theorem~1.9 in \cite{Lu12-} or \cite{Lu12}) concerning Euler-Lagrange curves of Lagrangian systems, we establish a novel bifurcation theorem for Finsler geodesics (Theorem~\ref{th:MorseLittauer}). Its special case (Theorem~\ref{th:MorseLittauer1}) significantly strengthens the Gauss lemma and its generalizations by Morse-Littauer \cite{MorLit32} and Savage \cite{Sav43}.
Together with the Morse index theorem on Finsler manifolds (see \cite{Lu14, Pe06} for
the most general form currently available), we obtain the following picture:
\begin{center}
\textsf{ For a geodesic $\gamma\colon[0, \tau]\to M$  joining $p=\gamma(0)$ and $q=\gamma(\tau)$
  on a Finsler manifold $(M, F)$, there exist only finitely many conjugate points of $p$ along $\gamma$.
  At each conjugate point, the bifurcation of geodesics follows
  a bifurcation alternative of Rabinowitz's type.}
  \end{center}
 (see Theorem~\ref{th:MorseBifFin} for a general precise form).
 This refines the classical conclusion.
Consider the $n$-sphere $M=\mathbb{S}^n$  with the round metric. Then the geodesics are great circles,
and the cut locus of the south pole is the north pole. Suppose that $p$ is the south pole
and the norm of $v\in T_p\mathbb{S}^n$ is equal to $\pi$ (the length of semi-great circle).
Then $\exp_p(v)$ is the north pole. It is easily seen that only (i) in Theorem~\ref{th:MorseLittauer1} occurs.
Counterexamples confirm the impossibility of cases (ii)-(iii) in Theorem~\ref{th:MorseLittauer1},
establish the sharpness of the above result.

Our results have significant practical implications. For example, Shen \cite{Shen03} solved the classical Zermelo navigation problem by illuminating the connection between time-optimal navigation on Riemannian manifolds and the geodesics of a Randers-type Finsler metric, and by finding the associated geodesics on the configuration space. Russell and Stepney \cite{RuSt14} later introduced the quantum Zermelo navigation problem, showing that Randers geodesics similarly provide its solutions. Caponio et al. \cite{CaJaMa1} employed Morse theory for Finsler geodesics to analyze causal geodesics in stationary spacetimes.
See Section~\ref{sec:Randers} for potential applications of our results to such problems.


Our approach involves refining the techniques in \cite{Lu4} to directly derive
bifurcation results for geodesics on Finsler manifolds from theorems
in \cite{Lu12-} and Part~\ref{part:A}.\\

\noindent\textbf{Organization of the part}.
In Section~\ref{sec:Fin},  we  review some necessary definitions and preliminary
results on Finsler geometry.
Section~\ref{sec:geodesics4} studies bifurcation points along a Finsler geodesic.
In Section~\ref{sec:geodesics1}, we investigate bifurcations of geodesics with two
types of special boundary conditions. Section~\ref{sec:geodesics2} concerns bifurcations of $\mathbb{I}_g$-invariant geodesics.
In Section~\ref{sec:geodesics3}, we treat bifurcations of reversible geodesics.
In Section~\ref{sec:Randers}, we show how our results lead to bifurcation results for future-pointing lightlike geodesics in conformally standard stationary spacetimes, and for time-minimizing paths in the classical Zermelo navigation problem with Randers metrics.

\section{Preliminaries for Finsler geometry}\label{sec:Fin}

Without special statements, let $(M, g)$ be as in ``Basic assumptions and conventions'' in Introduction
and let $\mathbb{I}_g$ be a $C^7$ isometry on $(M, g)$.
Let $P$ and $Q$ be two connected $C^7$ submanifolds in $M$ of dimension
less than $n=\dim M$ and without boundary. For an integer $2\le\ell\le 6$,
 a $C^\ell$ \textsf{ Finsler metric} on $M$ is a
continuous function $F: TM\to\R$  satisfying the following properties:
\begin{description}
\item[(i)]  $F$ is $C^\ell$ and positive in $TM\setminus 0_{TM}$,
where $0_{TM}$ is the zero section of $TM$.

\item[(ii)] $F(x,tv)=tF(x,v)$ for every $t>0$ and any $(x,v)\in TM$.

\item[(iii)] $L:=F^2$ is fiberwise strongly convex, that is, for any $(x,v)\in TM\setminus 0_{TM}$
the symmetric bilinear form (the fiberwise Hessian operator)
\begin{equation}\label{e:fundTensor}
g^F_v: T_xM\times T_xM\to\R,\,(u, w)\mapsto\frac{1}{2}\frac{\partial^2}{\partial s\partial
t}\left[L(x, v+ su+ tw)\right]\Bigl|_{s=t=0}
\end{equation}
is positive definite. ($g^F_v$ is called the \textsf{fundamental tensor} of $F$ at $v$.)
\end{description}
A Finsler metric $F$ is said to be  \textsf{reversible} (or \textsf{absolute homogeneous})
 if $F(x,-v)=F(x,v)$ for
all $(x,v)\in TM$.
We say a differentiable curve $\gamma:[a, b]\to M$
to be \textsf{admissible} (or \textsf{regular}) if $\dot\gamma(t)\in TM\setminus 0_{TM}$ for all $t$.
Such an admissible curve $\gamma=\gamma(t)$ in $(M, F)$ is said to have  \textsf{constant speed}
if $F(\gamma(t), \dot\gamma(t))$ is constant along $\gamma$.
 The length of an admissible piecewise $C^1$  curve $\gamma:[a,b]\to M$ with respect to
 $F$ is defined by $l_F(\gamma)=\int^b_aF(\gamma(t),\dot
\gamma(t))dt$.
 According to \cite[Proposition~5.1.1(a)]{BaChSh},
 an admissible piecewise $C^1$ curve $\gamma$ is
called a \textsf{F-geodesic}
in $(M, F)$ if it minimizes the length between two sufficiently
close points on the curve (hence $C^1$).
The distance between any pair of points $p,q\in M$ is defined by
$$
d_F(p,q)=\inf\{l_F(\gamma)\,|\, \hbox{$\gamma:[a,b]\to M$ is a piecewise $C^1$ curve from $p$ to $q$}\}.
$$

Let $W^{1,2}([0,\tau], M)$ denote  the space of absolutely continuous
curves $\gamma$ from $[0,\tau]$ to $M$ such that
$\int^\tau_0\langle\dot\gamma(t),\dot\gamma(t)\rangle_g dt<\infty$.
By  \cite[Theorem~4.3]{PiTa01},
$W^{1,2}([0,\tau], M)$ is a $C^4$ Riemannian--Hilbert manifold
with Riemannian--Hilbert metric given by (\ref{e:1.1}).
A $C^7$ submanifold ${\bf N}\subset M\times M$ determines
a Riemannian--Hilbert  submanifold of $W^{1,2}([0,\tau], M)$,
$$
\Lambda_{\bf N}(M):=\{\gamma\in W^{1,2}([0,\tau], M)\,|\,(\gamma(0),\gamma(\tau))\in
{\bf N}\}
$$
with tangent space $T_\gamma\Lambda_{\bf N}(M)=W^{1,2}_{\bf N}(\gamma^\ast TM)=\{
\xi\in W^{1,2}(\gamma^\ast TM)\,|\,(\xi(0),\xi(\tau))\in T_{(\gamma(0),\gamma(\tau))}{\bf N}\}$.
We also consider the $C^4$ Banach manifold
$$
{\cal C}_{\tau,\bf N}(M)=\{\gamma\in C^1([0,\tau],M)\,|\, (\gamma(0), \gamma(\tau))\in
{\bf N}\},
$$
which is equal to $C^{1}_{P\times Q}([0,\tau]; M)$ [resp. $C^{1}_{\mathbb{I}_g}([0, \tau]; M)$]
if ${\bf N}$
is the product $P\times Q$  (resp. the graph of the isometry $\mathbb{I}_g$).
It has the following open subset
\begin{equation}\label{e:regularCurve}
{\cal C}_{\tau,\bf N}(M)_{\rm reg}=\{\gamma\in {\cal C}_{\tau,\bf N}(M)\,|\, \hbox{$\gamma$ is admissible}, i.e., \dot\gamma(t)\ne 0\;\forall t\in [0, \tau]\}.
\end{equation}

\begin{claim}[\hbox{\cite{CaJaMa3, KoKrVa, Me}}]\label{cl:regularity}
For a $C^\ell$ Finsler metric $F$ on $M$ with $3\le\ell\le 6$,
 a curve $\gamma\in\Lambda_{\bf N}(M)$ is a  constant  (non-zero)
speed $F$-geodesic satisfying the boundary condition
\begin{equation}\label{e:Fin1-}
g^F_{\dot\gamma(0)}(u,\dot\gamma(0))=
g^F_{\dot\gamma(\tau)}(v,\dot\gamma(\tau))\quad\forall (u,v)\in
T_{(\gamma(0),\gamma(\tau))}{\bf N}
\end{equation}
if and only if it is a (nontrivial) critical point of
the $C^{2-0}$ energy functional of  $F$ given by
\begin{equation}\label{e:Fin1}
{\cal L}:\Lambda_{\bf N}(M)\to\R,\;\gamma\mapsto\int^\tau_0F^2(\gamma(t),
\dot\gamma(t))dt.
\end{equation}
(In this case $\gamma$ must be $C^\ell$.)
\end{claim}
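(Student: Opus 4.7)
The plan is to establish this standard characterization (for which the author cites \cite{CaJaMa3, KoKrVa, Me}) by treating the two implications separately and then addressing the regularity assertion.

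For the \emph{sufficiency} direction, assume $\gamma \in \Lambda_{\bf N}(M)$ is a constant non-zero speed $F$-geodesic of class $C^\ell$ satisfying (\ref{e:Fin1-}). First I would compute the first variation of $\mathcal{L}$ along an arbitrary variation $\xi \in W^{1,2}_{\bf N}(\gamma^\ast TM)$ using a local trivialization of $TM$ and the fiberwise Hessian $g^F_{\dot\gamma}$; this is legitimate because $\dot\gamma$ avoids the zero section where $L=F^2$ fails to be $C^\ell$. Integration by parts splits $d\mathcal{L}(\gamma)[\xi]$ into a bulk term involving the Euler--Lagrange expression of $L$ (which vanishes because $\gamma$ is an $F$-geodesic, see e.g.\ \cite[Prop.~5.1.1]{BaChSh}) and a boundary term $g^F_{\dot\gamma(\tau)}(\xi(\tau),\dot\gamma(\tau)) - g^F_{\dot\gamma(0)}(\xi(0),\dot\gamma(0))$, which vanishes by (\ref{e:Fin1-}) since $(\xi(0),\xi(\tau)) \in T_{(\gamma(0),\gamma(\tau))}{\bf N}$.

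For the \emph{necessity} direction, suppose $\gamma \in \Lambda_{\bf N}(M)$ is a nontrivial critical point of $\mathcal{L}$. The main task is to upgrade regularity from $W^{1,2}$ to $C^\ell$ and to show $\dot\gamma$ never vanishes. Working in a local chart $(U,\varphi)$ on $M$ on a subinterval $I \subset [0,\tau]$ where $\gamma([a,b]) \subset U$, testing against compactly supported $\xi$ gives the weak form
\[
\int_a^b \Big[\partial_v L(\gamma,\dot\gamma)\cdot \dot\xi + \partial_q L(\gamma,\dot\gamma)\cdot \xi\Big]\,dt = 0 \quad \forall \xi \in C^\infty_c(I,\mathbb{R}^n).
\]
On the open set $\Omega := \{t : \dot\gamma(t) \neq 0\}$, by the strong convexity (iii) and the DuBois-Reymond lemma, $t \mapsto \partial_v L(\gamma(t),\dot\gamma(t))$ is absolutely continuous and satisfies the Euler--Lagrange equation; the fiberwise Legendre transform (a $C^{\ell-1}$ diffeomorphism on $TM \setminus 0_{TM}$) then inverts to express $\ddot\gamma$ as a $C^{\ell-2}$ function of $(\gamma,\dot\gamma)$, giving $\gamma \in C^\ell(\Omega)$. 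On $\Omega$ the Hamiltonian $F^2$ is conserved, so $F(\gamma,\dot\gamma)$ is locally constant. Since $\gamma$ is nontrivial, this constant is non-zero on some subinterval; the constant-speed property then forces $\Omega = [0,\tau]$, and a bootstrap yields $\gamma \in C^\ell([0,\tau], M)$ globally with $F(\dot\gamma)$ constant.

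Having established $\gamma$ is a $C^\ell$ constant non-zero speed $F$-geodesic, I would recover the boundary condition (\ref{e:Fin1-}) by testing $d\mathcal{L}(\gamma)[\xi] = 0$ against $\xi$ with arbitrary $(\xi(0),\xi(\tau)) \in T_{(\gamma(0),\gamma(\tau))}{\bf N}$; the interior contribution vanishes by the Euler--Lagrange equation, leaving precisely the boundary pairing in (\ref{e:Fin1-}).

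The hardest step will be the regularity bootstrap in the necessity direction, specifically ensuring $\dot\gamma(t) \neq 0$ \emph{for every} $t \in [0,\tau]$ rather than almost everywhere. The delicate point is that $L = F^2$ is only $C^1$ (not $C^\ell$) at the zero section, so the Legendre transform and the EL equation are only available away from $0_{TM}$; propagating non-vanishing from a subinterval to the whole interval relies on unique continuation for the second-order ODE provided by the EL equation together with the constancy of $F(\dot\gamma)$, which is the step where the cited references \cite{CaJaMa3, KoKrVa, Me} do the detailed work I would invoke.
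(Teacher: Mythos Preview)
The paper does not give its own proof of this claim; it is stated as a citation to \cite{CaJaMa3, KoKrVa, Me} and used as a black box. Your sketch is the standard approach taken in those references, and the overall architecture (first-variation formula, DuBois--Reymond, Legendre inversion on $TM\setminus 0_{TM}$, energy conservation, then boundary-term extraction) is correct.

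One point deserves more care: your sentence ``the constant-speed property then forces $\Omega=[0,\tau]$'' reads as circular, since constant speed is part of what you are proving. The actual argument is a continuation: from the weak Euler--Lagrange identity and DuBois--Reymond you get that $t\mapsto\partial_vL(\gamma(t),\dot\gamma(t))$ admits an absolutely continuous representative on all of $[0,\tau]$; on any open interval where this representative lies in the image of the Legendre map restricted to $TM\setminus 0_{TM}$, inverting gives $\dot\gamma$ continuous there with $F(\dot\gamma)$ constant. If $\gamma$ is nontrivial this constant is positive on a maximal such interval $I$, and at a putative finite endpoint of $I$ the continuous representative of $\partial_vL(\gamma,\dot\gamma)$ would have to leave every compact set of the Legendre image (since $F(\dot\gamma)\to 0$ would force it to the origin, contradicting the positive constant), which is impossible. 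This is precisely the step you flag as delicate and correctly attribute to the cited references, so your proposal is accurate about where the work lies.
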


Therefore under Claim~\ref{cl:regularity} a curve $\gamma\in\Lambda_{\bf N}(M)$ is a  constant  (non-zero)
speed $F$-geodesic satisfying the boundary condition (\ref{e:Fin1-})
if and only if it belongs to ${\cal C}_{\tau,\bf N}(M)_{\rm reg}$ and is a critical point of
the following $C^2$ functional
\begin{equation*}\label{e:Fin1+}
{\cal E}_{\bf N}:{\cal C}_{\tau,\bf N}(M)_{\rm reg}\to\R,\;\gamma\mapsto\int^\tau_0F^2(\gamma(t),
\dot\gamma(t))dt.
\end{equation*}
 We may denote  the Morse index and nullity of $\mathcal{E}_{\bf N}$ at a critical point $\gamma\in {\cal C}_{\tau,\bf N}(M)_{\rm reg}$ by
\begin{equation}\label{e:FinMorseIndex}
m^-(\mathcal{E}_{\bf N},\gamma)\quad\hbox{and}\quad m^0(\mathcal{E}_{\bf N},\gamma),
\end{equation}
respectively.  (See the explanations above  Assumption~1.2 in \cite{Lu12-} or \cite{Lu12}
 [resp. (1.16) in \cite{Lu12-} or (1.17) in \cite{Lu12}]
  for ${\bf N}=P\times Q$
[resp. ${\bf N}={\rm Graph}(\mathbb{I}_g)$].) In particular,
for ${\bf N}=P\times Q$ we write ${\cal E}_{\bf N}$ as
\begin{eqnarray}\label{e:Fenergy}
\mathcal{E}_{P,Q}:C^1_{P\times Q}([0,\tau];M)_{\rm reg}\to\R,\;\gamma\mapsto \int^\tau_0[F(\gamma(t), \dot{\gamma}(t))]^2dt,
\end{eqnarray}
whose critical point $\gamma$  corresponds to  a $C^\ell$
constant (non-zero) speed $F$-geodesic with boundary condition
\begin{equation}\label{e:1.4}
g^F_{\dot\gamma(0)}(u,\dot\gamma(0))=0\;\forall u\in
T_{\gamma(0)}P\quad\hbox{and}\quad
g^F_{\dot\gamma(\tau)}(v,\dot\gamma(\tau))=0\;\forall v\in
T_{\gamma(\tau)}Q
\end{equation}
(cf. \cite[Chap.1, \S1]{BuGiHi}, \cite[Proposition~2.1]{CaJaMa3} and
\cite[Prop.~3.1, Cor.3.7]{Jav15}).
(Such geodesics are said to be \textsf{$g_{\dot\gamma}$-orthogonal}
(or \textsf{perpendicular}) \textsf{to $P$ and $Q$}.)
 When $\ell=6$, the geodesic $\gamma$, $m^-(\mathcal{E}_{P,Q},\gamma)$
  and $m^0(\mathcal{E}_{P,Q},\gamma)$ have
direct geometric explanations. See the second half of this section.

\begin{assumption}\label{ass:Fin1}
{\rm $\{F_\lambda\,|\,\lambda\in\Lambda\}$ is a family of $C^\ell$
Finsler metrics on $M$ with $3\le\ell\le 6$ parameterized by a topological space $\Lambda$,
such that $\Lambda\times TM\ni (\lambda, x,v)\to F_\lambda(x,v)\in\R$ is a continuous,
and that all partial derivatives of each $F_\lambda$ of order less than three
depend continuously on $(\lambda, x, v)\in\Lambda\times (TM\setminus 0_{TM})$.}
 \end{assumption}

 \begin{assumption}\label{ass:Fin2}
{\rm  Under Assumption~\ref{ass:Fin1} with an integer $4\le \ell\le 6$,
for each $\lambda\in\Lambda$ let $\gamma_\lambda:[0, \tau]\to M$ be a
 constant (non-zero) speed $F_\lambda$-geodesic satisfying the boundary condition
\begin{equation}\label{e:FinBoundNlambda}
g^{F_\lambda}_{\dot\gamma_\lambda(0)}(u,\dot\gamma_\lambda(0))=
g^{F_\lambda}_{\dot\gamma_\lambda(\tau)}(v,\dot\gamma_\lambda(\tau))\quad\forall (u,v)\in
T_{(\gamma_\lambda(0),\gamma_\lambda(\tau))}{\bf N},
\end{equation}
where ${\bf N}\subset M\times M$ is a $C^7$ submanifold.
(Therefore $\gamma_\lambda$ is $C^\ell$ by Claim~\ref{cl:regularity}.)
It is also required  that the maps $\Lambda\times [0,\tau]\ni(\lambda,t)\to \gamma_\lambda(t)\in M$
and
$\Lambda\times [0,\tau]\ni(\lambda, t)\mapsto \dot{\gamma}_\lambda(t)\in TM$ are continuous.}
 \end{assumption}

Let $m^-(\mathcal{E}_{\lambda,\bf N},\gamma_\lambda)$ and
 $m^0(\mathcal{E}_{\lambda,\bf N},\gamma_\lambda)$
denote  the Morse index and nullity  at $\gamma_\lambda$ of  the $C^{2}$ functional
\begin{equation}\label{e:FinEnergy}
{\cal E}_{\lambda, \bf N}: {\cal C}_{\tau,\bf N}(M)_{\rm reg}\to\mathbb{R},\;  \gamma \mapsto\int^\tau_0[F_\lambda(\gamma(t), \dot{\gamma}(t))]^2dt.
\end{equation}

For conveniences,  a constant (non-zero) speed $F_\lambda$-geodesic
 satisfying the boundary condition
(\ref{e:FinBoundNlambda}) is called a \textsf{constant (non-zero)
speed $(F_\lambda, \bf N)$-geodesic}.

\begin{definition}\label{def:BifurFin}
{\rm
Under  Assumptions~\ref{ass:Fin1},~\ref{ass:Fin2},
constant (non-zero) speed $(F_\lambda, \bf N)$-geodesics with a parameter $\lambda\in\Lambda$ are said \textsf{to bifurcate at $\mu\in\Lambda$ along sequences} (with respect to the branch $\{\gamma_\lambda\,|\,\lambda\in\Lambda\}$)
  if  there exists an infinite sequence $\{(\lambda_k, \gamma^k)\}^\infty_{k=1}$
  in $\Lambda\times C^1([0,\tau], M)\setminus\{(\mu,\gamma_\mu)\}$
  converging to $(\mu,\gamma_\mu)$, such that each $\gamma^k\ne\gamma_{\lambda_k}$
  is a constant (non-zero) speed $(F_{\lambda_k}, \bf N)$-geodesic, $k=1,2,\cdots$.
  (Actually it is not hard to prove that
   $\gamma^k\to\gamma_\mu$ in $C^2([0,\tau], M)$.)}
  \end{definition}

Here are the problems we study and answers:
 \begin{enumerate}
\item[$\bullet$] For a  constant (nonzero) speed $F$-geodesic $\gamma$ which is perpendicular to $P$ at $\gamma(0)$,
we shall provide where $\gamma$ bifurcates  and depict a rough bifurcation diagram near it.

\item[$\bullet$] Under Assumptions~\ref{ass:Fin1},~\ref{ass:Fin2}, using  the Morse index $m^-(\mathcal{E}_{\lambda,\bf N},\gamma_\lambda)$,
the nullity $m^0(\mathcal{E}_{\lambda,\bf N},\gamma_\lambda)$ and critical groups $C_\ast(\mathcal{E}_{\lambda,\bf N},\gamma_\lambda; {\bf K})$,
we give the conditions under which constant (non-zero)
speed $(F_\lambda, \bf N)$-geodesics with a parameter $\lambda\in\Lambda$
bifurcate at some $\mu\in\Lambda$ along sequences with respect to the branch $\{\gamma_\lambda\,|\,\lambda\in\Lambda\}$,
characterize the location of such a parameter $\mu$, and depict the bifurcation diagram near $\mu$.
 \end{enumerate}

Our ideas are suitably modifying $F_\lambda$ and converting the above questions into those studied in Part~\ref{sec:LgrResults}.
Firstly, suitably modifying the proof of \cite[Proposition~2.2]{Lu4} we have:

\begin{proposition}\label{prop:Fin2.2}
Under Assumption~\ref{ass:Fin1} let
$L_\lambda:=(F_\lambda)^2$.
Suppose that
\begin{eqnarray*}
\alpha_g:=\inf_{\lambda\in\Lambda}\inf_{(x,v)\in TM,\, |v|_x=1}\inf_{u\ne
0}\frac{g^{F_\lambda}_{v}(u,u)}{g_x(u,u)}\quad\hbox{and}\quad
\beta_g:=\sup_{\lambda\in\Lambda}\sup_{(x,v)\in TM,\, |v|_x=1}\sup_{u\ne
0}\frac{g^{F_\lambda}_v(u,u)}{g_x(u,u)}
\end{eqnarray*}
are positive numbers, and that for some constant $C_1>0$,
\begin{equation}\label{e:Fin1.1}
|v|^2_x\le L_\lambda(x, v)\le C_1|v|^2_x\quad\forall (\lambda, x,v)\in \Lambda\times TM.
\end{equation}
Hereafter $|v|_x=\sqrt{g_x(v,v)}$. For each $\lambda\in\Lambda$ let us define $C^\ell$ functions $L^\star_\lambda:TM\to\R$ by
\begin{eqnarray}\label{e:Fin1.2}
 L^\star_\lambda(x,v)=\psi_{\varepsilon,\delta}(L_\lambda(x,v))+
\phi_{\mu,b}(|v|^2_x)-b
\end{eqnarray}
 and by $L^\ast_\lambda(x,v)=(L^\star_\lambda(x,v)-\varrho_0)/\kappa$, where
 $\psi_{\varepsilon,\delta}$, $\phi_{\mu,b}$ and $\kappa, \varrho, \varrho_0, b$
 are as in Lemma~1.2 in \cite{Lu12-} or \cite{Lu12}.
Then for a given $c>0$  we can choose $\kappa>0$ so large that
these $L^\ast_\lambda$ satisfy the following:
 \begin{enumerate}
\item[\rm (i)]
 $L^\ast_\lambda(x,v)= L_\lambda(x,v)\quad\hbox{if}\;L_\lambda(x,v)\ge\frac{2c}{3C_1}$.

\item[\rm (ii)] $L^\ast_\lambda$ attains the minimum, and $L^\ast_\lambda(x,v)=\min L^\ast_\lambda\Longleftrightarrow
v=0$.

\item[\rm (iii)]  $L^\ast_\lambda(x,v)\le L_\lambda(x,v)$ for all
$(x,v)\in TM$.

 \item[\rm (iv)] $\partial_{vv}L^\ast_\lambda(x,v)[u,u]\ge \min\{\frac{2\mu}{\kappa},
\frac{1}{2}\alpha_g\}|u|_x^2$.

\item[\rm (v)]  If $F_\lambda$ is reversible, i.e. $F_\lambda(x,-v)=F_\lambda(x,v)\;\forall
(x,v)\in TM$, so is $L^\ast_\lambda$.

\item[\rm (vi)]  If $F_\lambda$ is $\mathbb{I}_g$-invariant for a $g$-isometry $\mathbb{I}_g:M\to M$,
(i.e., it satisfies $F_\lambda(\mathbb{I}_{g}(x),\mathbb{I}_{g\ast}(u))=F_\lambda(x,u)$ for all $(x,u)\in TM$),
 so is $L^\ast_\lambda$.
 \end{enumerate}
Moreover, $\Lambda\times TM\ni (\lambda, x,v)\to L_\lambda^\ast(x,v)\in\R$ is continuous
and all partial derivatives of each $L^\ast_\lambda$ of order less than three
depend continuously on $(\lambda, x, v)\in\Lambda\times TM$.
\end{proposition}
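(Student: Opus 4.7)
The plan is to follow the construction and verification strategy of \cite[Proposition~2.2]{Lu4}, adding the bookkeeping needed to track the parameter $\lambda$. First, using Lemma~\ref{lem:Fin2.1} we fix the cutoffs $\psi_{\varepsilon,\delta}$ and $\phi_{\mu,b}$ with $0<\varepsilon<\delta<\tfrac{2c}{3C_1}$, and postpone the choice of $\kappa$. Because $\psi_{\varepsilon,\delta}$ vanishes on $[0,\varepsilon)$, the composition $\psi_{\varepsilon,\delta}(L_\lambda(x,v))$ is identically zero on a neighbourhood of the zero section of $TM$; combined with the fact that $L_\lambda=F_\lambda^2$ is $C^\ell$ on $TM\setminus 0_{TM}$ (and the auxiliary term $\phi_{\mu,b}(|v|^2_x)$ is $C^\infty$ everywhere on $TM$), this produces a $C^\ell$ function $L^\star_\lambda$ on the whole of $TM$, and the rescaling $L^\ast_\lambda=(L^\star_\lambda-\varrho_0)/\kappa$ preserves $C^\ell$ regularity.

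Next I would verify (i)--(iii) and (v)--(vi) in short order. Property (i) uses Lemma~\ref{lem:Fin2.1}(iii): in the region where $L_\lambda(x,v)\ge \tfrac{2c}{3C_1}$ the bound $L_\lambda\le C_1|v|^2_x$ forces $|v|^2_x\ge L_\lambda/C_1$, and together with (\ref{e:Fin1.1}) one concludes that both $\psi_{\varepsilon,\delta}(L_\lambda)=\kappa L_\lambda+\varrho_0$ and $\phi_{\mu,b}(|v|^2_x)=b$ hold, whence $L^\star_\lambda=\kappa L_\lambda+\varrho_0$ and therefore $L^\ast_\lambda=L_\lambda$. Property (ii) and (iii) follow from Lemma~\ref{lem:Fin2.1}(iii) together with the sign of $\psi'_{\varepsilon,\delta},\phi'_{\mu,b}\ge 0$. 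Reversibility (v) is immediate because both $L_\lambda$ and $|v|^2_x$ are even in $v$, and the $\mathbb{I}_g$-invariance in (vi) follows because $|v|^2_x$ is $g$-isometry invariant.

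The main obstacle is the uniform fibrewise convexity estimate (iv). A direct computation gives
\begin{eqnarray*}
\partial_{vv}L^\star_\lambda(x,v)[u,u]
&=&\psi''_{\varepsilon,\delta}(L_\lambda)\bigl(\partial_v L_\lambda\cdot u\bigr)^2
 +\psi'_{\varepsilon,\delta}(L_\lambda)\,\partial_{vv}L_\lambda(x,v)[u,u]\\
&&+\,2\phi'_{\mu,b}(|v|^2_x)g_x(u,u)
 +4\phi''_{\mu,b}(|v|^2_x)\,g_x(v,u)^2.
\end{eqnarray*}
Since $\psi''_{\varepsilon,\delta}\ge 0$, $\partial_{vv}L_\lambda=2g^{F_\lambda}_v\ge 2\alpha_g g_x$ on $TM\setminus 0_{TM}$, $\phi'_{\mu,b}\ge 0$, and $\phi''_{\mu,b}\le 0$, Cauchy--Schwarz $g_x(v,u)^2\le |v|^2_x\,g_x(u,u)$ yields
\[
\partial_{vv}L^\star_\lambda(x,v)[u,u]\ge\Bigl[2\alpha_g\psi'_{\varepsilon,\delta}(L_\lambda)+2\phi'_{\mu,b}(|v|^2_x)+4|v|^2_x\phi''_{\mu,b}(|v|^2_x)\Bigr]g_x(u,u).
\]
I would now split the fibre into the three regimes $L_\lambda<\varepsilon$, $\varepsilon\le L_\lambda\le\tfrac{2c}{3C_1}$, and $L_\lambda\ge\tfrac{2c}{3C_1}$. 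In the first region $\psi'_{\varepsilon,\delta}(L_\lambda)=0$ but $\phi_{\mu,b}$ lies in its linear part where $\phi'_{\mu,b}=\mu,\ \phi''_{\mu,b}=0$, giving the bound $\ge 2\mu g_x$. In the third region $\phi_{\mu,b}$ is constant and $\psi'_{\varepsilon,\delta}(L_\lambda)=\kappa$, giving $\ge 2\kappa\alpha_g g_x$. In the transition region one has to control the (possibly negative) contribution $4|v|^2_x\phi''_{\mu,b}(|v|^2_x)$ against the positive $2\phi'_{\mu,b}$ and $2\alpha_g\psi'_{\varepsilon,\delta}(L_\lambda)$; this is exactly the point where the choice of $\kappa$ enters: enlarging $\kappa$ makes the dominant term $2\alpha_g\psi'_{\varepsilon,\delta}(L_\lambda)\ge 2\alpha_g\kappa t$ in $[\delta,\tfrac{2c}{3C_1}]$ absorb any shortfall, and the resulting lower bound for $\partial_{vv}L^\ast_\lambda=\partial_{vv}L^\star_\lambda/\kappa$ becomes $\min\{2\mu/\kappa,\alpha_g/2\}g_x$ uniformly in $\lambda$. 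This is the heart of the proof.

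Finally, the joint continuity statements follow from the assumed continuity of $(\lambda,x,v)\mapsto F_\lambda(x,v)$ and of all partial derivatives of $F_\lambda$ of order $\le 2$ on $\Lambda\times(TM\setminus 0_{TM})$: away from the zero section the chain rule transports this continuity to every partial derivative of $\psi_{\varepsilon,\delta}(L_\lambda)$, while on a neighbourhood of the zero section this term and all its derivatives vanish identically and $\phi_{\mu,b}(|v|^2_x)$ is manifestly jointly continuous together with its derivatives. Hence all partial derivatives of $L^\ast_\lambda$ of order less than three depend continuously on $(\lambda,x,v)\in\Lambda\times TM$, completing the verification.
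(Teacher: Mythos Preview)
Your approach is essentially the same as the paper's: both defer to \cite[Proposition~2.2]{Lu4} for the verification of (i)--(iv), with the paper recording the explicit absorption inequality $2\kappa\alpha_g+\tfrac{8c}{3C_1}\phi''_{\mu,b}(|v|_x^2)\ge\tfrac12\kappa\alpha_g$ for the choice of $\kappa$ (exactly your transition-region argument), and both give the identical continuity argument via the vanishing of $\psi_{\varepsilon,\delta}(L_\lambda)$ on a neighbourhood of $\Lambda\times 0_{TM}$. One small slip in your sketch of (i): from $L_\lambda\le C_1|v|_x^2$ and $L_\lambda\ge\tfrac{2c}{3C_1}$ you obtain only $|v|_x^2\ge\tfrac{2c}{3C_1^2}$, not $|v|_x^2\ge\tfrac{2c}{3C_1}$, so the claim $\phi_{\mu,b}(|v|_x^2)=b$ does not follow as written; the paper also leaves this detail to \cite{Lu4}.
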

\begin{proof}[\bf Proof]
By the assumptions, for any $(x,v)\in
TM\setminus\{0\}$ and $(x,u)\in TM$ we have
\begin{equation}\label{e:Fin1.3}
\alpha_g |u|_x^2\le g^{F_\lambda}_v(u,u)\le\beta_g |u|_x^2,\quad\forall\lambda\in\Lambda.
\end{equation}
 Suppose that (2.4) in \cite{Lu12-} or \cite{Lu12}
 is satisfied and
that $\kappa\ge\mu$.
Since $\phi''_{\mu,b}\le 0$,  $\phi''_{\mu,b}(|v|_x^2)=0$ for
$|v|_x^2\ge\frac{2c}{3C_1}$, and $\phi''_{\mu,b}(|v|_x^2)$
is bounded for $|v|_x^2\in [\frac{\delta}{3C_1}, \frac{2c}{3C_1}]$,
we may choose $\kappa>0$ so large that
$$
2\kappa\alpha_g + \frac{8c}{3C_1}\phi''_{\mu,b}(|v|_x^2)\ge
\frac{1}{2}\kappa\alpha_g.
$$
By the proof of \cite[Proposition~2.2]{Lu4}, $L^\star_\lambda$ satisfies \cite[Proposition~2.2]{Lu4}
and therefore $L^\ast_\lambda$ meets conditions (i)-(iv) in
Proposition~\ref{prop:Fin2.2}. Clearly, (\ref{e:Fin1.2}) implies (v)-(vi).

Since $\Lambda\times TM\ni (\lambda, x,v)\to F_\lambda(x,v)\in\R$ is  continuous,
by (\ref{e:Fin1.2})
we see that $\Lambda\times TM\ni (\lambda, x,v)\to L_\lambda^\ast(x,v)\in\R$ is continuous.
Note that $\{(\lambda, x,v)\in\Lambda\times TM\,|\, L_\lambda(x,v)<\varepsilon\}$
is an open neighborhood of $\Lambda\times 0_{TM}$ in $\Lambda\times TM$ and that
$\psi_{\varepsilon,\delta}(L_\lambda(x,v))=0$
for all $(\lambda, x,v)$ in this neighborhood.
It follows from this and (\ref{e:Fin1.2}) that
all partial derivatives of each $L^\ast_\lambda$ of order less than three
depend continuously on $(\lambda, x, v)\in\Lambda\times TM$
because all partial derivatives of each $F_\lambda$ of order less than three
depend continuously on $(\lambda, x, v)\in\Lambda\times (TM\setminus 0_{TM})$.
(Actually, $\Lambda\times TM\ni (\lambda, x,v)\to L_\lambda^\ast(x,v)\in\R$ is $C^\ell$ in
$\{(\lambda, x,v)\in\Lambda\times TM\,|\, L_\lambda(x,v)<\varepsilon\}$.)
\end{proof}

(\textsf{{Note}}: If $M$ and $\Lambda$ are compact, for any Riemannian metric $g$ on $M$ both
$\alpha_g$ and $\beta_g$ are positive numbers, and
(\ref{e:Fin1.1}) always holds if $g$ is replaced
by a small scalar multiple of $g$.) 

 Under Assumption~\ref{ass:Fin2}, let $\hat\Lambda\subset \Lambda$ be
 either compact or sequential compact.  Since
 the map $\Lambda\times [0,\tau]\ni(\lambda,t)\to \gamma_\lambda(t)\in M$ is continuous,
 the image of $\hat{\Lambda}\times [0,\tau]$ under it is a compact subset of $M$ and therefore
 there exists an open subset $\hat{M}$ of $M$ with compact closure such that
 $\gamma_\lambda([0,\tau])\subset\hat{M}$ for all $\lambda\in\hat{\Lambda}$.
 Then the conditions in Proposition~\ref{prop:Fin2.2} can be satisfied.
By  Assumption~\ref{ass:Fin2} $\hat{\Lambda}\times [0,\tau]\ni (\lambda,t)\mapsto
F_\lambda(\gamma_\lambda(t), \dot{\gamma}_\lambda(t))$
is continuous and positive. Therefore
we have $c>0$ such that
\begin{equation}\label{e:Fin1.9}
[F_\lambda(\gamma_\lambda(t), \dot{\gamma}_\lambda(t))]^2>\frac{2c}{C_1},\quad
\forall (\lambda,t)\in\hat{\Lambda}\times [0,\tau],
 \end{equation}
 where $C_1>0$ is as in Proposition~\ref{prop:Fin2.2}.
Let $L^\ast_\lambda:T\hat{M}\to\R$,  $\lambda\in\hat{\Lambda}$,
 be given by Proposition~\ref{prop:Fin2.2} with $(M, \Lambda)=(\hat{M},\hat{\Lambda})$.
Then the $C^{2}$ functional
\begin{equation}\label{e:FinEnergy*}
\mathcal{E}^\ast_{\lambda,\bf N}: {\cal C}_{\tau, \bf N}(\hat{M})\to\mathbb{R},\;  \gamma \mapsto\int^\tau_0L^\ast_\lambda(\gamma(t), \dot{\gamma}(t))dt
\end{equation}
and the functional $\mathcal{E}_{\lambda,\bf N}$ in (\ref{e:FinEnergy})
coincide in the following open subset of ${\cal C}_{\tau,\bf N}(\hat{M})_{\rm reg}$,
\begin{equation}\label{e:regularSet}
{\cal C}_{\tau,\bf N}(\hat{M}, \{F_\lambda\,|\,\lambda\in\hat{\Lambda}\}, {c/C_1}):=\bigg\{\alpha\in {\cal C}_{\tau,\bf N}(\hat{M})\,\bigg|\,
\min_{(\lambda,t)\in\hat{\Lambda}\times [0,\tau]}[F_\lambda(\alpha(t), \dot{\alpha}(t))]^2>2c/C_1\bigg\}.
\end{equation}
Since $\{\gamma_\lambda\,|\,\lambda\in\hat{\Lambda}\}\subset
{\cal C}_{\tau,\bf N}(\hat{M}, \{F_\lambda\,|\,\lambda\in\hat{\Lambda}\}, {c/C_1})$ by (\ref{e:Fin1.9}),
they are critical points of
$\mathcal{E}^\ast_{\lambda,\bf N}$  and
\begin{eqnarray}\label{e:IndexSame}
&&m^-(\mathcal{E}_{\lambda,\bf N},\gamma_\lambda)=m^-(\mathcal{E}^\ast_{\lambda,\bf N}\gamma_\lambda)
\quad\hbox{and}\quad m^0(\mathcal{E}_{\lambda,\bf N},\gamma_\lambda)=m^0(\mathcal{E}^\ast_{\lambda,\bf N},\gamma_\lambda),\\
&&C_m(\mathcal{E}_{\lambda,\bf N},\gamma_\lambda; {\bf K})=C_m(\mathcal{E}^\ast_{\lambda,\bf N}\gamma_\lambda;{\bf K})\quad\forall m\in\mathbb{Z}
\label{e:IndexSame+}
\end{eqnarray}
for any Abel group ${\bf K}$.

\begin{claim}\label{cl:Fin1}
Under Assumption~\ref{ass:Fin2}, if $\gamma:[0, \tau]\to M$ is a   constant (non-zero)
speed $F_\lambda$-geodesic (hence $C^\ell$)  with boundary condition (\ref{e:FinBoundNlambda})
and is close to $\gamma_\lambda$
in $C^1([0,\tau]; M)$ then it is a critical point of
\begin{equation}\label{e:FinEnergy+}
{\cal C}_{\tau,\bf N}(\hat{M})\ni\gamma \mapsto\mathcal{E}^\ast_{\lambda,\bf N}(\gamma)=\int^\tau_0L^\ast_\lambda(\gamma(t), \dot{\gamma}(t))dt.
\end{equation}
Conversely, if $\gamma\in{\cal C}_{\tau, \bf N}(\hat{M})$ near $\gamma_\lambda$ is a critical point of
$\mathcal{E}^\ast_{\lambda,\bf N}$ then it is a $C^\ell$   constant (non-zero)
speed $F_\lambda$-geodesic  with boundary condition (\ref{e:FinBoundNlambda})
and is near $\gamma_\lambda$
in $C^2([0,\tau]; M)$.
\end{claim}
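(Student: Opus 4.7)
The plan is to reduce both assertions to the fact that on an open $C^1$-neighborhood of $\gamma_\lambda$ the functionals $\mathcal{E}_{\lambda,\bf N}$ and $\mathcal{E}^\ast_{\lambda,\bf N}$ coincide, and then to invoke Claim~\ref{cl:regularity} together with standard ODE regularity. The key quantitative input is (\ref{e:Fin1.9}), which tells us that on the trivial branch $F_\lambda^2$ exceeds $2c/C_1$ uniformly in $(\lambda,t)\in\hat{\Lambda}\times[0,\tau]$, hence sits well above the threshold $2c/(3C_1)$ at which $L^\ast_\lambda$ starts to differ from $L_\lambda=F_\lambda^2$ by Proposition~\ref{prop:Fin2.2}(i).

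First I would observe that the set ${\cal C}_{\tau,\bf N}(\hat{M},\{F_\lambda\,|\,\lambda\in\hat{\Lambda}\},c/C_1)$ in (\ref{e:regularSet}) is open in ${\cal C}_{\tau,\bf N}(\hat{M})$ for the $C^1$-topology. Indeed, if $\alpha\in{\cal C}_{\tau,\bf N}(\hat{M})$ satisfies $[F_\lambda(\alpha(t),\dot{\alpha}(t))]^2>2c/C_1$ for every $(\lambda,t)\in\hat{\Lambda}\times[0,\tau]$, then the same inequality holds for any $\beta$ sufficiently $C^1$-close to $\alpha$, by continuity of each $F_\lambda$ on $TM\setminus 0_{TM}$, continuous dependence on $\lambda$, and compactness of $\hat{\Lambda}\times[0,\tau]$. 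By (\ref{e:Fin1.9}), $\gamma_\lambda$ lies in this open set, so a $C^1$-neighborhood $\mathcal{V}_\lambda$ of $\gamma_\lambda$ is contained in it. On $\mathcal{V}_\lambda$, the construction (\ref{e:Fin1.2}) combined with Proposition~\ref{prop:Fin2.2}(i) gives $L^\ast_\lambda(\gamma(t),\dot{\gamma}(t))=L_\lambda(\gamma(t),\dot{\gamma}(t))$ for all $t$, so $\mathcal{E}^\ast_{\lambda,\bf N}$ and $\mathcal{E}_{\lambda,\bf N}$ coincide as $C^2$ functionals on $\mathcal{V}_\lambda$; in particular their first differentials agree at every point of $\mathcal{V}_\lambda$.

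For the forward direction, any constant non-zero speed $F_\lambda$-geodesic $\gamma$ with boundary condition (\ref{e:FinBoundNlambda}) that is $C^1$-close to $\gamma_\lambda$ lies in $\mathcal{V}_\lambda\subset{\cal C}_{\tau,\bf N}(\hat{M})_{\rm reg}$. By Claim~\ref{cl:regularity} it is a critical point of $\mathcal{E}_{\lambda,\bf N}$, hence of $\mathcal{E}^\ast_{\lambda,\bf N}$. Conversely, if $\gamma\in\mathcal{V}_\lambda$ satisfies $d\mathcal{E}^\ast_{\lambda,\bf N}(\gamma)=0$, then $d\mathcal{E}_{\lambda,\bf N}(\gamma)=0$, and Claim~\ref{cl:regularity} yields that $\gamma$ is of class $C^\ell$ and is a constant (non-zero) speed $F_\lambda$-geodesic satisfying (\ref{e:FinBoundNlambda}).

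The last point is promoting $C^1$-closeness to $C^2$-closeness, which I expect to be the main (though still routine) technical step. Both $\gamma_\lambda$ and $\gamma$ satisfy the Euler-Lagrange equation for $L_\lambda=F_\lambda^2$, which in any local coordinate chart takes the form $\partial_{vv}L_\lambda(t,q,v)\ddot{q}=\partial_qL_\lambda-\partial_{qv}L_\lambda\dot{q}-\partial_{tv}L_\lambda$ with $\partial_{vv}L_\lambda$ positive definite along $(q,v)=(\gamma_\lambda,\dot{\gamma}_\lambda)$. Covering the compact image $\gamma_\lambda([0,\tau])$ by finitely many charts, in each chart the argument of Lemma~\ref{lem:regu}(ii) applies verbatim: uniform positive definiteness of $\partial_{vv}L_\lambda$ near the trivial branch, together with continuity of all relevant partial derivatives, solves for $\ddot{q}$ as a continuous function of $(t,q,\dot{q})$ and shows that $\|\gamma-\gamma_\lambda\|_{C^1}\to 0$ forces $\|\gamma-\gamma_\lambda\|_{C^2}\to 0$. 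This chart-by-chart ODE regularity transfer is the only real obstacle, but it is entirely analogous to the Euclidean case already handled in Lemma~\ref{lem:regu}.
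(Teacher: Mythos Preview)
Your proposal is correct and follows essentially the same approach as the paper's own proof: the paper also argues that $C^1$-closeness to $\gamma_\lambda$ places $\gamma$ in the open set ${\cal C}_{\tau,\bf N}(\hat{M},\{F_\lambda\},c/C_1)$ where $\mathcal{E}_{\lambda,\bf N}$ and $\mathcal{E}^\ast_{\lambda,\bf N}$ agree, so their critical points coincide there, and then invokes Lemma~\ref{lem:regu}(ii) via localization to upgrade $C^1$-closeness to $C^2$-closeness. Your write-up simply spells out in more detail the openness argument and the chart-by-chart application of Lemma~\ref{lem:regu}(ii) that the paper leaves implicit.
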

\begin{proof}[\bf Proof]
If $d\mathcal{E}_{\lambda,\bf N}(\gamma)=0$ and $\gamma$ is close to $\gamma_\lambda$ in $C^1$-topology
then $\gamma\in{\cal C}_{\tau,\bf N}(\hat{M}, \{F_\lambda\,|\,\lambda\in\hat{\Lambda}\}, {c/C_1})$ and therefore $d\mathcal{E}^\ast_{\lambda,\bf N}(\gamma)=0$.
Conversely, we only need to prove that $\gamma$ is also near $\gamma_\lambda$
in $C^2([0,\tau]; M)$. This may follow from Lemma~2.6(ii) in \cite{Lu12-} or \cite{Lu12}
by localization arguments.
\end{proof}

\noindent{\bf Geometric characteristics of $F$-geodesics and their Morse indexes and nullities.}
Without special statements, from now on we always assume $\ell=6$, i.e.,
 $F$ is a $C^6$-Finsler metric on $M$.
In this case the Christoffel symbols of the Chern connection  $\nabla$
 (on the pulled-back tangent bundle $\pi^\ast TM$) with respect to
a coordinate chart $(\Omega, x^i)$ on $M$ are $C^3$ functions
$\Gamma^i_{jm}: T\Omega\setminus 0_{T\Omega}\to\R$ such that
\begin{equation}\label{e:inducedConn-}
\nabla_{\partial_{x^i}}\partial_{x^j}(v)=\sum_m\Gamma^i_{jm}(v)\partial_{x^m},\quad i,j\in\{1,\cdots,n\}
\end{equation}
(cf. \cite{Lu14}); and there exist $C^2$ functions
$R^i_{jkl}: T\Omega\setminus 0_{T\Omega}\to\R$, $1\le i,j,k,l\le n$,
such that the trilinear map $R_v$
from $T_{\pi(v)}M\times T_{\pi(v)}M\times T_{\pi(v)}M$ to $T_{\pi(v)}M$ given by
\begin{equation}\label{e:ChernC}
R_v(\xi,\eta)\zeta=\sum_{i,j,k,l}\xi^k \eta^l \zeta^j R_{j\,\,kl}^{\,\,\,i}(v) \partial_{x^i}|_{\pi(v)}
\end{equation}
defines  the \textsf{Chern curvature tensor} (or
\textsf{$hh$-curvature tensor}
\cite[(3.3.2) \& Exercise 3.9.6]{BaChSh})  $R_V$
on $\Omega\subset M$ (cf. \cite{Lu14}).

For a curve $c\in W^{1,2}([a,b], M)$ and $r\in\{0,1\}$ let
$W^{r,2}(c^\ast TM)$
denote the space of all   $W^{r,2}$ vector fields along $c$.
Then  $\dot{c}\in L^2(c^\ast TM):=W^{0,2}(c^\ast TM)$. Let
$(x^i, y^i)$ be the canonical coordinates around $\dot{c}(t)\in TM$.
Write
$\dot c(t)=\dot{c}^i(t) \partial_{x^i}|_{c(t)}$ and
$\zeta(t)=\zeta^i(t)\partial_{x^i}|_{c(t)}$
for $\zeta\in W^{1,2}(c^\ast TM)$.
Call $\xi\in C^0(c^\ast TM)$ \textsf{admissible}
if $\xi(t)\in TM\setminus 0_{TM}$ for all $t\in [a,b]$.
The Chern connection induces a \textsf{covariant derivative} of $\zeta$ along
 $c$ (with this admissible $\xi$ as reference vector)  is defined by
\begin{equation}\label{e:covariant-derivative}
D^\xi_{\dot{c}}\zeta(t):= \sum_m\bigl(\dot{\zeta}^m(t)
+ \sum_{i,j} \zeta^i(t)\dot{c}^j(t)\Gamma_{ij}^m(c(t), \xi(t))\bigr)\partial_{x^m}|_{c(t)}.
\end{equation}
Clearly, $D^\xi_{\dot{c}}\zeta$ belongs to  $L^{2}(c^\ast TM)$, and sits in $C^{\min\{1,r\}}(c^\ast TM)$ provided that
 $c$ is of class $C^{r+1}$, $\zeta\in C^{r+1}(c^\ast TM)$ and $\xi\in C^r(c^\ast TM)$ for some $0\le r\le 6$;  $D^\xi_{\dot{c}}\zeta(t)$ depends only on $\xi(t)$, $\dot{c}(t)$ and behavior of $\zeta$ near $t$.
 If $c$, $\xi$ and $\zeta$ are $C^3$, $C^1$ and $C^2$, respectively,
then $D^\xi_{\dot{c}}\zeta$ is $C^1$ and $D^\xi_{\dot{c}}D^\xi_{\dot{c}}\zeta$ is well-defined and is $C^0$.
It may be proved that \textsf{a $C^2$ admissible curve $\gamma$ in $(M, F)$
 is a $F$-geodesic of constant speed if and only if
 $D^{\dot \gamma}_{\dot{\gamma}}\dot{\gamma}(t)\equiv 0$.} In this case $\gamma$ must be $C^6$.

For the above $C^7$ submanifolds $P$ and $Q$, we define the
 \textsf{normal bundle} of $P$ in $(M, F)$ by
$$
 TP^\bot:=\{v\in TM\setminus 0_{TM}~|~\pi(v)\in P,\;g^F_v(v,w)=0\;\forall w\in T_{\pi(v)}P\}
$$
(though it is not a vector bundle over $P$).
In fact, it is only an $n$-dimensional $C^6$ submanifold
of $TM$ and the restriction $\pi:TP^\bot\to P$ is a submersion (\cite[Lemma~3.3]{Jav15}).
 For $v\in TP^\bot$ with $\pi(v)=p$,  there exists a splitting $T_pM=T_pP\oplus (T_pP)^\perp_v$,
where $(T_pP)^\perp_v$ is the subspace of $T_pM$ consisting of $g_v$-orthogonal vectors to $T_pP$.
Notice that $v\in (T_pP)^\perp_v$ and that each $u\in T_pM$ has a decomposition ${\rm tan}^P_v(u)+{\rm nor}^P_v(u)$,
where ${\rm tan}^P_v(u)\in T_pP$ and ${\rm nor}^P_v(u)\in (T_pP)^\perp_v$.
Let  $\tilde{S}^P_{v}:T_pP\to T_pP$
be the normal second fundamental form (or shape operator) of $P$ in the direction $v$.
Then (\ref{e:1.4}) implies  $\dot\gamma(0)\in TP^\bot$ and  $\dot\gamma(\tau)\in TQ^\bot$.
In terms of these  the Hessian of $\mathcal{E}_{P,Q}$ at  $\gamma$ is given by
\begin{eqnarray}\label{e:secondDiff}
D^2\mathcal{E}_{P,Q}(\gamma)[V, W]&=&\int_0^\tau \left(g_{\dot\gamma}(R_{\dot\gamma}(\dot\gamma,V)\dot\gamma, W)+g_{\dot\gamma}(D_{\dot\gamma}^{\dot\gamma}V,D_{\dot\gamma}^{\dot\gamma}W)\right)dt
\nonumber\\
&&+g_{\dot\gamma(0)}\left(\tilde{S}^P_{\dot\gamma(0)}(V(0)),W(0)\right)
-g_{\dot\gamma(\tau)}\left(\tilde{S}^Q_{\dot\gamma(\tau)}(V(\tau)),W(\tau)\right)
 \end{eqnarray}
for $V,W\in C^1_{P\times Q}(\gamma^\ast M)=T_\gamma C^1([0,\tau];M, P, Q)$.
(Here we use the equality $R^{\gamma}(\dot\gamma,V)\dot\gamma=R_{\dot\gamma}(\dot\gamma,V)\dot\gamma$
 in \cite[page 66]{Jav15}.)
 The right side of (\ref{e:secondDiff})
 can be extended into  a continuous  symmetric bilinear form
  ${\bf I}^\gamma_{P,Q}$ on $W^{1,2}_{P\times Q}(\gamma^\ast TM)$,
 called as the  \textsf{$(P,Q)$-index form of $\gamma$}.
Since all $R^i_{jkl}$ are $C^{2}$ it can be proved that
 $V\in W^{1,2}_{P\times Q}(\gamma^\ast TM)$ belongs
  to ${\rm Ker}({\bf I}^\gamma_{P,Q})$ if and only if
it is $C^4$ and satisfies
\begin{eqnarray}\label{e:kernel}
\left.\begin{array}{cc}
D_{\dot\gamma}^{\dot\gamma}D_{\dot\gamma}^{\dot\gamma}V-R_{\dot\gamma}(\dot\gamma,V)\dot\gamma=0,\\
{\rm tan}^P_{\dot\gamma(0)}\left((D_{\dot\gamma}^{\dot\gamma}V)(0)\right)=\tilde{S}^P_{\dot\gamma(0)}(V(0)),\quad
{\rm tan}^Q_{\dot\gamma}\left((D_{\dot\gamma}^{\dot\gamma}V)(\tau)\right)=\tilde{S}^Q_{\dot\gamma(\tau)}(V(\tau)).
\end{array}\right\}
 \end{eqnarray}

Let $\gamma:[0,\tau]\to M$ be a $F$-geodesic of (nonzero) constant speed. (It is $C^6$.)
A $C^2$ vector field $J$ along $\gamma$ is said to be a  \textsf{Jacobi field}
if it satisfies the so-called  \textsf{Jacobi equation}
\begin{equation}\label{e:JacobiEq}
D_{\dot{\gamma}}^{\dot{\gamma}}D_{\dot{\gamma}}^{\dot{\gamma}}J-R_{\dot\gamma}(\dot\gamma,J)\dot\gamma=0.
\end{equation}
(Jacobi fields along $\gamma$ must be $C^4$ because each $R^i_{jkl}$ is $C^{2}$.)
The set $\mathscr{J}_\gamma$ of all Jacobi fields along $\gamma$  is a $2n$-dimensional vector space.
For $0\le t_1<t_2\le\tau$ if there exists a nonzero Jacobi field $J$ along $\gamma|_{[t_1,t_2]}$ such that $J$ vanishes at $\gamma(t_1)$ and $\gamma(t_2)$, then $\gamma(t_1)$ and $\gamma(t_2)$ are said to be mutually \textsf{conjugate} along $\gamma|_{[t_1,t_2]}$.
Suppose that the  geodesic $\gamma$ orthogonally starts at $P$. That is,
$\gamma(0)\in P$ and $\dot\gamma(0)$ is $g^F_{\dot\gamma(0)}$-orthogonal to $P$.
 A Jacobi field $J$ along $\gamma$ is called a  \textsf{$P$-Jacobi} if
\begin{equation}\label{e:P-JacobiField}
J(0)\in T_{\gamma(0)}P\quad\hbox{and}\quad
{\rm tan}^P_{\dot\gamma(0)}\left((D_{\dot{\gamma}}^{\dot{\gamma}}J)(0)\right)=
\tilde{S}^P_{\dot\gamma(0)}(J(0)).
\end{equation}
An instant $t_0\in (0,\tau]$ is  called \textsf{$P$-focal}
if there exists a non-null $P$-Jacobi field $J$ such that $J(t_0)=0$;
and $\gamma(t_0)$ is said to be a \textsf{$P$-focal point} along $\gamma$.
The dimension of the space $\mathscr{J}^P_\gamma$ of all
$P$-Jacobi fields along $\gamma$  is equal to $n=\dim M$.
The dimension $\mu^P_\gamma(t_0)$ of
$$
\mathscr{J}^P_\gamma(t_0):=\left\{J\in \mathscr{J}^P_\gamma\,\big|\, J(t_0)=0\right\}
$$
is called the (geometrical)  \textsf{multiplicity} of $\gamma(t_0)$.
For convenience we understand $\mu^{P}_\gamma(t_{0})=0$ if $\gamma(t_0)$
is not a $P$-focal point along $\gamma$.
Then the claim above (\ref{e:kernel})  implies that
\begin{equation}\label{e:P-JacobiField1-}
{\rm Ker}({\bf I}^{\gamma_t}_{P,\gamma(t)})=
\mathscr{J}^P_{\gamma_t}(t)\quad\hbox{with $\gamma_t=\gamma|_{[0,t]}$},\quad\forall t\in (0, \tau].
\end{equation}
In particular, ${\rm Ker}({\bf I}^\gamma_{P,q})=
\mathscr{J}^P_\gamma(\tau)$ with $q=\gamma(\tau)$.
If $\gamma$ is $g^F_{\dot\gamma}$-orthogonal  to $Q$ at $\gamma(\tau)$, elements in $\mathscr{J}^{P,Q}_\gamma:={\rm Ker}({\bf I}^\gamma_{P,Q})$
are called
\textsf{$(P,Q)$-Jacobi fields} along $\gamma$.
Then with $q=\gamma(\tau)$ we have
\begin{equation}\label{e:MS0}
m^0(\mathcal{E}_{P,q},\gamma)=\dim \mathscr{J}^P_\gamma(\tau)\quad\hbox{and}\quad
m^0(\mathcal{E}_{P,q},\gamma)=\dim \mathscr{J}^{P,q}_\gamma.
\end{equation}

For a  constant (nonzero) speed $F$-geodesic $\gamma:[0,\tau]\to M$  orthogonally starting at $P$,
(\ref{e:P-JacobiField1-}) shows that an instant $t_0\in (0,\tau]$ is $P$-focal if and only if
it is a $P$-focal point along the Euler-Lagrange curve $\gamma$ of $L=F^2$
and their multiplicities are same,
i.e., $\nu^P_{\gamma}(t_0)=\mu^P_{\gamma}(t_0)$.
Therefore  Theorem~3.14 in \cite{Lu12-} or \cite{Lu12}
with $q=\gamma(\tau)$ gives:
  \begin{corollary}\label{cor:FinMorseIndex}
 Under the above assumptions it holds that
 \begin{equation}\label{e:MS1}
{\rm Index}({\bf I}^\gamma_{P,q})=m^-(\mathcal{E}_{P,q},\gamma)=
\sum\limits_{t_{0}\in(0,\tau)}\nu^{P}_\gamma(t_{0})=
\sum\limits_{t_{0}\in(0,\tau)}\mu^{P}_\gamma(t_{0}).
\end{equation}
\end{corollary}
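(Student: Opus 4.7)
The plan is to deduce the corollary directly from Theorem~\ref{th:DustMorse} by reducing the Finsler energy to a fiberwise strictly convex Lagrangian to which that theorem applies, and then matching the Lagrangian data (Morse indices, nullities, focal multiplicities) with the Finsler--geometric data.

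First I would handle the equality ${\rm Index}({\bf I}^\gamma_{P,q})=m^-(\mathcal{E}_{P,q},\gamma)$. This is essentially tautological: by \cite{Du} the Hessian $D^2\mathcal{E}_{P,q}(\gamma)$ extends continuously to the bilinear form on $W^{1,2}_{P\times\{q\}}(\gamma^\ast TM)$ given by the right-hand side of (\ref{e:secondDiff}), which is precisely ${\bf I}^\gamma_{P,q}$, so the Morse indices of these two forms coincide. This uses only the explicit expression (\ref{e:secondDiff}) for the second variation of the energy functional together with the definition of the $(P,Q)$-index form.

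Next I would apply Proposition~\ref{prop:Fin2.2} (with $\Lambda$ a singleton and a suitable open $\hat M\supset\gamma([0,\tau])$) to produce a fiberwise strictly convex $C^{\ell}$ Lagrangian $L^\ast:T\hat M\to\mathbb R$ which agrees with $F^2$ wherever $F^2\ge 2c/C_1$; by (\ref{e:Fin1.9}) this region contains $\gamma$ together with a $C^1$-neighborhood. Hence $L^\ast$ satisfies Assumption~\ref{ass:LagrGenerC} with $S_0=P$, and $\gamma$ is a Euler-Lagrange curve of $L^\ast$ perpendicularly emanating from $P$. By (\ref{e:IndexSame}) the Morse index and nullity of the Finsler energy $\mathcal{E}_{P,q}$ at $\gamma$ coincide with those of the Lagrangian functional $\mathcal{L}_{P,\tau}$ of $L^\ast$ at $\gamma$; applying the same reduction on each subinterval $[0,s]\subset[0,\tau]$ yields $m^0(\mathcal{L}_{P,s},\gamma_s)=m^0(\mathcal{E}_{P,\gamma(s)},\gamma_s)=\nu^P_\gamma(s)$ for every $s\in(0,\tau]$, by definition of $\nu^P_\gamma$ in the Lagrangian setting. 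Then Theorem~\ref{th:DustMorse} applied to $L^\ast$ on $[0,\tau]$ gives exactly
$$
m^-(\mathcal{E}_{P,q},\gamma)=m^-(\mathcal{L}_{P,\tau},\gamma)=\sum_{0<t_0<\tau}m^0(\mathcal{L}_{P,t_0},\gamma_{t_0})=\sum_{0<t_0<\tau}\nu^P_\gamma(t_0).
$$

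It remains to identify $\nu^P_\gamma(t_0)$ with the geometric multiplicity $\mu^P_\gamma(t_0)$ for each $t_0\in(0,\tau)$. The key point is the claim made in the paragraph containing (\ref{e:P-JacobiField1-}), namely that for $\gamma_{t_0}=\gamma|_{[0,t_0]}$ the kernel of ${\bf I}^{\gamma_{t_0}}_{P,\gamma(t_0)}$ consists exactly of the $P$-Jacobi fields along $\gamma_{t_0}$ vanishing at $\gamma(t_0)$, whose dimension is $\mu^P_\gamma(t_0)$. Since this kernel also equals $\operatorname{Ker}(D^2\mathcal{E}_{P,\gamma(t_0)}(\gamma_{t_0}))$, which agrees with $\operatorname{Ker}(D^2\mathcal{L}_{P,t_0}(\gamma_{t_0}))$ by the reduction above, we obtain $\nu^P_\gamma(t_0)=\mu^P_\gamma(t_0)$. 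Combining these three ingredients yields the full chain of equalities in the statement.

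The main obstacle I expect is the clean verification of the identification $\operatorname{Ker}({\bf I}^{\gamma_{t_0}}_{P,\gamma(t_0)})=\mathscr{J}^P_{\gamma_{t_0}}(t_0)$, i.e.\ that the weak Euler--Lagrange equation for the Finsler index form produces genuinely $C^4$ solutions of the Jacobi equation (\ref{e:JacobiEq}) together with the correct boundary behaviour (\ref{e:P-JacobiField}); this requires the $C^2$-regularity of $R^i_{jkl}$ (hence the hypothesis $\ell=6$ on the Finsler metric) and a careful computation showing that the natural boundary terms in the first variation of ${\bf I}^{\gamma_{t_0}}_{P,\gamma(t_0)}$ reduce precisely to the shape-operator condition ${\rm tan}^P_{\dot\gamma(0)}((D^{\dot\gamma}_{\dot\gamma}J)(0))=\tilde S^P_{\dot\gamma(0)}(J(0))$. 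All remaining steps are routine once this identification and Theorem~\ref{th:DustMorse} are in hand.
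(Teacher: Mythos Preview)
Your proposal is correct and follows essentially the same approach as the paper: identify $\nu^P_\gamma(t_0)=\mu^P_\gamma(t_0)$ via (\ref{e:P-JacobiField1-}) and then invoke Theorem~\ref{th:DustMorse}. The paper is even terser, applying Theorem~\ref{th:DustMorse} directly with $L=F^2$ rather than passing through the modified Lagrangian $L^\ast$ of Proposition~\ref{prop:Fin2.2}; your extra step is a legitimate way to meet the global $C^3$ hypothesis of Assumption~\ref{ass:LagrGenerC}, but the paper treats it as understood since all second-order data live over $(\gamma(t),\dot\gamma(t))\in TM\setminus 0_{TM}$.
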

Moreover, if $\gamma$ is also perpendicular to  $Q$ at $q=\gamma(\tau)$,
and $\{X(\tau)\,|\,X\in\mathscr{J}^P_\gamma\}\supseteq T_{\dot\gamma(\tau)}Q$
(the latter may be satisfied if $\gamma(\tau)$ is not a $P$-focal point),
 \cite[Theorem~1.1(iii)]{Lu14} gives
\begin{eqnarray}\label{e:MS2}
{\rm Index}({\bf I}^\gamma_{P,Q})={\rm Index}({\bf I}^\gamma_{P,q})+ {\rm Index}(\mathcal{A}_\gamma)
\end{eqnarray}
with $q=\gamma(\tau)$,
where $\mathcal{A}_\gamma$ is the bilinear symmetric form on $\mathscr{J}^{P}_\gamma$ defined by
$$
\mathcal{A}_\gamma(J_1,J_2)=g_{\dot\gamma(\tau)}\Big(D_{\dot\gamma}^{\dot\gamma}J_1(\tau)+
\tilde{S}^Q_{\dot\gamma(\tau)}(J_1(\tau)), J_2(\tau)\Big).
$$
\begin{remark}\label{rm:FinMorseIndex}
{\rm When $M, F, P$ and $Q$ are smooth Ioan Radu Peter \cite{Pe06} proved
(\ref{e:MS1}) and (\ref{e:MS2}) if the Morse index form,
$P$-Jacobi field and the  shape operator
are introduced by the Cartan connection.
Recently, in \cite{Lu14} the author proved the Morse index theorem in the case of two variable
 endpoints in conic Finsler manifolds by
 employing the Chern connection to introduce the Morse index form,
 $P$-Jacobi field and the  shape operator. (\ref{e:MS1}) and (\ref{e:MS2})
 are included in \cite[Theorem~1.1(iii)]{Lu14}.}
\end{remark}

Recall that the \textsf{exponential map} of a $C^6$ Finsler matric $F$ on $M$
 is $\exp^F:\mathcal{D}\subset TM\to M$, where
$\mathcal{D}$ is the set of vectors $v$ in $TM$
such that the unique geodesic $\gamma_v$ satisfying
$\gamma_v(0)=\pi(v)$ and $\dot\gamma_v(0)=v$ is defined at least in $[0, b)\supset [0,1]$,
and $\exp^F(v)=\gamma_v(1)$.
$\mathcal{D}$ is a starlike open neighborhood of the zero section $0_{TM}$ of $TM$,
$\exp^F$ is $C^1$, $C^{3}$ in $TM\setminus 0_{TM}$,
 and $D(\exp^F_p)(0_p):T_pM\to T_pM$ is the identity map at the origin $0_p\in T_pM$ for any $p\in M$,
 where $\exp^F_p$ is the restriction of $\exp^F$ to $\mathcal{D}_p:=\mathcal{D}\cap T_pM$.
 For $v\in \mathcal{D}_p$ and $w\in T_pM$, by
 \cite[Proposition~3.15]{Jav15} or \cite[Lemma~11.2.2]{Shen})
 we have $D\exp_p^F(v)[w]=J(1)$, where $J$ is
 the unique Jacobi field on $\gamma_v$ such that $J(0)=0$ and $J'(0)=w$.
It follows that $D\exp_p(v):T_v(T_pM)\equiv T_pM\to T_pM$
is singular if and only if $\gamma_v(1)$ is a conjugate point
 of $p$ along $\gamma_v$ (cf. \cite[Proposition~7.1.1]{BaChSh}).
Moreover, the multiplicity (or order) of the conjugate point $\gamma_v(1)$
 is $\dim {\rm Ker}\left(D\textmd{exp}^{F}(v)\right)$.

More generally, let $P\subset M$ and $TP^\bot$ be as above, and let $\exp^{FN}$ be the restriction
of $\exp^F$ to $\mathcal{D}\cap (TP^\bot\cup 0_{TM}|_P)$, where
$0_{TM}|_P$ is the restriction of the zero section $0_{TM}$ of $TM$ to $P$.
 We say $\exp^{FN}$ to be the \textsf{normal exponential map}.
Note that $v\in \mathcal{D}\cap(TP^\bot\cup 0_{TM}|_P)$ if and only if $tv\in \mathcal{D}\cap (TP^\bot\cup 0_{TM}|_P)$ for all $0\le t\le 1$.
A point $q=\exp^{FN}(v)$ with $v\in \mathcal{D}\cap (TP^\bot\cup 0_{TM}|_P)$
 is a $P$-focal point along $[0,1]\ni t\mapsto \gamma_{v}(t)=\exp^{FN}_{\pi(v)}(tv)$
 if and only if it is a critical value of ${\rm exp}^{FN}$, and in this case
 the multiplicity (or order) of the focal point $q$
 is equal to $\dim {\rm Ker}\big(D\textmd{exp}^{FN}(v)\big)$
 (by the definitions above (\ref{e:P-JacobiField1-}) and
 \cite[Proposition~3.4]{Lu14}).
 (See also Lemma~4.8 in \cite[page~59]{Sak96} for the case in Riemannian geometry.)
 Therefore, if $v\in \mathcal{D}\cap (TP^\bot\cup 0_{TM}|_P)$
  is such that $\exp^{FN}(v)$ is not a focal point
along $[0,1]\ni t\mapsto\exp^{FN}(tv)$, and $u\in\mathcal{D}\cap (TP^\bot\cup 0_{TM}|_P)$
 is sufficiently close to $v$, then $\exp^{FN}(u)$ can not be
 a focal point along $[0,1]\ni t\mapsto\exp^{FN}(tu)$
 either.  Moreover, applying Sard theorem to
 the $C^3$ map $\exp^{FN}$ between $n$-dimensional $C^6$ manifolds
  $\mathcal{D}\cap (TP^\bot\cup 0_{TM}|_P)$ and $M$
 we obtain that the focal set of $P$ (i.e.,
 the set of all $P$-focal points) has measure zero in $M$.

\section{Bifurcations points along a Finsler geodesic}\label{sec:geodesics4}

\begin{assumption}\label{ass:Fin5}
{\rm Let $M$ be a $n$-dimensional, connected $C^{7}$ submanifold of $\R^N$, and
let $P$ be a $C^{7}$ submanifold in $M$ of dimension less than $n$.
For a $C^\ell$ Finsler metric $F$ on $M$ with $3\le\ell\le 6$ let $\gamma:[0,\tau]\to M$
be a  constant (nonzero) speed $F$-geodesic which is perpendicular to $P$ at $\gamma(0)$, i.e., $g^F_{\dot\gamma(0)}(\dot\gamma(0), u)=0\;\forall u\in T_{\gamma(0)}P$. (Note that $\gamma$ is $C^\ell$.) }
 \end{assumption}

Because of Definition~1.8 in \cite{Lu12-} or \cite{Lu12}
and  \cite[Definition~6.1]{PiPoTa04} we introduce:

\begin{definition}\label{def:bifur}
{\rm Under Assumption~\ref{ass:Fin5},  $\gamma(\mu)$ with $\mu\in (0, \tau]$ is called
a \textsf{bifurcation point on $\gamma$ relative to $P$}
if there exists a sequence $(t_k)\subset (0,\tau]$ converging to $\mu$
and a sequence   constant (non-zero)  speed $F$-geodesics $\gamma_k:[0, t_k]\to M$
emanating perpendicularly from $P$  such that
\begin{eqnarray}\label{e:geobifu1}
&&\gamma_k(t_k)=\gamma(t_k)\;\hbox{for all $k\in\N$},\\
&&0<\|\gamma_k-\gamma|_{[0, t_k]}\|_{C^1([0, t_k],\mathbb{R}^N)}\to 0\;\hbox{ as $k\to\infty$}.
\label{e:geobifu2}
\end{eqnarray}}
\end{definition}

\begin{remark}\label{rm:geobifu1}
{\rm
As pointed out below Definition~1.8 in \cite{Lu12-} or \cite{Lu12},
using Lemma~2.6 in \cite{Lu12-} or \cite{Lu12}
we can prove that the limit in (\ref{e:geobifu2}) is equivalent to
 each of the following two conditions:
\begin{enumerate}
\item[\rm (1)]
 $\gamma_k(0)\to\gamma(0)$ and $\dot{\gamma}_k(0)\to\dot{\gamma}(0)$,
 \item[(2)]
$\|\gamma_k-\gamma|_{[0, t_k]}\|_{C^2([0, t_k],\mathbb{R}^N)}\to 0$ as $k\to\infty$.
\end{enumerate}
}
\end{remark}

\begin{theorem}\label{th:MorseBifFin}
 Under Assumption~\ref{ass:Fin5}, the following are true.
 \begin{enumerate}
\item[\rm (i)] There exists only finitely many $P$-focal points  along $\gamma$.
\item[\rm (ii)] If $\gamma(\mu)$ with $\mu\in (0, \tau]$ is a
bifurcation point on $\gamma$ relatively to $P$,
 then  it is also a $P$-focal point along $\gamma$.
\item[\rm (iii)] If $\gamma(\mu)$ with $\mu\in (0, \tau)$ is a $P$-focal
point along $\gamma$, then it is
a bifurcation point on $\gamma$ relative to $P$ and  one of the following alternatives occurs:
\begin{enumerate}
\item[\rm (iii-1)] There exists a sequence of distinct $C^\ell$
  constant {\rm (}non-zero{\rm )}  speed $F$-geodesics emanating perpendicularly from $P$
and ending at $\gamma(\mu)$, $\alpha_k:[0,\mu]\to M$,
 $\alpha_k\ne\gamma|_{[0,\mu]}$, $k=1,2,\cdots$, such that
 $\alpha_k\to\gamma|_{[0,\mu]}$ in $C^2([0,\mu],\mathbb{R}^N)$ as $k\to\infty$.

\item[\rm (iii-2)]  For every $\lambda\in (0, \tau)\setminus\{\mu\}$ near $\mu$ there
exists a $C^\ell$   constant {\rm (}non-zero{\rm )}
speed $F$-geodesics emanating perpendicularly from $P$ and ending at $\gamma(\lambda)$,
$\alpha_\lambda:[0,\lambda]\to M$,  $\alpha_\lambda\ne\gamma|_{[0,\lambda]}$, such that
 $\|\alpha_\lambda-\gamma|_{[0,\lambda]}\|_{C^2([0,\lambda],\mathbb{R}^N)}\to 0$ as $\lambda\to\mu$.

\item[\rm (iii-3)] For a given small $\epsilon>0$ there is a one-sided  neighborhood $\Lambda^\ast$ of $\mu$ such that
for any $\lambda\in\Lambda^\ast\setminus\{\mu\}$, there exist at least two $C^\ell$ constant {\rm (}non-zero{\rm )}  speed
 $F$-geodesics  emanating perpendicularly from $P$ and ending at $\gamma(\lambda)$, $\beta^i_\lambda:[0,\lambda]\to M$, $\beta_\lambda^i\ne\gamma|_{[0,\lambda]}$, $i=1,2$,  to satisfy the condition that  $\|\beta^i_\lambda-\gamma|_{[0,\lambda]}\|_{C^1([0,\lambda],\mathbb{R}^N)}<\epsilon$, $i=1,2$.
Moreover, the geodesics $\beta^1_\lambda$ and $\beta^2_\lambda$ can also be chosen to have
distinct speeds {\rm (}or lengths{\rm )} if the multiplicity of $\gamma(\mu)$ as a $P$-focal point along $\gamma$ is greater than one and
there exist only finitely many $C^\ell$ constant {\rm (}non-zero{\rm )}  speed  $F$-geodesics
 emanating perpendicularly from $P$ and ending at $\gamma(\lambda)$,
$\alpha_1,\cdots,\alpha_m$, such that $\|\alpha_i-\gamma|_{[0,\lambda]}\|_{C^1([0,\lambda],\mathbb{R}^N)}<\epsilon$, $i=1,\cdots, m$.
\end{enumerate}
\end{enumerate}
\end{theorem}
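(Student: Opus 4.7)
The plan is to reduce the Finsler statement to Theorem~\ref{th:MorseBif} by replacing $F^{2}$ with a fiberwise strictly convex $C^{\ell}$ Lagrangian that agrees with $F^{2}$ along a neighborhood of the tangent curve of $\gamma$. Using that $F(\gamma(t),\dot\gamma(t))$ is a positive constant along $\gamma$, I pick an open neighborhood $\hat M$ of $\gamma([0,\tau])$ with compact closure and apply Proposition~\ref{prop:Fin2.2} with $\Lambda$ a singleton to produce a $C^{\ell}$ function $L^{\ast}:T\hat M\to\mathbb{R}$ which is fiberwise strictly convex and equals $L=F^{2}$ on the open set $\mathcal{V}:=\{(x,v)\in T\hat M\,|\,F(x,v)^{2}>2c/(3C_{1})\}$, with $c>0$ chosen so that $(\gamma(t),\dot\gamma(t))\in\mathcal{V}$ for all $t\in[0,\tau]$. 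Since $L^{\ast}=L$ on $\mathcal{V}$, the curve $\gamma$ is an Euler-Lagrange curve of $L^{\ast}$ emanating perpendicularly from $P$ in the sense of~(\ref{e:LagrCurve}), so Theorem~\ref{th:MorseBif} becomes applicable to the triple $(L^{\ast},P,\gamma)$.

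The pivotal bookkeeping step is the identification of focal notions: for each $s\in(0,\tau]$ the Hessians of $\mathcal{E}_{P,\gamma(s)}$ and $\mathcal{L}^{\ast}_{P,s}$ on $C^{1}_{P\times\{\gamma(s)\}}(\gamma_{s}^{\ast}TM)$ coincide because their integrands agree on a neighborhood of $\{(\gamma(t),\dot\gamma(t))\,|\,t\in[0,s]\}$. Combined with (\ref{e:IndexSame}) and (\ref{e:MS0}) this yields
\[
\mu^{P}_{\gamma}(s)=m^{0}(\mathcal{E}_{P,\gamma(s)},\gamma_{s})=m^{0}(\mathcal{L}^{\ast}_{P,s},\gamma_{s})=\nu^{P}_{\gamma}(s),
\]
so Finsler $P$-focal instants (and their multiplicities) coincide with the Lagrangian $S_{0}$-focal instants for $L^{\ast}$ with $S_{0}=P$. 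Conclusion~(i) follows at once from Theorem~\ref{th:MorseBif}(i).

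Next I translate the bifurcation notion. If $\gamma(\mu)$ is a bifurcation point on $\gamma$ relative to $P$, the attached sequence $\gamma_{k}:[0,t_{k}]\to M$ of constant nonzero speed $F$-geodesics satisfies $\|\gamma_{k}-\gamma|_{[0,t_{k}]}\|_{C^{2}}\to 0$ by Remark~\ref{rm:geobifu1}; for large $k$ their tangent lifts lie in $\mathcal{V}$, so by Claim~\ref{cl:Fin1} (applied in the singleton-$\Lambda$ setting) each $\gamma_{k}$ is also an Euler-Lagrange curve of $L^{\ast}$ perpendicular to $P$. Hence $\mu$ is a bifurcation instant for $(P,\gamma)$ in the Lagrangian sense, and Theorem~\ref{th:MorseBif}(ii) combined with the focal identification above proves~(ii). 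For~(iii) I start from a Finsler $P$-focal instant $\mu\in(0,\tau)$, which is Lagrangian $P$-focal by the identification, invoke Theorem~\ref{th:MorseBif}(iii) to obtain Euler-Lagrange curves $\alpha_{k}$, $\alpha_{\lambda}$, or $\beta^{i}_{\lambda}$ of $L^{\ast}$, and observe that their $C^{2}$-closeness to $\gamma|_{[0,\mu]}$ (resp.\ $\gamma|_{[0,\lambda]}$) forces their tangent lifts into $\mathcal{V}$, so they are genuine $C^{\ell}$ constant nonzero speed $F$-geodesics perpendicular to $P$ with the required convergence properties.

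The only remaining subtle translation is that of the energy-separation condition (\ref{e:diffEnergy}) into the speed/length statement of~(iii-3). On each $\beta^{i}_{\lambda}$ the Lagrangian $L^{\ast}$ coincides with $F^{2}$, and each $\beta^{i}_{\lambda}$ has constant $F$-speed, so
\[
\int_{0}^{\lambda}L^{\ast}(t,\beta^{i}_{\lambda}(t),\dot\beta^{i}_{\lambda}(t))\,dt=\lambda\cdot F(\beta^{i}_{\lambda}(0),\dot\beta^{i}_{\lambda}(0))^{2};
\]
inequality of the two energies therefore forces inequality of the constant speeds, and multiplying by $\lambda$ gives inequality of the $F$-lengths. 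The main obstacle is the careful matching of the Finsler focal multiplicity (defined through $P$-Jacobi fields with respect to the Chern connection, and characterized by (\ref{e:MS0})) with the Lagrangian focal multiplicity $\nu^{P}_{\gamma}(s)$ for $L^{\ast}$, passing through the intermediate identifications (\ref{e:IndexSame}) and (\ref{e:IndexSame+}); once that dictionary is established, Theorem~\ref{th:MorseBif} delivers each of (i), (ii), (iii-1)--(iii-3) by essentially formal reinterpretation.
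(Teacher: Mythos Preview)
Your proposal is correct and follows essentially the same route as the paper: localize to a compact piece of $M$, invoke Proposition~\ref{prop:Fin2.2} to replace $F^{2}$ by a globally fiberwise strictly convex $C^{\ell}$ Lagrangian $L^{\ast}$ that agrees with $F^{2}$ on a $C^{1}$-neighborhood of the tangent curve of $\gamma$, identify the Finsler and Lagrangian focal data via the coincidence of Hessians, and then read off (i)--(iii) from Theorem~\ref{th:MorseBif}. Two cosmetic points: the paper cites the Hessian identity (\ref{e:Hessians}) directly rather than the parametrized equalities (\ref{e:IndexSame})--(\ref{e:IndexSame+}), and for (iii-3) only $C^{1}$-closeness (not $C^{2}$) is available from Theorem~\ref{th:MorseBif}, but that is all that is needed to force the tangent lift into $\mathcal{V}$, so your argument goes through unchanged.
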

\begin{proof}[\bf Proof]
For any $\lambda\in (0, \tau]$, $\gamma_\lambda:=\gamma|_{[0,\lambda]}$ is a critical point of  the $C^{2}$-functional
\begin{equation*}
\mathcal{E}_{P,\gamma(\lambda)}: C^{1}_{P\times\{\gamma(\lambda)\}}([0,\lambda]; M)_{\rm reg}\to\mathbb{R},\;\alpha\mapsto
 \int^\lambda_0[F(\alpha(t), \dot{\alpha}(t))]^2dt.
\end{equation*}

Since $\gamma([0,\tau])$ is compact in $M$ and the geodesics involved are
near this compact subset \textbf{we may assume
that $M$ is compact}. Therefore there exists an Riemannian metric $g$ on $M$
and an constant $C_1>0$ such that
$|v|^2_x\le [F(x, v)]^2\le C_1|v|^2_x$ for all $(x,v)\in TM$,
where $|v|_x=\sqrt{g_x(v,v)}$.
Clearly, there exists a constant $c>0$ such that
$[F(\gamma(t), \dot{\gamma}(t))]^2>2c/C_1$ for all $t\in [0,\tau]$.
 As in Proposition~\ref{prop:Fin2.2} we define
 \begin{eqnarray*}
 L^\ast:TM\to\R,\;(x,v)\mapsto\psi_{\varepsilon,\delta}([F(x,v)]^2)+
\phi_{\mu,b}(|v|^2_x)-b,
\end{eqnarray*}
 which is $C^\ell$ and gives a family of $C^2$-functionals
\begin{equation*}
\mathcal{E}^\ast_{P,\gamma(\lambda)}: C^{1}_{P\times\{\gamma(\lambda)\}}([0,\lambda]; M)
\to\mathbb{R},\;
\alpha\mapsto\int^\lambda_0L^\ast(\alpha(t), \dot{\alpha}(t))dt,\qquad\lambda\in (0, \tau].
\end{equation*}
By Proposition~\ref{prop:Fin2.2}(i),  $L^\ast(x,v)= L(x,v)$ if $L(x,v)\ge\frac{2c}{3C_1}$.
Hence the functionals $\mathcal{E}_{P,\gamma(\lambda)}$ and $\mathcal{E}^\ast_{P,\gamma(\lambda)}$
agree on the following open subset of
$C^{1}_{P\times\{\gamma(\lambda)\}}([0,\lambda]; M)_{\rm reg}$ containing $\gamma_\lambda$,
$$
C^{1}_{P\times\{\gamma(\lambda)\}}([0,\lambda]; M, F, c/C_1):=\left\{\alpha\in C^{1}_{P\times\{\gamma(\lambda)\}}([0,\lambda]; M)\,\big|\,
\min_{t}[F(\alpha(t), \dot{\alpha}(t))]^2>2c/C_1\right\}.
$$
Then each $\gamma_\lambda$ is also a critical point of $\mathcal{E}^\ast_{P,\gamma(\lambda)}$
 and the Hessians
 \begin{equation}\label{e:Hessians}
 D^2\mathcal{E}_{P,\gamma(\lambda)}(\gamma_\lambda)=D^2\mathcal{E}^\ast_{P,\gamma(\lambda)}(\gamma_\lambda),
\quad\forall\lambda\in (0, \tau].
\end{equation}

By Assumption~\ref{ass:Fin5} we see that $L^\ast$ satisfies
Assumption~1.7 in \cite{Lu12-} or \cite{Lu12}
with $S_0=P$
and $\gamma$ is a $L^\ast$-curve emanating perpendicularly from $P$.
From Theorem~1.9(i) in \cite{Lu12-} (or \cite{Lu12})
and (\ref{e:Hessians})  there only exist finitely many
numbers $0<s_1<\cdots<s_m\le\tau$ such that
$$
\dim{\rm Ker}(D^2\mathcal{E}_{P,\gamma(s_i)}(\gamma_{s_i}))=
\dim{\rm Ker}(D^2\mathcal{E}^\ast_{P,\gamma(s_i)}(\gamma_{s_i}))>0,\quad i=1,\cdots,m.
$$
These and (\ref{e:MS0}) lead to (i).

Suppose that $\gamma(\mu)$ with $\mu\in (0, \tau]$ is a bifurcation point on $\gamma$ relatively to $P$.
By Definition~\ref{def:bifur} it is easy to see that $\mu$  is a bifurcation instant for $(P, \gamma)$
where $\gamma$ is as a $L^\ast$-curve. Therefore Theorem~1.9(ii) in \cite{Lu12-} or \cite{Lu12}
tells us
$\dim{\rm Ker}(D^2\mathcal{E}^\ast_{P,\gamma(\mu)}(\gamma_{\mu}))>0$.
Combing the latter with (\ref{e:Hessians}) and (\ref{e:MS0}) we arrive at (ii).

Finally, let us assume that $\gamma(\mu)$ with $\mu\in (0, \tau)$ is a $P$-focal point along $\gamma$.
Then by (\ref{e:Hessians}) we see that $\mu$ is a
 $P$-focal point along $\gamma$ (as a $L^\ast$-curve relative $P$).
It follows from this result and Theorem~1.9(iii) in \cite{Lu12-} or \cite{Lu12}
that $\mu$ is a bifurcation instant for $(P, \gamma)$
and that one of Theorem~1.9(iii-k) in \cite{Lu12-} or \cite{Lu12},
$k=1,2,3$, holds after $L$ is replaced by $L^\ast$.
Combining these facts with (\ref{e:Hessians}), we obtain the desired result. The argument proceeds in two steps.

(1) By choosing $\epsilon > 0$ sufficiently small, we can ensure that
any $\alpha \in C^{1}_{P \times \{\gamma(\lambda)\}}([0,\lambda]; M)$ satisfying $\|\alpha - \gamma|_{[0,\lambda]}\|_{C^1([0,\lambda],\mathbb{R}^N)} < \epsilon$ belongs to $C^{1}_{P \times \{\gamma(\lambda)\}}([0,\lambda]; M, F, c/C_1)$
automatically.

(2) Since both $\beta_\lambda^1$ and $\beta_\lambda^2$ are constant (nonzero) speed geodesics, relations
\begin{eqnarray*}
\int^\lambda_0L(\beta_\lambda^1(t), \dot{\beta}_\lambda^1(t))dt&=&
\int^\lambda_0L^\ast(\beta_\lambda^1(t), \dot{\beta}_\lambda^1(t))dt\\
&\ne& \int^\lambda_0
L^\ast(\beta_\lambda^2(t), \dot{\beta}_\lambda^2(t))dt=\int^\lambda_0
L(\beta_\lambda^2(t), \dot{\beta}_\lambda^2(t))dt
\end{eqnarray*}
imply that $\beta_\lambda^1$ and $\beta_\lambda^2$ have different speeds.
\end{proof}

Theorem~\ref{th:MorseBifFin} has the following deep geometrical consequence.

\begin{theorem}\label{th:MorseLittauer}
Let $M$ and $P$ be as in Assumption~\ref{ass:Fin5}, and let $F$ be a $C^6$ Finsler metric on $M$.
 Suppose that $v\in \mathcal{D}\cap (TP^\bot\cup 0_{TM}|_P)$ is a critical point of ${\exp}^{FN}$.
 Then
 $\exp^{FN}$ is not injective near $v$, precisely  one of the following alternatives occurs:
\begin{enumerate}
\item[\rm (i)] There exists a sequence $(v_k)$ of distinct points in $\mathcal{D}\cap (TP^\bot\cup 0_{TM}|_P)\setminus\{v\}$ converging to $v$,
such that  $\exp^{FN}(v_k)=\exp^{FN}(v)$ for each $k=1,2,\cdots$.

\item[\rm (ii)]  For every $\lambda\in \R\setminus\{1\}$ near $1$ there
exists $v_\lambda\in \mathcal{D}\cap (TP^\bot\cup 0_{TM}|_P)\setminus\{v\}$  such that
$\exp^{FN}(\lambda v_\lambda)=\exp^{FN}(\lambda v)$ and $v_\lambda\to v$ as $\lambda\to 1$.

\item[\rm (iii)] Given a small neighborhood
$\mathcal{O}$ of $v$ in $\mathcal{D}\cap (TP^\bot\cup 0_{TM}|_P)$
there is a one-sided  neighborhood $\Lambda^\ast$ of $1$ in $\R$ such that
for any $\lambda\in\Lambda^\ast\setminus\{1\}$,  there exist at least two points $v_\lambda^1$ and $v_\lambda^2$ in $\mathcal{O}\setminus\{v\}$
such that  $\exp^{FN}(\lambda v_\lambda^k)=\exp^{FN}(\lambda v)$ for each $k=1,2$. Moreover the points $v_\lambda^1$ and $v_\lambda^2$ above
can also be chosen to satisfy $F(v_\lambda^1)\ne F(v_\lambda^2)$ if $\dim {\rm Ker}\big(D{\rm exp}^{FN}(v)\big)>1$ and
$\mathcal{O}\setminus\{v\}$ only contains finitely many points, $v_1,\cdots,v_m$, such that
${\rm exp}^{FN}(\lambda v_i)={\rm exp}^{FN}(\lambda v)$, $i=1,\cdots,m$.
 \end{enumerate}
 \end{theorem}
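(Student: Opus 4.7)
The plan is to reduce Theorem~\ref{th:MorseLittauer} to Theorem~\ref{th:MorseBifFin}(iii) by interpreting the given critical point $v$ of $\exp^{FN}$ as the initial velocity of a $P$-geodesic whose endpoint is a $P$-focal point. Concretely, I would first extend the $F$-geodesic $[0,1]\ni t\mapsto\exp^{FN}(tv)$ to a constant-speed $C^6$ geodesic $\gamma\colon[0,\tau]\to M$ with $\tau>1$ (possible because $v\in\mathcal{D}$, which is open and starlike, so $tv\in\mathcal{D}$ on some interval $[0,b)\supset[0,1]$). Then $\gamma$ satisfies Assumption~\ref{ass:Fin5} with $F$ and $P$ as given. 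Since $v$ is a critical point of $\exp^{FN}$, the discussion after the definition of the normal exponential map in Section~\ref{sec:Fin} shows that $\gamma(1)=\exp^{FN}(v)$ is a $P$-focal point along $\gamma$, with geometric multiplicity $\mu^{P}_{\gamma}(1)=\dim\mathrm{Ker}(D\exp^{FN}(v))$. As $\mu:=1\in(0,\tau)$, Theorem~\ref{th:MorseBifFin}(iii) applies and delivers one of the three alternatives (iii-1), (iii-2), (iii-3).

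The core translation device is the bijection between constant-speed $F$-geodesics $\alpha\colon[0,\lambda]\to M$ emanating $g^F_{\dot\alpha(0)}$-perpendicularly from $P$ and vectors $w:=\dot\alpha(0)\in TP^{\bot}\cup0_{TM}|_P$, under which $\alpha(t)=\exp^F(tw)$ and $\alpha(\lambda)=\exp^{FN}(\lambda w)$. By continuous dependence of geodesics on initial data (encoded in Remark~\ref{rm:geobifu1} and Lemma~\ref{lem:regu}), $C^1$- or $C^2$-closeness of $\alpha$ to $\gamma|_{[0,\lambda]}$ is equivalent to closeness of $\dot\alpha(0)$ to $\dot\gamma(0)=v$ in $TP^{\bot}\cup0_{TM}|_P$. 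Hence given the neighborhood $\mathcal{O}$ of $v$ in Theorem~\ref{th:MorseLittauer}(iii), I can choose the $\epsilon$ of Theorem~\ref{th:MorseBifFin}(iii-3) small enough so that the $C^1$-$\epsilon$-ball of $\gamma|_{[0,\lambda]}$ maps into $\mathcal{O}$ under $\alpha\mapsto\dot\alpha(0)$, and conversely every $w\in\mathcal{O}$ near $v$ yields a geodesic in the $C^1$-$\epsilon$-ball.

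Applying this translation: from (iii-1) the sequence $\alpha_k\to\gamma|_{[0,1]}$ in $C^2$ yields $v_k:=\dot\alpha_k(0)\to v$ in $TP^{\bot}$ with $\exp^{FN}(v_k)=\alpha_k(1)=\gamma(1)=\exp^{FN}(v)$, giving (i). From (iii-2) define $v_\lambda:=\dot\alpha_\lambda(0)\ne v$; then $\exp^{FN}(\lambda v_\lambda)=\alpha_\lambda(\lambda)=\gamma(\lambda)=\exp^{FN}(\lambda v)$ and $v_\lambda\to v$, giving (ii) for $\lambda$ in a deleted real neighborhood of $1$. From (iii-3), set $v_\lambda^i:=\dot\beta_\lambda^i(0)\in\mathcal{O}\setminus\{v\}$, $i=1,2$, to obtain (iii); the secondary assertion $F(v_\lambda^1)\ne F(v_\lambda^2)$ is precisely the condition that $\beta_\lambda^1$ and $\beta_\lambda^2$ have distinct constant speeds (equivalently, distinct $F$-lengths on $[0,\lambda]$), which Theorem~\ref{th:MorseBifFin}(iii-3) delivers under the hypothesis $\dim\mathrm{Ker}(D\exp^{FN}(v))>1$ (matching $\mu^{P}_{\gamma}(1)>1$) together with the finite-multiplicity hypothesis, since the bijection $\alpha\leftrightarrow\dot\alpha(0)$ turns finitely many geodesics in the $C^1$-ball into finitely many preimages $v_1,\dots,v_m\in\mathcal{O}$ of $\exp^{FN}(\lambda v)$ under $\exp^{FN}(\lambda\,\cdot\,)$ and vice versa.

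The main obstacle I expect is not conceptual but administrative: making the neighborhood bookkeeping between $\mathcal{O}\subset\mathcal{D}\cap(TP^{\bot}\cup 0_{TM}|_P)$ and the $C^1$-$\epsilon$-neighborhood of $\gamma|_{[0,\lambda]}$ in $C^{1}_{P\times\{\gamma(\lambda)\}}([0,\lambda];M)$ uniformly in $\lambda$ near $1$, so that the one-sided parameter set $\Lambda^\ast$ produced by (iii-3) can be transported to a one-sided real neighborhood of $1$ without shrinking to a point. This is handled by the continuous-dependence estimates in Lemma~\ref{lem:regu}, applied in local coordinate charts around $\gamma(0)\in P$, combined with the openness of $\mathcal{D}$ in $TM$.
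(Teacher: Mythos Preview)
Your proposal is correct and follows essentially the same approach as the paper: extend the geodesic $\gamma_v(t)=\exp^{FN}(tv)$ to $[0,\tau]$ with $\tau>1$, identify $\gamma_v(1)$ as a $P$-focal point of multiplicity $\dim\mathrm{Ker}(D\exp^{FN}(v))$, apply Theorem~\ref{th:MorseBifFin}(iii), and translate each alternative via the correspondence $\alpha\leftrightarrow(\alpha(0),\dot\alpha(0))$ with Remark~\ref{rm:geobifu1} handling the neighborhood bookkeeping. The only detail you leave implicit that the paper makes explicit is that in case (ii) one shrinks the parameter neighborhood so that each $\alpha_\lambda$ extends to $[0,1]$ (ensuring $v_\lambda\in\mathcal{D}$), and in case (iii) one first shrinks $\mathcal{O}$ so that every $u\in\mathcal{O}$ has $\gamma_u$ defined on $[0,\tau]$; both follow from openness of $\mathcal{D}$ exactly as you indicate.
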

\begin{proof}[\bf Proof]
Let $\gamma_v(t)=\exp^{FN}(tv)$. It is well-defined on $[0,\tau]$ for some $\tau>1$
because $\mathcal{D}$ is a starlike open neighborhood of the zero section $0_{TM}$ of $TM$.
By the last paragraph of Section~\ref{sec:Fin} the assumption about $v$
implies that $q=\exp^{FN}(v)$ is a $P$-focal point along $\gamma_v$.
Therefore $q=\gamma_v(1)$  is a bifurcation point on $\gamma_v$ relatively to $P$
by Theorem~\ref{th:MorseBifFin}(iii).

Let $(\alpha_k)$ be as in (iii-1) of Theorem~\ref{th:MorseBifFin} with $\mu=1$ and $\gamma=\gamma_v$.
Then $v_k:=(\alpha_k(0), \dot\alpha_k(0))\to (\gamma_v(0),\dot\gamma_v(0))$
and all $v_k$ sit in $\mathcal{D}\cap (TP^\bot\cup 0_{TM}|_P)$ by the definition of $\mathcal{D}$.
Since $\alpha_k(t)={\rm exp}^{FN}(tv_k)$ for $t\in [0,1]$, we have ${\rm exp}^{FN}(v_k)=
\alpha_k(1)=\gamma_v(1)={\rm exp}^{FN}(v)$ and $v_k\ne v$ for all $k$.
 (i) is proved.

Similarly, let $\alpha_\lambda$ be as in (iii-2) of Theorem~\ref{th:MorseBifFin} with $\mu=1$ and $\gamma=\gamma_v$.
Then as $\lambda\to 1$ we have $v_\lambda:=(\alpha_\lambda(0), \dot\alpha_\lambda(0))\to (\gamma_v(0),\dot\gamma_v(0))=v$
because $0<\|\alpha_\lambda-\gamma_v|_{[0,\lambda]}\|_{C^2([0,\lambda],\mathbb{R}^N)}\to 0$
as $\lambda\to 1$.
Hence shrinking $\Lambda^\ast$ towards $1$ (if necessary) we may assume that
all geodesics $\alpha_\lambda$ are well-defined over $[0,1]$
(because $\gamma_v$ is well-defined on $[0,\tau]$ for some $\tau>1$).
 Therefore
$v_\lambda\in\mathcal{D}\cap (TP^\bot\cup 0_{TM}|_P)\setminus\{v\}$  and
$\exp^{FN}(\lambda v_\lambda)=\alpha_\lambda(\lambda)=\gamma_v(\lambda)=\exp^{FN}(\lambda v)$
for all $\lambda\in\Lambda^\ast$.
 (ii) is proved.

Finally, let us show how  (iii-3) of Theorem~\ref{th:MorseBifFin} leads to
 (iii) of Theorem~\ref{th:MorseLittauer}.
 Since $\gamma_v$ is well-defined on $[0,\tau']$ for some $\tau'>\tau$,
we may take a neighborhood $\mathcal{O}$ of $v$ in $\mathcal{D}\cap (TP^\bot\cup 0_{TM}|_P)$
such that for each $u\in \mathcal{O}$ the geodesic
 $t\mapsto \gamma_u(t):=\exp^{FN}(tu)$  is well-defined on $[0,\tau]$.
  By Remark~\ref{rm:geobifu1}, for a given small number $\delta\in (0,1)$
   there exists a small $\epsilon>0$ such that
 $(\alpha(0), \dot{\alpha}(0))\in\mathcal{O}$ for any $C^\ell$ constant (non-zero)
  speed $F$-geodesic  $\alpha:[0, \lambda]\to M$ emanating perpendicularly from $P$
  and ending at $\gamma_v(\lambda)$
 with $\lambda\in [1-\delta, 1+\delta]$ and satisfying  $\|\alpha-\gamma_v|_{[0,\lambda]}\|_{C^1([0,\lambda],\mathbb{R}^N)}<\epsilon$.
Let $\Lambda^\ast$ and $\beta^i_\lambda$ with $\lambda\in\Lambda^\ast$ be as in (iii-3) of Theorem~\ref{th:MorseBifFin} with $\mu=1$ and $\gamma=\gamma_v$.
We may shrink $\Lambda^\ast$ towards $1$ so that $\Lambda^\ast\subset[1-\delta,1+\delta]$.
Then the choice of $\epsilon$ implies that
$$
\hbox{$v_\lambda^j:=(\beta_\lambda^j(0), \dot\beta_\lambda^j(0))\in\mathcal{O}\setminus\{v\}$
and  $\exp^{FN}(\lambda v_\lambda^j)=\exp^{FN}(\lambda v)$ for $j=1,2$, and $v_\lambda^1\ne v_\lambda^2$.}
$$
Suppose that $\dim {\rm Ker}\big(D{\rm exp}^{FN}(v)\big)>1$, i.e.,
the multiplicity of $\gamma_v(1)$ as a $P$-focal point along $\gamma_v$ is greater than one
(by the last paragraph of Section~\ref{sec:Fin}), and that
$\mathcal{O}\setminus\{v\}$ only contains finitely many points, $v_1,\cdots,v_m$, such that
${\rm exp}^{FN}(\lambda v_i)={\rm exp}^{FN}(\lambda v)$, $i=1,\cdots,m$.
The second assumption implies that there are no infinitely many $C^\ell$ constant (non-zero)  speed  $F$-geodesics
 emanating perpendicularly from $P$ and ending at $\gamma_v(\lambda)$,
$\alpha_i:[0,\lambda]\to M$, $i=1,2,\cdots$, such that $\|\alpha_i-{\gamma_v}|_{[0,\lambda]}\|_{C^1([0,\lambda],\mathbb{R}^N)}<\epsilon$, $i=1,2,\cdots$.
(Otherwise, by the choice of $\epsilon$ we have $v_i:=(\alpha_i(0), \dot\alpha_i(0))\in\mathcal{O}\setminus\{v\}$
and  $\exp^{FN}(\lambda v_i)=\exp^{FN}(\lambda v)$ for each $i=1,2,\cdots$.)
Therefore there exist only finitely many such $\alpha$, saying $\alpha_1,\cdots,\alpha_k$.
In this case, by (iii-3) of Theorem~\ref{th:MorseBifFin}
the geodesics $\beta^1_\lambda$ and $\beta^2_\lambda$ above
can also be chosen to have distinct speeds, i.e., $F(v_\lambda^1)\ne F(v_\lambda^2)$.
The proof of (iii) of Theorem~\ref{th:MorseLittauer} is complete.
\end{proof}

Clearly, the following special case of Theorem~\ref{th:MorseLittauer} also significantly
strengthens a generalization of the Gauss lemma  by
 Morse-Littauer \cite{MorLit32} and Savage \cite{Sav43}:
\textsf{The exponential  map $\exp^F$ of a $C^\infty$ Finsler space $(M, F)$
is not locally injective near any critical point.}

\begin{theorem}\label{th:MorseLittauer1}
Let $M\subset\R^N$ be a $C^7$ manifold and let $F:TM\to\R$ be a $C^6$ Finsler metric.
If $v$ is a critical point of the restriction ${\exp}^{F}_p$ of
 the exponential map $TM\supseteq\mathcal{D}\ni u\mapsto\exp^F(u)\in M$ to
$\mathcal{D}_p:=\mathcal{D}\cap T_pM$, then
 one of the following alternatives occurs:
 \begin{enumerate}
\item[\rm (i)] There exists a sequence $(v_k)$ of distinct points
 in $\mathcal{D}_p\setminus\{v\}$ converging to $v$,
such that  $\exp^F_p(v_k)=\exp^F_p(v)$ for each $k=1,2,\cdots$.

\item[\rm (ii)]  For every $\lambda\in \R\setminus\{1\}$ near $1$ there
exists $v_\lambda\in \mathcal{D}_p\setminus\{v\}$  such that
$\exp^F_p(\lambda v_\lambda)=\exp^F_p(\lambda v)$ and $v_\lambda\to v$ as $\lambda\to 1$.

\item[\rm (iii)] Given a small neighborhood $\mathcal{O}$ of $v$ in $\mathcal{D}_p$
there is a one-sided  neighborhood $\Lambda^\ast$ of $1$ in $\R$ such that
for any $\lambda\in\Lambda^\ast\setminus\{1\}$,  there exist at least two points $v_\lambda^1$ and $v_\lambda^2$ in $\mathcal{O}\setminus\{v\}$
such that  $\exp^F_p(\lambda v_\lambda^k)=\exp^F_p(\lambda v)$ for each $k=1,2$. Moreover the points $v_\lambda^1$ and $v_\lambda^2$ above
can also be chosen to satisfy $F(v_\lambda^1)\ne F(v_\lambda^2)$ if $\dim {\rm Ker}\left(D{\rm exp}^F_p(v)\right)>1$ and
$\mathcal{O}\setminus\{v\}$ only contains finitely many points, $v_1,\cdots,v_m$, such that
${\rm exp}^F_p(\lambda v_i)={\rm exp}^F_p(\lambda v)$, $i=1,\cdots,m$.
 \end{enumerate}
 \end{theorem}

\section{Bifurcations of geodesics with two kinds of special boundary conditions}\label{sec:geodesics1}

Let $(M, g)$, $\mathbb{I}_g$ and submanifolds $P$, $Q$  be as at the beginning of Section~\ref{sec:Fin}.
In order to use the results in Section~1 and \cite{Lu12-} (or \cite{Lu12})
conveniently,\textbf{ we write $P$ and $Q$
as $S_0$ and $S_1$}, respectively.
Then the boundary condition (\ref{e:FinBoundNlambda}) becomes
\begin{equation}\label{e:FinBoundNlambda1}
g^{F_\lambda}_{\dot\gamma(0)}(u,\dot\gamma(0))=0\;\forall u\in
T_{\gamma(0)}S_0\quad\hbox{and}\quad
g^{F_\lambda}_{\dot\gamma(1)}(v,\dot\gamma(\tau))=0\;\forall v\in
T_{\gamma(\tau)}S_1
\end{equation}
if ${\bf N}=S_0\times S_1$, and
\begin{equation}\label{e:FinBoundNlambda2}
g^{F_\lambda}_{\dot\gamma(0)}(u,\dot\gamma(0))=
g^{F_\lambda}_{\dot\gamma(\tau)}(\mathbb{I}_{g\ast}u,\dot\gamma(\tau))
\quad\forall u\in T_{\gamma(0)}M
\end{equation}
if ${\bf N}={\rm Graph}(\mathbb{I}_{g})$. In these two cases the Morse index and nullity in
(\ref{e:FinMorseIndex}) have more precise explanations.
See (\ref{e:MS0}), (\ref{e:MS1}) and (\ref{e:MS2}) and \cite[\S6]{Lu5}.

\begin{theorem}\label{th:bif-nessFin}
Under Assumptions~\ref{ass:Fin1},~\ref{ass:Fin2}
with ${\bf N}=S_0\times S_1$ or ${\rm Graph}(\mathbb{I}_{g})$,
for $\mu\in\Lambda$  such that $\gamma_\mu(0)\ne\gamma_\mu(\tau)$
in the case $\dim S_0>0$ and $\dim S_1>0$, there holds:
\begin{enumerate}
\item[\rm (I)]{\rm (\textsf{Necessary condition}):}
Suppose  that constant (non-zero) speed $(F_\lambda, \bf N)$-geodesics
 with a parameter $\lambda\in\Lambda$
bifurcate at some $\mu\in\Lambda$ along sequences with respect to the branch $\{\gamma_\lambda\,|\,\lambda\in\Lambda\}$.
Then $m^0_\tau(\mathcal{E}_{\mu,\bf N}, \gamma_\mu)\ne 0$.
\item[\rm (II)]{\rm (\textsf{Sufficient condition}):}
Suppose that $\Lambda$ is first countable and that
there exist two sequences in  $\Lambda$ converging to $\mu$, $(\lambda_k^-)$ and
$(\lambda_k^+)$,  such that one of the following conditions is satisfied:
 \begin{enumerate}
 \item[\rm (II.1)] For each $k\in\mathbb{N}$, either $\gamma_{\lambda^+_k}$
  is not an isolated critical point of  ${\mathcal{E}}_{\lambda^+_k,\bf N}$,
 or $\gamma_{\lambda^-_k}$ is not an isolated critical point of ${\mathcal{E}}_{\lambda^-_k,\bf N}$,
 or $\gamma_{\lambda^+_k}$ (resp. $\gamma_{\lambda^-_k}$) is an isolated critical point of $\mathcal{E}_{\lambda^+_k,\bf N}$ (resp. $\mathcal{E}_{\lambda^-_k,\bf N}$) and
  $C_m(\mathcal{E}_{\lambda^+_k,\bf N}, \gamma_{\lambda^+_k};{\bf K})$
   and $C_m(\mathcal{E}_{\lambda^-_k,\bf N}, \gamma_{\lambda^-_k};{\bf K})$
  are not isomorphic for some Abel group ${\bf K}$ and some $m\in\mathbb{Z}$.
\item[\rm (II.2)] For each $k\in\mathbb{N}$, there exists $\lambda\in\{\lambda^+_k, \lambda^-_k\}$
such that $\gamma_{\lambda}$  is an either non-isolated or homological visible critical point of
$\mathcal{E}_{\lambda,\bf N}$ , and
$$
\left.\begin{array}{ll}
&[m^-(\mathcal{E}_{\lambda_k^-,\bf N}, \gamma_{\lambda^-_k}), m^-(\mathcal{E}_{\lambda_k^-,\bf N}, \gamma_{\lambda^-_k})+
m^0(\mathcal{E}_{\lambda_k^-,\bf N}, \gamma_{\lambda^-_k})]\\
&\cap[m^-(\mathcal{E}_{\lambda_k^+,\bf N}, \gamma_{\lambda^+_k}),
m^-(\mathcal{E}_{\lambda_k^+,\bf N}, \gamma_{\lambda^+_k})+m^0(\mathcal{E}_{\lambda_k^+,\bf N}, \gamma_{\lambda^+_k})]=\emptyset.
\end{array}\right\}\eqno(\hbox{$\spadesuit_k$})
$$
\item[\rm (II.3)] For each $k\in\mathbb{N}$, (\hbox{$\spadesuit_k$}) holds true,
and either $m^0(\mathcal{E}_{\lambda_k^-,\bf N}, \gamma_{\lambda^-_k})=0$ or $m^0(\mathcal{E}_{\lambda_k^+,\bf N}, \gamma_{\lambda^+_k})=0$.
 \end{enumerate}
  Then  there exists a sequence $(\lambda_k)\subset\hat{\Lambda}:=\{\mu,\lambda^+_k, \lambda^-_k\,|\,k\in\mathbb{N}\}$ converging to $\mu$
and constant speed $F_{\lambda_k}$-geodesic $\gamma^k:[0,\tau]\to M$ satisfying the boundary condition
(\ref{e:FinBoundNlambda})  with $\lambda=\lambda_k$, $k=1,2,\cdots$,
   such that $\gamma^k\to \gamma_\mu$ in $C^2([0, \tau];M)$.
 In particular, constant (non-zero) speed $(F_\lambda, \bf N)$-geodesics
  with a parameter $\lambda\in\Lambda$
bifurcate at $\mu\in\Lambda$ along sequences with respect to the branch $\{\gamma_\lambda\,|\,\lambda\in\Lambda\}$.
  \end{enumerate}
  \end{theorem}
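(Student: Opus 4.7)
The plan is to reduce both assertions to the corresponding Lagrangian bifurcation results (Theorem~\ref{th:bif-nessLagrGener} when ${\bf N}=S_0\times S_1$, and Theorem~\ref{th:bif-nessLagr*} when ${\bf N}={\rm Graph}(\mathbb{I}_g)$) via the modified Lagrangian $L^\ast$ supplied by Proposition~\ref{prop:Fin2.2}. The machinery is already assembled: the identifications (\ref{e:IndexSame})--(\ref{e:IndexSame+}) equate the Morse indices, nullities, and critical groups of $\mathcal{E}_{\lambda,{\bf N}}$ and $\mathcal{E}^\ast_{\lambda,{\bf N}}$ at $\gamma_\lambda$, while Claim~\ref{cl:Fin1} establishes a local bijection between critical points of $\mathcal{E}^\ast_{\lambda,{\bf N}}$ near $\gamma_\lambda$ and constant (non-zero) speed $(F_\lambda,{\bf N})$-geodesics near $\gamma_\lambda$.

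First I would fix a first-countable subset $\hat\Lambda\subset\Lambda$: in (I) take $\hat\Lambda=\{\mu,\lambda_k\,|\,k\in\N\}$ produced by the bifurcating sequence, and in (II) take $\hat\Lambda=\{\mu,\lambda^\pm_k\,|\,k\in\N\}$; both are sequentially compact. Next I choose a precompact open $\hat M\subset M$ containing $\gamma_\lambda([0,\tau])$ for all $\lambda\in\hat\Lambda$, a Riemannian metric $g$ with constants $\alpha_g,\beta_g>0$ and $C_1>0$ as in (\ref{e:Fin1.1}), and a constant $c>0$ with (\ref{e:Fin1.9}). Proposition~\ref{prop:Fin2.2} then yields $L^\ast_\lambda$ jointly continuous in $\lambda$, $C^\ell$ in $(x,v)$, fiberwise strongly convex, and agreeing with $F_\lambda^2$ on the open set (\ref{e:regularSet}). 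Consequently $L^\ast$ meets Assumption~\ref{ass:Lagr6} (resp.\ Assumption~\ref{ass:Lagr7}) on $\hat\Lambda\times[0,\tau]\times T\hat M$; the curves $\gamma_\lambda$ are critical points of $\mathcal{E}^\ast_{\lambda,{\bf N}}$ satisfying Assumption~\ref{ass:LagrGenerB} (resp.\ the hypotheses of Theorem~\ref{th:bif-nessLagr*}); and the identifications (\ref{e:IndexSame})--(\ref{e:IndexSame+}) translate every hypothesis of the theorem into the corresponding hypothesis for $\{\mathcal{E}^\ast_{\lambda,{\bf N}}\}$ (for (II.1) this uses that critical groups depend only on the germ of the functional at the critical point, and $\mathcal{E}_{\lambda,{\bf N}}=\mathcal{E}^\ast_{\lambda,{\bf N}}$ in a neighborhood of $\gamma_\lambda$).

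For (I), the bifurcating $C^2$ geodesics $\gamma^k$ converge to $\gamma_\mu$ in $C^1$, hence (via local charts and Lemma~\ref{lem:regu}(ii)) in $C^2$; they eventually lie in the open set (\ref{e:regularSet}) where $\mathcal{E}_{\lambda,{\bf N}}$ and $\mathcal{E}^\ast_{\lambda,{\bf N}}$ coincide, so $(\mu,\gamma_\mu)$ is a bifurcation point along sequences of (\ref{e:LagrGener})--(\ref{e:LagrGenerB}) (resp.\ (\ref{e:Lagr2})) with respect to $\{(\lambda,\gamma_\lambda)\}$. Theorem~\ref{th:bif-nessLagrGener}(I) (resp.\ Theorem~\ref{th:bif-nessLagr*}(I)) then gives $m^0(\mathcal{E}^\ast_{\mu,{\bf N}},\gamma_\mu)>0$, and (\ref{e:IndexSame}) converts this to $m^0(\mathcal{E}_{\mu,{\bf N}},\gamma_\mu)>0$. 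For (II), Theorem~\ref{th:bif-nessLagrGener}(II) (resp.\ Theorem~\ref{th:bif-nessLagr*}(II)) produces a sequence $(\lambda_k,\gamma^k)$ in $\hat\Lambda\times C^2_{\bf N}([0,\tau];M)$ with $\gamma^k\to\gamma_\mu$ in $C^2$ and each $\gamma^k\ne\gamma_{\lambda_k}$ a critical point of $\mathcal{E}^\ast_{\lambda_k,{\bf N}}$; the $C^2$-convergence places $\gamma^k$ eventually in (\ref{e:regularSet}), and Claim~\ref{cl:Fin1} upgrades it to a constant (non-zero) speed $(F_{\lambda_k},{\bf N})$-geodesic, which is exactly what (II) demands. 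The only nontrivial point, beyond bookkeeping, is to ensure the cutoff parameters $\varepsilon<\delta$ and $\mu,b$ defining $\psi_{\varepsilon,\delta}$ and $\phi_{\mu,b}$ in (\ref{e:Fin1.2}) can be chosen uniformly over the whole family $\hat\Lambda$ so that (\ref{e:Fin1.9}) holds simultaneously for all $\gamma_\lambda$; this is where sequential compactness of $\hat\Lambda$ and the joint continuity from Assumption~\ref{ass:Fin2} are used, and once past it the remainder of the argument is essentially a translation exercise between the Finsler and Lagrangian frameworks.
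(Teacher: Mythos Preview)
Your proposal is correct and follows essentially the same approach as the paper: reduce to a sequentially compact $\hat\Lambda$, construct the modified Lagrangian $L^\ast_\lambda$ via Proposition~\ref{prop:Fin2.2} on a precompact $\hat M$, use (\ref{e:IndexSame})--(\ref{e:IndexSame+}) to translate the hypotheses, invoke Theorem~\ref{th:bif-nessLagrGener} (resp.\ Theorem~\ref{th:bif-nessLagr*}), and then use Claim~\ref{cl:Fin1} to pass back to Finsler geodesics. The paper's version additionally enlarges $\hat M$ in Step~(I) so as to contain the images of the bifurcating geodesics $\gamma^k$ themselves, but your remark that $C^1$-convergence eventually places them in the open set (\ref{e:regularSet}) accomplishes the same thing.
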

\begin{proof}[\bf Proof]
{\it Step 1}[\textsf{Prove} (I)].
By Definition~\ref{def:BifurFin}
there exists an infinite sequence $\{(\lambda_k, \gamma^k)\}^\infty_{k=1}$
  in $\Lambda\times C^1([0,\tau], M)\setminus\{(\mu,\gamma_\mu)\}$
  converging to $(\mu,\gamma_\mu)$, such that each $\gamma^k\ne\gamma_{\lambda_k}$
  is a $F_{\lambda_k}$-geodesic satisfying the boundary condition
(\ref{e:FinBoundNlambda})  with $\lambda=\lambda_k$, $k=1,2,\cdots$.
  Let $\hat{\Lambda}=\{\mu\}\cup\{\lambda_k\,|\,k\in\mathbb{N}\}$. It is compact and sequential compact.
(Note that all $\gamma^m$ and $\gamma_{\lambda}$ are $C^\ell$, $4\le\ell\le 6$.)
It is easy to find an open subset $\hat{M}$ of $M$ with compact closure such that the closure
 $$
 Cl\left(\cup_{(\lambda,m)
 \in\hat{\Lambda}\times\mathbb{N}}\gamma_\lambda([0,\tau])\cup\gamma^m([0,\tau])\right)\subset\hat{M}.
 $$
Then the conditions in Proposition~\ref{prop:Fin2.2} can be satisfied with $(M, \Lambda)=(\hat{M}, \hat{\Lambda})$.
Therefore  for the constant $C_1>0$ as in Proposition~\ref{prop:Fin2.2} with $(M, \Lambda)=(\hat{M}, \hat{\Lambda})$ we have $c>0$ such that
for all $(m, \lambda,t)\in\mathbb{N}\times\hat{\Lambda}\times [0,\tau]$,
\begin{equation*}
[F_\lambda(\gamma_\lambda(t), \dot{\gamma}_\lambda(t))]^2>\frac{2c}{C_1}\quad\hbox{and}\quad
[F_{\lambda_m}(\gamma^m(t), \dot{\gamma}^m(t))]^2>\frac{2c}{C_1}.
 \end{equation*}
Let $L^\ast_\lambda:T\hat{M}\to\R$,  $\lambda\in\hat{\Lambda}$, be given by Proposition~\ref{prop:Fin2.2} with $(M, \Lambda)=(\hat{M}, \hat{\Lambda})$.
Then the corresponding $C^{2}$ functional $\mathcal{E}^\ast_{\lambda,\bf N}$ given by (\ref{e:FinEnergy*})
and the $C^{2-0}$ functional $\mathcal{E}_{\lambda,\bf N}$ in (\ref{e:FinEnergy}) coincide in the
following open subset of ${\cal C}_{\tau,\bf N}(\hat{M})_{\rm reg}$
as in (\ref{e:regularSet}),
\begin{equation}\label{e:regularSet11}
{\cal C}_{\tau,\bf N}(\hat{M}, \{F_\lambda, F_{\lambda_m}\,|\,(\lambda,m)\in\hat{\Lambda}\times\mathbb{N}\}, {c/C_1})
\end{equation}
 consisting of all $\alpha\in {\cal C}_{\tau,\bf N}(\hat{M})$ such that
\begin{equation*}
\min_{(\lambda,t)\in\hat{\Lambda}\times [0,\tau]}[F_\lambda(\alpha(t), \dot{\alpha}(t))]^2>2c/C_1\quad\hbox{and}\quad
\min_{(m,t)\in\mathbb{N}\times [0,\tau]}[F_{\lambda_m}(\alpha(t), \dot{\alpha}(t))]^2>2c/C_1.
 \end{equation*}
For any $(m, \lambda)\in\mathbb{N}\times\hat{\Lambda}$, since $\gamma_\lambda$ and $\gamma^m$ belong
to ${\cal C}_{\tau,\bf N}(\hat{M},
\{F_\lambda, F_{\lambda_m}\,|\,(\lambda,m)\in\hat{\Lambda}\times\mathbb{N}\}, {c/C_1})$,
each $\gamma_\lambda$ (resp. $\gamma^m$) is a critical point of
$\mathcal{E}^\ast_{\lambda,\bf N}$ (resp. $\mathcal{E}^\ast_{\lambda_m,\bf N}$) and we have also (\ref{e:IndexSame}).
Hence when  ${\bf N}={\rm Graph}(\mathbb{I}_{g})$ (resp. ${\bf N}=S_0\times S_1$),
$(\mu, \gamma_\mu)$  is a  bifurcation point of the problem
\cite[(1.14)]{Lu12-} or \cite[(1.13)]{Lu12}
[resp. (1.5)-(1.6) in \cite{Lu12-} or \cite{Lu12}] with respect
to the  trivial branch $\{(\lambda, \gamma_\lambda)\,|\,\lambda\in\hat{\Lambda}\}$
in $C^{1}([0,\tau]; M)$. It follows from  Theorem~1.13(I) in \cite{Lu12-} or \cite{Lu12}
 [resp. Theorem~1.4 in \cite{Lu12-} or \cite{Lu12}
  ] that $m^0(\mathcal{E}^\ast_{\mu,\bf N}, \gamma_\mu)>0$ and
  so $m^0(\mathcal{E}_{\mu,\bf N}, \gamma_\mu)>0$ by (\ref{e:IndexSame}).

{\it Step 2}[\textsf{Prove} (II)].
Since $\hat{\Lambda}=\{\mu,\lambda^+_k, \lambda^-_k\,|\,k\in\mathbb{N}\}$
is compact and sequential compact,
as above we can  find an open subset $\hat{M}$ of $M$ with compact closure such that the closure
 $$
Cl\left(\cup_{(\lambda,m)\in\hat{\Lambda}\times\mathbb{N}}\gamma_\lambda([0,\tau])
\cup\gamma_{\lambda^+_m}([0,\tau])\cup\gamma_{\lambda^-_m}([0,\tau])\right)
\subset\hat{M}.
$$
For the constant $C_1>0$ as in Proposition~\ref{prop:Fin2.2}
with $(M, \Lambda)=(\hat{M}, \hat{\Lambda})$ we have $c>0$ such that
for all $(m, \lambda,t)\in\mathbb{N}\times\hat{\Lambda}\times [0,\tau]$,
\begin{equation*}
[F_\lambda(\gamma_\lambda(t), \dot{\gamma}_\lambda(t))]^2>\frac{2c}{C_1}\quad\hbox{and}\quad
[F_{\lambda_m^\pm}(\gamma_{\lambda^\pm_m}(t), \dot{\gamma}_{\lambda^\pm_m}(t))]^2>\frac{2c}{C_1}.
 \end{equation*}
Let $L^\ast_\lambda:T\hat{M}\to\R$,  $\lambda\in\hat{\Lambda}$,
be given by Proposition~\ref{prop:Fin2.2} with $(M, \Lambda)=(\hat{M}, \hat{\Lambda})$.
As above, for all $(m, \lambda)\in\mathbb{N}\times\hat{\Lambda}$ we have
$$
\gamma_\lambda, \gamma_{\lambda^+_m}, \gamma_{\lambda^-_m}\in {\cal C}_{\tau,\bf N}(\hat{M}, \{F_\lambda, F_{\lambda^+_m}, F_{\lambda^-_m}\,|\,(\lambda,m)\in\hat{\Lambda}\times\mathbb{N}\}, {c/C_1}),
$$
and (\ref{e:IndexSame}) and (\ref{e:IndexSame+}) lead to
\begin{eqnarray*}
&&m^-(\mathcal{E}_{\lambda^\pm_m,\bf N},\gamma_{\lambda^\pm_m})=m^-(\mathcal{E}^\ast_{\lambda^\pm_m,\bf N},\gamma_{\lambda^\pm_m})
\quad\hbox{and}\quad m^0(\mathcal{E}_{\lambda^\pm_m,\bf N},\gamma_{\lambda^\pm_m})=m^0(\mathcal{E}^\ast_{\lambda^\pm_m,\bf N},\gamma_{\lambda^\pm_m}),\\
&&C_k(\mathcal{E}_{\lambda^\pm_m,\bf N},\gamma_{\lambda^\pm_m};{\bf K})=
C_k(\mathcal{E}^\ast_{\lambda^\pm_m,\bf N},\gamma_{\lambda^\pm_m};
{\bf K})\quad\forall (k,m)\in\mathbb{Z}\times\mathbb{N}
\end{eqnarray*}
for any Abel group ${\bf K}$. By these we see that for ${\bf N}=S_0\times S_1$ [resp. ${\bf N}={\rm Graph}(\mathbb{I}_{g})$]
the conditions (II.1), (II.2) and (II.3) in Theorem~\ref{th:bif-nessFin}, respectively,
give rise to the corresponding conditions (II.1), (II.2) and (II.3)
 in  Theorem~1.4 in \cite{Lu12-} or \cite{Lu12}
 (resp. Theorem~1.13 in \cite{Lu12-} or \cite{Lu12}).
Hence there exists an infinite sequence $\{(\lambda_k, \gamma^k)\}^\infty_{k=1}$
  in $\hat{\Lambda}\times C^2([0,\tau], \hat{M})\setminus\{(\mu,\gamma_\mu)\}$
  converging to $(\mu,\gamma_\mu)$, such that each $\gamma^k\ne\gamma_{\lambda_k}$
  satisfies
\begin{equation}\label{e:Lagr1FinSler}
\frac{d}{dt}\left(\partial_vL^\ast_\lambda(t, \gamma(t), \dot{\gamma}(t))\right)-
\partial_x L^\ast_\lambda(t, \gamma(t), \dot{\gamma}(t))=0
\end{equation}
with $\lambda=\lambda_k$ and the boundary condition \cite[(1.7)]{Lu12-} or \cite[(1.6)]{Lu12}
 [resp. \cite[(1.14)]{Lu12-} or \cite[(1.13)]{Lu12})
 ] with $\lambda=\lambda_k$
if ${\bf N}=S_0\times S_1$ [resp. ${\bf N}={\rm Graph}(\mathbb{I}_{g})$], $k=1,2,\cdots$.
From these and Claim~\ref{cl:Fin1} we conclude that for each $k$ large enough, $\gamma^k$ is a constant (nonzero) speed $F_{\lambda_k}$-geodesic
satisfying the boundary condition (\ref{e:FinBoundNlambda1}) [resp. (\ref{e:FinBoundNlambda2})] with $\lambda=\lambda_k$
if ${\bf N}=S_0\times S_1$ [resp. ${\bf N}={\rm Graph}(\mathbb{I}_{g})$], $k=1,2,\cdots$.
\end{proof}

\begin{theorem}[\textsf{Existence for bifurcations}]\label{th:bif-existFin1}
Under Assumptions~\ref{ass:Fin1},~\ref{ass:Fin2},
let ${\bf N}=S_0\times S_1$ or ${\rm Graph}(\mathbb{I}_{g})$.
Suppose that $\Lambda$ is path-connected and  there exist two
points $\lambda^+, \lambda^-\in\Lambda$ such that
  one of the following conditions is satisfied:
 \begin{enumerate}
 \item[\rm (i)] Either $\gamma_{\lambda^+}$  is not an isolated
  critical point of $\mathcal{E}_{\lambda^+,\bf N}$,
 or $\gamma_{\lambda^-,\bf N}$ is not an isolated critical point of $\mathcal{E}_{\lambda^-,\bf N}$,
 or $\gamma_{\lambda^+,\bf N}$ {\rm (}resp. $\gamma_{\lambda^-,\bf N}${\rm )}
 is an isolated critical point of
  $\mathcal{E}_{\lambda^+,\bf N}$ {\rm (}resp. $\mathcal{E}_{\lambda^-,\bf N}${\rm )} and
  $C_m(\mathcal{E}_{\lambda^+,\bf N}, \gamma_{\lambda^+};{\bf K})$ and $C_m(\mathcal{E}_{\lambda^-,\bf N}, \gamma_{\lambda^-};{\bf K})$ are not isomorphic for some Abel group ${\bf K}$ and some $m\in\mathbb{Z}$.

\item[\rm (ii)] The intervals $[m^-({\mathcal{E}}_{\lambda^-,\bf N}, \gamma_{\lambda^-}),
m^-({\mathcal{E}}_{\lambda^-,\bf N}, \gamma_{\lambda^-})+ m^0({\mathcal{E}}_{\lambda^-,\bf N}, \gamma_{\lambda^-})]$ and
$$
[m^-({\mathcal{E}}_{\lambda^+,\bf N}, \gamma_{\lambda^+}),
m^-({\mathcal{E}}_{\lambda^+,\bf N}, \gamma_{\lambda^+})+m^0({\mathcal{E}}_{\lambda^+,\bf N}, \gamma_{\lambda^+})]
$$
are disjoint, and there exists $\lambda\in\{\lambda^+, \lambda^-\}$ such that $\gamma_{\lambda}$
 is an either non-isolated or homological visible critical point of $\mathcal{E}_{\lambda,\bf N}$.

\item[\rm (iii)] The intervals $[m^-({\mathcal{E}}_{\lambda^-,\bf N}, \gamma_{\lambda^-}),
m^-({\mathcal{E}}_{\lambda^-,\bf N}, \gamma_{\lambda^-})+ m^0({\mathcal{E}}_{\lambda^-,\bf N}, \gamma_{\lambda^-})]$ and
$$
[m^-({\mathcal{E}}_{\lambda^+,\bf N}, \gamma_{\lambda^+}),
m^-({\mathcal{E}}_{\lambda^+,\bf N}, \gamma_{\lambda^+})+m^0({\mathcal{E}}_{\lambda^+,\bf N}, \gamma_{\lambda^+})]
$$
are disjoint, and either $m^0(\mathcal{E}_{\lambda^+,\bf N}, \gamma_{\lambda^+})=0$ or $m^0(\mathcal{E}_{\lambda^-,\bf N}, \gamma_{\lambda^-})=0$.
 \end{enumerate}
  Then for any path $\alpha:[0,1]\to\Lambda$ connecting $\lambda^+$ to $\lambda^-$
  such that $\gamma_{\alpha(s)}(0)\ne \gamma_{\alpha(s)}(\tau)$
  for any $s\in [0,1]$ in the case ${\bf N}=S_0\times S_1$ and $\dim S_0\dim S_1>0$,
  there exists  a sequence $(\lambda_k)\subset\alpha([0,1])$ converging to some $\mu\in\alpha([0,1])$, and
  constant {\rm (}non-zero{\rm )} speed $F_{\lambda_k}$-geodesics $\gamma^k:[0, \tau]\to M$
 satisfying the boundary condition (\ref{e:FinBoundNlambda}), $k=1,2,\cdots$,
  such that $0<\|\gamma^k-\gamma_{\lambda_k}\|_{C^2([0,\tau];\mathbb{R}^N)}\to 0$
  as $k\to\infty$.
Moreover, $\mu$ is not equal to $\lambda^+$ {\rm (}resp. $\lambda^-${\rm )} if $m^0_\tau(\mathcal{E}_{\lambda^+,\bf N}, \gamma_{\lambda^+})=0$
 {\rm (}resp. $m^0_\tau(\mathcal{E}_{\lambda^-,\bf N}, \gamma_{\lambda^-})=0${\rm )}.
  \end{theorem}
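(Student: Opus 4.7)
\noindent\textbf{Proof proposal for Theorem~\ref{th:bif-existFin1}.}\; The plan is to reduce to the already-proved Lagrangian bifurcation existence theorems, namely Theorem~\ref{th:bif-existLagrGener} in the case $\mathbf{N}=S_0\times S_1$ and Theorem~\ref{th:bif-existLagr*} in the case $\mathbf{N}={\rm Graph}(\mathbb{I}_g)$, exactly following the reduction scheme used in the proof of Theorem~\ref{th:bif-nessFin}. Given a path $\alpha:[0,1]\to\Lambda$ as in the statement, set $\hat\Lambda=\alpha([0,1])$; it is compact (hence sequentially compact), and by continuity of $(\lambda,t)\mapsto\gamma_\lambda(t)$ (Assumption~\ref{ass:Fin2}) the set $\bigcup_{s\in[0,1]}\gamma_{\alpha(s)}([0,\tau])$ has compact closure in $M$, so I can choose an open neighborhood $\hat M\subset M$ with compact closure containing it. On $\hat M$ I invoke Proposition~\ref{prop:Fin2.2} (after scaling the background Riemannian metric so that $\alpha_g,\beta_g,C_1$ are as required) to manufacture a continuous family $L^\ast_\lambda:T\hat M\to\mathbb R$, $\lambda\in\hat\Lambda$, with $L^\ast_\lambda=F_\lambda^2$ wherever $F_\lambda^2\ge 2c/(3C_1)$ and with fiberwise strictly convex $\partial_{vv}L^\ast_\lambda$ everywhere. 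Since $\hat\Lambda\times[0,\tau]\ni(\lambda,t)\mapsto F_\lambda^2(\gamma_\lambda(t),\dot\gamma_\lambda(t))$ is continuous and positive, I can choose $c>0$ small enough that every $\gamma_{\alpha(s)}$ lies in the open set ${\cal C}_{\tau,\bf N}(\hat M,\{F_\lambda\mid\lambda\in\hat\Lambda\},c/C_1)$ of (\ref{e:regularSet}), where $\mathcal E_{\lambda,\bf N}$ and $\mathcal E^\ast_{\lambda,\bf N}$ of (\ref{e:FinEnergy*}) coincide.

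Next, thanks to (\ref{e:IndexSame}) and (\ref{e:IndexSame+}), the Morse indices, nullities, and critical groups of $\gamma_{\lambda^\pm}$ as critical points of $\mathcal E^\ast_{\lambda^\pm,\bf N}$ agree with the corresponding invariants of $\mathcal E_{\lambda^\pm,\bf N}$. Consequently each of the hypotheses (i), (ii), (iii) of Theorem~\ref{th:bif-existFin1} translates verbatim into the respective hypothesis (i), (ii), (iii) of Theorem~\ref{th:bif-existLagrGener} (in the case $\mathbf N=S_0\times S_1$, with the same Lagrangian $L=L^\ast$ and the same trivial branch $\{(\lambda,\gamma_\lambda)\}$) or of Theorem~\ref{th:bif-existLagr*} (in the case $\mathbf N={\rm Graph}(\mathbb I_g)$). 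The modified $L^\ast$ satisfies Assumption~\ref{ass:Lagr6} on $\hat M$ (continuity in $\lambda$, $C^2$ in $(t,q,v)$, strict convexity in $v$) and, together with the family $\{\gamma_{\alpha(s)}\}$, fulfills Assumption~\ref{ass:LagrGenerB} or Assumption~\ref{ass:Lagr7}, respectively.

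Applying the appropriate Lagrangian existence theorem to the restricted path $\alpha$ yields a sequence $(\lambda_k)\subset\alpha([0,1])$ converging to some $\mu\in\alpha([0,1])$ and $C^2$-solutions $\gamma^k\neq\gamma_{\lambda_k}$ of the associated Euler--Lagrange problem with parameter $\lambda_k$, such that $\|\gamma^k-\gamma_{\lambda_k}\|_{C^2([0,\tau];\mathbb R^N)}\to 0$. Since $\|\gamma^k-\gamma_{\lambda_k}\|_{C^1}\to 0$ and $\gamma_{\lambda_k}\in{\cal C}_{\tau,\bf N}(\hat M,\{F_\lambda\},c/C_1)$, for large $k$ each $\gamma^k$ also lies in this open set; Claim~\ref{cl:Fin1} then promotes $\gamma^k$ to a constant non-zero speed $(F_{\lambda_k},\mathbf N)$-geodesic of class $C^\ell$ with the required boundary condition (\ref{e:FinBoundNlambda}). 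The final clause about $\mu\neq\lambda^\pm$ when the corresponding nullity vanishes is inherited directly from the same clause in Theorem~\ref{th:bif-existLagrGener} / Theorem~\ref{th:bif-existLagr*}, using again the identification $m^0_\tau(\mathcal E^\ast_{\lambda^\pm,\bf N},\gamma_{\lambda^\pm})=m^0_\tau(\mathcal E_{\lambda^\pm,\bf N},\gamma_{\lambda^\pm})$.

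The only genuinely delicate point is the geometric hypothesis needed by Theorem~\ref{th:bif-existLagrGener} when $\mathbf N=S_0\times S_1$ with $\dim S_0>0$ and $\dim S_1>0$, namely that $\gamma_{\alpha(s)}(0)\neq\gamma_{\alpha(s)}(\tau)$ for every $s\in[0,1]$; this is precisely the assumption imposed on the path $\alpha$, which guarantees a uniform choice of metric $g$ along $\hat\Lambda$ making $S_0$ (resp.\ $S_1$) totally geodesic near the endpoints (cf.\ the footnote to Theorem~\ref{th:bif-nessLagrGener}). The remaining work is routine book-keeping to verify that the continuity of $(\lambda,t)\mapsto(\gamma_\lambda(t),\dot\gamma_\lambda(t))$ survives under the passage to $L^\ast_\lambda$, which is immediate from Proposition~\ref{prop:Fin2.2} and the explicit formula (\ref{e:Fin1.2}).
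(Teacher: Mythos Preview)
Your proposal is correct and follows essentially the same route as the paper: restrict to the compact parameter set $\hat\Lambda=\alpha([0,1])$, localize to an open $\hat M$ with compact closure, use Proposition~\ref{prop:Fin2.2} to build the fiberwise strictly convex $L^\ast_\lambda$ agreeing with $F_\lambda^2$ on the relevant open set (\ref{e:regularSet}), transfer the hypotheses via (\ref{e:IndexSame})--(\ref{e:IndexSame+}), and then invoke Theorem~\ref{th:bif-existLagrGener} or Theorem~\ref{th:bif-existLagr*}. Your explicit mention of Claim~\ref{cl:Fin1} to upgrade the Euler--Lagrange solutions back to constant-speed $F_{\lambda_k}$-geodesics is the step the paper leaves implicit under ``as above''.
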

\begin{proof}[\bf Proof]
As above, since $\hat{\Lambda}:=\alpha([0,1])$ is a compact and sequential compact subset in $\Lambda$
 we can  find an open subset $\hat{M}$ of $M$ with compact closure such that the closure
 $Cl\left(\cup_{\lambda\in\hat{\Lambda}\times\mathbb{N}}\gamma_\lambda([0,\tau])
\right)\subset\hat{M}$.
For the constant $C_1>0$ as in Proposition~\ref{prop:Fin2.2} with $(M, \Lambda)=(\hat{M}, \hat{\Lambda})$ we have $c>0$ such that
\begin{equation*}
[F_\lambda(\gamma_\lambda(t), \dot{\gamma}_\lambda(t))]^2>\frac{2c}{C_1}\quad\hbox{and}\quad
[F_{\lambda^\pm}(\gamma_{\lambda^\pm}(t), \dot{\gamma}_{\lambda^\pm}(t))]^2>\frac{2c}{C_1}\quad\hbox{for all $(\lambda,t)\in\hat{\Lambda}\times [0,\tau]$}.
 \end{equation*}
Let $L^\ast_\lambda:T\hat{M}\to\R$,  $\lambda\in\hat{\Lambda}$,
 be given by Proposition~\ref{prop:Fin2.2} with $(M, \Lambda)=(\hat{M}, \hat{\Lambda})$.
As above,  we have  $\gamma_\lambda, \gamma_{\lambda^+}, \gamma_{\lambda^-}\in
{\cal C}_{\tau,\bf N}(\hat{M}, \{F_\lambda, F_{\lambda^+}, F_{\lambda^-}\,|\,\lambda\in\hat{\Lambda}\}, {c/C_1})$ for all $\lambda\in\hat{\Lambda}$,
and (\ref{e:IndexSame}) and (\ref{e:IndexSame+}) lead to
\begin{eqnarray*}
&&m^-(\mathcal{E}_{\lambda^\pm,\bf N},\gamma_{\lambda^\pm})=m^-(\mathcal{E}^\ast_{\lambda^\pm,\bf N},\gamma_{\lambda^\pm})
\quad\hbox{and}\quad m^0(\mathcal{E}_{\lambda^\pm,\bf N},\gamma_{\lambda^\pm})=m^0(\mathcal{E}^\ast_{\lambda^\pm,\bf N},\gamma_{\lambda^\pm}),\\
&&C_k(\mathcal{E}_{\lambda^\pm,\bf N},\gamma_{\lambda^\pm};{\bf K})=C_k(\mathcal{E}^\ast_{\lambda^\pm,\bf N},\gamma_{\lambda^\pm};
{\bf K})\quad\forall k\in\mathbb{Z}
\end{eqnarray*}
for any Abel group ${\bf K}$. As above, for ${\bf N}=S_0\times S_1$ [resp. ${\bf N}={\rm Graph}(\mathbb{I}_{g})$]
the corresponding results may follow from these and Theorem~1.5 in \cite{Lu12-} or \cite{Lu12}
(resp. Theorem~1.14 in \cite{Lu12-} or \cite{Lu12}).
\end{proof}

\begin{theorem}[\textsf{Alternative bifurcations of Rabinowitz's type}]\label{th:bif-suffFin}
Under Assumptions~\ref{ass:Fin1},~\ref{ass:Fin2} with $\Lambda$ being a real interval, suppose that
${\bf N}=S_0\times S_1$ or ${\rm Graph}(\mathbb{I}_{g})$,
and that $\mu\in{\rm Int}(\Lambda)$ satisfies conditions:
 $\gamma_\mu(0)\ne\gamma_\mu(\tau)$ {\rm (}in the case $\dim S_0>0$ and $\dim S_1>0${\rm )},
$m^0(\mathcal{E}_{\mu,\bf N}, \gamma_\mu)>0$,
    $m^0(\mathcal{E}_{\lambda,\bf N}, \gamma_\lambda)=0$  for each $\lambda\in\Lambda\setminus\{\mu\}$ near $\mu$, and
  $m^-(\mathcal{E}_{\lambda,\bf N}, \gamma_\lambda)$ take, respectively, values $m^-(\mathcal{E}_{\mu,\bf N}, \gamma_\mu)$ and
  $m^-(\mathcal{E}_{\mu,\bf N}, \gamma_\mu)+ m^0(\mathcal{E}_{\mu,\bf N}, \gamma_\mu)$
 as $\lambda\in\Lambda$ varies in two deleted half neighborhoods  of $\mu$.
Then one of the following alternatives occurs:
\begin{enumerate}
\item[\rm (i)] There exists a sequence  $C^\ell$  constant (non-zero) speed $F_{\mu}$-geodesics
$\gamma^m\ne\gamma_{\mu}$  satisfying the boundary condition (\ref{e:FinBoundNlambda})
such that $\gamma^m\to\gamma_\mu$ in $C^2([0,\tau], M)$.

\item[\rm (ii)]  For every $\lambda\in\Lambda\setminus\{\mu\}$ near $\mu$ there is a
$C^\ell$  constant (non-zero) speed $F_{\lambda}$-geodesic
$\gamma'_\lambda\ne \gamma_\lambda$  satisfying the boundary condition (\ref{e:FinBoundNlambda})
 such that $\gamma'_\lambda-\gamma_\gamma$ converges to zero in
   $C^2([0,\tau], \R^N)$ as $\lambda\to \mu$.

\item[\rm (iii)] For a given neighborhood $\mathcal{W}$ of $\gamma_\mu$ in $C^2([0,\tau], \R^N)$,
there is a one-sided  neighborhood $\Lambda^0$ of $\mu$ such that
for any $\lambda\in\Lambda^0\setminus\{\mu\}$,
$\mathcal{W}$ contains  at least two distinct $C^\ell$  constant (non-zero) speed $F_{\lambda}$-geodesics
satisfying the boundary condition (\ref{e:FinBoundNlambda}),
 $\gamma_\lambda^1\ne \gamma_\lambda$ and $\gamma_\lambda^2\ne \gamma_\lambda$,
 which can also be chosen to satisfy  $F_\lambda(\gamma^1_\lambda(t), \dot{\gamma}^1_\lambda(t))\ne F_\lambda(\gamma^2_\lambda(t), \dot{\gamma}^2_\lambda(t))\;\forall t$  provided that  $m^0_\tau(\mathcal{E}_{\mu,\bf N}, \gamma_\mu)>1$ and
$\mathcal{W}$ only contains finitely many distinct constant (non-zero) speed $F_{\lambda}$-geodesics
satisfying the boundary condition (\ref{e:FinBoundNlambda}).
\end{enumerate}
\end{theorem}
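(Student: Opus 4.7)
The plan is to reduce Theorem~\ref{th:bif-suffFin} to the Lagrangian version Theorem~\ref{th:bif-suffLagrGener} (when $\mathbf{N}=S_0\times S_1$) or Theorem~\ref{th:bif-suffLagr*} (when $\mathbf{N}=\mathrm{Graph}(\mathbb{I}_g)$), exactly as in the proofs of Theorems~\ref{th:bif-nessFin} and~\ref{th:bif-existFin1}. Since $\mu\in\mathrm{Int}(\Lambda)$ I first shrink $\Lambda$ to a compact interval $\hat\Lambda=[\mu-\varepsilon,\mu+\varepsilon]$; by Assumption~\ref{ass:Fin2} the set $\bigcup_{\lambda\in\hat\Lambda}\gamma_\lambda([0,\tau])$ has compact closure, so I can pick an open $\hat M\subset M$ of compact closure containing it. Then Proposition~\ref{prop:Fin2.2}, applied to $(\hat M,\hat\Lambda)$, yields $C^\ell$ Lagrangians $L^\ast_\lambda:T\hat M\to\mathbb{R}$ and a constant $c>0$ such that $L^\ast_\lambda\equiv F_\lambda^2$ on $\{(x,v)\in T\hat M:F_\lambda^2(x,v)\ge 2c/(3C_1)\}$, while $[F_\lambda(\gamma_\lambda(t),\dot\gamma_\lambda(t))]^2>2c/C_1$ uniformly in $(\lambda,t)$.

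Next I form the $C^2$ functionals $\mathcal{E}^\ast_{\lambda,\mathbf{N}}$ as in (\ref{e:FinEnergy*}) and use that they coincide with $\mathcal{E}_{\lambda,\mathbf{N}}$ on the open set $\mathcal{C}_{\tau,\mathbf{N}}(\hat M,\{F_\lambda\},c/C_1)$, which is a $\mathcal{C}_{\tau,\mathbf{N}}(\hat M)$-neighborhood of each $\gamma_\lambda$. By (\ref{e:IndexSame}) the hypotheses on $m^-(\mathcal{E}_{\lambda,\mathbf{N}},\gamma_\lambda)$ and $m^0(\mathcal{E}_{\lambda,\mathbf{N}},\gamma_\lambda)$ transfer verbatim to the corresponding indices and nullities of $\mathcal{E}^\ast_{\lambda,\mathbf{N}}$ at $\gamma_\lambda$. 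Hence all the hypotheses of Theorem~\ref{th:bif-suffLagrGener} (resp.\ Theorem~\ref{th:bif-suffLagr*}) hold for $\{L^\ast_\lambda\}$ at $(\mu,\gamma_\mu)$, yielding one of the three Rabinowitz-type alternatives for solutions of (\ref{e:LagrGener})--(\ref{e:LagrGenerB}) (resp.\ (\ref{e:Lagr2})) with $L$ replaced by $L^\ast$.

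Finally I transport each alternative back to the Finsler setting via Claim~\ref{cl:Fin1}. In alternatives (i) and (ii) the sequence $\gamma^m$ (resp.\ the curve $\gamma'_\lambda$) converges to $\gamma_\mu$ in $C^2([0,\tau],M)$, so eventually lies in $\mathcal{C}_{\tau,\mathbf{N}}(\hat M,\{F_\lambda\},c/C_1)$; there it is a critical point of $\mathcal{E}_{\lambda,\mathbf{N}}$, hence by Claim~\ref{cl:regularity} a $C^\ell$ constant non-zero speed $F_\lambda$-geodesic satisfying (\ref{e:FinBoundNlambda}). For alternative (iii), after shrinking the neighborhood $\mathcal{W}$ if necessary so that $\mathcal{W}\subset\mathcal{C}_{\tau,\mathbf{N}}(\hat M,\{F_\lambda\},c/C_1)$, the two distinct critical points $\gamma^1_\lambda,\gamma^2_\lambda$ of $\mathcal{E}^\ast_{\lambda,\mathbf{N}}$ are likewise $F_\lambda$-geodesics. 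The energy inequality $\mathcal{E}^\ast_\lambda(\gamma^1_\lambda)\ne\mathcal{E}^\ast_\lambda(\gamma^2_\lambda)$ coincides with $\mathcal{E}_\lambda(\gamma^1_\lambda)\ne\mathcal{E}_\lambda(\gamma^2_\lambda)$ in this region; and since both geodesics have constant speed,
\[
\mathcal{E}_\lambda(\gamma^i_\lambda)=\tau\,[F_\lambda(\gamma^i_\lambda(t),\dot\gamma^i_\lambda(t))]^2\quad\text{(independent of }t\text{)},
\]
so the energies differ exactly when the speeds differ pointwise, giving the stated refinement.

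The main technical obstacle is the compatibility step between $\mathcal{W}$ in (iii) and the open subset on which $L^\ast_\lambda\equiv F_\lambda^2$: I must verify that the bifurcating critical points produced by Theorem~\ref{th:bif-suffLagrGener}/\ref{th:bif-suffLagr*}, which a priori are only $C^1$-close to $\gamma_\mu$, are actually close enough in $C^1$ that $[F_\lambda(\gamma^i_\lambda(t),\dot\gamma^i_\lambda(t))]^2>2c/C_1$ for all $t$. This follows from the $C^1$ convergence together with the strict inequality for $\gamma_\mu$, combined with Lemma~\ref{lem:regu}(ii) to upgrade $C^1$ to $C^2$ convergence; once this is in place, the finiteness statement for the energy-difference refinement transfers because $\mathcal{W}$ intersects the two families of critical points in the same finite set.
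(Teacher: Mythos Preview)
Your proposal is correct and follows essentially the same route as the paper: shrink to a compact interval $\hat\Lambda$, localize to an open $\hat M$ with compact closure, invoke Proposition~\ref{prop:Fin2.2} to build the Lagrangians $L^\ast_\lambda$, transfer the index/nullity hypotheses via (\ref{e:IndexSame}), apply Theorem~\ref{th:bif-suffLagrGener} or~\ref{th:bif-suffLagr*}, and return to the Finsler setting through Claim~\ref{cl:Fin1}. Your treatment of alternative (iii), including the energy-to-speed conversion via $\mathcal{E}_\lambda(\gamma^i_\lambda)=\tau\,[F_\lambda(\gamma^i_\lambda(t),\dot\gamma^i_\lambda(t))]^2$, matches the paper's reasoning.
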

\begin{proof}[\bf Proof]
Since $\Lambda$ is a real interval and $\mu\in{\rm Int}(\Lambda)$, for some real $\rho>0$ the compact set $\hat{\Lambda}:=[\mu-\rho, \mu+\rho]$
is contained in $\Lambda$. The continuous map $\hat{\Lambda}\times [0,\tau]\ni(\lambda,t)\to \gamma_\lambda(t)\in M$ has compact image set  and therefore
 the latter is contained in an open subset $\hat{M}$ of $M$ with compact closure.
For the constant $C_1>0$ as in Proposition~\ref{prop:Fin2.2} with $(M, \Lambda)=(\hat{M}, \hat{\Lambda})$ we have $c>0$ such that
\begin{equation*}
[F_\lambda(\gamma_\lambda(t), \dot{\gamma}_\lambda(t))]^2>\frac{2c}{C_1},\quad
\forall (\lambda,t)\in\hat{\Lambda}\times [0,\tau].
 \end{equation*}
Let $L^\ast_\lambda:T\hat{M}\to\R$,  $\lambda\in\hat{\Lambda}$,
be given by Proposition~\ref{prop:Fin2.2} with $(M, \Lambda)=(\hat{M}, \hat{\Lambda})$.
Then for all $\lambda\in\hat{\Lambda}$ we have  $\gamma_\lambda\in {\cal C}_{\tau,\bf N}(\hat{M}, \{F_\lambda\,|\,\lambda\in\hat\Lambda\}, {c/C_1})$,
and
\begin{eqnarray*}
m^-(\mathcal{E}_{\lambda,\bf N},\gamma_{\lambda})=m^-(\mathcal{E}^\ast_{\lambda,\bf N},\gamma_{\lambda})
\quad\hbox{and}\quad m^0(\mathcal{E}_{\lambda,\bf N},\gamma_{\lambda})=m^0(\mathcal{E}^\ast_{\lambda,\bf N},\gamma_{\lambda}).
\end{eqnarray*}
By these and the assumptions of Theorem~\ref{th:bif-suffFin} we obtain that
 $m^0(\mathcal{E}^\ast_{\mu,\bf N}, \gamma_\mu)>0$,
 $m^0(\mathcal{E}^\ast_{\lambda,\bf N}, \gamma_\lambda)=0$
  for each $\lambda\in\hat{\Lambda}\setminus\{\mu\}$ near $\mu$, and
  $m^-(\mathcal{E}^\ast_{\lambda,\bf N}, \gamma_\lambda)$ take, respectively, values $m^-(\mathcal{E}^\ast_{\mu,\bf N}, \gamma_\mu)$ and
  $m^-(\mathcal{E}^\ast_{\mu,\bf N}, \gamma_\mu)+ m^0(\mathcal{E}^\ast_{\mu,\bf N}, \gamma_\mu)$
 as $\lambda\in\hat{\Lambda}$ varies in two deleted half neighborhoods  of $\mu$.
 Hence the desired results may follow from Theorems~1.15, 1.6 in \cite{Lu12-}
  (or \cite{Lu12})
and Claim~\ref{cl:Fin1} as above.
\end{proof}

\noindent{\bf A modest strengthening of the above results in the case of
a family of Riemannian matrics}.
The following is a special case of Assumptions~1.1, ~1.2 and 1.11 in \cite{Lu12-} or \cite{Lu12}.

\begin{assumption}\label{ass:Rieman1}
	{\rm $\{h_\lambda\,|\,\lambda\in\Lambda\}$ is a family of $C^\ell$ Riemannian metrics on $M$ with $4\le\ell\le 6$ parameterized by a topological space $\Lambda$,
such that $\Lambda\times TM\ni (\lambda, x,v)\to h_\lambda(x,v)\in\R$ is a continuous,
		and that all partial derivatives of each $h_\lambda$ of order less than three
		depend continuously on $(\lambda, x, v)\in\Lambda\times TM$.
		For each $\lambda\in\Lambda$ let $\gamma_\lambda:[0, \tau]\to M$ be a
		$h_\lambda$-geodesic satisfying the boundary condition
		\begin{equation}\label{e:FinBoundNlambdaRiem}
		h_\lambda(u,\dot\gamma_\lambda(0))=
		h_\lambda(v,\dot\gamma_\lambda(\tau))\quad\forall (u,v)\in
		T_{(\gamma_\lambda(0),\gamma_\lambda(\tau))}{\bf N},
		\end{equation}
		where ${\bf N}\subset M\times M$ is a $C^7$ submanifold.
(Therefore $\gamma_\lambda$ is $C^\ell$ by Claim~\ref{cl:regularity}.)
		It is also required  that the maps
$\Lambda\times [0,\tau]\ni(\lambda,t)\to \gamma_\lambda(t)\in M$ and
		$\Lambda\times [0,\tau]\ni(\lambda, t)\mapsto \dot{\gamma}_\lambda(t)\in TM$ are continuous.}
\end{assumption}

If there is no confusion, we use $\mathcal{E}_{\lambda, \bf N}$
to denote the functional
\begin{equation}\label{e:FinEnergyRiem}
{\cal C}_{\tau,\bf N}(M)\to\mathbb{R},\;   \gamma \mapsto\int^\tau_0h_\lambda(\gamma(t), \dot{\gamma}(t))dt,
\end{equation}
and $m^-(\mathcal{E}_{\lambda,\bf N},\gamma_\lambda)$ and $m^0(\mathcal{E}_{\lambda,\bf N},\gamma_\lambda)$
to denote  the Morse index and nullity  at $\gamma_\lambda$ of  it.
For  ${\bf N}=S_0\times S_1$ [resp. ${\rm Graph}(\mathbb{I}_{g})$], the following theorem
may directly follow from  Theorem~1.4 in \cite{Lu12-} or \cite{Lu12}
(resp. Theorem~1.13 in \cite{Lu12-} or \cite{Lu12}).

\begin{theorem}\label{th:bif-nessFinRiem}
	Under Assumption~\ref{ass:Rieman1} with
${\bf N}=S_0\times S_1$ or ${\rm Graph}(\mathbb{I}_{g})$,
the conclusions of Theorem~\ref{th:bif-nessFin} hold for $\mu\in\Lambda$ with the following
notational changes:
	\begin{enumerate}
		\item[\rm (I)] 
In statement (I) of Theorem~\ref{th:bif-nessFin}, replace the phrase
        ``constant (non-zero) speed $(F_\lambda, \bf N)$-geodesics''
        with ``$(h_\lambda, \bf N)$-geodesics''.

		\item[\rm (II)] 
In statement (II) of Theorem~\ref{th:bif-nessFin}, replace the phrases
        \begin{itemize}
            \item ``constant speed $F_{\lambda_k}$-geodesic $\gamma^k:[0,\tau]\to M$
                  satisfying the boundary condition (\ref{e:FinBoundNlambda}) with $\lambda=\lambda_k$'', and
            \item ``constant (non-zero) speed $(F_\lambda, \bf N)$-geodesics'',
        \end{itemize}
        with ``$h_{\lambda_k}$-geodesic $\gamma^k:[0,\tau]\to M$ satisfying the boundary condition
        (\ref{e:FinBoundNlambdaRiem}) with $\lambda=\lambda_k$'' and ``$(h_\lambda, \bf N)$-geodesics'',
        respectively.
	\end{enumerate}
\end{theorem}

For ${\bf N}=S_0\times S_1$ [resp. ${\bf N}={\rm Graph}(\mathbb{I}_{g})$]
Theorem~1.5 in \cite{Lu12-} or \cite{Lu12}
 (resp. Theorem~1.14 in \cite{Lu12-} or \cite{Lu12})
 directly leads to:

\begin{theorem}[\textsf{Existence for bifurcations}]\label{th:bif-existFin1Riem}
Under Assumption~\ref{ass:Rieman1} with $\mathbf{N} = S_0 \times S_1$ or $\mathrm{Graph}(\mathbb{I}_g)$, the conclusion of Theorem~\ref{th:bif-existFin1} holds with the following modifications: the geodesics are $h_{\lambda_k}$-geodesics $\gamma^k: [0, \tau] \to M$ satisfying the boundary condition (\ref{e:FinBoundNlambdaRiem}).
\end{theorem}


Similarly, for ${\bf N}=S_0\times S_1$ [resp. ${\bf N}={\rm Graph}(\mathbb{I}_{g})$]
from Theorem~1.6 in \cite{Lu12-} or \cite{Lu12}
(resp. Theorem~1.15 in \cite{Lu12-} or \cite{Lu12})
we directly derive:

\begin{theorem}[\textsf{Alternative bifurcations of Rabinowitz's type}]\label{th:bif-suffFinRiem}
    Under Assumption~\ref{ass:Rieman1} with $\mathbf{N}=S_0\times S_1$ or $\mathrm{Graph}(\mathbb{I}_{g})$, the conclusions of Theorem~\ref{th:bif-suffFin} hold with the following modifications:
    \begin{enumerate}
        \item[\rm (i)] In statement (i) of Theorem~\ref{th:bif-suffFin},
                the phrase ``constant (non-zero) speed $F_{\mu}$-geodesics $\gamma^m \neq \gamma_{\mu}$ satisfying the boundary condition (\ref{e:FinBoundNlambda})'' is replaced by ``$h_{\mu}$-geodesics $\gamma^m \neq \gamma_{\mu}$ satisfying the boundary condition (\ref{e:FinBoundNlambdaRiem})''.

        \item[\rm (ii)] In statement (ii) of Theorem~\ref{th:bif-suffFin},
        the phrase ``constant (non-zero) speed $F_{\lambda}$-geodesic $\gamma'_{\lambda} \neq \gamma_{\lambda}$ satisfying the boundary condition (\ref{e:FinBoundNlambda})'' is replaced by ``$h_{\lambda}$-geodesic $\gamma'_{\lambda} \neq \gamma_{\lambda}$ satisfying the boundary condition (\ref{e:FinBoundNlambdaRiem})''.

        \item[\rm (iii)] In statement (iii) of Theorem~\ref{th:bif-suffFin},
       replace the phrases
        \begin{itemize}
            \item  ``constant (non-zero) speed $F_{\lambda}$-geodesics satisfying the boundary condition (\ref{e:FinBoundNlambda}), $\gamma_{\lambda}^1 \neq \gamma_{\lambda}$ and $\gamma_{\lambda}^2 \neq \gamma_{\lambda}$, which can also be chosen to satisfy $F_{\lambda}(\gamma^1_\lambda(t), \dot{\gamma}^1_\lambda(t)) \neq F_{\lambda}(\gamma^2_\lambda(t), \dot{\gamma}^2_\lambda(t)) \; \forall t$'',  and
            \item ``constant (non-zero) speed $F_{\lambda}$-geodesics satisfying the boundary condition (\ref{e:FinBoundNlambda})''
        \end{itemize}
     with  ``$h_{\lambda}$-geodesics satisfying the boundary condition (\ref{e:FinBoundNlambdaRiem}), $\gamma_{\lambda}^1 \neq \gamma_{\lambda}$ and $\gamma_{\lambda}^2 \neq \gamma_{\lambda}$, which can also be chosen to satisfy $h_{\lambda}(\gamma^1_\lambda(t), \dot{\gamma}^1_\lambda(t)) \neq h_{\lambda}(\gamma^2_\lambda(t), \dot{\gamma}^2_\lambda(t)) \; \forall t$''
           and ``$h_{\lambda}$-geodesics satisfying the boundary condition (\ref{e:FinBoundNlambdaRiem})'',
        respectively.
    \end{enumerate}
    In summary, all occurrences of ``Assumptions~\ref{ass:Fin1}, \ref{ass:Fin2}'' are replaced by ``Assumption~\ref{ass:Rieman1}'', and all geodesics mentioned are $h_{\lambda}$-geodesics (or $h_{\mu}$-geodesics) satisfying the boundary condition (\ref{e:FinBoundNlambdaRiem}).
\end{theorem}

\section{Bifurcations of $\mathbb{I}_{g}$-invariant geodesics}\label{sec:geodesics2}

For  an  $\mathbb{I}_{g}$-invariant Finsler metric $F$ on $M$,
a $F$-geodesic $\gamma:\mathbb{R}\to M$  said to be
\textsf{$\mathbb{I}_{g}$-invariant} if $\gamma(t+1)=\mathbb{I}_{g}(\gamma(t))\;\forall t\in\R$.
Clearly, $s\cdot\gamma$ is also an  $\mathbb{I}_{g}$-invariant $F$-geodesics for any $s\in\mathbb{R}$,
where $(s\cdot\gamma)(t)=\gamma(t+s)$ for $t\in\mathbb{R}$.
Two $\mathbb{I}_{g}$-invariant $F$-geodesics $\gamma_1$ and $\gamma_2$ are said to be
\textsf{$\mathbb{R}$-distinct} if there is no $s\in\R$ such that
 $s\cdot\gamma_1=\gamma_2$.
If a $F$-geodesic $\gamma:[0,1]\to M$ satisfies $\mathbb{I}_{g\ast}(\dot\gamma(0))=\dot\gamma(1)$, then it
may be extended into an $\mathbb{I}_{g}$-invariant $F$-geodesic
$\gamma^\star:\R\to M$ via
\begin{equation}\label{e:Fin1.6}
\gamma^\star(t)=\mathbb{I}^{[t]}_{g}(\gamma(t-[t]))\;\forall t\in\R,
\end{equation}
where $[s]$ denotes the greatest integer at most $s$,
 called the \textsf{corresponding (maximal)
$\mathbb{I}_{g}$-invariant $F$-geodesic} (determined by $\gamma$).

 \begin{assumption}\label{ass:Fin3}
{\rm
Under Assumption~\ref{ass:Fin1} with $\ell=6$, all $F_\lambda$ are also $\mathbb{I}_{g}$-invariant,
and $\bar\gamma:\mathbb{R}\to M$ is an $\mathbb{I}_{g}$-invariant constant (non-zero) speed $F_{\lambda}$-geodesic
 for each $\lambda\in\Lambda$.}
 \end{assumption}

Under this assumption,  each element in
$\R\cdot \bar\gamma:=\{\bar\gamma(\theta+\cdot)\,|\,\theta\in\R\}$ (\textsf{$\R$-orbit})
is also an $\mathbb{I}_{g}$-invariant constant (non-zero) speed $F_\lambda$-geodesic for each $\lambda\in\Lambda$.
Because of this reason, similar to Definition~\ref{def:orbitBifur} we have:


\begin{definition}\label{def:FinOrbitInvar}
{\rm
$\R$-orbits of $\mathbb{I}_{g}$-invariant constant (non-zero) speed $F_\lambda$-geodesics with a parameter $\lambda\in\Lambda$
are said to be \textsf{sequentially bifurcating
from the $\R$-orbit $\R\cdot \bar\gamma$ at $\mu$}
if there exists a sequence of parameters $\lambda_k\to\mu$ in $\Lambda$
 and, correspondingly, a sequence of
  $\mathbb{I}_{g}$-invariant constant (non-zero) speed $F_{\lambda_k}$-geodesics
 $\gamma^k$, $k=1,2,\cdots$,
   satisfying the following conditions:
 \begin{enumerate}
\item[\rm (i)] $\gamma^k\notin\R\cdot \bar\gamma$ for all $k$,
\item[\rm (ii)] the  $F_{\lambda_k}$-geodesics $\gamma^k$ are pairwise $\R$-distinct,
\item[\rm (iii)] $\gamma^k|_{[0,1]}\to \bar\gamma|_{[0,1]}$ in $C^1([0,1];M)$.
\end{enumerate}
 [Passing to a subsequence, (i) is implied in (ii).]}
\end{definition}

 By definition of the fundamental tensor $g^{F_\lambda}$ in (\ref{e:fundTensor}),
 $\mathbb{I}_{g}$-invariance of $F_\lambda$ implies that
 $$
 g^{F_\lambda}(\mathbb{I}_{g}(x),\mathbb{I}_{g\ast}(v))[\mathbb{I}_{g\ast}(u)), \mathbb{I}_{g\ast}(w))]=
 g^{F_\lambda}(x,v)[u,w]
 $$
 for all $(x,v)\in TM\setminus 0_{TM}$ and $u,w\in T_xM$.
 The following claim easily follows from these and the  $\mathbb{I}_{g}$-invariance of $g$.

 \begin{claim}\label{cl:Fin1.5}
 For a given sequential compact subset $\tilde{\Lambda}\subset\Lambda$ and
  a given open neighborhood $\mathcal{M}$ of $\bar\gamma([0,1])$
 with compact closure, on the open submanifold
 $\tilde{M}:=\cup_{k\in\mathbb{Z}}(\mathbb{I}_{g})^k(\mathcal{M})$ of $M$
 the corresponding numbers  defined  by Proposition~\ref{prop:Fin2.2} with $(\Lambda, M)=(\tilde{\Lambda},\tilde{M})$,
  \begin{eqnarray*}
&&\tilde{\alpha}_g:=\inf_{\lambda\in\Lambda}\inf_{(x,v)\in T\tilde{M},\, |v|_x=1}\inf_{u\ne
0}\frac{g^{F_\lambda}_{v}(u,u)}{g_x(u,u)}=\inf_{\lambda\in\Lambda}\inf_{(x,v)\in T\mathcal{M},\, |v|_x=1}\inf_{u\ne
0}\frac{g^{F_\lambda}_{v}(u,u)}{g_x(u,u)}\quad\hbox{and}\\
&&\tilde{\beta}_g:=\sup_{\lambda\in\Lambda}\sup_{(x,v)\in T\tilde{M},\, |v|_x=1}\sup_{u\ne
0}\frac{g^{F_\lambda}_v(u,u)}{g_x(u,u)}=\sup_{\lambda\in\Lambda}\sup_{(x,v)\in T\mathcal{M},\, |v|_x=1}\sup_{u\ne
0}\frac{g^{F_\lambda}_v(u,u)}{g_x(u,u)}
\end{eqnarray*}
are positive, and by scaling down or up $g$ (if necessary) it holds that for some constant $C_1>0$,
\begin{equation*}
|v|^2_x\le L_\lambda(x, v)\le C_1|v|^2_x\quad\forall (\lambda, x,v)\in \tilde{\Lambda}\times T\tilde{M}.
\end{equation*}
 Moreover (since $\bar\gamma$ is $\mathbb{I}_{g}$-invariant) there exists $c>0$ such that
\begin{equation}\label{e:Fin1.9+}
[F_\lambda(\bar\gamma(t), \dot{\bar\gamma}(t))]^2>\frac{2c}{C_1},\quad
\forall (\lambda,t)\in\tilde{\Lambda}\times \mathbb{R}.
 \end{equation}
 \end{claim}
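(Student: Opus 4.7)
The plan is to exploit $\mathbb{I}_g$-invariance to reduce all uniform estimates on the noncompact set $T\tilde{M}$ to estimates on the relatively compact set $T\mathcal{M}$, where continuity plus (sequential) compactness delivers the positivity.

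First I would verify the identities $\tilde\alpha_g=\inf_{T\mathcal{M}}(\cdots)$ and $\tilde\beta_g=\sup_{T\mathcal{M}}(\cdots)$. The point is that any $x\in\tilde{M}$ can be written as $\mathbb{I}_g^k(y)$ for some $k\in\mathbb{Z}$ and $y\in\mathcal{M}$, and then any $v\in T_xM$, $u\in T_xM$ pull back via $(\mathbb{I}_g)^{-k}_\ast$ to vectors $w,u'\in T_yM$. Since $\mathbb{I}_g$ is a $g$-isometry we have $g_x(v,v)=g_y(w,w)$, $g_x(u,u)=g_y(u',u')$, and since $F_\lambda$ is $\mathbb{I}_g$-invariant, direct differentiation of the defining formula (\ref{e:fundTensor}) gives $g^{F_\lambda}_v(u,u)=g^{F_\lambda}_w(u',u')$. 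Hence the ratio $g^{F_\lambda}_v(u,u)/g_x(u,u)$ is $\mathbb{I}_g$-invariant, and normalization $|v|_x=1$ is preserved under $(\mathbb{I}_g)^{-k}_\ast$. This reduces the inf/sup over $T\tilde{M}$ to the same inf/sup restricted to $T\mathcal{M}$.

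Next I would show positivity. By homogeneity of degree zero in $u$, the inf/sup may be restricted to $|u|_x=1$, so we are taking inf/sup of the continuous function $(\lambda,x,v,u)\mapsto g^{F_\lambda}_v(u,u)$ over the sequentially compact set $\tilde{\Lambda}\times\{(x,v,u)\in TM\oplus TM:x\in Cl(\mathcal{M}),\,|v|_x=|u|_x=1\}$ (continuity in $(\lambda,x,v,u)$ follows from the continuous dependence of second partial derivatives of $F_\lambda$ on $(\lambda,x,v)\in\Lambda\times(TM\setminus 0_{TM})$). Positive definiteness of the fundamental tensor makes this function strictly positive everywhere, so its minimum $\tilde\alpha_g$ is attained and positive, and its maximum $\tilde\beta_g$ is finite and positive. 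Using $g^{F_\lambda}_v(v,v)=L_\lambda(x,v)$ together with homogeneity of $L_\lambda$ in $v$, these bounds upgrade to $\tilde\alpha_g|v|_x^2\le L_\lambda(x,v)\le\tilde\beta_g|v|_x^2$ on $\tilde{\Lambda}\times T\tilde{M}$. Replacing $g$ by $\tilde\alpha_g g$ then yields the desired sandwich $|v|_x^2\le L_\lambda(x,v)\le C_1|v|_x^2$ with $C_1=\tilde\beta_g/\tilde\alpha_g$ (the positive numbers $\tilde\alpha_g,\tilde\beta_g$ rescale accordingly but remain positive).

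Finally, for the lower bound (\ref{e:Fin1.9+}), I would note that since $\bar\gamma$ has constant $F_\lambda$-speed, the map $t\mapsto F_\lambda(\bar\gamma(t),\dot{\bar\gamma}(t))$ is constant in $t$ for each $\lambda$, so the inequality only needs to be checked at $t=0$. The function $\tilde\Lambda\ni\lambda\mapsto[F_\lambda(\bar\gamma(0),\dot{\bar\gamma}(0))]^2$ is continuous and strictly positive (since $\bar\gamma$ is a nonzero-speed geodesic), and on a sequentially compact space every continuous real-valued function attains its infimum, so there is $c>0$ with $[F_\lambda(\bar\gamma(0),\dot{\bar\gamma}(0))]^2>2c/C_1$ for all $\lambda\in\tilde\Lambda$, which is exactly (\ref{e:Fin1.9+}). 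The only mildly technical point is the reduction step via $\mathbb{I}_g$-invariance of the fundamental tensor — once that identity is in hand, the rest is a routine continuity-and-compactness argument.
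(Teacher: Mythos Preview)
Your proposal is correct and fills in exactly the details that the paper omits; the paper itself merely asserts that the claim ``easily follows from these and the $\mathbb{I}_g$-invariance of $g$,'' and your reduction via $\mathbb{I}_g$-invariance followed by a continuity/compactness argument is precisely that. The only minor deviation is in the last part: the paper's parenthetical ``(since $\bar\gamma$ is $\mathbb{I}_g$-invariant)'' suggests reducing to the compact interval $[0,1]$ via $1$-periodicity of $t\mapsto F_\lambda(\bar\gamma(t),\dot{\bar\gamma}(t))$, whereas you use the constant-speed hypothesis to reduce to a single point $t=0$ --- both arguments are valid and equally short.
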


\begin{claim}\label{cl:Fin2}
Under Claim~\ref{cl:Fin1.5}  let $L^\ast_\lambda:T\tilde{M}\to\R$,  $\lambda\in\tilde{\Lambda}$, be given by
 Proposition~\ref{prop:Fin2.2} with $(\Lambda, M)=(\tilde{\Lambda},\tilde{M})$.
Then there exists a neighborhood $\mathscr{U}$ of $\bar{\gamma}|_{[0,1]}$ in $C^1([0,1], M)$ such that
if $\gamma:\mathbb{R}\to M$ is a constant (non-zero) speed $\mathbb{I}_{g}$-invariant $F_\lambda$-geodesic
whose restriction to $[0,1]$ sits in $\mathscr{U}$, where $\lambda\in\tilde\Lambda$,
then it is $C^6$, sits in $\tilde{M}$ and satisfies
\begin{equation}\label{e:FinLagr10*}
\left.\begin{array}{ll}
\frac{d}{dt}\Big(\partial_vL^\ast_\lambda(\gamma(t), \dot{\gamma}(t))\Big)-\partial_x L^\ast_\lambda(\gamma(t), \dot{\gamma}(t))=0\;\forall t\in\mathbb{R},\\
\mathbb{I}_g(\gamma(t))=\gamma(t+1)\quad\forall t\in\mathbb{R}.
\end{array}\right\}
\end{equation}
Conversely, for a solution $\gamma$ of (\ref{e:FinLagr10*}), which must be $C^6$, if $\gamma|_{[0,1]}$
is in $\mathscr{U}$,
   then $\gamma$ is a constant (non-zero) speed $\mathbb{I}_{g}$-invariant $F_\lambda$-geodesic.
\end{claim}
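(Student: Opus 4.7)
My plan is to choose $\mathscr{U}$ as a sufficiently small $C^1$-neighborhood of $\bar{\gamma}|_{[0,1]}$ and then exploit the $\mathbb{I}_g$-invariance to propagate local properties on $[0,1]$ to all of $\mathbb{R}$. More precisely, I would select $\mathscr{U}$ so that every $\alpha \in \mathscr{U}$ has image in $\mathcal{M}$ and satisfies $F_\lambda^2(\alpha(t),\dot{\alpha}(t)) > 2c/(3C_1)$ for all $(\lambda,t)\in\tilde{\Lambda}\times[0,1]$. Both conditions are open and contain $\bar{\gamma}|_{[0,1]}$; the second follows from the strict inequality (\ref{e:Fin1.9+}) of Claim~\ref{cl:Fin1.5} together with joint continuity of $(\lambda,x,v)\mapsto F_\lambda(x,v)$ and sequential compactness of $\tilde{\Lambda}\times[0,1]$.

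For the forward direction, let $\gamma$ be a constant nonzero speed $\mathbb{I}_g$-invariant $F_\lambda$-geodesic with $\gamma|_{[0,1]}\in\mathscr{U}$. Claim~\ref{cl:regularity} gives $\gamma\in C^6$. The relation $\gamma(t+k)=\mathbb{I}_g^k(\gamma(t))$ yields $\gamma([k,k+1])\subset\mathbb{I}_g^k(\mathcal{M})\subset\tilde{M}$, so $\gamma(\mathbb{R})\subset\tilde{M}$. The $\mathbb{I}_g$-invariance of $F_\lambda$ combined with the constant-speed hypothesis propagates the inequality $F_\lambda^2(\gamma(t),\dot{\gamma}(t))>2c/(3C_1)$ from $[0,1]$ to all of $\mathbb{R}$, using $F_\lambda(\mathbb{I}_g\gamma(t),\mathbb{I}_{g*}\dot{\gamma}(t))=F_\lambda(\gamma(t),\dot{\gamma}(t))$. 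By Proposition~\ref{prop:Fin2.2}(i), $L^\ast_\lambda$ and $L_\lambda=F_\lambda^2$ then coincide on an open neighborhood of the trajectory $\{(\gamma(t),\dot{\gamma}(t))\}_{t\in\mathbb{R}}$ in $T\tilde{M}$. Hence the Euler-Lagrange equation satisfied by $\gamma$ as a constant-speed $F_\lambda$-geodesic agrees with the Euler-Lagrange equation of $L^\ast_\lambda$, and together with $\mathbb{I}_g(\gamma(t))=\gamma(t+1)$ this produces (\ref{e:FinLagr10*}).

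The converse is a symmetric argument. Given $\gamma\in C^6(\mathbb{R},M)$ solving (\ref{e:FinLagr10*}) with $\gamma|_{[0,1]}\in\mathscr{U}$, the $\mathbb{I}_g$-invariance again gives $\gamma(\mathbb{R})\subset\tilde{M}$, and the choice of $\mathscr{U}$ forces $L^\ast_\lambda=L_\lambda$ along the trajectory over $[0,1]$. Since $L^\ast_\lambda$ (being independent of $t$) admits a conserved energy, and this energy coincides with $L_\lambda=F_\lambda^2$ along $[0,1]$ by fibrewise $2$-homogeneity of $F_\lambda^2$ (the Euler identity $\partial_vL_\lambda\cdot v=2L_\lambda$ yields $\partial_v L_\lambda\cdot v-L_\lambda=L_\lambda$), the speed $F_\lambda(\gamma(t),\dot{\gamma}(t))$ is constant on $[0,1]$ and strictly above $\sqrt{2c/(3C_1)}$. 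Propagating by $\mathbb{I}_g$-invariance of $F_\lambda$ yields the same constant positive speed on all of $\mathbb{R}$, so $L^\ast_\lambda=L_\lambda$ along the whole trajectory and $\gamma$ solves the Euler-Lagrange equation for $L_\lambda$, i.e., $\gamma$ is a constant nonzero speed $F_\lambda$-geodesic that is $\mathbb{I}_g$-invariant.

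The only non-routine point is the initial construction of $\mathscr{U}$: it must deliver the lower bound on $F_\lambda^2$ \emph{uniformly} over $\lambda\in\tilde{\Lambda}$, which is where the sequential compactness hypothesis on $\tilde{\Lambda}$ is essential. Everything else reduces to bookkeeping: using $\mathbb{I}_g$-invariance to extend statements from $[0,1]$ to $\mathbb{R}$, and using Proposition~\ref{prop:Fin2.2}(i) to identify $L^\ast_\lambda$ with $L_\lambda$ along the trajectory, so that their Euler-Lagrange equations become interchangeable.
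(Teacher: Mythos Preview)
Your proposal is correct and follows essentially the same approach as the paper: choose $\mathscr{U}$ so that $F_\lambda^2>2c/(3C_1)$ uniformly over $\tilde{\Lambda}\times[0,1]$ (using sequential compactness), propagate to all of $\mathbb{R}$ via $\mathbb{I}_g$-invariance, and then invoke Proposition~\ref{prop:Fin2.2}(i) to identify the Euler--Lagrange operators of $L^\ast_\lambda$ and $L_\lambda$ along the trajectory. The paper's proof is terser and omits your energy-conservation step in the converse; in fact that step is unnecessary, since the lower bound $F_\lambda^2>2c/(3C_1)$ on $[0,1]$ comes directly from membership in $\mathscr{U}$ (for any $C^1$ curve, not just solutions) and then extends to $\mathbb{R}$ by $\mathbb{I}_g$-invariance of $F_\lambda$ and $\gamma$ without first proving constant speed---once $L^\ast_\lambda=L_\lambda$ holds near the whole trajectory, $\gamma$ solves the Euler--Lagrange equation for $L_\lambda$ and is therefore automatically a constant-speed $F_\lambda$-geodesic.
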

\begin{proof}[\bf Proof]
 Since $\tilde{\Lambda}\subset\Lambda$ is sequential compact, it follows from
 (\ref{e:Fin1.9+}) that there exists a neighborhood $\mathscr{U}$
 of $\bar\gamma|_{[0,1]}$ in $C^1([0,1], M)$ such that
if the restriction of an $\mathbb{I}_{g}$-invariant $C^1$ curve $\gamma:\R\to M$
to $[0,1]$ belongs to $\mathscr{U}$ then
$[F_\lambda(\gamma(t), \dot{\gamma}(t))]^2>\frac{2c}{3C_1}$
for all $(\lambda,t)\in\tilde{\Lambda}\times \mathbb{R}$.
 Therefore for an $\mathbb{I}_{g}$-invariant $C^2$ curve $\gamma:\R\to M$, if $\gamma|_{[0,1]}$
is in $\mathscr{U}$  then $\gamma$ satisfies
 $$
 \frac{d}{dt}\Big(\partial_vL^\ast_\lambda(\gamma(t), \dot{\gamma}(t))\Big)-\partial_x L^\ast_\lambda(\gamma(t), \dot{\gamma}(t))=
\frac{d}{dt}\Big(\partial_vL_\lambda(\gamma(t), \dot{\gamma}(t))\Big)-\partial_x L_\lambda(\gamma(t), \dot{\gamma}(t))
 $$
 by Proposition~\ref{prop:Fin2.2}(i) with $(\Lambda, M)=(\tilde{\Lambda},\tilde{M})$.
 These imply the desired conclusions.
\end{proof}

 Let $\mathcal{X}^1_{\tau}(\tilde{M}, \mathbb{I}_g)$ be  the $C^4$ Banach manifold defined in (\ref{e:BanachPerMani}), which
 may be identified with  $C^{1}_{\mathbb{I}_g}([0, 1]; \tilde{M})$. Define functionals
\begin{eqnarray*}
&&{\cal E}_{\lambda, \mathbb{I}_g}: \mathcal{X}^1_{\tau}(\tilde{M}, \mathbb{I}_g)\to\mathbb{R},\;  \gamma \mapsto\int^1_0[F_\lambda(\gamma(t), \dot{\gamma}(t))]^2dt,\\
&&{\cal E}^\ast_{\lambda, \mathbb{I}_g}: \mathcal{X}^1_{\tau}(\tilde{M}, \mathbb{I}_g)\to\mathbb{R},\; \gamma \mapsto\int^1_0L^\ast_\lambda(\gamma(t), \dot{\gamma}(t))dt.
\end{eqnarray*}
Clearly, they agree on  the  open subset
\begin{equation*}
C^{1}(\mathbb{R}; \tilde{M}, \mathbb{I}_g, \{F_\lambda\,|\,\lambda\in\tilde{\Lambda}\}, {c/C_1}):=\left\{\alpha\in \mathcal{X}^1_{\tau}(\tilde{M}, \mathbb{I}_g)\,\Big|\,
\min_{(\lambda,t)\in\tilde{\Lambda}\times [0,1]}[F_\lambda(\alpha(t), \dot{\alpha}(t))]^2>2c/C_1\right\}
\end{equation*}
of $\mathcal{X}^1_{\tau}(\tilde{M}, \mathbb{I}_g)$ containing $\bar\gamma$,
and their critical points on $\mathcal{X}^1_{\tau}(\tilde{M}, \mathbb{I}_g)$ near $\bar\gamma$
correspond, respectively, to constant speed $\mathbb{I}_{g}$-invariant $F_\lambda$-geodesics near $\bar\gamma$ in $\tilde{M}$ and
solutions of (\ref{e:FinLagr10*}) near $\bar\gamma$ in $\tilde{M}$.
 For each $\lambda\in\tilde{\Lambda}$  we write
\begin{eqnarray*}
&&m^-(\mathcal{E}_{\lambda,\mathbb{I}_g}, \bar{\gamma}):=m^-(\mathcal{E}_{\lambda,\mathbb{I}_g}, \bar{\gamma}|_{[0,1]})\quad\hbox{and}\quad
m^0(\mathcal{E}_{\lambda,\mathbb{I}_g}, \bar{\gamma}):=m^0(\mathcal{E}_{\lambda,\mathbb{I}_g}\bar{\gamma}|_{[0,1]}),\\
&&m^-(\mathcal{E}^\ast_{\lambda,\mathbb{I}_g}, \bar{\gamma}):=m^-(\mathcal{E}^\ast_{\lambda,\mathbb{I}_g}, \bar{\gamma}|_{[0,1]})\quad\hbox{and}\quad
m^0(\mathcal{E}^\ast_{\lambda,\mathbb{I}_g}, \bar{\gamma}):=m^0(\mathcal{E}^\ast_{\lambda,\mathbb{I}_g},\bar{\gamma}|_{[0,1]}).
\end{eqnarray*}
Then $m^-(\mathcal{E}^\ast_{\lambda,\mathbb{I}_g}, \bar{\gamma})=m^-(\mathcal{E}_{\lambda,\mathbb{I}_g}, \bar{\gamma})$ and
$m^0(\mathcal{E}^\ast_{\lambda,\mathbb{I}_g}, \bar{\gamma})=m^0(\mathcal{E}_{\lambda,\mathbb{I}_g}, \bar{\gamma})$.

\begin{theorem}[\textsf{Necessary condition}]\label{th:bif-ness-orbitLagrManFin}
Under Assumptions~\ref{ass:Fin3}, if $\R$-orbits of $\mathbb{I}_{g}$-invariant constant (non-zero)
speed $F_\lambda$-geodesics with a parameter $\lambda\in\Lambda$
sequentially bifurcate
from the $\R$-orbit $\R\cdot \bar\gamma$ at $\mu$,
 then $m^0(\mathcal{E}_{\mu,\mathbb{I}_g}, \bar{\gamma})\ge 2$.
\end{theorem}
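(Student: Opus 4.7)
The plan is to reduce the statement to its Lagrangian counterpart, namely Theorem~\ref{th:bif-ness-orbitLagrMan}, applied to the auxiliary Lagrangian $L^\ast_\lambda$ constructed from $F_\lambda^2$ via Proposition~\ref{prop:Fin2.2}. The strategy is the same as in the proof of Theorem~\ref{th:bif-nessFin}(I), but now in the $\mathbb{R}$-equivariant setting where one must keep track of $\mathbb{R}$-distinctness of the bifurcating geodesics rather than mere distinctness in $C^1$.

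First I will take the sequence $(\lambda_k)\subset\Lambda$ converging to $\mu$ together with the $\mathbb{I}_g$-invariant constant (non-zero) speed $F_{\lambda_k}$-geodesics $\gamma^k$ provided by Definition~\ref{def:FinOrbitInvar}, so that the $\gamma^k$ are pairwise $\mathbb{R}$-distinct, none of them lies in $\mathbb{R}\cdot\bar\gamma$, and $\gamma^k|_{[0,1]}\to\bar\gamma|_{[0,1]}$ in $C^1([0,1];M)$. Set $\hat\Lambda=\{\mu,\lambda_k\,|\,k\in\mathbb{N}\}$; this is sequentially compact. Choose an open neighborhood $\mathcal{M}$ of $\bar\gamma([0,1])$ in $M$ with compact closure, and apply Claim~\ref{cl:Fin1.5} with $\tilde\Lambda=\hat\Lambda$ to obtain the $\mathbb{I}_g$-invariant open submanifold $\tilde M=\cup_{k\in\mathbb{Z}}\mathbb{I}_g^{\,k}(\mathcal{M})$, a constant $C_1>0$, a number $c>0$ satisfying \eqref{e:Fin1.9+}, and the Lagrangians $L^\ast_\lambda:T\tilde M\to\mathbb{R}$ given by Proposition~\ref{prop:Fin2.2}.

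Next I will verify that $\{L^\ast_\lambda\,|\,\lambda\in\hat\Lambda\}$ fulfills Assumption~\ref{ass:Lagr10} on $\tilde M$: Proposition~\ref{prop:Fin2.2}(iv) gives fiberwise strict convexity, (vi) gives $\mathbb{I}_g$-invariance, and the final sentence of Proposition~\ref{prop:Fin2.2} together with $\ell=6$ in Assumption~\ref{ass:Fin3} gives the required continuous dependence of $L^\ast$ and of all partial derivatives of each $L^\ast_\lambda$ up to order two on $(\lambda,x,v)\in\hat\Lambda\times T\tilde M$. Moreover, $\bar\gamma:\mathbb{R}\to\tilde M$ satisfies \eqref{e:Lagr10*} with this $L^\ast$ for every $\lambda\in\hat\Lambda$, because on the tube around $\bar\gamma(\mathbb{R})$ where $[F_\lambda(\bar\gamma,\dot{\bar\gamma})]^2>2c/(3C_1)$ we have $L^\ast_\lambda=L_\lambda=F_\lambda^2$ by Proposition~\ref{prop:Fin2.2}(i), so the Euler--Lagrange equations coincide there.

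Then, by Claim~\ref{cl:Fin2} applied for each $k$ large enough (so that $\gamma^k|_{[0,1]}$ lies in the $C^1$-neighborhood $\mathscr{U}$ of $\bar\gamma|_{[0,1]}$), every $\gamma^k$ satisfies \eqref{e:FinLagr10*} with $\lambda=\lambda_k$, i.e., the autonomous Lagrangian boundary problem \eqref{e:Lagr10*} for $L^\ast_{\lambda_k}$. Discarding finitely many terms, the sequence $(\lambda_k,\gamma^k)$ is then a witness of the sequent bifurcation of $\mathbb{R}$-orbits of solutions of \eqref{e:Lagr10*} at $\mu$ with respect to $\mathbb{R}\cdot\bar\gamma$ in the sense of Definition~\ref{def:orbitBifur}: the $\mathbb{R}$-distinctness of the $\gamma^k$ is intrinsic and does not involve $F$ or $L^\ast$, condition $\gamma^k\notin\mathbb{R}\cdot\bar\gamma$ is inherited, and the $C^1$ convergence on $[0,1]$ is assumed.

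Finally, Theorem~\ref{th:bif-ness-orbitLagrMan} applied to $\{L^\ast_\lambda\}$ yields $m^0_\tau(\mathfrak{E}^\ast_\mu,\bar\gamma)\ge 2$. Because $L^\ast_\mu$ and $F_\mu^2$ coincide on an open neighborhood of $\{(\bar\gamma(t),\dot{\bar\gamma}(t))\,|\,t\in[0,1]\}$ in $T\tilde M$, the second variations $D^2\mathcal{E}^\ast_{\mu,\mathbb{I}_g}(\bar\gamma)$ and $D^2\mathcal{E}_{\mu,\mathbb{I}_g}(\bar\gamma)$ agree (this is the analogue of \eqref{e:IndexSame} in the periodic setting and the identity noted below Claim~\ref{cl:Fin2}), so $m^0(\mathcal{E}_{\mu,\mathbb{I}_g},\bar\gamma)=m^0(\mathcal{E}^\ast_{\mu,\mathbb{I}_g},\bar\gamma)=m^0_\tau(\mathfrak{E}^\ast_\mu,\bar\gamma)\ge 2$, as desired.

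The only genuinely delicate point is the passage between the two notions of sequent bifurcation; the rest is bookkeeping of the Riemannian modification. Concretely, the hard part will be checking that the $\mathbb{R}$-distinctness built into Definition~\ref{def:FinOrbitInvar} really transfers to Definition~\ref{def:orbitBifur} for the modified Lagrangian. This is immediate once one observes that the $\mathbb{R}$-action $(s\cdot\gamma)(t)=\gamma(s+t)$ is the same in both definitions and is independent of whether the curves are viewed as Finsler geodesics or as solutions of the Euler--Lagrange equation for $L^\ast$; but listing it explicitly is what makes the reduction legitimate.
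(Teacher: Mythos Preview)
Your proposal is correct and follows essentially the same route as the paper's proof: take the bifurcating sequence, pass to the sequentially compact parameter set $\{\mu,\lambda_k\}$, build $\tilde M$ and the modified Lagrangians $L^\ast_\lambda$ via Claim~\ref{cl:Fin1.5} and Proposition~\ref{prop:Fin2.2}, invoke Claim~\ref{cl:Fin2} so that the $\gamma^k$ solve \eqref{e:FinLagr10*}, then apply Theorem~\ref{th:bif-ness-orbitLagrMan} and use the equality $m^0(\mathcal{E}_{\mu,\mathbb{I}_g},\bar\gamma)=m^0(\mathcal{E}^\ast_{\mu,\mathbb{I}_g},\bar\gamma)$. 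Your write-up is somewhat more explicit than the paper's (you spell out the verification of Assumption~\ref{ass:Lagr10} and the observation that $\mathbb{R}$-distinctness is intrinsic to the curves and hence transfers automatically to Definition~\ref{def:orbitBifur}), but there is no substantive difference in strategy.
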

\begin{proof}[\bf Proof]
By the assumption there exists a sequence $(\lambda_k)\subset\Lambda$  converging to $\mu\in\Lambda$,
and constant (non-zero) speed $\mathbb{I}_{g}$-invariant $F_{\lambda_k}$-geodesics
$\gamma^k:\mathbb{R}\to M$ (which must be $C^6$), $k=1,2,\cdots$, such that
these $\gamma^k$ are pairwise $\R$-distinct and satisfy
$\gamma^k|_{[0,1]}\to \bar{\gamma}|_{[0,1]}$ in $C^1([0, 1];M)$.
Let $\tilde{\Lambda}=\{\mu,\lambda_k\,|\,k\in\mathbb{N}\}$, which is sequential compact.
Choose $\tilde{M}$ as in Claim~\ref{cl:Fin1.5}, and $L^\ast_\lambda:T\tilde{M}\to\R$,  $\lambda\in\tilde{\Lambda}$,
as in Claim~\ref{cl:Fin2}. Since $\gamma^k|_{[0,1]}\to \bar{\gamma}|_{[0,1]}$ in $C^1([0, 1];M)$,
for the neighborhood $\mathscr{U}$ of $\bar{\gamma}|_{[0,1]}$ in $C^1([0,1], M)$
in Claim~\ref{cl:Fin2}, we can assume that all $\gamma^k|_{[0,1]}$ belong to $\mathscr{U}$.
By Claim~\ref{cl:Fin2} each $\gamma^k$ is $C^6$, sits in $\tilde{M}$ and satisfies
(\ref{e:FinLagr10*}) with $\lambda=\lambda_k$.
Then Theorem~\ref{th:bif-ness-orbitLagrMan} concludes  $m^0(\mathcal{E}^\ast_{\lambda,\mathbb{I}_g}, \bar{\gamma})\ge 2$
and so $m^0(\mathcal{E}_{\lambda,\mathbb{I}_g}, \bar{\gamma})\ge 2$.
\end{proof}

\begin{theorem}[\textsf{Sufficient condition}]\label{th:bif-suffict1-orbitLagrManFin}
Under Assumption~\ref{ass:Fin3} suppose  the following conditions hold.
\begin{enumerate}
\item[\rm (a)]  $\bar\gamma$ is periodic, and $m^0(\mathcal{E}_{\mu,\mathbb{I}_g}, \bar\gamma)\ge 2$.
\item[\rm (b)]  There exist two sequences in  $\Lambda$ converging to $\mu$, $(\lambda_k^-)$ and
$(\lambda_k^+)$,  such that for each $k\in\mathbb{N}$,
 \begin{eqnarray*}
 &&[m^-_\tau(\mathcal{E}_{\lambda_k^-,\mathbb{I}_g}, \bar\gamma), m^-_\tau(\mathcal{E}_{\lambda_k^-,\mathbb{I}_g}, \bar\gamma)+
 m^0_\tau(\mathcal{E}_{\lambda_k^-,\mathbb{I}_g}, \bar\gamma)-1]\\
&& \cap[m^-_\tau(\mathcal{E}_{\lambda_k^+,\mathbb{I}_g}, \bar\gamma),
 m^-_\tau(\mathcal{E}_{\lambda_k^+,\mathbb{I}_g},
 \bar\gamma)+m^0_\tau(\mathcal{E}_{\lambda_k^+,\mathbb{I}_g}, \bar\gamma)-1]=\emptyset
 \end{eqnarray*}
 and either $m^0_\tau(\mathcal{E}_{\lambda_k^-,\mathbb{I}_g}, \bar\gamma)=1$ or $m^0_\tau(\mathcal{E}_{\lambda_k^+,\mathbb{I}_g}, \bar\gamma)=1$.
 \item[\rm (c)] For any constant (non-zero) speed $\mathbb{I}_{g}$-invariant $F_{\mu}$-geodesic
$\gamma:\mathbb{R}\to M$,  if there exists a sequence $(s_k)$ of reals such that  $s_k\cdot\gamma$
  converges to  $\bar\gamma$ on any compact interval $I\subset\R$ in $C^1$-topology, then $\gamma$ is periodic.
  (Clearly, this holds if $(\mathbb{I}_g)^l=id_M$ for some $l\in\mathbb{N}$.)
   \end{enumerate}
  Then there exists a sequence $(\lambda_k)\subset\tilde{\Lambda}:=\{\mu,\lambda^+_k, \lambda^-_k\,|\,k\in\mathbb{N}\}$
  converging to $\mu$ and $C^6$  constant (non-zero) speed
   $\mathbb{I}_{g}$-invariant $F_{\lambda_k}$-geodesics
$\gamma^k:\mathbb{R}\to M$, $k=1,2,\cdots$, such that
any two of these $\gamma_k$ are $\R$-distinct and
  that $\gamma^k|_{[0,1]}\to \bar{\gamma}|_{[0,1]}$ in $C^2([0, 1];M)$.
 In particular, $\R$-orbits of $\mathbb{I}_{g}$-invariant constant (non-zero)
speed $F_\lambda$-geodesics with a parameter $\lambda\in\Lambda$
sequentially bifurcate
from the $\R$-orbit $\R\cdot \bar\gamma$ at $\mu$.
  \end{theorem}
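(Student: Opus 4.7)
The plan is to reduce Theorem~\ref{th:bif-suffict1-orbitLagrManFin} to Theorem~\ref{th:bif-suffict1-orbitLagrMan} by following the same modification-and-transfer scheme used in Section~\ref{sec:geodesics1}, adapted to the autonomous $\mathbb{I}_g$-invariant setting. First I would set $\tilde\Lambda:=\{\mu,\lambda_k^+,\lambda_k^-\mid k\in\mathbb{N}\}$, which is countable, first countable and sequentially compact in $\Lambda$. Choose a relatively compact open neighborhood $\mathcal{M}$ of $\bar\gamma([0,1])$ in $M$ and let $\tilde M:=\bigcup_{k\in\mathbb{Z}}(\mathbb{I}_g)^k(\mathcal{M})$, which is $\mathbb{I}_g$-invariant. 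By Claim~\ref{cl:Fin1.5}, after rescaling $g$ if necessary, there exist $C_1,c>0$ such that $|v|^2_x\le L_\lambda(x,v)\le C_1|v|_x^2$ on $\tilde\Lambda\times T\tilde M$ with $L_\lambda:=F_\lambda^2$, and $[F_\lambda(\bar\gamma(t),\dot{\bar\gamma}(t))]^2>2c/C_1$ on $\tilde\Lambda\times\mathbb{R}$. Let $L^\ast_\lambda:T\tilde M\to\mathbb{R}$ be the modified Lagrangians supplied by Proposition~\ref{prop:Fin2.2}: each $L^\ast_\lambda$ is $C^6$, fiberwise strongly convex, $\mathbb{I}_g$-invariant, and coincides with $L_\lambda$ on $\{L_\lambda\ge 2c/(3C_1)\}$. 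In particular $L^\ast_\lambda$ agrees with $L_\lambda$ along $(\bar\gamma,\dot{\bar\gamma})$, so $\bar\gamma$ solves the Euler--Lagrange equation of $L^\ast_\lambda$ together with $\mathbb{I}_g(\bar\gamma(t))=\bar\gamma(t+1)$, and Assumption~\ref{ass:Lagr10} holds with period $\tau=1$ for the family $\{L^\ast_\lambda\}_{\lambda\in\tilde\Lambda}$.

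Next I would translate the hypotheses. By Claim~\ref{cl:Fin2} the functionals $\mathcal{E}^\ast_{\lambda,\mathbb{I}_g}$ and $\mathcal{E}_{\lambda,\mathbb{I}_g}$ agree on a $C^1$-neighborhood of $\bar\gamma$ in $\mathcal{X}^1_1(\tilde M,\mathbb{I}_g)$; therefore their second variations at $\bar\gamma$ coincide and
\begin{equation*}
m^-(\mathfrak{E}^\ast_\lambda,\bar\gamma)=m^-(\mathcal{E}_{\lambda,\mathbb{I}_g},\bar\gamma),\qquad m^0(\mathfrak{E}^\ast_\lambda,\bar\gamma)=m^0(\mathcal{E}_{\lambda,\mathbb{I}_g},\bar\gamma)
\end{equation*}
for every $\lambda\in\tilde\Lambda$, where $\mathfrak{E}^\ast_\lambda$ denotes the autonomous functional associated with $L^\ast_\lambda$. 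Hypotheses (a) and (b) of Theorem~\ref{th:bif-suffict1-orbitLagrManFin} thus transfer verbatim to the corresponding (a) and (b) of Theorem~\ref{th:bif-suffict1-orbitLagrMan}. For hypothesis (c) of the Lagrangian theorem, suppose $\gamma:\mathbb{R}\to\tilde M$ is a $C^2$ solution of (\ref{e:FinLagr10*}) with $\lambda=\mu$ such that $s_k\cdot\gamma$ converges to $\bar\gamma$ in $C^1$ on every compact interval for some sequence $(s_k)\subset\mathbb{R}$. For $k$ large enough, $(s_k\cdot\gamma)|_{[0,1]}$ lies in the neighborhood $\mathscr{U}$ of $\bar\gamma|_{[0,1]}$ supplied by Claim~\ref{cl:Fin2}; applying the converse part of that claim makes $s_k\cdot\gamma$ a constant non-zero speed $\mathbb{I}_g$-invariant $F_\mu$-geodesic, and hence so is $\gamma$, since being a Finsler geodesic with $\mathbb{I}_g$-invariance is preserved under time-translation. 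The Finsler hypothesis (c) now forces $\gamma$ to be periodic, which is exactly hypothesis (c) of Theorem~\ref{th:bif-suffict1-orbitLagrMan}.

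With the hypotheses in place, I apply Theorem~\ref{th:bif-suffict1-orbitLagrMan} to obtain a sequence $(\lambda_k)\subset\tilde\Lambda$ converging to $\mu$ together with $C^6$ solutions $\gamma^k$ of (\ref{e:FinLagr10*}) with $\lambda=\lambda_k$, pairwise $\mathbb{R}$-distinct and such that $\gamma^k|_{[0,1]}\to\bar\gamma|_{[0,1]}$ in $C^2([0,1];M)$. For $k$ large, $\gamma^k|_{[0,1]}$ lies in the neighborhood $\mathscr{U}$ from Claim~\ref{cl:Fin2}, so the converse half of that claim upgrades each $\gamma^k$ to a $C^6$ constant non-zero speed $\mathbb{I}_g$-invariant $F_{\lambda_k}$-geodesic in $\tilde M\subset M$. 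This yields the required sequence and, by Definition~\ref{def:FinOrbitInvar}, the sequential bifurcation of $\mathbb{R}$-orbits at $\mu$.

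The main obstacle will be the verification of hypothesis (c) of the Lagrangian theorem: the modification only guarantees that $L^\ast_\mu$-solutions staying $C^1$-close to $\bar\gamma$ are genuine $F_\mu$-geodesics, so one has to carefully exploit convergence of $s_k\cdot\gamma$ to $\bar\gamma$ on \emph{every} compact interval (not merely $[0,1]$) and choose an appropriate time-shift before invoking Claim~\ref{cl:Fin2}; only after this adjustment does the Finsler assumption (c) become directly applicable, yielding the periodicity needed to close the reduction.
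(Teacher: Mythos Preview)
Your proposal is correct and follows essentially the same approach as the paper: set $\tilde\Lambda=\{\mu,\lambda_k^\pm\}$, build $\tilde M$ and the modified Lagrangians $L^\ast_\lambda$ via Claim~\ref{cl:Fin1.5} and Proposition~\ref{prop:Fin2.2}, transfer hypotheses (a)--(c) to those of Theorem~\ref{th:bif-suffict1-orbitLagrMan} using the local coincidence of $\mathcal{E}_{\lambda,\mathbb{I}_g}$ and $\mathcal{E}^\ast_{\lambda,\mathbb{I}_g}$ encoded in Claim~\ref{cl:Fin2}, and then upgrade the resulting $L^\ast_{\lambda_k}$-solutions back to $F_{\lambda_k}$-geodesics via the converse part of Claim~\ref{cl:Fin2}. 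Your closing concern about hypothesis (c) is not a genuine obstacle: since $s_k\cdot\gamma\to\bar\gamma$ in $C^1$ on every compact interval, in particular on $[0,1]$, Claim~\ref{cl:Fin2} applies directly to $s_k\cdot\gamma$ for large $k$, exactly as you already argued in the body of your proof and as the paper does.
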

\begin{proof}[\bf Proof]
Since $\tilde{\Lambda}=\{\mu,\lambda^+_k, \lambda^-_k\,|\,k\in\mathbb{N}\}$
is compact and sequential compact,
we may choose $\tilde{M}$ as in Claim~\ref{cl:Fin1.5}, and $L^\ast_\lambda:T\tilde{M}\to\R$,  $\lambda\in\tilde{\Lambda}$,
as in Claim~\ref{cl:Fin2}. Clearly, the assumptions (a) and (b) lead to
the corresponding conditions (a) and (b) in Theorem~\ref{th:bif-suffict1-orbitLagrMan} with  $(M, L_\lambda)=(\tilde{M}, L^\ast_\lambda)$, respectively.

For any solution $\gamma$ of (\ref{e:Lagr10*}) with $(M, L_\lambda)=(\tilde{M}, L^\ast_\lambda)$ and $\lambda=\mu$,
that is, a solution of (\ref{e:FinLagr10*}) with $\lambda=\mu$,
 suppose that there exists a sequence $(s_k)$ of reals such that  $s_k\cdot\gamma$
  converges to  $\bar\gamma$ on any compact interval $I\subset\mathbb{R}$ in $C^1$-topology.
  Then $s_k\cdot\gamma|_{[0,1]}\to \bar{\gamma}|_{[0,1]}$ in $C^1([0, 1];M)$.
  By Claim~\ref{cl:Fin2}, for each $k$ large enough,
  $s_k\cdot\gamma$ and hence $\gamma$ is a constant (non-zero) speed $\mathbb{I}_{g}$-invariant $F_\mu$-geodesic.
  The assumption (c) assures that $\gamma$ is periodic.
Hence the condition (c) in Theorem~\ref{th:bif-suffict1-orbitLagrMan}
with  $(M, L_\lambda)=(\tilde{M}, L^\ast_\lambda)$
is satisfied.

By Theorem~~\ref{th:bif-suffict1-orbitLagrMan}
   there exists a sequence $(\lambda_k)\subset\tilde{\Lambda}$ converging to $\mu$ and
 $C^6$ solutions $\gamma^k$ of  (\ref{e:FinLagr10*}) with $\lambda=\lambda_k$, $k=1,2,\cdots$,
  such that any two of these $\gamma^k$ are $\mathbb{R}$-distinct and that
  $(\gamma^k)$ converges to  $\bar\gamma$ on any compact interval $I\subset\mathbb{R}$
  in $C^2$-topology as $k\to\infty$.
  By Claim~\ref{cl:Fin2}, for each $k$ large enough $\gamma^k$ is a constant (non-zero) speed
  $\mathbb{I}_{g}$-invariant $F_{\lambda_k}$-geodesic.
\end{proof}

\begin{theorem}[\hbox{\textsf{Existence for bifurcations}}]\label{th:bif-existence-orbitLagrManFin}
Under Assumption~\ref{ass:Fin3}, suppose that $\Lambda$ is path-connected,
$(\mathbb{I}_g)^l=id_M$ for some $l\in\mathbb{N}$,
and the following is satisfied:
\begin{enumerate}
\item[\rm (d)] There exist two  points $\lambda^+, \lambda^-\in\Lambda$ such that
\begin{eqnarray*}
&& [m^-_\tau(\mathcal{E}_{\lambda^-,\mathbb{I}_g}, \bar\gamma), m^-_\tau(\mathcal{E}_{\lambda^-,\mathbb{I}_g}, \bar\gamma)+
 m^0_\tau(\mathcal{E}_{\lambda^-,\mathbb{I}_g}, \bar\gamma)-1]\\
 &&\cap[m^-_\tau(\mathcal{E}_{\lambda^+,\mathbb{I}_g}, \bar\gamma),
 m^-_\tau(\mathcal{E}_{\lambda^+,\mathbb{I}_g}, \bar\gamma)+m^0_\tau(\mathcal{E}_{\lambda^+,\mathbb{I}_g}, \bar\gamma)-1]=\emptyset
 \end{eqnarray*}
 and either $m^0_\tau(\mathcal{E}_{\lambda^-,\mathbb{I}_g}, \bar\gamma)=1$ or $m^0_\tau(\mathcal{E}_{\lambda^+,\mathbb{I}_g}, \bar\gamma)=1$.
  \end{enumerate}
 Then for any path $\alpha:[0,1]\to\Lambda$ connecting $\lambda^+$ to $\lambda^-$ there exists a sequence $(\lambda_k)$ in $\alpha([0,1])$ converging to
 $\mu\in \alpha([0,1])\subset\Lambda$, and
 $C^6$ constant (non-zero) speed $\mathbb{I}_{g}$-invariant $F_{\lambda_k}$-geodesics,
  $k=1,2,\cdots$, such that any two of these $\gamma_k$ are $\R$-distinct and that
  $(\gamma_k)$ converges to  $\bar\gamma$ on any compact interval $I\subset\R$ in $C^2$-topology as $k\to\infty$.
   Moreover, this $\mu$ is not equal to $\lambda^+$ {\rm (}resp. $\lambda^-${\rm )} if
   $m^0_\tau(\mathcal{E}_{\lambda^+,\mathbb{I}_g}, \bar\gamma)=1$ {\rm (}resp. $m^0_\tau(\mathcal{E}_{\lambda^-,\mathbb{I}_g}, \bar\gamma)=1${\rm )}.
 \end{theorem}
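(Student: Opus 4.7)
The plan is to mirror exactly the reduction strategy used in the proofs of Theorems~\ref{th:bif-ness-orbitLagrManFin} and~\ref{th:bif-suffict1-orbitLagrManFin}: modify each $F_\lambda^2$ to a globally well-behaved Lagrangian $L^\ast_\lambda$ via Proposition~\ref{prop:Fin2.2}, transfer condition (d) to the modified Lagrangian system, invoke the abstract Lagrangian existence theorem Theorem~\ref{th:bif-existence-orbitLagrMan}, and then convert the resulting solutions back to genuine Finsler geodesics via Claim~\ref{cl:Fin2}.

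Concretely, fix a path $\alpha:[0,1]\to\Lambda$ from $\lambda^+$ to $\lambda^-$ and set $\tilde\Lambda:=\alpha([0,1])$, which is compact and sequentially compact. Apply Claim~\ref{cl:Fin1.5} to $\tilde\Lambda$ and an open neighborhood $\mathcal{M}$ of $\bar\gamma([0,1])$ with compact closure, obtaining the $\mathbb{I}_g$-invariant open set $\tilde{M}=\cup_{k\in\mathbb{Z}}(\mathbb{I}_g)^k(\mathcal{M})$, constants $C_1,c>0$, and then invoke Proposition~\ref{prop:Fin2.2} (with $(M,\Lambda)=(\tilde{M},\tilde\Lambda)$) to produce $L^\ast_\lambda:T\tilde{M}\to\mathbb{R}$ for $\lambda\in\tilde\Lambda$; by item (vi) of that proposition each $L^\ast_\lambda$ remains $\mathbb{I}_g$-invariant, so $(\tilde{M},\mathbb{I}_g,L^\ast)$ satisfies Assumption~\ref{ass:Lagr10} on $\tilde\Lambda$, and $\bar\gamma$ is an $\mathbb{I}_g$-periodic trajectory for every $L^\ast_\lambda$ by (\ref{e:Fin1.9+}) and Proposition~\ref{prop:Fin2.2}(i). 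As in Section~\ref{sec:geodesics2}, the functionals $\mathcal{E}_{\lambda,\mathbb{I}_g}$ and $\mathcal{E}^\ast_{\lambda,\mathbb{I}_g}$ coincide on an open neighborhood of $\bar\gamma$ in $\mathcal{X}^1_1(\tilde{M},\mathbb{I}_g)$, hence
\begin{equation*}
m^-_\tau(\mathcal{E}^\ast_{\lambda,\mathbb{I}_g},\bar\gamma)=m^-_\tau(\mathcal{E}_{\lambda,\mathbb{I}_g},\bar\gamma),
\qquad m^0_\tau(\mathcal{E}^\ast_{\lambda,\mathbb{I}_g},\bar\gamma)=m^0_\tau(\mathcal{E}_{\lambda,\mathbb{I}_g},\bar\gamma),
\end{equation*}
for all $\lambda\in\tilde\Lambda$. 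Therefore hypothesis (d) translates verbatim into condition (d) of Theorem~\ref{th:bif-existence-orbitLagrMan} applied to $(\tilde{M},\mathbb{I}_g,L^\ast,\bar\gamma)$ on the path-connected parameter space $\tilde\Lambda$ (note that $\tilde\Lambda$ is path-connected because it is the image of $[0,1]$ under $\alpha$).

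Since $(\mathbb{I}_g)^l=\mathrm{id}_M$, Theorem~\ref{th:bif-existence-orbitLagrMan} yields a sequence $(\lambda_k)\subset\tilde\Lambda$ converging to some $\mu\in\tilde\Lambda$ and $C^6$ solutions $\gamma^k$ of (\ref{e:FinLagr10*}) with $\lambda=\lambda_k$, pairwise $\mathbb{R}$-distinct, such that $\gamma^k\to\bar\gamma$ on every compact interval in $C^2$-topology; moreover the same theorem gives the endpoint statement that $\mu\ne\lambda^\pm$ if the corresponding $m^0_\tau(\mathcal{E}^\ast_{\lambda^\pm,\mathbb{I}_g},\bar\gamma)=m^0_\tau(\mathcal{E}_{\lambda^\pm,\mathbb{I}_g},\bar\gamma)=1$. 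Since $\gamma^k|_{[0,1]}\to\bar\gamma|_{[0,1]}$ in $C^1$, for $k$ large the restriction $\gamma^k|_{[0,1]}$ lies in the neighborhood $\mathscr{U}$ furnished by Claim~\ref{cl:Fin2}, whence each such $\gamma^k$ is in fact a constant (non-zero) speed $\mathbb{I}_g$-invariant $F_{\lambda_k}$-geodesic. Discarding the finitely many terms with $k$ small gives the required sequence.

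The main technical point (and the only step requiring care beyond bookkeeping) is the compatibility between the two convergences: Theorem~\ref{th:bif-existence-orbitLagrMan} produces $C^2$-convergence on compact intervals, while Claim~\ref{cl:Fin2} only requires $C^1$-closeness of $\gamma^k|_{[0,1]}$ to $\bar\gamma|_{[0,1]}$ to force $\gamma^k$ to be Finsler geodesics on all of $\mathbb{R}$; the $\mathbb{I}_g$-invariance of the modification (Proposition~\ref{prop:Fin2.2}(vi)) together with $(\mathbb{I}_g)^l=\mathrm{id}_M$ guarantees that once $\gamma^k|_{[0,1]}$ agrees with the $F_{\lambda_k}^2$-geodesic equation, the extended curve $\gamma^k$ satisfies it globally. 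Hence the Lagrangian conclusions transfer at once, and the $\mathbb{R}$-distinctness of the $\gamma^k$ as Lagrangian trajectories is exactly $\mathbb{R}$-distinctness as Finsler geodesics.
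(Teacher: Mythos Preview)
Your proof is correct and follows essentially the same route as the paper: set $\tilde\Lambda=\alpha([0,1])$, build $\tilde M$ and $L^\ast_\lambda$ via Claim~\ref{cl:Fin1.5} and Proposition~\ref{prop:Fin2.2}, transfer condition (d) through the index identities, apply Theorem~\ref{th:bif-existence-orbitLagrMan}, and then convert the resulting $L^\ast$-trajectories back to $F_{\lambda_k}$-geodesics using Claim~\ref{cl:Fin2}. Your additional remarks on the $C^1$/$C^2$ convergence compatibility and the role of $\mathbb{I}_g$-invariance are accurate and slightly more explicit than the paper's terse ``the required results easily follow from these as before.''
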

\begin{proof}[\bf Proof]
Since  $\tilde{\Lambda}:=\alpha([0,1])$ is a compact and sequential compact subset in $\Lambda$
we may choose $\tilde{M}$ as in Claim~\ref{cl:Fin1.5}, and $L^\ast_\lambda:T\tilde{M}\to\R$,  $\lambda\in\tilde{\Lambda}$,
as in Claim~\ref{cl:Fin2}.  Then the assumption (d) yields the corresponding condition (d)  in Theorem~\ref{th:bif-existence-orbitLagrMan}
 with  $(M, L_\lambda)=(\tilde{M}, L^\ast_\lambda)$.
Hence  there exists  a sequence $(\lambda_k)$ in $\alpha([0,1])$ converging to $\mu\in \alpha([0,1])\subset\Lambda$,
and $C^6$ solutions $\gamma_k$ of the corresponding problem (\ref{e:Lagr10*}) on $(M, L_\lambda)=(\tilde{M}, L^\ast_\lambda)$
 with $\lambda=\lambda_k$, $k=1,2,\cdots$,
   such that any two of these $\gamma_k$ are $\R$-distinct and that
  $(\gamma_k)$ converges to  $\bar\gamma$ on any compact interval $I\subset\R$ in $C^2$-topology as $k\to\infty$.
   Moreover, this $\mu$ is not equal to $\lambda^+$ (resp. $\lambda^-$) if
   $m^0_\tau(\mathcal{E}_{\lambda^+,\mathbb{I}_g}, \bar\gamma)=1$ (resp. $m^0_\tau(\mathcal{E}_{\lambda^-,\mathbb{I}_g}, \bar\gamma)=1$).
The required results easily follow from these as before.
\end{proof}

\begin{theorem}[\textsf{Alternative bifurcations of Rabinowitz's type}]\label{th:bif-suffict-orbitLagrManFin}
Under Assumption~\ref{ass:Fin3} with $\Lambda$ being a real interval, suppose
\begin{enumerate}
\item[\rm (a)] $\mu\in{\rm Int}(\Lambda)$, $\mathbb{I}_g=id_M$ and $\bar\gamma$ have least period $1$;
\item[\rm (b)] $m^0(\mathcal{E}_{\mu,\mathbb{I}_g}, \bar\gamma)\ge 2$,  $m^0(\mathcal{E}_{\lambda,\mathbb{I}_g}, \bar\gamma)=1$
 for each $\lambda\in\Lambda\setminus\{\mu\}$ near $\mu$;
\item[\rm (c)] $m^-(\mathcal{E}_{\lambda,\mathbb{I}_g}, \bar\gamma)$  take, respectively, values $m^0(\mathcal{E}_{\mu,\mathbb{I}_g}, \bar\gamma)$ and
 $m^-(\mathcal{E}_{\mu,\mathbb{I}_g}, \bar\gamma)+ m^0(\mathcal{E}_{\mu,\mathbb{I}_g}, \bar\gamma)-1$
 as $\lambda\in\Lambda$ varies in two deleted half neighborhoods  of $\mu$.
\end{enumerate}
 Then  one of the following alternatives occurs:
\begin{enumerate}
\item[\rm (i)]
There exists a sequence   $C^6$  constant {\rm (}non-zero{\rm )} speed $\mathbb{I}_{g}$-invariant $F_{\mu}$-geodesic
$\gamma^k:\mathbb{R}\to M$, $k=1,2,\cdots$, such that these $\gamma^k$ are pairwise $\R$-distinct  and  converge to $\bar\gamma$ on any compact interval $I\subset\R$ in $C^2$-topology as $k\to\infty$.

\item[\rm (ii)]  For every $\lambda\in\Lambda\setminus\{\mu\}$ near $\mu$ there is a $C^6$
constant {\rm (}non-zero{\rm )} speed $\mathbb{I}_{g}$-invariant $F_{\lambda}$-geodesic  $\gamma^\lambda$,
which is $\R$-distinct with $\bar\gamma$ and
converges to $\bar\gamma$ on any compact interval $I\subset\R$ in $C^2$-topology as $\lambda\to \mu$.

\item[\rm (iii)] For a given neighborhood $\mathcal{W}$ of $\bar\gamma$ in $C^{1}(\mathbb{R}; {M}, \mathbb{I}_g)$, there exists
 a one-sided  neighborhood $\Lambda^0$ of $\mu$ such that
for any $\lambda\in\Lambda^0\setminus\{\mu\}$, there exist at least two $\R$-distinct $C^6$
constant {\rm (}non-zero{\rm )} speed $\mathbb{I}_{g}$-invariant $F_{\lambda}$-geodesics,
$\gamma_\lambda^1\notin\mathbb{R}\cdot\bar\gamma$ and $\gamma_\lambda^2\notin\mathbb{R}\cdot\bar\gamma$,
which can also be chosen to have different $F_\lambda$-speeds
{\rm (}i.e., $F_\lambda(\gamma^1_\lambda(t), \dot{\gamma}^1_\lambda(t))\ne F_\lambda(\gamma^2_\lambda(t),
\dot{\gamma}^2_\lambda(t))\;\forall t${\rm )}
provided that $m^0_\tau(\mathscr{E}_{\mu,\mathbb{I}_g}, \bar\gamma)\ge 3$ and
there exist only finitely many $\mathbb{R}$-distinct $C^6$
constant {\rm (}non-zero{\rm )} speed $\mathbb{I}_{g}$-invariant $F_{\lambda}$-geodesics
 in $\mathcal{W}$ which are  $\mathbb{R}$-distinct from $\bar\gamma$.
\end{enumerate}
\end{theorem}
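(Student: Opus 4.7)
The plan is to follow the same reduction strategy used in the proofs of Theorems~\ref{th:bif-ness-orbitLagrManFin},~\ref{th:bif-suffict1-orbitLagrManFin} and \ref{th:bif-existence-orbitLagrManFin}, and then to invoke the Lagrangian version Theorem~\ref{th:bif-suffict-orbitLagrMan} with parameters $\tau=1$ and $\mathbb{I}_g=\mathrm{id}_M$. First I would pick a compact interval $\tilde{\Lambda}:=[\mu-\rho,\mu+\rho]\subset\mathrm{Int}(\Lambda)$ so small that all the quantitative data of the assumptions (b) and (c) hold on it, together with an open neighborhood $\mathcal{M}$ of $\bar\gamma([0,1])$ whose closure is compact. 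Setting $\tilde M:=\cup_{k\in\mathbb{Z}}(\mathbb{I}_g)^k(\mathcal{M})=\mathcal{M}$ (because $\mathbb{I}_g=\mathrm{id}_M$) and applying Claim~\ref{cl:Fin1.5} and Proposition~\ref{prop:Fin2.2} with $(\Lambda,M)=(\tilde\Lambda,\tilde M)$, I obtain $c>0$, $C_1>0$ and a family $\{L_\lambda^\ast\}_{\lambda\in\tilde\Lambda}$ of $C^6$, fiberwise strongly convex, $\mathbb{I}_g$-invariant Lagrangians on $T\tilde M$ all of whose relevant derivatives depend continuously on $(\lambda,x,v)$. In particular $L^\ast$ satisfies Assumption~\ref{ass:Lagr10} on $\tilde M$.

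Next I would invoke Claim~\ref{cl:Fin2} to fix a $C^1$-neighborhood $\mathscr{U}$ of $\bar\gamma|_{[0,1]}$ inside which constant (non-zero) speed $\mathbb{I}_g$-invariant $F_\lambda$-geodesics coincide with $C^6$ solutions of (\ref{e:FinLagr10*}) for $\lambda\in\tilde\Lambda$. By Proposition~\ref{prop:Fin2.2}(i), $L_\lambda^\ast$ and $L_\lambda$ agree on a neighborhood of $\{(\bar\gamma(t),\dot{\bar\gamma}(t))\,|\,t\in\mathbb{R}\}$, so the functionals $\mathcal{E}_{\lambda,\mathbb{I}_g}$ and $\mathfrak{E}_\lambda$ built from $L^\ast_\lambda$ have the same second variation at $\bar\gamma$, giving
\begin{equation*}
m^-_1(\mathfrak{E}_\lambda,\bar\gamma)=m^-(\mathcal{E}_{\lambda,\mathbb{I}_g},\bar\gamma),\qquad m^0_1(\mathfrak{E}_\lambda,\bar\gamma)=m^0(\mathcal{E}_{\lambda,\mathbb{I}_g},\bar\gamma)
\end{equation*}
for every $\lambda\in\tilde\Lambda$. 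Combined with hypotheses (a)--(c), this shows that the hypotheses of Theorem~\ref{th:bif-suffict-orbitLagrMan}, applied to $L^\ast$ on $\tilde M$ with $\tau=1$, $\mathbb{I}_g=\mathrm{id}_M$ and the same $\bar\gamma$ (which has least period $1$), are fulfilled. Hence one of the three alternatives (i), (ii), (iii) from that theorem holds for the problem (\ref{e:FinLagr10*}).

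Finally I would translate each Lagrangian alternative back to a Finsler geodesic alternative. For (i) and (ii): the solutions $\gamma_k$ (respectively $\gamma_\lambda$) of (\ref{e:FinLagr10*}) produced converge to $\bar\gamma$ on compact intervals in $C^2$-topology, so for $k$ large (resp.\ $\lambda$ close to $\mu$) the restriction to $[0,1]$ lies in the neighborhood $\mathscr{U}$, and Claim~\ref{cl:Fin2} promotes them to $C^6$ constant (non-zero) speed $\mathbb{I}_g$-invariant $F_\mu$- (resp.\ $F_\lambda$-) geodesics; their $\mathbb{R}$-distinctness is preserved verbatim. For (iii): given a $C^1$-neighborhood $\mathcal{W}$ of $\bar\gamma$ in $C^1(\mathbb{R};M,\mathbb{I}_g)$, I would first shrink it so that $\mathcal{W}|_{[0,1]}\subset\mathscr{U}$ and so that every element of $\mathcal{W}$ is automatically a constant $F_\lambda$-speed path with speed squared $>2c/C_1$; applying (iii) of Theorem~\ref{th:bif-suffict-orbitLagrMan} produces the two $\mathbb{R}$-distinct $C^6$ solutions $\gamma^1_\lambda,\gamma^2_\lambda$ of (\ref{e:FinLagr10*}) in $\mathcal{W}$, which by Claim~\ref{cl:Fin2} are the desired $\mathbb{R}$-distinct $\mathbb{I}_g$-invariant $F_\lambda$-geodesics. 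The main small obstacle is the speed-distinction clause: I expect to handle it by noting that on $\mathcal{W}$ the functionals $\mathfrak{E}_\lambda$ (of $L^\ast_\lambda$) and $\mathcal{E}_{\lambda,\mathbb{I}_g}$ coincide, so the conclusion $\mathfrak{E}_\lambda(\gamma^1_\lambda)\ne\mathfrak{E}_\lambda(\gamma^2_\lambda)$ provided by Theorem~\ref{th:bif-suffict-orbitLagrMan} reads $\int_0^1[F_\lambda(\gamma^i_\lambda,\dot\gamma^i_\lambda)]^2\,dt$ are distinct; since the integrands are constant in $t$ (constant speed), this is exactly $F_\lambda(\gamma^1_\lambda,\dot\gamma^1_\lambda)\ne F_\lambda(\gamma^2_\lambda,\dot\gamma^2_\lambda)$. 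The finiteness hypothesis also transfers: finitely many $\mathbb{R}$-distinct Finsler geodesics in $\mathcal{W}$ yields, via Claim~\ref{cl:Fin2}, finitely many $\mathbb{R}$-distinct solutions of (\ref{e:FinLagr10*}) in the corresponding neighborhood, so that clause of Theorem~\ref{th:bif-suffict-orbitLagrMan}(iii) is applicable. This completes the deduction.
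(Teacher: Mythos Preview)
Your proposal is correct and follows essentially the same route as the paper's own proof: choose a compact parameter interval $\tilde\Lambda=[\mu-\rho,\mu+\rho]$, build $\tilde M$ and the modified Lagrangians $L^\ast_\lambda$ via Claim~\ref{cl:Fin1.5} and Proposition~\ref{prop:Fin2.2}, transfer the index/nullity hypotheses to $\mathcal{E}^\ast_{\lambda,\mathbb{I}_g}$, and then apply Theorem~\ref{th:bif-suffict-orbitLagrMan} together with Claim~\ref{cl:Fin2} to translate the three alternatives back to Finsler geodesics. Your write-up is in fact more explicit than the paper's in spelling out the translation of alternative~(iii), including the observation that $\mathfrak{E}_\lambda(\gamma^1_\lambda)\ne\mathfrak{E}_\lambda(\gamma^2_\lambda)$ together with constant speed yields $F_\lambda(\gamma^1_\lambda,\dot\gamma^1_\lambda)\ne F_\lambda(\gamma^2_\lambda,\dot\gamma^2_\lambda)$.
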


\begin{proof}[\bf Proof]
As in the proof of Theorem~\ref{th:bif-suffFin} we have a number $\rho>0$ such that
the compact and sequential compact set $\tilde{\Lambda}:=[\mu-\rho, \mu+\rho]\subset\Lambda$.
 Choose $\tilde{M}$ as in Claim~\ref{cl:Fin1.5}, and $L^\ast_\lambda:T\tilde{M}\to\R$,  $\lambda\in\tilde{\Lambda}$,
as in Claim~\ref{cl:Fin2}. Since  $m^-(\mathcal{E}^\ast_{\lambda,\mathbb{I}_g},\bar{\gamma})=m^-(\mathcal{E}_{\lambda,\mathbb{I}_g},
\bar{\gamma})$ and
$m^0(\mathcal{E}^\ast_{\lambda,\mathbb{I}_g}, \bar{\gamma})=m^0(\mathcal{E}_{\lambda,\mathbb{I}_g}, \bar{\gamma})$ for each $\lambda\in\tilde\Lambda$,
 the assumptions (b) and (c) imply  that
 $m^0(\mathcal{E}^\ast_{\mu,\mathbb{I}_g}, \bar\gamma)\ge 2$,  $m^0(\mathcal{E}^\ast_{\lambda,\mathbb{I}_g}, \bar\gamma)=1$
 for each $\lambda\in\tilde\Lambda\setminus\{\mu\}$ near $\mu$ and that
 $m^-(\mathcal{E}^\ast_{\lambda,\mathbb{I}_g}, \bar\gamma)$  take, respectively, values $m^0(\mathcal{E}^\ast_{\mu,\mathbb{I}_g}, \bar\gamma)$ and
 $m^-(\mathcal{E}^\ast_{\mu,\mathbb{I}_g}, \bar\gamma)+ m^0(\mathcal{E}^\ast_{\mu,\mathbb{I}_g}, \bar\gamma)-1$
 as $\lambda\in\tilde\Lambda$ varies in two deleted half neighborhoods  of $\mu$.
 Hence the desired results may follow from Theorem~\ref{th:bif-suffict-orbitLagrMan} and Claim~\ref{cl:Fin2}
 as above.
\end{proof}

\begin{remark}\label{rm:Fin}
{\rm As noted below Theorem~\ref{th:bif-suffict-orbitLagrMan},
if $M$ is an open subset $U$ of $\mathbb{R}^n$ and $\mathbb{I}_g$ is an  orthogonal matrix $E$ of order $n$
  which maintain $U$ invariant, ``Assumption~\ref{ass:Fin3}''
  and  all ``$C^6$'' in  Theorem~\ref{th:bif-ness-orbitLagrManFin},~\ref{th:bif-suffict1-orbitLagrManFin},~\ref{th:bif-suffict-orbitLagrManFin}
can be replaced by ``Assumption~\ref{ass:Fin3}  with $\ell=4$''
  and  ``$C^4$'' respectively.
}
\end{remark}

\section{Bifurcations of reversible geodesics}\label{sec:geodesics3}

For a reversible Finsler metric $F$ on $M$,
and any geodesic of $F$,
 $\gamma:(-r, r)\to M$, the reverse curve $\gamma^{-}:(-r,r)\to M$ defined by
 $\gamma^{-}(t)=\gamma(-t)$  is a geodesic of $F$ that coincide pointwise with $\gamma$. (See \cite[Remark~3.1]{MaSaSh10}).
 The irreversibility of a Finsler metric  is a very strong restriction that excludes a lot of interesting examples, for instance Randers metrics (see Section~\ref{sec:Randers} for the definition).

\begin{assumption}\label{ass:Fin4}
{\rm
Under Assumption~\ref{ass:Fin1},
for each $\lambda\in\Lambda$ suppose that $F_\lambda$ is reversible and
that $\gamma_\lambda:\mathbb{R}\to M$ is a $1$-periodic
 $F_\lambda$-geodesic of constant (non-zero) speed, (which must be $C^\ell$
 and satisfies $\gamma_\lambda(-t)=\gamma_\lambda(t)$ for all $t\in\mathbb{R}$).
It is also required  that the maps $\Lambda\times \mathbb{R}\ni(\lambda,t)\to \gamma_\lambda(t)\in M$ and
$\Lambda\times \mathbb{R}\ni(\lambda, t)\mapsto \dot{\gamma}_\lambda(t)\in TM$ are continuous.}
 \end{assumption}

\begin{definition}\label{def:BifurFinRever}
{\rm
Under  Assumption~\ref{ass:Fin4}, $1$-periodic
$F_\lambda$-geodesics of constant (non-zero) speed  with a parameter $\lambda\in\Lambda$
are said \textsf{to bifurcate at $\mu\in\Lambda$ along sequences}
(with respect to the branch
$\{\gamma_\lambda\,|\,\lambda\in\Lambda\}$)
if  there exists an infinite sequence $\{(\lambda_k, \gamma^k)\}^\infty_{k=1}$
  in $\Lambda\times EC^1(S_1, M)\setminus\{(\mu,\gamma_\mu)\}$
  converging to $(\mu,\gamma_\mu)$, such that each $\gamma^k\ne\gamma_{\lambda_k}$
  is a $1$-periodic  $F_{\lambda_k}$-geodesic of constant (non-zero) speed, $k=1,2,\cdots$.
  (Actually it is not hard to prove that  $\gamma^k\to\gamma_\mu$ in $C^2(S_1, M)$.)}
  \end{definition}

Under Assumption~\ref{ass:Fin4} nonconstant critical points of the $C^{2-0}$-functional
\begin{equation}\label{e:ReFinEnergy}
\gamma \mapsto\mathcal{E}^E_\lambda(\gamma)=\int^1_0[F_\lambda(\gamma(t), \dot{\gamma}(t))]^2dt
\end{equation}
 on the Banach manifold
   \begin{equation*}
 EC^{1}(S_\tau; M):=\{\gamma\in C^1(\R;M)\,|\,\gamma(t+1)=\gamma(t)\,\&\, \gamma(-t)=\gamma(t)\;\forall t\in\R\}
 \end{equation*}
 correspond to $1$-periodic $F_\lambda$-geodesics of constant (non-zero) speed.
Since $\mathcal{E}^E_\lambda$ is $C^2$ near such a critical point $\gamma$,  the Morse index and nullity
$m^-(\mathcal{E}^E_\lambda,\gamma)$ and $m^0(\mathcal{E}^E_\lambda,\gamma)$ are well-defined as before.

\begin{theorem}\label{th:bif-nessLagrBrakeMFin}
Let Assumption~\ref{ass:Fin4}  be satisfied, $\mu\in\Lambda$.
\begin{enumerate}
\item[{\rm (I)}]{\rm (\textsf{Necessary condition}):} Suppose that
$1$-periodic
 $F_\lambda$-geodesics of constant (non-zero) speed  with a parameter $\lambda\in\Lambda$
bifurcate at $\mu\in\Lambda$ along sequences with respect to the branch
$\{\gamma_\lambda\,|\,\lambda\in\Lambda\}$.
Then $m^0(\mathcal{E}^{E}_{\mu}, \gamma_\mu)\ge 1$.
\item[{\rm (II)}]{\rm (\textsf{Sufficient condition}):}
Let $\Lambda$ be first countable.
Suppose  that there exist two sequences in  $\Lambda$ converging to $\mu$, $(\lambda_k^-)$ and
$(\lambda_k^+)$,  such that
one of the following conditions is satisfied:
 \begin{enumerate}
 \item[\rm (II.1)] For each $k\in\mathbb{N}$, either $\gamma_{\lambda^+_k}$  is not an isolated critical point of
 $\mathcal{E}^E_{\lambda^+_k}$,
 or $\gamma_{\lambda^-_k}$ is not an isolated critical point of $\mathcal{E}^E_{\lambda^-_k}$,
 or $\gamma_{\lambda^+_k}$ {\rm (}resp. $\gamma_{\lambda^-_k}${\rm )} is an isolated critical point of $\mathcal{E}^E_{\lambda^+_k}$
 {\rm (}resp. $\mathcal{E}^E_{\lambda^-_k}${\rm )} and
  $C_m(\mathcal{E}^E_{\lambda^+_k}, \gamma_{\lambda^+_k};{\bf K})$ and $C_m(\mathcal{E}^E_{\lambda^-_k}, \gamma_{\lambda^-_k};{\bf K})$
  are not isomorphic for some Abel group ${\bf K}$ and some $m\in\mathbb{Z}$.
\item[\rm (II.2)] For each $k\in\mathbb{N}$, there exists $\lambda\in\{\lambda^+_k, \lambda^-_k\}$
such that $\gamma_{\lambda}$  is an either non-isolated or homological visible critical point of
$\mathcal{E}^E_{\lambda}$ , and
$$
\left.\begin{array}{ll}
&[m^-(\mathcal{E}^E_{\lambda_k^-}, \gamma_{\lambda^-_k}), m^-(\mathcal{E}^E_{\lambda_k^-}, \gamma_{\lambda^-_k})+
m^0(\mathcal{E}^E_{\lambda_k^-}, \gamma_{\lambda^-_k})]\\
&\cap[m^-(\mathcal{E}^E_{\lambda_k^+}, \gamma_{\lambda^+_k}),
m^-(\mathcal{E}^E_{\lambda_k^+}, \gamma_{\lambda^+_k})+m^0(\mathcal{E}^E_{\lambda_k^+}, \gamma_{\lambda^+_k})]=\emptyset.
\end{array}\right\}\eqno(\hbox{$\clubsuit_k$})
$$
\item[\rm (II.3)] For each $k\in\mathbb{N}$, (\hbox{$\clubsuit_k$}) holds true,
and either $m^0(\mathcal{E}^E_{\lambda_k^-}, \gamma_{\lambda^-_k})=0$ or $m^0(\mathcal{E}^E_{\lambda_k^+}, \gamma_{\lambda^+_k})=0$.
 \end{enumerate}
Then  there exists a sequence $(\lambda_k)\subset\hat{\Lambda}:=\{\mu,\lambda^+_k, \lambda^-_k\,|\,k\in\mathbb{N}\}$ converging to $\mu$,
   $1$-periodic  $F_{\lambda_k}$-geodesics $\gamma^k\ne \gamma_{\lambda_k}$ of constant {\rm (}non-zero{\rm )} speed, $k=1,2,\cdots$,
   such that $\gamma^k\to \gamma_\mu$ in $C^2(S_1;M)$.
 In particular, $1$-periodic
 $F_\lambda$-geodesics of constant {\rm (}non-zero{\rm )}
  speed  with a parameter $\lambda\in\Lambda$
bifurcate at $\mu\in\Lambda$ along sequences with respect to the branch
$\{\gamma_\lambda\,|\,\lambda\in\Lambda\}$.
 \end{enumerate}
\end{theorem}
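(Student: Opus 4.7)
The plan is to reduce Theorem~\ref{th:bif-nessLagrBrakeMFin} to the corresponding result for Lagrangian brake orbits, namely Theorem~\ref{th:bif-nessLagrBrakeM}, via the modification technique of Proposition~\ref{prop:Fin2.2}. The key observation is that since each $F_\lambda$ is reversible and time-independent, the Lagrangian $L_\lambda=F_\lambda^2$ satisfies $L_\lambda(x,-v)=L_\lambda(x,v)$ and depends only on $(x,v)$, so for any $\tau>0$ the symmetry (\ref{e:brakeLM}) is trivially true. The modified Lagrangian $L^\ast_\lambda$ produced by Proposition~\ref{prop:Fin2.2} is also reversible (part (v)) and time-independent, so it still verifies Assumption~\ref{ass:Lagrbrake} with $\tau=1$, and each $\gamma_\lambda$ satisfying $\gamma_\lambda(-t)=\gamma_\lambda(t)=\gamma_\lambda(t+1)$ is an element of $EC^1(S_1;M)\cap C^2(S_1;M)$ meeting Assumption~\ref{ass:Lagr7brake}.

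For part (I), I would start from a bifurcating sequence $\{(\lambda_k,\gamma^k)\}\subset\Lambda\times EC^{1}(S_1;M)$ converging to $(\mu,\gamma_\mu)$ with each $\gamma^k\ne\gamma_{\lambda_k}$ a $1$-periodic constant speed $F_{\lambda_k}$-geodesic. Setting $\hat\Lambda=\{\mu\}\cup\{\lambda_k\}$, which is sequentially compact, one finds an open submanifold $\hat M\subset M$ with compact closure containing the closure of $\bigcup_{\lambda,k}\gamma_\lambda(S_1)\cup\gamma^k(S_1)$. Apply Proposition~\ref{prop:Fin2.2} on $(\hat M,\hat\Lambda)$ with a constant $c>0$ chosen so that $[F_\lambda(\gamma_\lambda,\dot\gamma_\lambda)]^2>2c/C_1$ and $[F_{\lambda_k}(\gamma^k,\dot\gamma^k)]^2>2c/C_1$ on $S_1$ for all indices (possible because continuity of $(\lambda,t)\mapsto\dot\gamma_\lambda(t)$ and convergence $\gamma^k\to\gamma_\mu$ bound the speeds away from zero). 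On the open set where these inequalities hold, $\mathcal{E}^E_\lambda$ coincides with $\mathcal{L}^E_\lambda$ built from $L^\ast_\lambda$, so (\ref{e:IndexSame})--(\ref{e:IndexSame+}) hold and each $\gamma^k$ is a critical point of $\mathcal{L}^E_{\lambda_k}$ for $k$ sufficiently large. Thus $(\mu,\gamma_\mu)$ is a bifurcation point along sequences for the brake problem (\ref{e:PPerLagrBrakeorbit}) with $L=L^\ast$, and Theorem~\ref{th:bif-nessLagrBrakeM}(I) gives $m^0_1(\mathcal{L}^E_\mu,\gamma_\mu)>0$, which by (\ref{e:IndexSame}) yields $m^0(\mathcal{E}^E_\mu,\gamma_\mu)\ge 1$.

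For part (II), take $\hat\Lambda=\{\mu,\lambda_k^+,\lambda_k^-\,|\,k\in\mathbb{N}\}$, again sequentially compact, and produce $\hat M$ and $L^\ast_\lambda$ in the same way, using (\ref{e:Fin1.9}) to locate $\gamma_\lambda$ in ${\cal C}_{1,\bf N}(\hat{M}, \{F_\lambda\}, c/C_1)$ for all $\lambda\in\hat\Lambda$. Then (\ref{e:IndexSame})--(\ref{e:IndexSame+}) translate each of the hypotheses (II.1), (II.2), (II.3) into the corresponding conditions of Theorem~\ref{th:bif-nessLagrBrakeM}(II) for the functionals $\mathcal{L}^E_\lambda$. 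That theorem then produces a sequence $\{(\lambda_k,\gamma^k)\}\subset\hat\Lambda\times EC^1(S_1;\hat M)$ with $\gamma^k\ne\gamma_{\lambda_k}$ solving (\ref{e:PPerLagrBrakeorbit}) for $L^\ast_{\lambda_k}$ and $\gamma^k\to\gamma_\mu$ in $C^2$. Finally an analogue of Claim~\ref{cl:Fin1} adapted to the brake setting--which follows by the same localization argument, since for $k$ large one has $\gamma^k$ in the open set where $L^\ast_{\lambda_k}=L_{\lambda_k}$--shows that each such $\gamma^k$ is in fact a $C^\ell$ constant (non-zero) speed $F_{\lambda_k}$-geodesic that is $1$-periodic and even, concluding the proof.

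The main technical obstacle will be verifying the transfer of the various hypotheses (II.1)--(II.3) cleanly, in particular ensuring that ``isolated critical point'' and critical group isomorphisms in $EC^1(S_1;M)$ translate faithfully to the corresponding statements for $\mathcal{L}^E_\lambda$ on the Banach manifold $EC^1(S_\tau;\hat M)$; this relies on the fact that the two functionals agree on a full $C^1$-neighborhood of $\gamma_\lambda$ (determined by the speed lower bound $2c/C_1$), which is an open subset of both ambient manifolds, so the local critical point structure and critical groups coincide by excision. A second, more routine, point is verifying the Lemma~\ref{lem:regu} type upgrade $C^1\to C^2$ convergence under the brake/periodic constraints, which proceeds identically once one localizes to a coordinate chart and uses that the modified Lagrangian satisfies the convexity hypothesis (L2) of Lemma~\ref{lem:Gener-modif}.
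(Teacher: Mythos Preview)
Your proposal is correct and follows essentially the same approach as the paper: localize to a compact $\hat\Lambda$ and an open $\hat M$ with compact closure, apply Proposition~\ref{prop:Fin2.2} to produce the reversible modified Lagrangians $L^\ast_\lambda$, identify $\mathcal{E}^E_\lambda$ with $\mathcal{L}^E_\lambda$ on the speed-bounded open set, transfer indices, nullities and critical groups via (\ref{e:IndexSame})--(\ref{e:IndexSame+}), and invoke Theorem~\ref{th:bif-nessLagrBrakeM}. The paper carries this out by explicit reference to the parallel argument in the proof of Theorem~\ref{th:bif-nessFin}; your write-up is in fact slightly more self-contained in spelling out the reversibility check and the excision reasoning for critical groups.
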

\begin{proof}[\bf Proof]
{\it Step 1}[\textsf{Prove} (I)].
By the assumptions there exists a sequence $(\lambda_k)\subset\Lambda$ converging  to $\mu\in\Lambda$
such that for each $k$ there exists a nonconstant  reversible and $1$-periodic
 $F_{\lambda_k}$-geodesic  $\gamma^k\ne\gamma_{\lambda_k}$ to satisfy $\gamma^k\to \gamma_\mu$ in $C^1(S_1;M)$.
  Let $\hat{\Lambda}=\{\mu\}\cup\{\lambda_k\,|\,k\in\mathbb{N}\}$. It is sequential compact.
  Then we can choose an open subset $\hat{M}$ of $M$ with compact closure such that
 $\gamma_\lambda(S_1)\cup\gamma^m(S_1)\subset\hat{M}$ for all $(\lambda,m)\in\hat{\Lambda}\times\mathbb{N}$,
 and therefore the conditions in Proposition~\ref{prop:Fin2.2} can be satisfied
 with $(M, \Lambda)=(\hat{M}, \hat{\Lambda})$.
For the constant $C_1>0$ as in Proposition~\ref{prop:Fin2.2} with $(M, \Lambda)=(\hat{M}, \hat{\Lambda})$ we have $c>0$ such that
for all $(m, \lambda,t)\in\mathbb{N}\times\hat{\Lambda}\times\R$,
\begin{equation*}
[F_\lambda(\gamma_\lambda(t), \dot{\gamma}_\lambda(t))]^2>\frac{2c}{C_1}\quad\hbox{and}\quad
[F_{\lambda_m}(\gamma^m(t), \dot{\gamma}^m(t))]^2>\frac{2c}{C_1}.
 \end{equation*}
Let $L^\ast_\lambda:T\hat{M}\to\R$,  $\lambda\in\hat{\Lambda}$, be given by Proposition~\ref{prop:Fin2.2} with $(M, \Lambda)=(\hat{M}, \hat{\Lambda})$. Then $L^\ast_\lambda(x,v)=L^\ast_\lambda(x,-v)$ for all $(x,v)\in T\hat{M}$,
and on $EC^1(S_1;M)$ the corresponding $C^{2}$ functional $\mathcal{E}^{E\ast}_\lambda$ given by (\ref{e:FinEnergy*}) with $\tau=1$
and the $C^{2-0}$ functional $\mathcal{E}^E_\lambda$ in (\ref{e:FinEnergy}) with $\tau=1$ coincide in the open subset
\begin{equation}\label{e:openSet13.2}
EC^1(S_1, \hat{M}, \{F_\lambda\}, {c/C_1}):=\{\alpha\in EC^1(S_1, \hat{M})\,|\,
\min_{(\lambda,t)\in\hat{\Lambda}\times S_1}[F_\lambda(\alpha(t), \dot{\alpha}(t))]^2>2c/C_1\}
\end{equation}
of $EC^1(S_1;M)$. Moreover, at a critical point $\gamma$ of them on
$EC^1(S_1, \hat{M}, \{F_\lambda\}, {c/C_1})$
it holds that their Morse indexes and nullities satisfy
$m^-(\mathcal{E}^{E\ast}_\lambda,\gamma)=m^-(\mathcal{E}^E_\lambda,\gamma)$ and $m^0(\mathcal{E}^{E\ast}_\lambda,\gamma)=
m^0(\mathcal{E}^E_\lambda,\gamma)$. As in Step 1 of proof of  Theorem~\ref{th:bif-nessFin}
the desired conclusion may follow from \cite[Th.1.22(I)]{Lu12-} or \cite[Th.1.29(I)]{Lu12}.

  {\it Step 2}[\textsf{Prove} (II)].
  Since $\hat{\Lambda}=\{\mu,\lambda^+_k, \lambda^-_k\,|\,k\in\mathbb{N}\}$ is sequential compact,
as above we can  find an open subset $\hat{M}$ of $M$ with compact closure such that
 $\gamma_\lambda(S_1)\cup\gamma_{\lambda^+_m}(S_1)\cup\gamma_{\lambda^-_m}(S_1)\subset\hat{M}$ for all $(\lambda,m)\in\hat{\Lambda}\times\mathbb{N}$.
For the constant $C_1>0$ as in Proposition~\ref{prop:Fin2.2} with $(M, \Lambda)=(\hat{M}, \hat{\Lambda})$ we have $c>0$ such that
for all $(m, \lambda,t)\in\mathbb{N}\times\hat{\Lambda}\times \R$,
\begin{equation*}
[F_\lambda(\gamma_\lambda(t), \dot{\gamma}_\lambda(t))]^2>\frac{2c}{C_1}\quad\hbox{and}\quad
[F_{\lambda_m}(\gamma_{\lambda^\pm_m}(t), \dot{\gamma}_{\lambda^\pm_m}(t))]^2>\frac{2c}{C_1}.
 \end{equation*}
Let $L^\ast_\lambda:T\hat{M}\to\R$,  $\lambda\in\hat{\Lambda}$, be given by Proposition~\ref{prop:Fin2.2} with $(M, \Lambda)=(\hat{M}, \hat{\Lambda})$.
As above, for all $(m, \lambda)\in\mathbb{N}\times\hat{\Lambda}$ we have
 $\gamma_\lambda, \gamma_{\lambda^+_m}, \gamma_{\lambda^-_m}\in EC^1(S_1, \hat{M}, \{F_\lambda\}, {c/C_1})$, and
  \begin{eqnarray*}
&&m^-(\mathcal{E}^E_{\lambda^\pm_m},\gamma_{\lambda^\pm_m})=
m^-(\mathcal{E}^{E\ast}_{\lambda^\pm_m},\gamma_{\lambda^\pm_m})
\quad\hbox{and}\quad m^0(\mathcal{E}^E_{\lambda^\pm_m},\gamma_{\lambda^\pm_m})=
m^0(\mathcal{E}^{E\ast}_{\lambda^\pm_m},\gamma_{\lambda^\pm_m}),\\
&&C_k(\mathcal{E}^E_{\lambda^\pm_m},\gamma_{\lambda^\pm_m};{\bf K})=C_k(\mathcal{E}^{E\ast}_{\lambda^\pm_m},\gamma_{\lambda^\pm_m};
{\bf K})\quad\forall (k,m)\in\mathbb{Z}\times\mathbb{N}
\end{eqnarray*}
for any Abel group ${\bf K}$.
The other reasoning may be derived from \cite[Th.1.22(II)]{Lu12-} or \cite[Th.1.29(II)]{Lu12}
as in Step 2 of proof of  Theorem~\ref{th:bif-nessFin}.
  \end{proof}

\begin{theorem}[\textsf{Existence for bifurcations}]\label{th:bif-existFin1Brake}
Under Assumption~\ref{ass:Fin4}, suppose that $\Lambda$ is path-connected and  there exist two  points $\lambda^+, \lambda^-\in\Lambda$ such that
  one of the following conditions is satisfied:
 \begin{enumerate}
 \item[\rm (i)] Either $\gamma_{\lambda^+}$  is not an isolated critical point of $\mathcal{E}^E_{\lambda^+}$,
 or $\gamma_{\lambda^-}$ is not an isolated critical point of $\mathcal{E}^E_{\lambda^-}$,
 or $\gamma_{\lambda^+}$ {\rm (}resp. $\gamma_{\lambda^-}${\rm )}  is an isolated critical point of
  $\mathcal{E}^E_{\lambda^+}$ {\rm (}resp. $\mathcal{E}^E_{\lambda^-}${\rm )} and
  $C_m(\mathcal{E}^E_{\lambda^+}, \gamma_{\lambda^+};{\bf K})$ and $C_m(\mathcal{E}^E_{\lambda^-}, \gamma_{\lambda^-};{\bf K})$ are not isomorphic for some Abel group ${\bf K}$ and some $m\in\mathbb{Z}$.

\item[\rm (ii)] The intervals $[m^-({\mathcal{E}}^E_{\lambda^-}, \gamma_{\lambda^-}),
m^-({\mathcal{E}}^E_{\lambda^-}, \gamma_{\lambda^-})+ m^0({\mathcal{E}}^E_{\lambda^-}, \gamma_{\lambda^-})]$ and
$$
[m^-({\mathcal{E}}^E_{\lambda^+}, \gamma_{\lambda^+}),
m^-({\mathcal{E}}^E_{\lambda^+}, \gamma_{\lambda^+})+m^0({\mathcal{E}}^E_{\lambda^+}, \gamma_{\lambda^+})]
$$
are disjoint, and there exists $\lambda\in\{\lambda^+, \lambda^-\}$ such that $\gamma_{\lambda}$
 is an either non-isolated or homological visible critical point of $\mathcal{E}^E_{\lambda}$.

\item[\rm (iii)] The intervals $[m^-({\mathcal{E}}^E_{\lambda^-}, \gamma_{\lambda^-}),
m^-({\mathcal{E}}^E_{\lambda^-}, \gamma_{\lambda^-})+ m^0({\mathcal{E}}^E_{\lambda^-}, \gamma_{\lambda^-})]$ and
$$
[m^-({\mathcal{E}}^E_{\lambda^+}, \gamma_{\lambda^+}),
m^-({\mathcal{E}}^E_{\lambda^+}, \gamma_{\lambda^+})+m^0({\mathcal{E}}^E_{\lambda^+}, \gamma_{\lambda^+})]
$$
are disjoint, and either $m^0(\mathcal{E}^E_{\lambda^+}, \gamma_{\lambda^+})=0$ or $m^0(\mathcal{E}^E_{\lambda^-}, \gamma_{\lambda^-})=0$.
 \end{enumerate}
  Then for any path $\alpha:[0,1]\to\Lambda$ connecting $\lambda^+$ to $\lambda^-$ there exists
 a sequence $(\lambda_k)$ in $\alpha([0,1])$ converging to some
 $\mu\in \alpha([0,1])$,
  and
  $1$-periodic $F_{\lambda_k}$-geodesics $\gamma^k$
 of constant {\rm (}non-zero{\rm )} speed, $k=1,2,\cdots$, such that $0<\|\gamma^k-\gamma_{\lambda_k}\|_{C^2(S_1;\mathbb{R}^N)}\to 0$
  as $k\to\infty$. Moreover, $\mu$ is not equal to $\lambda^+$ {\rm (}resp. $\lambda^-${\rm )}
   if $m^0_\tau(\mathcal{E}^E_{\lambda^+}, \gamma_{\lambda^+})=0$ {\rm (}resp. $m^0_\tau(\mathcal{E}^E_{\lambda^-}, \gamma_{\lambda^-})=0${\rm )}.
  \end{theorem}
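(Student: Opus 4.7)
The plan is to reduce to Theorem~\ref{th:bif-existLagrBrakeM} by the same modification procedure used in the proofs of Theorems~\ref{th:bif-nessFin}, \ref{th:bif-existFin1}, \ref{th:bif-suffFin} and \ref{th:bif-nessLagrBrakeMFin}. Fix a path $\alpha:[0,1]\to\Lambda$ connecting $\lambda^+$ to $\lambda^-$ and set $\hat\Lambda:=\alpha([0,1])$, which is compact and sequential compact. Since $(\lambda,t)\mapsto\gamma_\lambda(t)$ is continuous on $\hat\Lambda\times S_1$, its image lies in an open subset $\hat M\subset M$ with compact closure. By scaling $g$ if necessary, the conditions of Proposition~\ref{prop:Fin2.2} hold on $(\hat M,\hat\Lambda)$, and since each $\gamma_\lambda$ has constant nonzero $F_\lambda$-speed, there exists $c>0$ with $[F_\lambda(\gamma_\lambda(t),\dot\gamma_\lambda(t))]^2>2c/C_1$ for every $(\lambda,t)\in\hat\Lambda\times S_1$.

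Let $L^\ast_\lambda:T\hat M\to\mathbb{R}$, $\lambda\in\hat\Lambda$, be the $C^\ell$ Lagrangians produced by Proposition~\ref{prop:Fin2.2}. They are autonomous and, by Proposition~\ref{prop:Fin2.2}(v), each $L^\ast_\lambda$ is reversible in the velocity variable, so
$$
L^\ast(\lambda,-t,q,-v)=L^\ast(\lambda,t,q,v)=L^\ast(\lambda,t+1,q,v),
$$
i.e.\ $L^\ast$ satisfies Assumption~\ref{ass:Lagrbrake} with $\tau=1$ (over $\hat M$). Moreover $\gamma_\lambda\in EC^1(S_1;\hat M)\cap C^2(S_1;\hat M)$ solves (\ref{e:PPerLagrBrakeorbit}) for $L^\ast_\lambda$ and has the required joint continuity in $(\lambda,t)$, so Assumption~\ref{ass:Lagr7brake} is verified. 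Because $L^\ast_\lambda\equiv L_\lambda$ on the open subset of $T\hat M$ where $L_\lambda\ge 2c/(3C_1)$, the functionals $\mathcal{E}^E_\lambda$ and $\mathcal{L}^{E\ast}_\lambda:=\int^1_0L^\ast_\lambda(\gamma,\dot\gamma)dt$ coincide on the neighbourhood of $\gamma_\lambda$ in $EC^1(S_1;\hat M)$ defined as in (\ref{e:openSet13.2}). Consequently $\gamma_\lambda$ is a critical point of $\mathcal{L}^{E\ast}_\lambda$, and its Morse index, nullity, and critical groups with arbitrary coefficients agree with those of $\mathcal{E}^E_\lambda$ at $\gamma_\lambda$ (compare (\ref{e:IndexSame})--(\ref{e:IndexSame+})). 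Therefore each of the hypotheses (i), (ii), (iii) of Theorem~\ref{th:bif-existFin1Brake} translates verbatim into the corresponding hypothesis (i), (ii), (iii) of Theorem~\ref{th:bif-existLagrBrakeM} for the family $\{\mathcal{L}^{E\ast}_\lambda\}_{\lambda\in\hat\Lambda}$.

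Applying Theorem~\ref{th:bif-existLagrBrakeM} produces a sequence $(\lambda_k)\subset\alpha([0,1])$ converging to some $\mu\in\alpha([0,1])$ and solutions $\gamma^k\ne\gamma_{\lambda_k}$ of (\ref{e:PPerLagrBrakeorbit}) for $L^\ast_{\lambda_k}$ with $\|\gamma^k-\gamma_{\lambda_k}\|_{C^2(S_1;\mathbb{R}^N)}\to 0$, together with the sharper statement that $\mu\ne\lambda^+$ (resp.\ $\mu\ne\lambda^-$) whenever $m^0(\mathcal{L}^{E\ast}_{\lambda^+},\gamma_{\lambda^+})=0$ (resp.\ $=0$ at $\lambda^-$). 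Invoking Claim~\ref{cl:Fin1} (its obvious brake-orbit analogue), the $C^2$-convergence forces $\gamma^k$ to lie in the region where $L^\ast_{\lambda_k}=L_{\lambda_k}$ for all sufficiently large $k$; hence $\gamma^k$ is a $1$-periodic $F_{\lambda_k}$-geodesic of constant (nonzero) speed satisfying $\gamma^k(-t)=\gamma^k(t)$, which is the required conclusion.

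The main technical point that needs care is the last step: verifying that $C^2$-closeness of $\gamma^k$ to $\gamma_{\lambda_k}$ indeed keeps $[F_{\lambda_k}(\gamma^k,\dot\gamma^k)]^2$ above $2c/(3C_1)$ uniformly in $k$; this is automatic from the uniform lower bound $[F_\lambda(\gamma_\lambda,\dot\gamma_\lambda)]^2>2c/C_1$ on $\hat\Lambda\times S_1$ together with joint continuity of $F_\lambda$ and $\dot\gamma_\lambda$ on the compact set $\hat\Lambda$, and is entirely parallel to the corresponding estimate used in Theorem~\ref{th:bif-nessLagrBrakeMFin}.
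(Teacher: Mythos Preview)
Your proof is correct and follows essentially the same approach as the paper: fix $\hat\Lambda=\alpha([0,1])$, localize to an open $\hat M$ with compact closure, apply Proposition~\ref{prop:Fin2.2} (using reversibility of $F_\lambda$ so that $L^\ast_\lambda$ is even in $v$) to obtain a modified Lagrangian satisfying Assumption~\ref{ass:Lagrbrake}, identify Morse indices, nullities and critical groups of $\mathcal{E}^E_\lambda$ with those of $\mathcal{L}^{E\ast}_\lambda$, and then invoke Theorem~\ref{th:bif-existLagrBrakeM}. Your sketch is in fact slightly more explicit than the paper's proof in verifying the brake-orbit hypotheses and in spelling out why the $C^2$-close solutions land back in the region where $L^\ast_{\lambda_k}=L_{\lambda_k}$.
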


  \begin{proof}[\bf Proof]
Since $\hat{\Lambda}:=\alpha([0,1])$ is  compact and sequential compact subset
 we can  find an open subset $\hat{M}$ of $M$ with compact closure such that the closure
 $Cl\left(\cup_{\lambda\in\hat{\Lambda}\times\mathbb{N}}\gamma_\lambda(S_1)
\right)\subset\hat{M}$.
For the constant $C_1>0$ as in Proposition~\ref{prop:Fin2.2} with $(M, \Lambda)=(\hat{M}, \hat{\Lambda})$ we have $c>0$ such that
\begin{equation*}
[F_\lambda(\gamma_\lambda(t), \dot{\gamma}_\lambda(t))]^2>\frac{2c}{C_1}\quad\hbox{for all $(\lambda,t)\in\hat{\Lambda}\times S_1$}.
 \end{equation*}
Let $L^\ast_\lambda:T\hat{M}\to\R$,  $\lambda\in\hat{\Lambda}$, be given by Proposition~\ref{prop:Fin2.2} with $(M, \Lambda)=(\hat{M}, \hat{\Lambda})$.
Then $L^\ast_\lambda(x,v)=L^\ast_\lambda(x,-v)$ for all $(x,v)\in T\hat{M}$,
and on $EC^1(S_1;M)$ the corresponding $C^{2}$ functional $\mathcal{E}^{E\ast}_\lambda$ given by (\ref{e:FinEnergy*}) with $\tau=1$
and the $C^{2-0}$ functional $\mathcal{E}^E_\lambda$ in (\ref{e:FinEnergy}) with $\tau=1$ coincide in the open subset
$EC^1(S_1, \hat{M}, \{F_\lambda\}, {c/C_1})$ in (\ref{e:openSet13.2}), which contains
$\{\gamma_\lambda\,|\,\lambda\in\hat{\Lambda}\}$. Therefore
\begin{eqnarray*}
&&m^-(\mathcal{E}^E_{\lambda^\pm},\gamma_{\lambda^\pm})=
m^-(\mathcal{E}^{E\ast}_{\lambda^\pm},\gamma_{\lambda^\pm})
\quad\hbox{and}\quad m^0(\mathcal{E}^E_{\lambda^\pm},\gamma_{\lambda^\pm})=
m^0(\mathcal{E}^{E\ast}_{\lambda^\pm},\gamma_{\lambda^\pm}),\\
&&C_k(\mathcal{E}^E_{\lambda^\pm},\gamma_{\lambda^\pm};{\bf K})=C_k(\mathcal{E}^{E\ast}_{\lambda^\pm},\gamma_{\lambda^\pm};
{\bf K})\;\forall k\in\mathbb{Z},\quad\hbox{for any Abel group ${\bf K}$}.
\end{eqnarray*}
The desired results may follow from these and
\cite[Th.1.23]{Lu12-} or \cite[Th.1.30]{Lu12}.
\end{proof}

\begin{theorem}[\textsf{Alternative bifurcations of Rabinowitz's type}]\label{th:bif-suffLagrBrakeMFin}
Under Assumptions~\ref{ass:Fin4}  with $\Lambda$ being a real interval,
let  $\mu\in{\rm Int}(\Lambda)$ satisfy  $m^0(\mathcal{E}^E_{\mu}, \gamma_\mu)\ne 0$.
Suppose that $m^0(\mathcal{E}^E_{\lambda}, \gamma_\lambda)=0$
 for each $\lambda\in\Lambda\setminus\{\mu\}$ near $\mu$, and
that $m^-(\mathcal{E}^E_{\lambda}, \gamma_\lambda)$ take,
respectively, values $m^-(\mathcal{E}^E_{\mu}, \gamma_\mu)$ and
  $m^-(\mathcal{E}^E_{\mu}, \gamma_\mu)+ m^0(\mathcal{E}^E_{\mu}, \gamma_\mu)$
 as $\lambda\in\Lambda$ varies in two deleted half neighborhoods  of $\mu$.
  Then  one of the following alternatives occurs:
\begin{enumerate}
\item[\rm (i)] There exists a sequence of  nonconstant
 $1$-periodic  $F_{\mu}$-geodesics $\gamma^k\ne \gamma_{\mu}$, $k=1,2,\cdots$,
   such that $\gamma^k\to \gamma_\mu$ in $C^2(S_1;M)$.

\item[\rm (ii)]  For every $\lambda\in\Lambda\setminus\{\mu\}$ near $\mu$ there is a
  nonconstant   $1$-periodic  $F_{\lambda}$-geodesic $\alpha_\lambda\ne \gamma_{\lambda}$,
 such that  $\alpha_\lambda-\gamma_\lambda$
 converges to zero in  $C^2(S_1, \R^N)$ as $\lambda\to \mu$.
(Recall $M\subset\R^N$.)

\item[\rm (iii)]  For a given neighborhood $\mathcal{W}$ of $\gamma_\mu$ in $C^2(S_1, M)$,
there exists a one-sided  neighborhood $\Lambda^0$ of $\mu$ such that
for any $\lambda\in\Lambda^0\setminus\{\mu\}$,
there exist at least two distinct
$1$-periodic  $F_{\lambda}$-geodesics of constant {\rm (}non-zero{\rm )} speed in $\mathcal{W}$, $\gamma^1_\lambda\ne\gamma_\lambda$ and
 $\gamma^2_\lambda\ne\gamma_\lambda$,
which can also be chosen to have different $F_\lambda$-speeds
{\rm (}i.e., $F_\lambda(\gamma^1_\lambda(t), \dot{\gamma}^1_\lambda(t))\ne F_\lambda(\gamma^2_\lambda(t),
\dot{\gamma}^2_\lambda(t))\;\forall t${\rm )}
provided that $m^0(\mathcal{E}^E_{\mu}, \gamma_\mu)>1$ and
there exist only finitely many
$1$-periodic  $F_{\lambda}$-geodesics of constant {\rm (}non-zero{\rm )} speed in $\mathcal{W}$.
\end{enumerate}
\end{theorem}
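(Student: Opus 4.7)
The plan is to reduce the problem to the bifurcation result for brake orbits of Lagrangian systems, Theorem~\ref{th:bif-suffLagrBrakeM}, via the same modification scheme used in the previous proofs of this section. Since $\Lambda$ is a real interval and $\mu\in{\rm Int}(\Lambda)$, I first choose $\rho>0$ so that the compact, sequentially compact set $\hat{\Lambda}:=[\mu-\rho,\mu+\rho]$ lies in $\Lambda$; by continuity of $(\lambda,t)\mapsto\gamma_\lambda(t)$, I can then pick an open $\hat{M}\subset M$ with compact closure containing $\bigcup_{\lambda\in\hat{\Lambda}}\gamma_\lambda(S_1)$. Proposition~\ref{prop:Fin2.2} applied with $(\Lambda,M)=(\hat{\Lambda},\hat{M})$ yields $C_1>0$ and $c>0$ with $[F_\lambda(\gamma_\lambda(t),\dot\gamma_\lambda(t))]^2>2c/C_1$ for all $(\lambda,t)\in\hat{\Lambda}\times\mathbb{R}$, together with a $C^\ell$ family $L^\ast_\lambda:T\hat{M}\to\mathbb{R}$.

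The critical observation is that each $L^\ast_\lambda$ inherits two symmetries from $F_\lambda$: it is independent of $t$ (because $F_\lambda$ is), and, by Proposition~\ref{prop:Fin2.2}(v), it is even in $v$, i.e.\ $L^\ast_\lambda(x,-v)=L^\ast_\lambda(x,v)$. These together give
\[
L^\ast(\lambda,-t,q,-v)=L^\ast(\lambda,t,q,v)=L^\ast(\lambda,t+1,q,v)\quad\forall(\lambda,t,q,v),
\]
so $L^\ast$ satisfies Assumption~\ref{ass:Lagrbrake} with $\tau=1$, and each $\gamma_\lambda$ is a solution of the brake problem (\ref{e:PPerLagrBrakeorbit}) for $L^\ast_\lambda$ (Assumption~\ref{ass:Lagr7brake} holds by hypothesis). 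The $C^2$-functional $\mathcal{L}^E_\lambda$ from Section~\ref{sec:Lagrbroke} associated to $L^\ast_\lambda$ coincides with $\mathcal{E}^E_\lambda$ on the open subset
\[
EC^1(S_1,\hat{M},\{F_\lambda\},c/C_1):=\Bigl\{\alpha\in EC^1(S_1,\hat{M})\,\Big|\,\min_{(\lambda,t)\in\hat{\Lambda}\times S_1}[F_\lambda(\alpha(t),\dot\alpha(t))]^2>2c/C_1\Bigr\}
\]
of $EC^1(S_1;M)$, which contains $\{\gamma_\lambda\,|\,\lambda\in\hat{\Lambda}\}$; hence $m^{-,0}(\mathcal{L}^E_\lambda,\gamma_\lambda)=m^{-,0}(\mathcal{E}^E_\lambda,\gamma_\lambda)$ for $\lambda\in\hat{\Lambda}$. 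The hypotheses of Theorem~\ref{th:bif-suffLagrBrakeM} on Morse index and nullity therefore transfer verbatim from $\mathcal{E}^E_\lambda$ to $\mathcal{L}^E_\lambda$.

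Applying Theorem~\ref{th:bif-suffLagrBrakeM} yields one of three alternatives for solutions of (\ref{e:PPerLagrBrakeorbit}). Using the analogue of Claim~\ref{cl:Fin1} (here $L^\ast_\lambda\equiv L_\lambda$ on $\{[F_\lambda]^2>2c/(3C_1)\}$, and $C^1$-closeness to $\gamma_\mu$ plus the convergence statements push candidate solutions into this set, where they are automatically nonconstant constant-speed $F_\lambda$-geodesics by Claim~\ref{cl:regularity}), I translate these back: alternative (i) of Theorem~\ref{th:bif-suffLagrBrakeM} gives (i) here with $C^2(S_1,M)$-convergence via Lemma~\ref{lem:regu}(ii); alternative (ii) gives (ii) here; alternative (iii) gives two $\mathcal{L}^E_\lambda$-distinct, $C^2$-close brake solutions $\gamma_\lambda^1,\gamma_\lambda^2$. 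The final refinement for (iii) is the only nontrivial translation: since $\gamma_\lambda^i$ is a constant-speed $F_\lambda$-geodesic lying in the set where $L^\ast_\lambda=F_\lambda^2$, we have $\mathcal{E}^E_\lambda(\gamma_\lambda^i)=\mathcal{L}^E_\lambda(\gamma_\lambda^i)=[F_\lambda(\gamma_\lambda^i(0),\dot\gamma_\lambda^i(0))]^2$, so $\mathcal{L}^E_\lambda(\gamma_\lambda^1)\ne\mathcal{L}^E_\lambda(\gamma_\lambda^2)$ forces the $F_\lambda$-speeds to differ, exactly as required; and the finiteness clause in (iii) transfers between the two settings because the correspondence is local and bijective near $\gamma_\mu$ after shrinking $\mathcal{W}$ to lie in the coincidence set.

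The main (and essentially only) potential obstacle is ensuring that the modified functional framework really does give a bijective correspondence between candidate bifurcating geodesics of $\mathcal{E}^E_\lambda$ and critical points of $\mathcal{L}^E_\lambda$ in an appropriate neighborhood; this is handled by choosing $\mathcal{W}$ (and the tolerance in alternative (iii)) small enough that every nearby critical point of $\mathcal{L}^E_\lambda$ has $F_\lambda$-speed squared $>2c/(3C_1)$, and invoking Lemma~\ref{lem:regu}(ii) to ensure $C^1$-convergence upgrades to $C^2$-convergence. No genuinely new estimates beyond those already established in this section are required.
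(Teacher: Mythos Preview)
Your proposal is correct and follows essentially the same approach as the paper: restrict to a compact parameter interval $\hat{\Lambda}=[\mu-\rho,\mu+\rho]$, localize to an open $\hat{M}$ with compact closure, apply Proposition~\ref{prop:Fin2.2} to obtain the modified Lagrangians $L^\ast_\lambda$ (using reversibility of $F_\lambda$ via part (v) to guarantee the brake symmetry of Assumption~\ref{ass:Lagrbrake}), and then invoke Theorem~\ref{th:bif-suffLagrBrakeM} and translate back via the local coincidence of functionals. The paper's own proof is much terser, simply referring to the proof of Theorem~\ref{th:bif-suffFin} for the analogous reasoning; your version spells out the verification of the brake symmetry and the translation of the ``different functional values'' clause into ``different $F_\lambda$-speeds,'' which are exactly the details the paper leaves implicit.
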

\begin{proof}[\bf Proof]
As in the proof of Theorem~\ref{th:bif-suffFin} we have a number $\rho>0$ such that
the compact set $\hat{\Lambda}:=[\mu-\rho, \mu+\rho]\subset\Lambda$.
Choose an open subset $\hat{M}$ of $M$ with compact closure such that
$\gamma_\lambda(S_1)\subset\hat{M}$ for all $\lambda\in\hat{\Lambda}$.
Then for the constant $C_1>0$ as in Proposition~\ref{prop:Fin2.2} with $(M, \Lambda)=(\hat{M}, \hat{\Lambda})$ we have $c>0$ such that
$[F_\lambda(\gamma_\lambda(t), \dot{\gamma}_\lambda(t))]^2>{2c}/{C_1}$
for all $(\lambda,t)\in\hat{\Lambda}\times \R$.
 Let $L^\ast_\lambda:T\hat{M}\to\R$,  $\lambda\in\hat{\Lambda}$,
 be given by Proposition~\ref{prop:Fin2.2} with $(M, \Lambda)=(\hat{M}, \hat{\Lambda})$.
  The other reasoning may be derived from
 \cite[Th.1.24]{Lu12-} or \cite[Th.1.31]{Lu12}
as in the proof of Theorem~\ref{th:bif-suffFin}.
\end{proof}

\section{Randers metrics and potential applications}\label{sec:Randers}

A class of special Finsler metrics (Randers metric) are often used in several places.
A \textsf{Randers metric} on manifold $M$ is a Finsler metric of form
\[
F(x,y)=\sqrt{h_x(y,y)}+\beta(x)[y]
\]
with $\|\beta(x)\|_{h_x}<1$ for all $x\in M$,
where $h$ is a Riemannian metric, $\beta$ is a one-form on $M$ and
 $\|\beta(x)\|_{h_x}$ is the norm of the linear functional
 $\beta(x):T_xM\to\mathbb{R}$ on the Hilbert space $(T_xM, h_x)$.
  It is the condition $\|\beta(x)\|_{h_x}<1$ for all $x\in M$ that
 assures  $F$ is positive and $F^2$ is fiberwise strongly
 convex (see \cite[Sect.~11.1]{BaChSh}).
If the Riemannian manifold $(M,h)$ is complete and
\[
\|\beta\|_h:=\sup_{x\in M}\|\beta(x)\|_{h_x}<1,
\]
the Randers manifold $(M,F)$ is forward and backward complete
(see \cite[Remark~4.1]{CaJaMa3}).

\subsection{Bifurcation of  the  future-pointing lightlike geodesics
on conformally standard stationary spacetime}\label{sec:Randers.1}

Consider the  conformally standard stationary Lorentzian manifold
 ${\mathcal M}={\mathcal M}_0\times \R$
 with the Lorentzian metric  given by
 \begin{equation}\label{e:Lorent}
\mathfrak{g}_{(x,t)}\bigl((y,\tau),(y,\tau)\bigl)=\tilde{g}_{0x}(y,y)+2\tilde{g}_{0x}(V(x),y)\tau
-\tau^2
\end{equation}
(\cite[(33)]{CaJaMa1}), where $g_0$ is a Riemannian metric,
  $V$ is a vector field, and $f$ is a positive function
on ${\mathcal M}_0$, with $\tilde{g}_0=g_0/f$.
In \cite{CaJaMa1}, the \textsf{Fermat metric} associated to the Lorentzian manifold
$(\mathcal M, \mathfrak{g})$
is defined as the Randers metrics on $\mathcal M_0$  given by
\begin{align}
F(x,y)&=\sqrt{(\tilde{g}_{0x}(V(x),y))^2+\tilde{g}_{0x}(y,y)}+\tilde{g}_{0x}[V(x),y],\\
F_-(x,y)&=\sqrt{(\tilde{g}_{0x}(V(x),y))^2+\tilde{g}_{0x}(y,y)}-\tilde{g}_{0x}[V(x),y],
\end{align}
(\cite[(35)]{CaJaMa1}). Clearly, the expression  $h_x(y,y)=(\tilde{g}_{0x}(V(x),y))^2+\tilde{g}_{0x}(y,y)$)
defines the associated  Riemannian metric of them.
Kovner's Fermat principle can be stated as follows (see \cite[Theorem~4.1]{CaJaMa1} and \cite[Proposition~12]{CaJaMa3}):

\begin{proposition}[Fermat's principle]\label{prop:fermatp}
Let $(\mathcal M, \mathfrak{g})$ be the standard stationary spacetime as above,
$p=(p_0, 0), q=(q_0, 1)\in\mathcal M$,  $[0,1]\ni s\to\gamma(s)=(q_0,s)\in \mathcal M$, $p_0, q_0\in\mathcal M_0$.  A curve $[0,1]\ni s\to z(s)=(x(s),t(s))\in\mathcal{M}$ is a future-pointing lightlike geodesic
 of $({\mathcal M}, \mathfrak{g})$ as in (\ref{e:Lorent}) if and only if $[0,1]\ni s\to x(s)\in \mathcal M_0$ is a geodesic of the Fermat metric $F$, parametrized  with constant Riemannian speed $h_{x}(\dot x,\dot x)=\mathrm{const.}$, and $t(s)$ is given by
\[t(s)=t_0+\!\int_0^sF(x,\dot x)\,d v.\]
\end{proposition}

This proposition implies that $x:[0, 1]\to \mathcal{M}_0$ is an $F$-geodesic provided that
 $z=(x,t):[0,1]\to\mathcal{M}$ is a future-pointing lightlike geodesic
(see \cite{CaJaMa3} for the precise definition).
Let ${\rm Index}({\bf I}^z_{p,q})$ (resp.~${\rm Index}({\bf I}^x_{p_0,q_0})$),
denote the \textsf{geometric index} of $z$  (resp. $x$),
that is the number of conjugate points  along $z$ (resp. $x$) counted with their multiplicity.
(In the notation of \cite{CaJaMa1}, these two indices are $\mu(z)$ and $\mu(x)$.)

\begin{thm}[\text{\cite[Theorem~13]{CaJaMa1}}]\label{th:Loen-FinSindex}
Let $(\mathcal M, \mathfrak{g})$ be a conformally standard stationary spacetime
as above, $[0,1]\ni s\to z(s)=(x(s),t(s))\in {\mathcal M}$ be a future-pointing lightlike geodesic.
 Let $F$ be the Fermat metric associated to $(\mathcal{M}, \mathfrak{g})$.
Then the points $x(0)$ and $x(1)$ are non-conjugate along the geodesic $x$
in $(\mathcal{M}_0,F)$ if and only if the points $z(0)$ and $z(1)$ are non-conjugate along the lightlike geodesic $z$ in $(\mathcal{M}, \mathfrak{g})$.  Moreover
\begin{equation}\label{e:Loen-FinSindex}
{\rm Index}({\bf I}^z_{p,q})={\rm Index}({\bf I}^x_{p_0,q_0}).
\end{equation}
\end{thm}
Since Corollary~\ref{cor:FinMorseIndex} gives
 ${\rm Index}({\bf I}^x_{p_0,q_0})=m^-(\mathcal{E}_{p_0,q_0},x)$,
  Proposition~\ref{prop:fermatp} and
Theorem~\ref{th:Loen-FinSindex}, together with the results in Section~\ref{sec:geodesics4} and
Section~\ref{sec:geodesics1} with ${\bf N}=\{p_0\}\times\{q_0\}$
 yield  the corresponding bifurcation results for the  future-pointing lightlike geodesics
 in $({\mathcal M}, \mathfrak{g})$.

\subsection{Bifurcation of  the Zermelo navigation problem }\label{sec:Randers.2}

Consider an object moving on a Riemannian manifold
 $(M, h)$ under an internal force, with its velocity
  $U_x$ satisfying $h_x(U_x, U_x) = 1$.
  In the absence of external forces, any time-minimizing path coincides with
  a shortest path of $h$.

   Now suppose the object is subject to an external force field $V$
   satisfying  $h_x(V_x, V_x) < 1$ for all $x\in M$.
   The classical Zermelo navigation problem asks for the curve between two given points
   on $M$ that minimizes the travel time. Zhongmin Shen solved this problem \cite{Shen03} by showing that its solutions are precisely the geodesics of a Randers metric on $M$. Define
       $$
    \tilde{h}_x(y,y)=\frac{\sqrt{h_x(V(x),y)+h_x(y,y)(1-h_x(V(x),V(x))}}{1-h_x(y,y)},\qquad
    \tilde{\beta}_x(y)=-\frac{h_x(V(x),y)}{1-h_x(V(x),V(x))}.
        $$
  Then $\tilde{h}$ is a Riemannian metric on $M$, $\tilde{\beta}$ is a $1$-form on $M$ satisfying
  $\|\tilde{\beta}_x\|_{\tilde{h}_x}<1$ for all $x\in M$. Consequently, $F:=\tilde{h}+\tilde{\beta}$
  is a Randers metric on $M$. Shen \cite{Shen03} proved that
the $\tilde{F}$-length of any piecewise smooth curve $C$ on $M$
 equals the travel time along $C$.
 Therefore, under the influence of the external force $V$,
 the time-minimizing paths between two points on $(M, h)$
 are no longer $h$-geodesics, but rather $F$-geodesics.

Applying Theorem~\ref{th:MorseBifFin}, we can analyze the bifurcation structure of such curves.
We note that Shen \cite{Shen03} also treated the more general case where $h$
itself is a Finsler metric, reducing the corresponding navigation problem to the geodesic problem for another Finsler metric determined by $h$ and $V$. Our results are not restricted to Randers-type
Finsler metrics and thus remain applicable
in this broader setting.

\vspace{3mm}


\noindent{\bf Acknowledgements.}\; I am  deeply grateful to the anonymous referees for their careful reading of the manuscript and insightful comments
 and suggestions, which have greatly improved the revised version.\\

\noindent{\bf Funding}\; This work was supported by National Natural Science Foundation of China [12371108].\\

\noindent{\bf Data Availability}\; We do not analyse or generate any datasets, because our work proceeds within a theoretical
and mathematical approach.\\

%

\noindent{\bf Declarations}\\

\noindent{\bf Conflict of interest}\; The author has no Conflict of interest to declare that are relevant to the content of this
article.

\renewcommand{\refname}{REFERENCES}

\medskip

\begin{tabular}{l}
 School of Mathematical Sciences, Beijing Normal University\\
 Laboratory of Mathematics and Complex Systems, Ministry of Education\\
 Beijing 100875, The People's Republic of China\\
 E-mail address: gclu@bnu.edu.cn\\
\end{tabular}

\end{document}